\documentclass{article}

\pdfoutput=1

\usepackage{amsmath}
\usepackage{amsthm}
\usepackage{thmtools}
\usepackage{amssymb,bm,verbatim,bbm,mathrsfs}
\usepackage{amssymb,bbm}
\usepackage{tikz}
\usepackage{enumerate}
\usepackage[font=small,labelfont=bf]{caption}
\usepackage{subcaption}
\textwidth=125mm
\textheight = 195mm
\usepackage{hyperref}
\usepackage{soul}
\usepackage{microtype}
\usepackage{cleveref}

\hypersetup{
    colorlinks=true,
    linkcolor=black,
    citecolor=black,
    filecolor=black,
    urlcolor=black,
}

\usetikzlibrary{calc,decorations.markings}
\usetikzlibrary{decorations.pathmorphing}
\usetikzlibrary{matrix}

\numberwithin{equation}{section}
%\pgfrealjobname{LWW}
\allowdisplaybreaks[1]

%% Define environments
\declaretheorem[numberwithin=section]{theorem}
\declaretheorem[sibling=theorem]{lemma}
\declaretheorem[sibling=theorem]{corollary}
\declaretheorem[sibling=theorem]{proposition}
\declaretheorem[sibling=theorem]{remark}
\declaretheorem[sibling=theorem]{definition}

\declaretheorem[]{assumption}

%% Custom Commands
% General
\newcommand{\bb}[1]{\mathbb{#1}}
\newcommand{\cc}[1]{\mathcal{#1}}

%% Shortform letter commands

\newcommand{\R}{\bb R}
\newcommand{\Z}{\bb Z}

\newcommand{\C}{\bb C}
\newcommand{\N}{\bb N}
\renewcommand{\P}{\bb P}

%% Bracketing
\newcommand{\co}[1]{\left[#1\right )} %% Closed then open braces
\newcommand{\oc}[1]{\left(#1\right ]} %% Open then closed braces
\newcommand{\ob}[1]{\left(#1\right )} %% Open braces
\newcommand{\cb}[1]{\left[#1\right ]} %% Closed braces
\newcommand{\set}[1]{\left\{#1\right\}} %% Pair of curly braces
\newcommand{\ab}[1]{\langle #1 \rangle} %% Pair of angled braces
\newcommand{\abs}[1]{\left\vert#1\right\vert} %% Absolute value of argument
\newcommand{\norm}[1]{\|#1\|} %% Norm of argument
 %% Restriction

%% Odds and Ends
\newcommand{\indicator}{\mathbbm{1}}
\newcommand{\indicatorthat}[1]{{\mathbbm{1}}_{\left\{#1\right\}}}

%% Document Specific
\newcommand{\LE}{\mathrm{LE}} % Regular loop erasure.
\newcommand{\Walks}{\Omega} % Set of walks.
\newcommand{\SAW}{\Walks_\mathrm{SAW}} % SAW
\newcommand{\SAP}{\Walks_\mathrm{SAP}} % SAP
\newcommand{\WH}[0]{\cc V} %Set of valid pairs $(\gamma,H)$
\newcommand{\loopadd}[0]{\oplus} % Loop insertion symbol.
\newcommand{\OC}[0]{\vec{\cc C}} %Oriented cycle symbol.

\newcommand{\range}[1]{\mathrm{range}(#1)}
\newcommand{\edgespan}[0]{\mathrm{span}}
\newcommand{\IH}{\cc G} % Set of "interaction hypergraphs"
\newcommand{\graphs}{G} % Set of graphs
\newcommand{\cgraphs}{G^{c}} % Set of connected graphs.
\newcommand{\splice}[0]{\colon\!}
\newcommand{\truebubblechain}[0]{\mathrm{BC}}
\newcommand{\bubblechain}[0]{\mathrm{B^{\star}}}
\newcommand{\FS}[0]{\cb{-\pi,\pi}} %Symbol for space on which the
                                   %Fourier Transform lives.
\newcommand{\FSint}[0]{(2\pi)^{d}} % Normalization of Fourier integrals.
\newcommand{\hyp}[2]{F_{#1,#2}} % Hypergraph weight.

\makeatletter
\newcommand{\subjclass}[2][2010]{%
  \let\@oldtitle\@title%
  \gdef\@title{\@oldtitle\footnotetext{#1 \emph{Mathematics subject classification:} #2}}%
}
\newcommand{\keywords}[1]{%
  \let\@@oldtitle\@title%
  \gdef\@title{\@@oldtitle\footnotetext{\emph{Key words and phrases:} #1.}}%
}
\makeatother

\begin{document}

\title{Loop-Weighted Walk}
\author{Tyler Helmuth\footnote{Institute for Computational and
    Experimental Research in Mathematics, 121 South Main St.\,
    Providence, RI, 02903, USA.} \footnote{Current address: Department of
    Mathematics, 899 Evans Hall, Berkeley, CA, 94720-3840 USA. Email:
    jhelmt@math.berkeley.edu}}
\date{}

\subjclass{60K35, 60G50, 82B41}
\keywords{Self-interacting walks, Lace expansion, Loop
    erasure, Loop models}

\maketitle

\begin{abstract}
  Loop-weighted walk with parameter $\lambda\geq 0$ is a non-Markovian
  model of random walks that is related to the loop $O(N)$ model of
  statistical mechanics. A walk receives weight $\lambda^{k}$ if it
  contains $k$ loops; whether this is a reward or punishment for
  containing loops depends on the value of $\lambda$. A challenging
  feature of loop-weighted walk is that it is not purely repulsive,
  meaning the weight of the future of a walk may either increase or
  decrease if the past is forgotten. Repulsion is typically an
  essential property for lace expansion arguments. This article
  circumvents the lack of repulsion and proves, for any $\lambda>0$,
  that loop-weighted walk is diffusive in high dimensions
  by lace expansion methods.
\end{abstract}

\section{Introduction and Main Results}
\label{sec:LWW-Intro-Results}

\emph{Loop-weighted walk with parameter $\lambda$}, abbreviated
\emph{$\lambda$-LWW}, is a model of self-interacting walks that can be
informally defined as follows. Formal definitions will be given in
\Cref{sec:LWW-Basics}. Let $\omega$ be a walk on a graph. A walk is
called a \emph{loop} if $\omega$ begins and ends at the same
vertex. The \emph{loop erasure} $\LE(\omega)$ is formed by
chronologically removing loops from $\omega$. If $n_{L}(\omega)$
denotes the number of loops removed, the $\lambda$-LWW weight of a
walk $\omega$ is
\begin{equation}
  \label{eq:LWW-Weight-Intro}
  w_{\lambda}(\omega) = \lambda^{n_{L}(\omega)}.
\end{equation}

Throughout this article it will be assumed that $\lambda\geq 0$, so
\Cref{eq:LWW-Weight-Intro} defines a non-negative weight on walks. In
particular, $w_{\lambda}$ defines a probability measure on $n$-step
walks that begin at a fixed vertex of a graph by defining the
probability of $\omega$ to be proportional to
$w_{\lambda}(\omega)$. If $0\leq \lambda<1$ the effect of the weight
is to discourage walks from containing loops, and for this parameter
range $\lambda$-LWW interpolates between the uniform measure on
$n$-step self-avoiding walks ($0$-LWW) and the uniform measure on all
$n$-step walks ($1$-LWW). If $\lambda>1$ the weight encourages the
existence of loops: walks are rewarded for returning to vertices that
have been visited in the past. Note that $\lambda$-LWW for
$\lambda\neq 1$ is not a Markovian model of walks.

In addition to being an interesting model of self-interacting random
walks that encompasses the well-known models of self-avoiding and
simple random walk, $\lambda$-LWW also has connections with spin
models in statistical mechanics. The description of these connections
will be deferred until after the results of the article are described,
see \Cref{sec:LWW-Intro-SM}.

This article consists of a lace expansion analysis of $\lambda$-LWW. The
lace expansion, originally introduced by Brydges and
Spencer~\cite{BrydgesSpencer1985}, is a powerful tool for proving
mean-field behaviour in high dimensions~\cite{Slade2006}. With
few exceptions, see the discussion at the end of
\Cref{sec:LWW-Proof-Outline}, walk models that have been successfully
studied with the lace expansion have been \emph{purely repulsive}. A
walk model being purely repulsive means that the weight $w$ on walks
that defines the model satisfies the inequality
\begin{equation}
  \label{eq:LWW-Repulsive-Intro}
  w(\omega\circ\eta) \leq w(\omega) w(\eta),
\end{equation}
where $\omega\circ\eta$ is the concatenation of two walks $\omega$ and
$\eta$. For example, self-avoiding walk is purely repulsive. In
general $\lambda$-LWW is \emph{not} purely repulsive if
$\lambda\neq 0,1$. See~\Cref{fig:LWW-Not-Repulsive}.

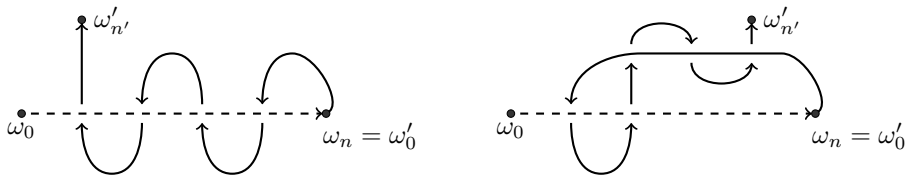
\begin{figure}[h]
  \centering
  \beginpgfgraphicnamed{fig1a}
  \begin{tikzpicture}[scale=.80]
    \node (s0) at (1,0) {};
    \node (s2) at (2,0) {};
    \node (s3) at (3,0) {};
    \node (s4) at (4,0) {};
    \node (s5) at (5,0) {};
    \node (s6) at (6,0) {};
    \draw[black,thick,dashed,->] (s0) to (6,0);
    \draw[black, thick,->] (6,0) to[out=0, in=0] (5.5,1) to[out=180,in=90]
    (s5);
    \draw[black,thick,->] (s5) to[out=270, in= 0] (4.5,-1) to[out=180,
    in=270] (s4);
    \draw[black, thick,->] (s4) to[out=90, in=0] (3.5,1) to[out=180,in=90]
    (s3);
    \draw[black,thick,->] (s3) to[out=270, in= 0] (2.5,-1) to[out=180,
    in=270] (s2);
    \draw[black,thick,->] (s2) to (2,1.5);

    \node[circle,draw,fill=black!80,inner sep=1pt] at (s0) {};
    \node[circle,draw,fill=black!80,inner sep=1pt] at (6.06,0) {};
    \node[circle,draw,fill=black!80,inner sep=1pt] at (2.0,1.56) {};
    \node at (s0) [below] {$\omega_{0}$};
    \node at (s6) [below right] {$\!\!\omega_{n}=\omega^{\prime}_{0}$};
    \node at (2.05,1.55) [right] {$\omega^{\prime}_{n^{\prime}}$};
  \end{tikzpicture}
  \endpgfgraphicnamed
  \qquad
  \beginpgfgraphicnamed{fig1b}
  \begin{tikzpicture}[scale=.80]
    \node (s0) at (1,0) {};
    \node (s2) at (2,0) {};
    \node (s3) at (3,0) {};
    \node (h3) at (3,1) {};
    \node (h4) at (4,1) {};
    \node (h5) at (5,1) {};
    \node (s6) at (6,0) {};
    \draw[black,thick,dashed,->] (s0) to (6,0);
    \draw[black,thick,->] (6,0) to[out=0, in=0] (5.5,1) to[out=180,in=0]
    (h3) to[out=180,in=90] (s2);
    \draw[black,thick,->] (s2) to[out=270, in= 180] (2.5,-1) to[out=0,
    in=270] (s3);
    \draw[black,thick,->] (s3) to[out=90, in=270] (h3);
    \draw[black,thick,->] (h3) to[out=90, in=180] (3.5,1.5) to[out=0,
    in=90] (h4);
    \draw[black,thick,->] (h4) to[out=270, in=180] (4.5,.5) to[out=0,
    in=270] (h5);
    \draw[black,thick,->] (h5) to (5,1.5);

    \node[circle,draw,fill=black!80,inner sep=1pt] at (s0) {};
    \node[circle,draw,fill=black!80,inner sep=1pt] at (6.06,0) {};
    \node[circle,draw,fill=black!80,inner sep=1pt] at (5.0,1.56) {};
    \node at (s0) [below] {$\omega_{0}$};
    \node at (s6) [below right] {$\!\!\omega_{n}=\omega^{\prime}_{0}$};
    \node at (5.05,1.55) [right] {$\omega^{\prime}_{n^{\prime}}$};
  \end{tikzpicture}
  \endpgfgraphicnamed
  \caption{For each diagram consider (i) the concatenation of the
    dashed walk $\omega$ and the solid walk $\omega^{\prime}$ and (ii)
    the two walks as being separate. On the left (i) results in four
    loops being erased, $n_{L}(\omega\circ\omega^{\prime})=4$, while
    (ii) results in no loops being erased,
    $n_{L}(\omega)=n_{L}(\omega^{\prime})=0$. On the right (i) results
    in one loop being erased, $n_{L}(\omega\circ\omega^{\prime})=1$,
    while (ii) results in three loops being erased, $n_{L}(\omega)=0$,
    $n_{L}(\omega^{\prime})=3$. It follows that the $\lambda$-LWW
    weight is not purely repulsive for $\lambda\neq 0,1$.}
  \label{fig:LWW-Not-Repulsive}
\end{figure}

The most significant step required to analyze $\lambda$-LWW with the
lace expansion is therefore a technique to overcome the lack of
repulsion. This is done by resumming $\lambda$-LWW to obtain a model
of self-interacting and self-avoiding walks. The particular form of
the $\lambda$-LWW weight leads to a very explicit description of the
self-interaction in terms of a generalization of the loop measure
of~\cite{LawlerLimic2010}, and this explicit description makes it
clear that the self-interaction is repulsive. This enables a lace
expansion to be performed. Further details about the proof follow
after the statement of \Cref{thm:LWW-Main-Intro}.

Some notation will be needed to state the results. Let
$\ab{\cdot}_{n}^{\lambda}$ denote expectation with respect to the
measure on $n$-step walks associated to $w_{\lambda}$. Let
$c_{n}^{\lambda}$ be the normalizing factor for the expectation, i.e.,
the sum over all $n$ step walks weighted by $\lambda^{n_{L}(\omega)}$
as in~\eqref{eq:LWW-Weight-Intro}. Let $\chi_{\lambda}(z) =
\sum_{n}c_{n}^{\lambda}z^{n}$, and let $z_{c}(\lambda)$ be the radius
of convergence of $\chi_{\lambda}(z)$. The main result of this article
can be summarized as saying that, in high dimensions, $\lambda$-LWW
has mean field behaviour at criticality.

\begin{theorem}
  \label{thm:LWW-Main-Intro}
  Fix $\lambda\geq 0$ and consider $\lambda$-LWW on
  $\Z^{d}$. There exists $d_{0}=d_{0}(\lambda)$ such that for $d\geq
  d_{0}$ there are constants $A$ and $D$ such
  that 
  \begin{enumerate}
  \item The susceptibility diverges linearly: $\chi_{\lambda}(z) \sim
    Az_{c} (z_{c}-z)^{-1}$ as $z\nearrow z_{c}$,
  \item $c_{n}^{\lambda} = A\ob{z_{c}(\lambda)}^{-n}(1+O(n^{-\delta}))$ for any
    $\delta<1$, and
  \item $\lambda$-LWW is diffusive:
    $\ab{\abs{\omega_{n}}^{2}}^{\lambda}_{n} = Dn(1+O(n^{-\delta}))$ for any
    $\delta<1$.
  \end{enumerate}
\end{theorem}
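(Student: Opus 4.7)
The plan is to follow the lace expansion roadmap but to first rewrite the model as a measure on self-avoiding walks with a repulsive self-interaction, so that the standard expansion machinery applies. The starting observation is that every walk $\omega$ decomposes canonically as its loop erasure $\LE(\omega)$ (a self-avoiding walk) together with an ordered collection of erased loops, one attached at each vertex of $\LE(\omega)$. The $\lambda$-LWW weight $\lambda^{n_L(\omega)}$ factorizes along this decomposition, so summing over all loop decorations of a fixed self-avoiding walk $\eta$ produces an effective weight $W_\lambda(\eta)$ expressed in terms of a loop measure in the spirit of Lawler--Limic, but weighted by $\lambda$. The crucial gain is that $W_\lambda$ is purely repulsive in the sense of \eqref{eq:LWW-Repulsive-Intro}: for a concatenation $\eta\circ\eta'$ of self-avoiding walks, the sets of loops that may be attached are a subset of the pairs allowed when $\eta$ and $\eta'$ are considered independently, because the endpoint vertex is shared and cross-loops are forbidden by self-avoidance. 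This converts $\lambda$-LWW into an interacting self-avoiding walk model to which lace expansion methods apply.

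With repulsivity in hand, I would run the standard lace expansion on the two-point function $G_z(x)=\sum_n z^n \ab{\indicatorthat{\omega_n=x}}^{\lambda}_n c_n^\lambda$ (viewing expectation as an unnormalized sum). Defining laces and irreducible pieces using the effective interaction $W_\lambda$, the expansion yields an identity of the form $G_z = \delta + (\Pi_z + zD\ast)\ast G_z$ for a lace expansion kernel $\Pi_z$, where $D$ is the simple random walk step distribution. Taking Fourier transforms gives the usual formula $\hat G_z(k) = (1 - \hat\Pi_z(k) - z\hat D(k))^{-1}$. The next step is to bound $\Pi_z$ by diagrams whose building blocks are the two-point function and loop sums; because the loop weight is the exponential of a convergent loop sum in high dimensions (controlled by the simple random walk bubble), these diagrams are dominated by convolutions of $G_z$ and hence by the bubble-type quantity $\sum_x G_z(x)^2$.

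The heart of the argument is then a bootstrap / running-coupling estimate in the style of Slade and of Brydges--Spencer: one assumes $G_z$ satisfies infrared bounds of random-walk type, uses these to bound $\Pi_z$ diagrammatically, and verifies self-consistency for $d$ sufficiently large (depending on $\lambda$, since the loop weight deteriorates as $\lambda$ grows). This determines $z_c(\lambda)$ as the value where $1 - \hat\Pi_{z_c}(0) - z_c=0$ and yields the infrared bound $\hat G_{z_c}(k)\asymp|k|^{-2}$. Part (1) of the theorem then follows from differentiating the lace expansion identity at $k=0$ and obtaining $\chi_\lambda(z)\sim Az_c/(z_c-z)$; parts (2) and (3) follow by Tauberian / generating-function arguments applied to $\chi_\lambda$ and, for diffusivity, to the Fourier inversion of $\hat G_z$ along the lines of the standard lace expansion derivation of a central limit theorem for the endpoint. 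The main obstacle I anticipate is the first step: performing the resummation cleanly enough that the effective interaction is both a recognizable repulsive loop weight and admits diagrammatic estimates uniform in the length of the underlying self-avoiding walk, since $\lambda>1$ assigns diverging weight to individual loops and the proof must rely on the competition between the repulsion forced by self-avoidance and the enhancement from $\lambda$.
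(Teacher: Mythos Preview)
Your high-level strategy matches the paper's: resum over loop erasures to obtain a repulsive self-avoiding walk model, run a lace expansion, bootstrap the diagrammatic bounds, and extract the asymptotics. The resummation you anticipate as the main obstacle actually has a clean answer: the effective weight on a self-avoiding walk $\eta$ is exactly $z^{|\eta|}\exp(\mu_{\lambda,z}(\range{\eta}))$ where $\mu_{\lambda,z}$ is a $\lambda$-weighted loop measure, and repulsivity is immediate from monotonicity of $\mu_{\lambda,z}(A;B)$ in its avoidance set $B$.

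There is, however, a genuine gap in your plan where you invoke the ``standard lace expansion''. The effective interaction $\exp(\mu_{\lambda,z}(\range{\eta}))$ is a \emph{many-body} interaction: a single closed walk $X$ contributes a factor $(1+\alpha_X)$ that depends on whether $X$ meets $\range{\eta}$ at all, not on any particular pair of times in $\eta$. The Brydges--Spencer lace expansion is set up for pair interactions $\prod_{s<t}(1+U_{st})$, and the notions of connectedness, laces, and compatible edges that drive the recursion are stated in terms of edges $st$. You cannot feed the loop-measure weight into that machinery directly. The paper resolves this by using inclusion--exclusion to expand each factor $(1+\alpha_X)^{\indicatorthat{\ell(X)\cap\range{\eta}\neq\emptyset}}$ into a product over nonempty subsets $J\subset[|\eta|]$, obtaining a \emph{hypergraph} representation; one then defines connectedness and laces via the spans $\{\min J,\max J\}$ and resums the hyperedge contributions over each lace edge. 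This produces a lace expansion identity with additional vertex factors $\alpha_0,\alpha$ (so the two-point function reads $\hat G = \alpha_0/(1-\alpha z|\Omega|\hat D - \hat\Pi)$, not your $1/(1-z\hat D-\hat\Pi)$) and lace-edge weights $I^\omega(s,t)$ that are not simply indicators of coincidence but encode the probability that a loop threads both $\omega_s$ and $\omega_t$. These new pieces must then be bounded diagrammatically alongside the usual two-point functions; the paper does this by introducing an ``interaction two-point function'' $I_{\lambda,z}$ and a ``bubble chain'' $\bubblechain_{\lambda,z}$ and showing they are small under the bootstrap hypothesis. None of this is visible in your sketch, and without it the expansion does not get off the ground.

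A secondary point: your concern that ``$\lambda>1$ assigns diverging weight to individual loops'' is misplaced. Each loop contributes a bounded factor $\lambda z^{|\ell|}$, and the loop measure converges for $z$ below $(\abs{\Omega}\sqrt{\bar\lambda})^{-1}$; the real work is pushing the bounds up to $z_c$ via the bootstrap, which goes through once the many-body expansion is correctly organized.
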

For $\lambda=0$ \Cref{thm:LWW-Main-Intro} has been proven with
$d_{0}=5$ by Hara and Slade~\cite{HaraSlade1992}. It is worth
emphasizing that \Cref{thm:LWW-Main-Intro} holds for $\lambda>1$ when
$\lambda$-LWW is attractive in the sense that the formation of loops
is encouraged.

\begin{remark}
  \label{rem:LWW-Dimension}
  No attempt has been made to track the value of $d_{0}$ that is
  required, and the proof presented in this article requires $d_{0}\gg
  9$. The true behaviour of $d_{0}(\lambda)$ is an interesting
  question for future study.
\end{remark}

Let us say a few more words about the proof of
\Cref{thm:LWW-Main-Intro}. As described earlier, the key step is a
resummation of $\lambda$-LWW into a self-interacting self-avoiding
walk. The self-interaction of the self-avoiding walk is a many-body
interaction, and this leads to a hypergraph-based lace expansion
instead of the graph-based lace expansion that is used for
self-avoiding walk. We stress that hypergraphs are merely
an organizational tool, and no prior knowledge of hypergraphs is
needed to understand the expansion. Once the lace expansion has been
performed the various self-interacting self-avoiding walk quantities
can be re-expressed in terms of $\lambda$-LWW. The diagrams that occur
in analyzing the expansion generalize the diagrams for self-avoiding
walk, and when $\lambda=0$ they reduce to the diagrams for
self-avoiding walk. With some effort it is possible to analyze the
diagrams for $\lambda>0$ with existing methods. Once the analysis of
the diagrams is completed it is possible to apply established
techniques to analyze $\lambda$-LWW, namely the trigonometric approach
to the convergence of the lace expansion~\cite{Slade2006} and complex
analytic methods for studying asymptotics.

In fact \Cref{thm:LWW-Main-Intro} holds in greater generality. Let
$\lambda_{\ell}\geq 0$ be the weight of the loop $\ell$. Replace the
weight $\lambda$ per loop in~\Cref{eq:LWW-Weight-Intro} with the
product of $\lambda_{\ell}$ over the set of loops $\ell$ that are
erased when performing loop erasure on $\omega$. Assume the set of
weights $\{\lambda_{\ell}\}$ satisfy a mild symmetry hypothesis,
see~\Cref{sec:LWW-Definition}, and are uniformly bounded above. Then the
results of \Cref{thm:LWW-Main-Intro} continue to hold.

The remainder of the introduction is as
follows. \Cref{sec:LWW-Intro-SM} describes an important connection
between $\lambda$-LWW and the loop $O(N)$ model of statistical
physics. \Cref{sec:LWW-Basics} gives a formal definition of
$\lambda$-LWW, relates $\lambda$-LWW to a self-interacting and
self-avoiding walk, and outlines how this enables a lace expansion
analysis. Lastly, \Cref{sec:LWW-Conventions} establishes a few
conventions used in the remainder of the article.

\subsection{Motivation from Statistical Mechanics}
\label{sec:LWW-Intro-SM}

For $N\in \N$ the \emph{$O(N)$ model} on a graph finite $G = (V,E)$ is
a generalization of the Ising model. To each vertex $x\in V$ is
associated a \emph{spin} $\vec s_{x}$ taking values in the unit sphere
in $\R^{N}$. The probability of a spin configuration is defined by
\begin{equation}
  \label{eq:LWW-Intro-SM-1}
  \P\ob{ \{\vec s_{x}\}_{x\in V}} \propto \exp ( \beta \sum_{x\sim y}
  \vec s_{x}\cdot \vec s_{y}),
\end{equation}
where $\beta$ is a real parameter and the summation is over all edges
$\{x,y\}\in E$.  In~\cite{DomanyMukamelNienhuisSchwimmer1981} a
simplification of the $O(N)$ model known as the \emph{loop $O(N)$
  model} was introduced. The loop $O(N)$ model is defined in terms of
subgraph configurations on $G$. In the special case of a graph with vertex
degree bounded by $3$, the loop $O(N)$ model configurations are subgraphs
that are disjoint unions of cycles of length at least $3$, and the
probability of a subgraph $H$ is given by
\begin{equation}
  \label{eq:LWW-Intro-SM-2}
  \P(H) \propto z^{\abs{E(H)}} N^{\# H},
\end{equation}
where $\# H$ denotes the number of connected components of $H$. Note
that the probability in \Cref{eq:LWW-Intro-SM-2} may be negative if
$N<0$: \Cref{eq:LWW-Intro-SM-2} defines a signed measure in
general.

The definition of the loop $O(N)$ model on an arbitrary graph $G$
involves noncyclic subgraphs, see for
example~\cite{ChayesPryadkoShtengel2000}. The noncyclic subgraphs are
predicted by non-rigorous renormalization group arguments to be
irrelevant~\cite{Nienhuis2010}, at least for $\abs{N}\leq 2$ on
planar graphs.  Call the model whose configurations are disjoint
unions of cyclic subgraphs the $O(N)$ cycle gas. For $N\in\N$ this
model has previously appeared in the physics literature as a model for
melting transitions~\cite{HelfrichRys1982}.

As described in~\Cref{sec:LWW-Cycle-Correlation},
$\lambda$-LWW is a walk representation of the $O(N)$ cycle gas.  The
two-point function of $\lambda$-LWW corresponds to a two-point
correlation in the $O(N)$ cycle gas for $N=-2\lambda$. In other words,
$\lambda$-LWW yields a \emph{probabilistic} interpretation of the
$O(N)$ cycle gas for $N<0$. This is an example of a ``negative
activity isomorphism theorem'': an equivalence between a statistical
mechanics model at negative activity ($N<0$) and a probability
model. An important previous example of such a theorem is the
Brydges--Imbrie isomorphism between branched polymers in $\R^{d+2}$
and the hard-core gas in $\R^{d}$~\cite{BrydgesImbrie2003}. In the
present work the isomorphism allows results about $\lambda$-LWW to be
transferred to the $O(N)$ cycle gas for $N<0$. For example, the
isomorphism theorem combined with \Cref{thm:LWW-Main-Intro}
immediately implies the following corollary.

\begin{corollary}
  \label{cor:LWW-CYCLE}
  For $d$ sufficiently large the susceptibility of the $O(N)$ cycle
  gas on $\Z^{d}$ for $N<0$ diverges linearly at the critical point.
\end{corollary}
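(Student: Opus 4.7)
The plan is to obtain Corollary~\ref{cor:LWW-CYCLE} by translating the linear divergence of $\chi_\lambda(z)$ proved in Theorem~\ref{thm:LWW-Main-Intro} across the negative-activity isomorphism between $\lambda$-LWW and the $O(N)$ cycle gas that will be developed in Section~\ref{sec:LWW-Cycle-Correlation}. Given $N<0$, the natural choice is to set $\lambda = -N/2$, so that $\lambda>0$, and then Theorem~\ref{thm:LWW-Main-Intro}(1) applies for every $d\geq d_{0}(\lambda)$, giving $\chi_\lambda(z)\sim A z_{c}(z_{c}-z)^{-1}$ as $z\nearrow z_{c}(\lambda)$.

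The content of the translation is that the $\lambda$-LWW two-point function $G_\lambda(x,z)=\sum_{\omega:\,0\to x}w_\lambda(\omega)z^{\abs{\omega}}$ equals, up to an explicit constant independent of $x$, a two-point correlation of the $O(N)$ cycle gas at $N=-2\lambda$. Summing this identity over $x\in\Z^{d}$ expresses the cycle-gas susceptibility as a constant multiple of $\chi_\lambda(z)$, so the divergence asserted by the corollary follows directly from Theorem~\ref{thm:LWW-Main-Intro}(1), with the same critical point $z_{c}(\lambda)$ and the same leading behaviour $(z_{c}-z)^{-1}$.

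The only step that requires any care is the passage to infinite volume on the cycle-gas side, since~\eqref{eq:LWW-Intro-SM-2} produces a signed measure when $N<0$ and the cycle-gas susceptibility is not a priori defined on $\Z^{d}$. The cleanest way to handle this is to take the finite-volume isomorphism as the definition of the two-point correlation on $\Z^{d}$: one verifies the identification on a finite box $\Lambda_{L}$, then passes to $L\to\infty$ using the walk-side generating function, which has nonnegative coefficients and radius of convergence $z_{c}(\lambda)$, so the limit exists termwise in $z$ below criticality. Once the cycle-gas two-point correlation on $\Z^{d}$ is defined in this way, summing over endpoints and invoking Theorem~\ref{thm:LWW-Main-Intro}(1) concludes the proof; I anticipate no analytic obstacle beyond the bookkeeping of the proportionality constant, since all the heavy lifting has already been done in establishing the isomorphism and the main theorem.
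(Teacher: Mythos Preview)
Your proposal is correct and follows exactly the approach the paper indicates: the paper states just before the corollary that ``the isomorphism theorem combined with \Cref{thm:LWW-Main-Intro} immediately implies'' it, and gives no further proof. Your identification $\lambda=-N/2$ matches the correspondence $N=-2\lambda$ spelled out in \Cref{sec:LWW-Cycle-Correlation}, and your extra care about the infinite-volume limit (defining the cycle-gas correlation via the walk side) is a reasonable elaboration of a point the paper leaves implicit.
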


This section may be summarized as saying that $\lambda$-LWW can be
viewed as a random walk representation of an approximation of the
$O(N)$ model. Thus $\lambda$-LWW fits into a long history of random
walk representations of spin
models~\cite{Aizenman1982,BrydgesFrohlichSpencer1982,
  FernandezFrohlichSokal1992} inspired by the pioneering work of
Symanzik~\cite{Symanzik1969}.

\subsection{Introduction to the Loop-Weighted Walk Model}
\label{sec:LWW-Basics}

The rest of the paper will be concerned with $\Z^{d}$, the simple
cubic lattice in $d$ dimensions. Edges $\{x,y\}$ will often be
abbreviated $xy$. Two vertices $x$ and $y$ will be called
\emph{adjacent}, written $x\sim y$, if $xy$ is an edge in
$\Z^{d}$. Let $\Omega = \{ y\in \Z^{d} \mid y\sim 0\}$, so
$\abs{\Omega} = 2d$ is the number of vertices adjacent to the origin
$0$.

\subsubsection{Model Definition}
\label{sec:LWW-Definition}

The next paragraphs establish some conventions about walks. An
\emph{$n$-step walk} $\omega = (\omega_{0}, \omega_{1}, \dots,
\omega_{n})$ is a sequence of $n+1$ adjacent vertices in
$\Z^{d}$. Given a walk $\omega$, $\abs{\omega}$ will denote the number
of steps in $\omega$. A walk is a \emph{loop} if
$\omega_{\abs{\omega}} = \omega_{0}$, \emph{self-avoiding} if
$\omega_{i}=\omega_{j}$ implies $i=j$, and a \emph{self-avoiding
  polygon} if $\omega_{i}=\omega_{j}$ and $i\neq j$ implies $\{i,j\} =
\{0,\abs{\omega}\}$.

A walk $\omega$ \emph{begins} at $\omega_{0}$ and \emph{ends} at
$\omega_{\abs{\omega}}$. Let $\omega\colon x\to y$ denote the set of
walks beginning at $x$ and ending at $y$. Let $\SAW(x,y) = \{
\omega\colon x\to y \mid \textrm{$\omega$ self-avoiding}\}$; if $x=y$
this is taken to be the set of self-avoiding polygons beginning at
$x$. Let $\SAP = \cup_{x}\SAW(x,x)$ and $\SAW =
\cup_{x}\cup_{y}\SAW(x,y)$. If $\omega^{(i)} = (\omega^{(i)}_{0},
\dots, \omega^{(i)}_{k_{i}})$ for $i=1,2$ and $\omega^{(1)}_{k_{1}} =
\omega^{(2)}_{0}$ the \emph{concatenation} $\omega^{(1)}\circ
\omega^{(2)}$ of $\omega^{(1)}$ with $\omega^{(2)}$ is defined by
\begin{equation}
  \label{eq:LWW-Concatenation}
  \omega^{(1)}\circ\omega^{(2)} = (\omega^{(1)}_{0}, \dots,
  \omega^{(1)}_{k_{1}}, \omega^{(2)}_{1}, \dots,
  \omega^{(2)}_{k_{2}}).
\end{equation}

To define $\lambda$-LWW precisely requires an explicit description
of the loop erasure of a walk $\omega$. Define
\begin{align}
  \tau_{\omega} &= \min \set{ i \mid \exists\, j<i \textrm{ such that }
    \omega_{i}=\omega_{j}}, \\
  \tau_{\omega}^{\star} &= \min \set{j \mid \omega_{j} =
    \omega_{\tau_{\omega}}}.
\end{align}
If $\omega$ is a self-avoiding walk, define $\tau_{\omega} =
\tau_{\omega}^{\star}=\infty$. The time $\tau_{\omega}$ is the first
time a walk visits a vertex twice.

\begin{definition}
  \label{def:LWW-Erasure}
  Let $\omega$ be a walk of length $n$. The \emph{single loop
    erasure $\LE^{1}(\omega)$} of $\omega$ is given by
  \begin{equation}
    \label{eq:LWW-Single-Erase}
    \LE^{1}(\omega) = (\omega_{0}, \dots,
    \omega_{\tau_{\omega}^{\star}\wedge n}, \omega_{\tau_{\omega}+1},
    \dots, \omega_{n}),
  \end{equation}
  where $a\wedge b$ denotes the minimum of $a$ and $b$.  The walk
  $(\omega_{\tau_{\omega}^{\star}}, \omega_{\tau_{\omega}^{\star}+1},
  \dots, \omega_{\tau_{\omega}})$ is the \emph{loop removed by loop
    erasure}. The \emph{loop erasure} $\LE(\omega)$ of $\omega$ is the
  result of iteratively applying $\LE^{1}$ until
  $\tau_{\omega}=\infty$.
\end{definition}
By construction, each loop removed from a walk by loop erasure is a
self-avoiding polygon.
\begin{definition}
  \label{def:LWW-Loop-Vector}
  The \emph{loop vector $n_{L}(\omega)$} of
  $\omega$ is the vector with coordinates
  \begin{equation}
    \label{eq:LWW-Loop-Vector}
    n_{L}^{\eta}(\omega) = \textrm{\# of times $\eta$
      is removed by loop erasure applied to $\omega$}, \qquad \eta\in\SAP.
  \end{equation}
\end{definition}
In what follows $\lambda$ will denote a vector of activities
$\lambda_{\eta}\geq 0$ for $\eta\in\SAP$. Inequalities with respect to
$\lambda$ are to be interpreted pointwise in $\eta\in\SAP$. Define
\begin{equation}
  \label{eq:LWW-Variable-Loop-Weight}
  \lambda^{n_{L}(\omega)} = \prod_{\eta}\lambda_{\eta}^{n_{L}^{\eta}(\omega)}.
\end{equation}

\begin{definition}
  \label{def:LWW-LRW}
  Let $\lambda\geq 0$, $z\geq 0$. The weight $w_{\lambda,z}$ of
  $\lambda$-LWW at activity $z$ is given by
  \begin{equation}
    \label{eq:LWW-LRW}
    w_{\lambda,z}(\omega) = z^{\abs{\omega}}\lambda^{n_{L}(\omega)}.
  \end{equation}
\end{definition}

\begin{definition}
  \label{def:LWW-Susceptibility}
  The \emph{susceptibility $\chi_{\lambda}(z)$} of $\lambda$-LWW is
  \begin{equation}
    \label{eq:LWW-Susceptibility}
    \chi_{\lambda}(z) = \sum_{x\in \Z^{d}}\sum_{\omega\colon 0 \to x} w_{\lambda,z}(\omega).
  \end{equation}
  The \emph{critical point $z_{c}(\lambda)$} of $\lambda$-LWW is
  defined to be the radius of convergence of $\chi_{\lambda}(z)$.
\end{definition}

If $0\leq\lambda\leq 1$ then $\chi_{\lambda}(z) \leq \chi_{1}(z)$, and
hence $\chi_{\lambda}(z)$ converges for $z<\abs{\Omega}^{-1}$. The
next proposition gives a mild condition under which the critical
point is nontrivial.
\begin{proposition}
  \label{prop:LWW-Trivial-G-Bound}
  Let $\bar\lambda = \sup_{\eta}\lambda_{\eta}>1$. If
  $z<(\abs{\Omega}\sqrt{\bar\lambda})^{-1}$ then $\chi_{\lambda}(z)$
  is finite.
\end{proposition}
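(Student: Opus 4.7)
The plan is to obtain a simple geometric bound on how many loops can possibly be erased from an $n$-step walk, combine it with the assumption $\bar\lambda<\infty$ to reduce the loop weight to an exponential in $|\omega|$, and then sum over all walks using the trivial count $|\Omega|^n$ for the number of $n$-step walks from a fixed vertex.

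First I would observe that every loop produced during loop erasure (the walk $(\omega_{\tau_\omega^{\star}},\dots,\omega_{\tau_\omega})$ in \Cref{def:LWW-Erasure}) has length at least $2$. Indeed, $\tau_\omega^\star<\tau_\omega$ by definition, and because $\Z^d$ has no self-loops one cannot have $\tau_\omega = \tau_\omega^\star +1$. Consequently each application of $\LE^1$ strictly reduces the length of the walk by at least $2$, so the total number of loops erased from an $n$-step walk is bounded by $n/2$:
\begin{equation*}
  \sum_{\eta \in \SAP} n_L^{\eta}(\omega) \leq \tfrac{1}{2}\abs{\omega}.
\end{equation*}

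Next, using $\bar\lambda \geq 1$ (which is given) together with $\lambda_\eta \leq \bar\lambda$ for all $\eta$, I would bound the loop weight pointwise:
\begin{equation*}
  \lambda^{n_L(\omega)} = \prod_{\eta}\lambda_\eta^{n_L^\eta(\omega)}
  \leq \bar\lambda^{\sum_{\eta} n_L^\eta(\omega)}
  \leq \bar\lambda^{\abs{\omega}/2}.
\end{equation*}
Inserting this into \Cref{eq:LWW-Susceptibility} and forgetting the endpoint (i.e.\ summing over \emph{all} walks from $0$ rather than just those ending at a fixed $x$) gives
\begin{equation*}
  \chi_\lambda(z) \leq \sum_{n\geq 0} \#\{\text{$n$-step walks from $0$}\}\,\bigl(z\sqrt{\bar\lambda}\bigr)^{n}
  = \sum_{n\geq 0} \bigl(\abs{\Omega}\,z\sqrt{\bar\lambda}\bigr)^{n},
\end{equation*}
which is a convergent geometric series whenever $z < (\abs{\Omega}\sqrt{\bar\lambda})^{-1}$.

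There is no real obstacle here beyond verifying the loop-length lower bound, which is immediate on $\Z^d$ but is the only place the geometry of the lattice enters; everything else is a trivial walk count. The bound is of course not sharp for $\lambda$ close to $1$, but it suffices to show $z_c(\lambda) > 0$ for every admissible $\lambda$, which is all that is needed before the real work of the lace expansion begins.
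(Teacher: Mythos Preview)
Your proof is correct and essentially identical to the paper's: both bound the number of erased loops by $\lfloor n/2\rfloor$, replace each $\lambda_\eta$ by $\bar\lambda$, and reduce to a geometric series in $z\sqrt{\bar\lambda}\,|\Omega|$. The paper phrases the last step as comparison with $\chi_1(\bar z)$ for $\bar z = z\sqrt{\bar\lambda}$, which is exactly your walk count.
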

\begin{proof}
  An $n$-step walk contains at most $\lfloor n/2\rfloor$
  loops, and weighting each loop by $\bar\lambda$ yields an
  upper bound for $\chi_{\lambda}(z)$. Cancelling the factors of
  $\sqrt{\bar\lambda}$ gives the claim, as the resulting sum is
  $\chi_{1}(\bar z)$ for some $\bar z < \abs{\Omega}^{-1}$.
\end{proof}

If $\cc R$ is an isometry of $\Z^{d}$, and $A\subset \Z^{d}$, let $\cc
R A = \{\cc Ra \mid a\in A\}$.
\begin{assumption}
  \label{ass:LWW-Symmetry}
  Assume that $\lambda_{\eta} = \lambda_{\cc R \eta}$ for any isometry
  $\cc R$ and any $\eta\in\SAP$. Further assume that $\lambda_{\eta} =
  \lambda_{\tilde \eta}$ if $\eta$ and $\tilde \eta$ are self-avoiding
  polygons that differ only in terms of initial vertex and
  orientation.
\end{assumption}
\begin{assumption}
  \label{ass:LWW-Bounded}
  Assume $\sup_{\eta\in\SAP}\lambda_{\eta}<\infty$.
\end{assumption}

\begin{theorem}
  \label{thm:LWW-Main}
  Fix $\lambda\geq 0$. If \Cref{ass:LWW-Symmetry} and
  \Cref{ass:LWW-Bounded} hold, then there exists
  $d_{0}=d_{0}(\lambda)$ such that for $d\geq d_{0}$ there are
  constants $A$ and $D$ such that the conclusions of
  \Cref{thm:LWW-Main-Intro} hold.
\end{theorem}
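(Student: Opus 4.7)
The plan is to follow verbatim the programme sketched in the paragraphs preceding the statement—resum $\lambda$-LWW as a repulsive self-interacting self-avoiding walk, perform a hypergraph-based lace expansion, re-express the output in $\lambda$-LWW quantities, bound the resulting diagrams, and close the argument by Fourier and complex-analytic means—and to verify that the two hypotheses \Cref{ass:LWW-Symmetry} and \Cref{ass:LWW-Bounded} suffice at each step. Because \Cref{thm:LWW-Main-Intro} is the special case $\lambda_\eta \equiv \lambda$ of the present statement, my task is really to locate the places where variable per-polygon weights might enter and to check that they do not obstruct any of the steps.

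First I would carry out the resummation. Partitioning walks by their loop erasure and summing over the loops that are discarded turns the factor $\lambda^{n_L(\omega)}$ into an explicit many-body interaction on the SAW $\LE(\omega)$, parameterised by the activities $\{\lambda_\eta\}$ via a weighted version of the loop measure. Since each $\lambda_\eta\geq 0$, this interaction is repulsive in the sense of~\eqref{eq:LWW-Repulsive-Intro}, so the resummed model admits a lace expansion in the standard way; the hypergraph machinery is what accommodates the multi-body nature of the interaction, and the variable weights merely rescale each hyperedge by $\lambda_\eta$ instead of by a single constant, so no structural change to the expansion is required.

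Next I would translate the expansion back into $\lambda$-LWW quantities and analyse the diagrams. Under \Cref{ass:LWW-Bounded}, setting $\bar\lambda=\sup_{\eta}\lambda_\eta$ and replacing every $\lambda_\eta$ by $\bar\lambda$ in the pointwise diagrammatic bounds reduces the absolute estimates to those already established in the constant-$\lambda$ case at parameter $\bar\lambda$, and \Cref{prop:LWW-Trivial-G-Bound} then provides a nontrivial radius of convergence. \Cref{ass:LWW-Symmetry} enters in two places: the clause that $\lambda_\eta$ depends only on the unrooted, unoriented polygon makes the loop activity a well-defined function of a cycle, so the resummation above is unambiguous; and invariance of $\lambda$ under lattice isometries gives the translation and reflection symmetries of the two-point function that are needed for the trigonometric convergence proof of the expansion and for the complex-analytic asymptotics that extract conclusions (1)–(3).

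The main obstacle I anticipate is the diagrammatic analysis. Although the majorization $\lambda_\eta\leq\bar\lambda$ handles the pointwise absolute bounds, portions of the lace expansion are signed alternating sums, so one cannot always discard the sign structure by inserting $\bar\lambda$; instead one has to separate contributions into those where the activities appear only through a nonnegative product (bound by $\bar\lambda$) and those where a signed combination must be controlled with more care. Verifying that this bookkeeping still closes the convergence criterion uniformly in $d$ large is the core piece of work; once it is in place the remainder of the argument proceeds by exactly the generating-function asymptotics used for \Cref{thm:LWW-Main-Intro}, yielding all three conclusions simultaneously.
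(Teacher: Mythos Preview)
Your plan matches the paper's approach: resum to a repulsive self-interacting SAW via the loop-measure representation (\Cref{thm:LWW-LM-Rep}), run the hypergraph lace expansion, and close with the trigonometric bootstrap and complex-analytic asymptotics. You have also located correctly where the two assumptions enter. Two corrections, though. First, the paper does not prove the constant-$\lambda$ case and then reduce the variable case to it; it works with general $\lambda$ satisfying the two assumptions from the outset, and $\bar\lambda$ appears only in the crude a priori bound on $z_c$ (\Cref{prop:LWW-Trivial-G-Bound}) and in one step of the bootstrap (\Cref{lem:LWW-SL5.16}). Second, your anticipated sign obstacle does not arise: each $\pi^{(N)}_m(x)$ is itself nonnegative, because the factors $I^\omega_{\lambda,z}(s,t)\in[0,1]$ and the products over compatible hyperedges resum to expressions of the form $(1+\alpha_X)^{\cdots}$ with $\alpha_X\geq 0$ (see \Cref{sec:LWW-X-Gas-pi-N} and \Cref{rem:LWW-X-Gas-Repulsion}). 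Hence $|\hat\Pi|\leq\sum_N\hat\pi^{(N)}$ needs no delicate sign bookkeeping, and wherever a majorisation by $\bar\lambda$ is required it passes through directly.

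The technical novelty you have not flagged is the control of $z$-derivatives of $G_{\lambda,z}$ and $\Pi_{\lambda,z}$, needed both for the continuity step in the bootstrap and for the Tauberian endgame. For SAW one splits a marked walk at the mark into two independent SAW pieces; for $\lambda>0$ the second piece retains memory of the first through loop erasure, so this fails. The paper resolves this with explicit ``bubble chain'' decomposition formulas (\Cref{prop:LWW-Bubble-Chain,prop:LWW-Bubble-Chain-Split}) that express the expected number of visits of a $\lambda$-LWW to a given vertex in terms of iterated two-point functions; these are what feed \Cref{prop:LWW-H-DB,prop:LWW-Derivative-Small} and ultimately the growth-rate and diffusivity proofs in \Cref{sec:LWW-Growth}.
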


\Cref{thm:LWW-Main-Intro} is the special case of \Cref{thm:LWW-Main}
when the loop activities $\lambda$ are constant. The constants $A$ and
$D$ have explicit expressions, see
\Cref{sec:LWW-Susceptibility-MF}. For the remainder of the article it
will be assumed that \Cref{ass:LWW-Symmetry} and
\Cref{ass:LWW-Bounded} hold.

\subsubsection{Aspects of Proof}
\label{sec:LWW-Proof-Outline}

This section describes the basic facts about $\lambda$-LWW that allow
for a lace expansion analysis, and gives an outline of the proof of
\Cref{thm:LWW-Main}.

\begin{definition}
  \label{def:LWW-LELWW}
  The \emph{loop-erased $\lambda$-LWW} weight $\bar w_{\lambda,z}$ on
  self-avoiding walks is
  \begin{equation}
    \label{eq:LWW-LELWW}
    \bar w_{\lambda,z}(\eta) = \indicatorthat{\eta\in\SAW}
    \sum_{\omega \colon \LE(\omega)=\eta} w_{\lambda,z}(\omega).
  \end{equation}
\end{definition}
Note that the definition of $\bar w_{\lambda,z}$ assigns non-zero
weight only to self-avoiding walks. The definition of $\bar
w_{\lambda,z}$ implies that for any $x\in \Z^{d}$
\begin{equation}
  \label{prop:LWW-2PT-Equivalence}
  \sum_{\omega\colon 0\to x} \bar w_{\lambda,z}(\omega) =
    \sum_{\omega\colon 0 \to x} w_{\lambda,z}(\omega),
\end{equation}
as the left-hand side is just a reorganization of the right-hand
side. This identity will be important in what follows. 

\begin{definition}
  \label{def:LWW-range}
  The \emph{range}, $\range{\omega}$, of a walk $\omega$ is the set of
  vertices visited by $\omega$.
\end{definition}

The \emph{$\lambda$-LWW loop measure at activity $z$} of a closed walk
$\omega$ is given by $w_{\lambda,z}(\omega) / \abs{\omega}$. The next
definition introduces a convenient shorthand for the loop measure of
certain subsets of walks; note that $\mu_{\lambda,z}$ is not a
measure.

\begin{definition}
  \label{def:LWW-LM}
  Let $A,B\subset \Z^{d}$. The \emph{$\lambda$-LWW loop measure}
  $\mu_{\lambda,z}(A;B)$ is
  \begin{equation}
    \label{eq:LWW-LM}
    \mu_{\lambda,z}(A;B) = \sum_{x} \mathop{\sum_{\omega\colon x\to
        x}}_{\abs{\omega} \geq 1}
    \indicatorthat{\range{\omega} \cap A\neq \emptyset} 
    \indicatorthat{\range{\omega} \cap B = \emptyset} 
    \frac{ w_{\lambda,z}(\omega)}{\abs{\omega}}.
  \end{equation}
  Define $\mu_{\lambda,z}(A) = \mu_{\lambda,z}(A;\emptyset)$. For
  singleton sets $\{x\}, \{y\}$, let $\mu_{\lambda,z}(x;y) =
  \mu_{\lambda,z}(\{x\};\{y\})$.
\end{definition}

For the special case of $\lambda=1$ the next theorem
is~\cite[Proposition~9.5.1]{LawlerLimic2010}.
\begin{theorem}
  \label{thm:LWW-LM-Rep}
  The loop erased $\lambda$-LWW weight on self-avoiding walks can be
  written in terms of the $\lambda$-LWW loop measure:
  \begin{equation}
    \label{eq:LWW-LM-Rep}
    \bar w_{\lambda,z}(\eta) = \sum_{\omega\colon\LE(\omega)=\eta}
    w_{\lambda,z}(\omega) = z^{\abs{\eta}} \exp (\mu_{\lambda,z}(\range{\eta})).
  \end{equation}
\end{theorem}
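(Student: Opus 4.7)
The plan is to parametrize walks $\omega$ with $\LE(\omega) = \eta$ by the loops attached at each vertex of $\eta$, and then identify the resulting generating function with an exponential of the loop measure via an irreducible-loop decomposition. Throughout, write $\eta = (\eta_0, \ldots, \eta_k)$ and $A_i = \{\eta_0, \ldots, \eta_{i-1}\}$ for $0 \leq i \leq k$, so $A_0 = \emptyset$.

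The first step is to establish a bijection between walks $\omega$ with $\LE(\omega) = \eta$ and tuples $(\xi_0, \ldots, \xi_k)$ in which $\xi_i$ is a loop (possibly trivial) at $\eta_i$ satisfying $\range{\xi_i} \cap A_i = \emptyset$, namely
\[
\omega = \xi_0 \circ (\eta_0, \eta_1) \circ \xi_1 \circ \cdots \circ (\eta_{k-1}, \eta_k) \circ \xi_k.
\]
The avoidance constraints guarantee that every loop erased during $\LE(\omega)$ lies entirely within a single $\xi_i$: iterating $\LE^1$ first exhausts the loops inside $\xi_0$, then traverses $(\eta_0,\eta_1)$, then exhausts the loops inside $\xi_1$, and so on. This yields $\abs{\omega} = \abs{\eta} + \sum_i \abs{\xi_i}$ and, coordinatewise, $n_L(\omega) = \sum_i n_L(\xi_i)$, whence
\[
\sum_{\omega \colon \LE(\omega) = \eta} w_{\lambda, z}(\omega) = z^{\abs{\eta}} \prod_{i=0}^{k} Z_i, \qquad Z_i := \sum_{\substack{\xi \colon \eta_i \to \eta_i \\ \range{\xi} \cap A_i = \emptyset}} w_{\lambda, z}(\xi).
\]

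The second step identifies $Z_i$ with $\exp(\mu_{\lambda, z}(\{\eta_i\}; A_i))$. Any loop $\xi$ at $\eta_i$ decomposes uniquely as a concatenation of $j = N_{\eta_i}(\xi)$ irreducible sub-loops, where irreducible means returning to $\eta_i$ only at the endpoints and $N_{\eta_i}(\xi) := \abs{\{0 \leq m < \abs{\xi} \colon \xi_m = \eta_i\}}$. The same additivity argument gives multiplicativity of $w_{\lambda,z}$ across this sub-loop decomposition, hence $Z_i = (1 - F_i)^{-1}$ where $F_i$ sums $w_{\lambda,z}$ over irreducible loops at $\eta_i$ avoiding $A_i$. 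On the other hand, \Cref{ass:LWW-Symmetry} makes $w_{\lambda, z}$ invariant under cyclic shifts of closed walks, so re-rooting every closed walk in the definition of $\mu_{\lambda, z}(\{\eta_i\}; A_i)$ at its first visit to $\eta_i$ gives
\[
\mu_{\lambda, z}(\{\eta_i\}; A_i) = \sum_{\substack{\xi \colon \eta_i \to \eta_i \\ \abs{\xi} \geq 1,\ \range{\xi} \cap A_i = \emptyset}} \frac{w_{\lambda, z}(\xi)}{N_{\eta_i}(\xi)} = \sum_{j \geq 1} \frac{F_i^j}{j} = -\log(1 - F_i) = \log Z_i.
\]
Finally, partitioning loops in $\mu_{\lambda, z}(\range{\eta})$ by the smallest index $i$ such that $\eta_i$ is visited gives $\mu_{\lambda, z}(\range{\eta}) = \sum_{i=0}^{k} \mu_{\lambda, z}(\{\eta_i\}; A_i)$, and exponentiating combined with the previous display proves the theorem.

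The main obstacle is the additivity $n_L(\omega) = \sum_i n_L(\xi_i)$ (and its analog for the irreducible decomposition): verifying that iterated chronological loop erasure respects the imposed block structure requires a careful induction on the number of $\LE^1$ applications, and is the place where the constraint $\range{\xi_i} \cap A_i = \emptyset$ is essential—without it an erased loop could straddle a spine step and destroy the factorization. Once additivity is in hand, the remaining algebra mirrors the classical derivation of the loop-soup representation for simple random walk given in Proposition~9.5.1 of~\cite{LawlerLimic2010}.
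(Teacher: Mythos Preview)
Your first and third steps are sound: the last-exit decomposition of $\omega$ along the spine $\eta$ into loops $\xi_i$ is correct, the coordinatewise additivity $n_L(\omega) = \sum_i n_L(\xi_i)$ holds because the constraints $\range{\xi_i}\cap A_i=\emptyset$ prevent any erased polygon from straddling a spine edge, and the telescoping identity $\mu_{\lambda,z}(\range{\eta}) = \sum_i \mu_{\lambda,z}(\{\eta_i\}; A_i)$ is immediate from the definition.

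The gap is in your second step. You assert that \Cref{ass:LWW-Symmetry} makes $w_{\lambda,z}$ invariant under cyclic shifts of closed walks, and use this to re-root at $\eta_i$ and obtain $\mu_{\lambda,z}(\{\eta_i\};A_i) = \sum_{\xi} w_{\lambda,z}(\xi)/N_{\eta_i}(\xi)$. This invariance is false. Take $\omega = (a,b,c,d,a,d,c,b,a)$ on the unit square in $\Z^2$ with $a=(0,0)$, $b=(1,0)$, $c=(1,1)$, $d=(0,1)$: chronological loop erasure removes the two $4$-polygons $(a,b,c,d,a)$ and $(a,d,c,b,a)$. Its cyclic shift $(b,c,d,a,d,c,b,a,b)$ instead has loop erasure removing the four $2$-polygons $(d,a,d)$, $(c,d,c)$, $(b,c,b)$, $(b,a,b)$. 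Even for constant activity this gives $w_{\lambda,z}(\omega) = z^8\lambda^2 \neq z^8\lambda^4 = w_{\lambda,z}(\sigma\omega)$ whenever $\lambda\neq 0,1$. Your re-rooting identity therefore fails term by term, and the identification $\mu = \sum_j F_i^j/j$ does not follow. This is exactly where $\lambda$-LWW departs from simple random walk: for $\lambda=1$ the weight $z^{|\omega|}$ is trivially shift-invariant and your argument reproduces the Lawler--Limic proof, but for $\lambda\neq 1$ the loop vector $n_L$ genuinely depends on the root. The paper avoids re-rooting entirely by invoking Viennot's bijection (\Cref{thm:LWW-Bijection}) between walks and heaps of oriented cycles---property~(2) there guarantees that the heap's cycle multiset coincides with the erased-loop multiset of the \emph{given} rooted walk, so the weight factorises over the heap with no symmetry hypothesis---and then obtains the exponential via the heap identity of \Cref{prop:LWW-SO}.
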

\begin{proof}
  Deferred to~\Cref{sec:LWW-Viennot}.
\end{proof}

A function $f$ on subsets of $\Z^{d}$ is said
to be \emph{weakly increasing} if $A\subset B$ implies $f(A) \leq
f(B)$, and \emph{weakly decreasing} if $f(A)\geq f(B)$.
\begin{proposition}
  \label{prop:LWW-LM-Properties}
  Assume $z\geq 0$, $\lambda\geq 0$.
  \begin{enumerate}
    \item Let $A,B\subset \Z^{d}$. Then for any isometry $\cc R$
      \begin{equation}
        \label{eq:LWW-LM-Symmetries}
        \mu_{\lambda,z}(\cc R A;\cc R B) = \mu_{\lambda,z}(A;B),
      \end{equation}
    \item $\mu_{\lambda,z}(A;B)$ is weakly increasing in $A$ and
      weakly decreasing in $B$.    
  \end{enumerate}
\end{proposition}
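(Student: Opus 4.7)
The plan is to read off both statements directly from the definition of $\mu_{\lambda,z}(A;B)$ in \Cref{def:LWW-LM}, exploiting that every factor in the summand is manifestly non-negative and that an isometry of $\Z^d$ acts on walks in a range-preserving way.

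For part (1), I would first extend $\cc R$ to an action on walks by setting $\cc R\omega = (\cc R\omega_{0}, \cc R\omega_{1}, \dots, \cc R\omega_{\abs{\omega}})$. This map is a length-preserving bijection from closed walks at $x$ onto closed walks at $\cc R x$ and satisfies $\range{\cc R\omega} = \cc R\range{\omega}$. The key point is that loop erasure is equivariant under $\cc R$: the times $\tau_{\omega}$ and $\tau_{\omega}^{\star}$ in \Cref{def:LWW-Erasure} depend only on the pattern of coincidences among the vertices of $\omega$, which $\cc R$ preserves, so the loops stripped off $\cc R\omega$ are exactly the $\cc R$-images of the loops stripped off $\omega$. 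Invoking \Cref{ass:LWW-Symmetry} term by term then yields $\lambda^{n_{L}(\cc R\omega)} = \lambda^{n_{L}(\omega)}$, and combined with the trivial identity $z^{\abs{\cc R\omega}} = z^{\abs{\omega}}$ this gives $w_{\lambda,z}(\cc R\omega) = w_{\lambda,z}(\omega)$. The set-theoretic identity $\cc R\range{\omega} \cap \cc R A = \cc R(\range{\omega}\cap A)$ then transfers the two indicator conditions in \Cref{eq:LWW-LM}, and reindexing the outer sum through $x \mapsto \cc R x$ completes the argument.

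For part (2), with $\omega$ fixed, $\indicatorthat{\range{\omega}\cap A\neq\emptyset}$ is a weakly increasing function of $A$ (enlarging $A$ cannot destroy an intersection) while $\indicatorthat{\range{\omega}\cap B=\emptyset}$ is a weakly decreasing function of $B$ (enlarging $B$ cannot preserve emptiness of intersection). Because $\lambda\geq 0$ and $z\geq 0$ imply $w_{\lambda,z}(\omega)\geq 0$, summing the pointwise inequalities over $x$ and $\omega$ preserves them and gives both monotonicity statements.

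Neither step presents a genuine obstacle; the only point requiring even momentary care is equivariance of loop erasure under $\cc R$, which is immediate from the observation that the recipe in \Cref{def:LWW-Erasure} sees only the coincidence pattern of $(\omega_{0},\dots,\omega_{n})$ and hence commutes with the injective map $\cc R$.
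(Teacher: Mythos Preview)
Your proof is correct and follows exactly the same approach as the paper: isometry invariance of $w_{\lambda,z}$ (via \Cref{ass:LWW-Symmetry}) for part (1), and monotonicity of the indicator constraints together with non-negativity of the summands for part (2). The paper's proof is simply a two-sentence compression of what you have written out in detail.
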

\begin{proof}
  The first item follows from the isometry invariance of
  $w_{\lambda,z}$, which follows from \Cref{ass:LWW-Symmetry}. The second
  follows as increasing $A$ (decreasing $B$) reduces (increases) the
  constraints on the set of walks that contribute to the defining sum,
  and $w_{\lambda,z}(\omega)\geq 0$.
\end{proof}

If $\eta = \eta_{1}\circ \eta_{2}$ is self-avoiding then
\Cref{thm:LWW-LM-Rep} and the definition of the loop measure imply
\begin{equation}
  \label{eq:LWW-LELWW-Repulsive}
  \bar w_{\lambda,z}(\eta) = z^{\abs{\eta_{1}}} z^{\abs{\eta_{2}}}
  \exp( \mu_{\lambda,z}(\range{\eta_{1}})) \exp
  (\mu_{\lambda,z}(\range{\eta_{2}} ; \range{\eta_{1}})).
\end{equation}
By the second statement of \Cref{prop:LWW-LM-Properties} dropping the
constraint in the second loop measure increases the weight, and hence
\emph{loop-erased $\lambda$-LWW} is purely repulsive.  This enables a
lace expansion analysis of $\lambda$-LWW as the two-point functions of
$\lambda$-LWW and loop-erased $\lambda$-LWW coincide by
\Cref{prop:LWW-2PT-Equivalence}. This is done as follows:
\begin{itemize}
\item \Cref{sec:LWW-Lace} derives a lace expansion for $\lambda$-LWW. This
  is done by manipulating the identity
  \begin{equation}
    \label{eq:LWW-Intro-X-Gas}
    \bar w_{\lambda,z} (\eta) = z^{\abs{\eta}}\indicatorthat{\eta\in\SAW}
    \prod_{X\in\cc X}(1+\alpha_{X})^{\indicatorthat{\ell(X) \cap \range{\eta}\neq \emptyset}},
  \end{equation}
  where 
  \begin{align}
    \label{eq:LWW-Specialization-1}
    \cc X &= \cup_{x\in\Z^{d}}\set{ \omega\colon x\to x,
      \abs{\omega}\geq 1}, \\
    \label{eq:LWW-Specialization-2}
    \ell(\omega) &= \range{\omega}, \\
    \label{eq:LWW-Specialization-3}
    \alpha_{\omega} &= \exp \ob{ \frac{ w_{\lambda,z}(\omega)}
      {\abs{\omega}}} - 1.
  \end{align}
  In \Cref{eq:LWW-Specialization-1} the condition $\abs{\omega}\geq 1$
  can be relaxed to $\abs{\omega}>1$ as all closed walks have length
  at least $2$. Note that $\alpha_{\omega}\geq 0$ for any closed walk
  $\omega$ as $\lambda\geq 0$, and that the product in
  \Cref{eq:LWW-Intro-X-Gas} converges for $z$ sufficiently small by
  \Cref{prop:LWW-Trivial-G-Bound}.
\item \Cref{sec:LWW-LM-Formulation} expresses the results
  of~\Cref{sec:LWW-Lace} in terms of $\mu_{\lambda,z}$, as opposed to
  the variables $\alpha_{\omega}$.
\item \Cref{sec:LWW-Convergence} and
  \Cref{sec:LWW-Diagrammatic-Bounds} prove the convergence of the lace
  expansion at the critical point. The strategy is based
  on~\cite{Slade2006}.
\item Lastly, \Cref{sec:LWW-Further} proves the main theorem after
  establishing some further estimates on the lace expansion
  coefficients. The analysis is based on~\cite{MadrasSlade2013}.
\end{itemize}

Before carrying out the arguments outlined above, let us briefly
comment on other relevant non-repulsive random walks that have
been studied. Ueltschi~\cite{Ueltschi2002} has given a lace expansion
analysis of a self-avoiding walk with nearest neighbour attractions;
the attraction means his model is not repulsive. The analysis
in~\cite{Ueltschi2002} overcomes the lack of repulsion by exploiting
the self-avoiding nature of the walk. Implementing this idea requires
technical assumptions that (i) the attraction is sufficiently weak and
(ii) the self-avoiding walk can take steps of unbounded range. A
second non-repulsive model that has been studied is excited random
walk: the analysis of this model in~\cite{vdHofstadHolmes2012} is
essentially a lace expansion analysis. These results have a somewhat
different flavour as the walk being studied has non-zero
speed. Roughly speaking, the lack of repulsion is overcome by using
the transience of the walk in the excited direction.

\subsection{Notation and Conventions}
\label{sec:LWW-Conventions}

Let $\indicatorthat{A}$ denote the indicator function
of a set $A$. For notational ease we will occasionally also make use of the
Kronecker delta $\delta_{x,y} = \indicatorthat{x=y}$. The \emph{single
  step distribution} $D(x)$ is defined by $D(x) = \abs{\Omega}^{-1}
\indicatorthat{x\sim 0}$, where we recall that $\abs{\Omega}=2d$ and
$x\sim 0$ indicates that $x$ is a nearest neighbour of $0$ in
$\Z^{d}$.

The Fourier transform $\hat f\colon \FS^{d}\to \C$ of a function $f$ on
$\Z^{d}$ is defined by
\begin{equation}
  \label{eq:LWW-FT}
  \hat f(k) = \sum_{x\in \Z^{d}} e^{ik\cdot x}f(x)
\end{equation}

Subwalks of a walk $\omega$ can be identified by specifying the
subinterval that defines them. That is, for $0\leq a <b\leq
\abs{\omega}$ define $\omega\cb{a,b} = (\omega_{a}, \dots,
\omega_{b})$, $\omega\co{a,b} = \omega\cb{a,b-1}$, $\omega\oc{a,b} =
\omega\cb{a+1,b}$, and $\omega\ob{a,b} = \omega\cb{a+1,b-1}$. By
convention $\cb{a,a}=\{a\}$, so $\omega\cb{a,a}=\omega_{a}$. To avoid
some ungainly notation, let $\omega\cb{a\splice} =
\omega\cb{a,\abs{\omega}}$.

By convention $\inf\emptyset=\infty$ and $\sup\emptyset =
-\infty$. The set $\{0,1,\dots, n\}$ will be denoted $\cb{n}$, and
$\cb{\omega}$ will denote $\cb{\abs{\omega}}$ when $\omega$ is a
walk. Further, $c$ will denote a positive constant independent of the
dimension $d$ and activity $z$; the precise value of $c$ may change
from line to line.

\section{A Lace Expansion}
\label{sec:LWW-Lace}

\begin{remark}
  \label{rem:LWW-Memory-Expansion}
  The lace expansion presented here can be derived by other means,
  e.g., the technique developed for self-interacting walks
  in~\cite{vdHofstadHolmes2012}.
\end{remark}

\subsection{Graphical Representations}
\label{sec:LWW-Graph-Representation}

This section provides a representation of the weight $\bar
w_{\lambda,z}$ in terms of graphs. The utility of such a
representation is that it allows recursive identities to be
derived.

\subsubsection{Graph Representation of Self Avoidance}
\label{sec:LWW-Timelike}

\begin{definition}
  \label{def:LWW-Graph}
  Let $A$ be a set. For $s,t\in A$, $s\neq t$, the pair $\{s,t\}
  \equiv st$ is called an \emph{edge}. A \emph{graph} $\Gamma$ on $A$
  is a set of edges.
\end{definition}

The condition $\omega\in\SAW$ that a walk $\omega$ is self-avoiding
can be expressed using graphs.
\begin{equation}
  \label{eq:LWW-Timelike}
  \indicatorthat{\omega\in \SAW} = \prod_{0\leq s<t\leq\abs{\omega}}
  \indicatorthat{\omega_{s}\neq\omega_{t}} 
  =  \prod_{0\leq s<t\leq \abs{\omega}}(1-\indicatorthat{\omega_{s}=\omega_{t}}) 
  = \sum_{\Gamma} \prod_{st\in\Gamma}
  \ob{-\indicatorthat{\omega_{s}=\omega_{t}}},
\end{equation}
The sum in the rightmost term is over all graphs $\Gamma$ on
$\cb{\omega}$, where we recall the definition $\cb{\omega} =
\cb{\abs{\omega}} = \{0,1,\dots, \abs{\omega}\}$.

\subsubsection{Hypergraph Decomposition of LWW Weight}
\label{sec:LWW-Spacelike}

A representation of the weight on self-avoiding walks due to the product
over $\cc X$ in \Cref{eq:LWW-Intro-X-Gas} is less straightforward than the
graph representation of self-avoidance. This is because the condition
of self-avoidance involves two distinct times, while the
condition that $\range{\omega}\cap \ell(X)\neq\emptyset$ involves
many distinct times. This issue can be handled by using
inclusion-exclusion. A convenient way to represent the results of
inclusion-exclusion is in terms of hypergraphs. We emphasize, however,
that no prior knowledge of hypergraphs are needed to understand the
expansion -- they are only used as a bookkeeping instrument.

\begin{lemma}
  \label{lem:LWW-Hypergraph-Representation}
  Let $\omega$ be a walk, and let $X\in \cc X$. Then
  \begin{align}
  \label{eq:LWW-Hypergraphs-General-1}
  (1+\alpha_{X})^{\indicatorthat{\ell(X)\cap
      \range{\omega}\neq\emptyset}}
  &= \mathop{\prod_{J\subset \cb{\omega}\colon \abs{J}\geq 1}}(1+\hyp{J}{X}(\omega)),
\end{align}
  where 
  \begin{equation}
    \label{eq:LWW-Hypergraphs-General-3}
    \hyp{J}{X}(\omega) \equiv
    \begin{cases}
      \phantom{-}\alpha_{X} \prod_{j\in J}\indicatorthat{\omega_{j}
        \in \ell(X)},
      & \abs{J}\in 2\N + 1\\
      -\frac{\alpha_{X}}{1+\alpha_{X}} \prod_{j\in
        J}\indicatorthat{\omega_{j} \in \ell(X)} & \abs{J}\in 2\N.
    \end{cases}
  \end{equation}
  In \Cref{eq:LWW-Hypergraphs-General-3} $0$ is included in $2\N$.
\end{lemma}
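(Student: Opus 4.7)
\medskip

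\noindent\textbf{Proof proposal.} The plan is to fix $\omega$ and $X$ and evaluate both sides by tracking the single set
\[
K = K(\omega,X) = \set{ j\in\cb{\omega} \mid \omega_{j}\in\ell(X)}.
\]
The indicator $\indicatorthat{\ell(X)\cap\range{\omega}\neq\emptyset}$ equals $\indicatorthat{K\neq\emptyset}$, so the left-hand side is $1+\alpha_{X}$ if $K\neq\emptyset$ and $1$ if $K=\emptyset$. On the right-hand side, the product $\prod_{j\in J}\indicatorthat{\omega_{j}\in\ell(X)}$ in the definition of $\hyp{J}{X}(\omega)$ equals $\indicatorthat{J\subset K}$, so every factor with $J\not\subset K$ contributes $1$.

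First I would dispose of the case $K=\emptyset$: every index set $J$ with $\abs{J}\geq 1$ fails $J\subset K$, so the product is $1$ and matches the left-hand side. Then, assuming $K\neq\emptyset$ with $\abs{K}=m\geq 1$, I would split the product according to the parity of $\abs{J}$ and use that $1 - \frac{\alpha_{X}}{1+\alpha_{X}} = (1+\alpha_{X})^{-1}$ to rewrite it as
\begin{equation*}
\prod_{\emptyset\neq J\subset\cb{\omega}}\ob{1+\hyp{J}{X}(\omega)} = (1+\alpha_{X})^{N_{o}-N_{e}},
\end{equation*}
where $N_{o}$ (resp.\ $N_{e}$) is the number of non-empty subsets of $K$ of odd (resp.\ even) cardinality.

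The last step is the elementary combinatorial identity: for $m\geq 1$ an $m$-element set has exactly $2^{m-1}$ subsets of odd cardinality and $2^{m-1}$ of even cardinality, so $N_{o} = 2^{m-1}$ and $N_{e} = 2^{m-1}-1$ (the empty set is removed). Hence $N_{o}-N_{e}=1$ and the product equals $1+\alpha_{X}$, matching the left-hand side. I expect no real obstacle here; the only thing to be careful about is the convention that $0\in 2\N$, which is precisely what forces the empty set to be subtracted from the even count and produces the cancellation to a single factor of $1+\alpha_{X}$.
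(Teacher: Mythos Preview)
Your argument is correct: reducing to the set $K$ of hitting indices and then counting subsets by parity cleanly verifies the identity. The remark about the convention $0\in 2\N$ is slightly beside the point, since the product is already restricted to $\abs{J}\geq 1$; what matters is simply that the empty set is excluded from the count of even subsets, which you handle correctly.

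The paper takes a different route. Rather than evaluating both sides, it \emph{derives} the right-hand side from the left: it expands the indicator $\indicatorthat{\ell(X)\cap\range{\omega}\neq\emptyset}$ by inclusion--exclusion as $\sum_{\abs{J}\geq 1}(-1)^{\abs{J}+1}\prod_{j\in J}\indicatorthat{\omega_{j}\in\ell(X)}$, exponentiates $(1+\alpha_{X})$ to this sum to get a product over $J$, and then uses the pointwise identities $(1+\alpha)^{\indicator_{A}}=1+\alpha\indicator_{A}$ and $(1+\alpha)^{-\indicator_{A}}=1-\tfrac{\alpha}{1+\alpha}\indicator_{A}$ to read off the weights $F_{J,X}$. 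Your approach is more elementary and self-contained, amounting to a direct check once the formula is on the table; the paper's approach explains \emph{why} the weights $F_{J,X}$ take this particular parity-dependent form and does not require the explicit subset count. Both rely on the same underlying combinatorics (the alternating sum over subsets), just packaged differently.
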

\begin{proof}
  Apply inclusion-exclusion to the condition $\ell(X)\cap
  \range{\omega}\neq\emptyset$:
  \begin{align}
    \indicatorthat{\ell(X)\cap \range{\omega}\neq\emptyset} &=
    1 - \indicatorthat{\ell(X)\cap \range{\omega}=\emptyset} \\
    &= 1 -  \prod_{j=0}^{\abs{\omega}}(1- \indicatorthat{\omega_{j}\in\ell(X)}) \\
    &= \mathop{\sum_{J\subset\cb{\omega}\colon\abs{J}\geq 1}}
    (-1)^{\abs{J}+1} \prod_{j\in
      J}\indicatorthat{\omega_{j}\in\ell(X)}.
  \end{align}
  Then
  \begin{align}
    (1+\alpha_{X})^{\indicatorthat{\ell(X)\cap
      \range{\omega}\neq\emptyset}} &= \prod_{J\subset
    \cb{\omega}\colon \abs{J}\geq 1}
  (1+\alpha_{X})^{(-1)^{\abs{J}+1}\prod_{j\in
      J}\indicatorthat{\omega_{j}\in \ell(X)}} \\
  &= \mathop{\prod_{J\subset \cb{\omega}\colon \abs{J}\geq 1}}(1+\hyp{J}{X}(\omega)),
  \end{align}
  where the weights $\hyp{J}{X}$ arise from the identities 
  $(1+\alpha)^{-\indicator_{A}} = 1 - \frac{\alpha}{1+\alpha}\indicator_{A}$ and 
  $(1+\alpha)^{\indicator_{A}} = 1+ \alpha\indicator_{A}$.
\end{proof}

\begin{definition}
  \label{def:LWW-Hypergraph}
  A \emph{hypergraph} $G$ on a countable set $A$ is a (possibly empty)
  finite subset of $A$. Each element of $G$ is called a \emph{hyperedge}.
\end{definition}
To connect this definition with the more familiar notion of a graph,
consider the case when $A$ is $V^{2}\setminus \{ \{x,x\} \mid x\in
V\}$ for $V$ a finite set. A subset of $A$ is then the edge set of a
graph on $V$.

If $F(a)$ is an indeterminate associated to the hyperedge $a$ then, as
formal power series,
\begin{equation}
  \label{eq:LWW-Hypergraph-Product}
  \prod_{a\in A}(1+ F(a))= \sum_{G} \prod_{a\in G}F(a),
\end{equation}
where the sum on the right-hand side
of~\eqref{eq:LWW-Hypergraph-Product} is over all hypergraphs on
$A$. In what follows we perform calculations in the sense of formal
power series. We will ultimately find that our final
expressions have interpretations as convergent objects.

To represent the product over $\cc X$ in
\Cref{eq:LWW-Intro-X-Gas} in terms of hypergraphs take
$A$ in \Cref{def:LWW-Hypergraph} to be $(2^{\cb{n}}\setminus
\emptyset) \times \cc X$. If $a\in A$ then $a=(J,X)$ for $J$ a
non-empty subset of $\cb{n}$ and $X\in \cc X$. Define $F(a) =
F_{J,X}$. This implies
\begin{align}
  \label{eq:LWW-Hypergraph-Product-1}
  \prod_{X\in \cc X}(1+\alpha_{X})^{\indicatorthat{\ell(X)\cap
      \range{\omega}\neq \emptyset}} &= \prod_{X\in \cc
    X} \mathop{\prod_{J\subset \cb{\omega}\colon \abs{J}\geq 1}}(1+F_{J,X}(\omega)) \\
  \label{eq:LWW-Hypergraph-Product-2}
   &= \sum_{G}\prod_{(J,X)\in G}F_{J,X}(\omega),
\end{align}
where the sum in~\eqref{eq:LWW-Hypergraph-Product-2} is over all
hypergraphs. 

The next corollary is a useful hypergraph representation
of the weight carried by a subwalk.

\begin{corollary}
  \label{cor:LWW-Remainder}
  Let $\omega$ be an $n$-step walk. For $k\leq n$, $X\in \cc X$,
  \begin{equation}
    \label{eq:LWW-Remainder}
    (1+\alpha_{X})^{ \indicatorthat{\range{\omega\co{0,k}} \cap
        \ell(X) = \emptyset} \indicatorthat{\range{\omega\cb{k,n}}
        \cap \ell(X)\neq \emptyset}} =
    \mathop{\prod_{J\subset \cb{n}\colon \abs{J}\geq 1,}}_{J\cap
      \cb{k,n}\neq\emptyset}(1+\hyp{J}{X}(\omega)).
  \end{equation}
\end{corollary}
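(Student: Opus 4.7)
The plan is to deduce \Cref{cor:LWW-Remainder} from \Cref{lem:LWW-Hypergraph-Representation} by applying that lemma twice and dividing. Apply \Cref{lem:LWW-Hypergraph-Representation} first to $\omega$ itself, giving
\begin{equation*}
(1+\alpha_{X})^{\indicatorthat{\ell(X)\cap \range{\omega}\neq\emptyset}}
= \mathop{\prod_{J\subset \cb{n}\colon \abs{J}\geq 1}}(1+\hyp{J}{X}(\omega)),
\end{equation*}
and then apply the same lemma to the subwalk $\omega\co{0,k} = (\omega_{0},\dots,\omega_{k-1})$, which is a walk indexed by $\cb{0,k-1}$. Since $\hyp{J}{X}$ depends on $\omega$ only through the coordinates indexed by $J$, the factors $\hyp{J}{X}(\omega\co{0,k})$ for $J\subset\cb{0,k-1}$ coincide with the corresponding $\hyp{J}{X}(\omega)$, yielding
\begin{equation*}
(1+\alpha_{X})^{\indicatorthat{\ell(X)\cap \range{\omega\co{0,k}}\neq\emptyset}}
= \mathop{\prod_{J\subset \cb{0,k-1}\colon \abs{J}\geq 1}}(1+\hyp{J}{X}(\omega)).
\end{equation*}

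The second step is a combinatorial bookkeeping observation: since $\cb{0,k-1}$ and $\cb{k,n}$ partition $\cb{n}$, every nonempty $J\subset\cb{n}$ either satisfies $J\cap\cb{k,n}\neq\emptyset$ or is contained in $\cb{0,k-1}$, but not both. Hence the product over nonempty $J\subset\cb{n}$ factorizes as the product over $J$ with $J\cap\cb{k,n}\neq\emptyset$ times the product over nonempty $J\subset\cb{0,k-1}$. Dividing the first display by the second therefore gives
\begin{equation*}
\mathop{\prod_{J\subset \cb{n}\colon \abs{J}\geq 1,}}_{J\cap\cb{k,n}\neq\emptyset}(1+\hyp{J}{X}(\omega))
= (1+\alpha_{X})^{\indicatorthat{\ell(X)\cap\range{\omega}\neq\emptyset} - \indicatorthat{\ell(X)\cap\range{\omega\co{0,k}}\neq\emptyset}}.
\end{equation*}

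The last step is to identify the exponent on the right-hand side with the product of indicators in the statement of the corollary. Since $\range{\omega} = \range{\omega\co{0,k}} \cup \range{\omega\cb{k,n}}$, a case analysis on whether $\ell(X)$ hits $\range{\omega\co{0,k}}$ and $\range{\omega\cb{k,n}}$ shows the difference of indicators is $1$ exactly when $\range{\omega\co{0,k}}\cap\ell(X)=\emptyset$ and $\range{\omega\cb{k,n}}\cap\ell(X)\neq\emptyset$, and is $0$ otherwise; in other words
\begin{equation*}
\indicatorthat{\ell(X)\cap\range{\omega}\neq\emptyset} - \indicatorthat{\ell(X)\cap\range{\omega\co{0,k}}\neq\emptyset}
= \indicatorthat{\range{\omega\co{0,k}}\cap\ell(X)=\emptyset}\indicatorthat{\range{\omega\cb{k,n}}\cap\ell(X)\neq\emptyset}.
\end{equation*}
Substituting this identity gives the claimed formula. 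The only real care needed is the index bookkeeping ensuring that the $J\subset\cb{0,k-1}$ factors on $\omega$ genuinely agree with the factors that would be produced by applying \Cref{lem:LWW-Hypergraph-Representation} to $\omega\co{0,k}$; once that is checked, no further computation is required, so I do not expect any substantive obstacle.
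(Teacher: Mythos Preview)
Your proof is correct and follows essentially the same approach as the paper: apply \Cref{lem:LWW-Hypergraph-Representation} to both $\omega$ and $\omega\co{0,k}$, divide, and identify the resulting exponent with the product of indicators via the identity $\indicatorthat{\range{\omega}\cap\ell(X)\neq\emptyset} - \indicatorthat{\range{\omega\co{0,k}}\cap\ell(X)\neq\emptyset} = \indicatorthat{\range{\omega\co{0,k}}\cap\ell(X)=\emptyset}\indicatorthat{\range{\omega\cb{k,n}}\cap\ell(X)\neq\emptyset}$. You have simply spelled out the index bookkeeping and case analysis that the paper leaves implicit.
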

\begin{proof}
  Observe that
  $\indicatorthat{\range{\omega\cb{k,n}}\cap \ell(X)\neq
    \emptyset}\indicatorthat{\range{\omega\co{0,k}} \cap \ell(X) =
    \emptyset} $
  can be rewritten as
  $\indicatorthat{\range{\omega}\cap \ell(X)\neq\emptyset} -
  \indicatorthat{\range{\omega\co{0,k}}\cap \ell(X)\neq
    \emptyset}$.
  The corollary follows by applying
  \Cref{lem:LWW-Hypergraph-Representation} to both $\omega$ and
  $\omega\co{0,k}$ and dividing.
\end{proof}

\subsubsection{The Full Graphical Representation}
\label{sec:LWW-Hypergraph-Representation}

\begin{definition}
  Let $J\subset \cb{n}$ be non-empty and let $X$ denote an
  element of $\cc X\cup\{\emptyset\}$. A pair $(J,X)$ is
  \emph{timelike} if $\abs{J}=2$, $X=\emptyset$. A pair is
  \emph{spacelike} if $X\neq\emptyset$. 
\end{definition}
The use of spacelike and timelike as labels has no relation
to the use of these terms in physics. Extend the definition of
$\hyp{J}{X}$ by defining $\hyp{J}{X}$ via
\eqref{eq:LWW-Hypergraphs-General-3} if $(J,X)$ is spacelike, and
defining $\hyp{st}{\emptyset} =
-\indicatorthat{\omega_{s}=\omega_{t}}$ for timelike hyperedges
$(st,\emptyset)$. Let $\IH\cb{a,b}$ denote the set of hypergraphs
whose hyperedges are pairs $(J,X)$ such that (i) $X\in\cc
X\cup\{\emptyset\}$, (ii) $J\subset \{a,a+1,\dots, b\}$, $\abs{J}\geq
1$, and (iii) $X=\emptyset$ implies $\abs{J}=2$. Define $\IH(n)\equiv
\IH\cb{0,n}$. The decompositions of
\Cref{sec:LWW-Graph-Representation} imply that
\begin{align}
  \label{eq:LWW-Hypergraph-Interaction}
  c_{n}(0,x) &=\mathop{\sum_{\omega\colon 0 \to x}}_{\abs{\omega}=n}
  \indicatorthat{\omega\in\SAW} \prod_{X\in\cc
    X}(1+\alpha_{X})^{\indicatorthat{ \ell(X)\cap
      \range{\omega} \neq \emptyset}} \\
  \label{eq:LWW-Hypergraph-Interaction-1}
  &= \mathop{\sum_{\omega\colon 0 \to x}}_{\abs{\omega}=n}
  \sum_{G\in\IH(n)} \prod_{(J,X)\in G}\hyp{J}{X}(\omega).
\end{align}

\subsection{Lace Graphs}
\label{sec:LWW-Laces}

\begin{definition}
  \label{def:LWW-Lace-Connectedness}
  A graph $\Gamma$ on $\cb{a,b}$ is \emph{(lace) connected} if (i)
  $b>a+1$, (ii) for all $a<j<b$ there is an edge $st\in\Gamma$ such
  that $s<j<t$ and (iii) there are $j_{1},j_{2}$ such that $aj_{1}$,
  $j_{2}b\in\Gamma$. Let $\graphs\cb{a,b}$ (resp.\ $\cgraphs\cb{a,b}$)
  denote the set of graphs (resp.\ lace connected graphs) on $\cb{a,b}$.
\end{definition}

We caution the reader that the definition of lace connectedness is not
the same as the graph theoretical definition of connectedness. The
adjective lace will be dropped in what follows, as the graph-theoretic
notion of connectedness is not relevant in this section.

A function $w$ on graphs on the discrete interval $\cb{a,b}$ is called
\emph{multiplicative} if $w(G) = \prod_{st\in E(G)}w(st)$. Note that a
multiplicative function on graphs assigns the empty graph weight $1$. If
$w$ is a multiplicative function on graphs on $\cb{a,b}$ define
\begin{equation}
  K\cb{a,b} = \sum_{G\in\graphs\cb{a,b}}w(G), \qquad J\cb{a,b} =
  \sum_{G\in\cgraphs\cb{a,b}} w(G),
\end{equation}
and let $K\cb{a,b}=J\cb{a,b}=0$ if $a>b$. For $a<b$ the
observation that a graph on $\cb{a,b}$ either contains $a$ in a
connected subgraph or does not and the definition of connectedness
imply
\begin{equation}
  \label{eq:LWW-Connectedness-Recursion}
  K\cb{a,b} = K\cb{a,a+1}K\cb{a+1,b} + \sum_{j\geq 2}J\cb{a,a+j} K\cb{a+j,b}.
\end{equation}

\begin{definition}
  \label{def:lace-graph}
  A graph is a \emph{lace graph} if the removal of any edge results in
  a graph which is not connected.
\end{definition}

A \emph{labelled graph} is a graph where each edge is given a label of
either spacelike or timelike; a labelled graph may contain both the
edge $(st,\mathrm{spacelike})$ and the edge
$(st,\mathrm{timelike})$. The definition of a lace graph applies to
labelled graphs as the notion of connectedness does not depend on the
labelling. The following procedure associates a unique lace
$L_{\Gamma}$ to each labelled connected graph $\Gamma$ on
$\cb{a,b}$. The labelled lace $L_{\Gamma}$ consists of the set of
edges $s_{i}t_{i}$ along with their labellings, where $s_{i}t_{i}$ are
determined by $s_{1}=a$,
$t_{1} = \max \{v \colon s_{1}v \in \Gamma\}$,
$t_{i+1} = \max\{ v \colon \exists\, \textrm{$s<t_{i}$ such that
  $sv\in \Gamma$}\}$, and
$s_{i+1} = \min \{s\colon st_{i+1}\in \Gamma\}$.  If this does not
uniquely specify $s_{i}t_{i}$ then $s_{i}t_{i}$ is chosen to have the
label spacelike. The procedure terminates when $t_{i+1}=b$. See
\Cref{fig:LWW-Lace}.

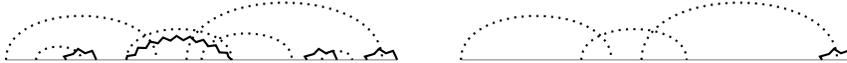
\begin{figure}[h]
  \centering
  \beginpgfgraphicnamed{fig2a}
  \begin{tikzpicture}[scale=.20]
    \draw[gray] (0,0) -- (26,0);
    \draw[black,dotted, thick] (0,0) to[out=90,in=90] (10,0);
    \draw[black,dotted,thick] (2,0) to[out=90,in=90] (5,0);
    \draw[black, thick] decorate [decoration={zigzag, segment length=5,
      amplitude = .8}] {(4,0) to[out=90,in=90] (6,0)};
    \draw[black,dotted,thick] (8,0) to[out=90,in=90] (15,0);
    \draw[black, thick] decorate [decoration={zigzag, segment length=5,
      amplitude = .8}] {(8,0) to[out=45,in=135] (15,0)};
    \draw[black,dotted,thick] (12,0) to[out=90,in=90] (25,0);
    \draw[black,dotted,thick] (13,0) to[out=90,in=90] (19,0);
    \draw[black, thick] decorate [decoration={zigzag, segment length=5,
      amplitude = .8}] {(20,0) to[out=90,in=90] (22,0)};
    \draw[black,dotted,thick] (21,0) to[out=90,in=90] (23,0);
    \draw[black, thick] decorate [decoration={zigzag, segment length=5,
      amplitude = .8}] {(24,0) to[out=90,in=90] (26,0)};
  \end{tikzpicture}
  \endpgfgraphicnamed
  \qquad
  \beginpgfgraphicnamed{fig2b}
    \begin{tikzpicture}[scale=.20]
    \draw[gray] (0,0) -- (26,0);
    \draw[black,dotted,thick] (0,0) to[out=90,in=90] (10,0);
    \draw[black,dotted,thick] (8,0) to[out=90,in=90] (15,0);
    \draw[black,dotted,thick] (12,0) to[out=90,in=90] (25,0);
    \draw[black, thick] decorate [decoration={zigzag, segment length=5,
      amplitude = .8}] {(24,0) to[out=90,in=90] (26,0)};
  \end{tikzpicture}
  \endpgfgraphicnamed
  \caption[A labelled graph and the corresponding labelled lace
  graph]{A labelled graph and the corresponding labelled lace
    graph. The left-hand side depicts a connected labelled graph,
    while the right-hand side depicts the corresponding labelled lace
    graph. The dotted black edges are labelled spacelike, while the
    solid black zigzag edges are labelled timelike.}
  \label{fig:LWW-Lace}
\end{figure}

A labelled edge $st$ is said to be \emph{compatible} with a lace $L$ if
$L_{L\cup\{st\}} = L$, i.e., if the addition of the labelled edge $st$ does not
alter the outcome of the above algorithm. Let $\cc L\cb{a,b}$ denote
the set of labelled lace graphs on $\cb{a,b}$ and $\cc C(L)$ the set of
compatible labelled edges for a lace $L\in\cc L$.

\begin{lemma}
  \label{lem:LWW-Lace-Prescription}
  Let $w$ be a weight on labelled edges $st$. Then
  \begin{equation}
    \sum_{\Gamma\in \cgraphs\cb{a,b}}\prod_{st\in \Gamma}w(st) =
    \sum_{L\in\cc L\cb{a,b}} \prod_{st\in L}
    w(st)\prod_{s^{\prime}t^{\prime}\in\cc
      C(L)}(1+w(s^{\prime}t^{\prime})),
  \end{equation}
  where the sums are over labelled connected graphs and labelled
  laces, respectively.
\end{lemma}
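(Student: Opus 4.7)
The plan is to partition $\cgraphs\cb{a,b}$ according to the map $\Gamma \mapsto L_\Gamma$ and to evaluate the weighted sum fibre by fibre. Writing $\cc C(L)$ for the compatible edges not already in $L$, the central claim I would establish is the fibre identity
\begin{equation*}
\set{\Gamma \in \cgraphs\cb{a,b} : L_\Gamma = L} = \set{L \cup S : S \subseteq \cc C(L)},
\end{equation*}
which is a bijection under $\Gamma \mapsto (L_\Gamma, \Gamma \setminus L_\Gamma)$. Once this is known, multiplicativity of $w$ together with the binomial expansion $\sum_{S \subseteq \cc C(L)} \prod_{e \in S} w(e) = \prod_{e \in \cc C(L)}(1 + w(e))$ give
\begin{equation*}
\sum_{\Gamma \in \cgraphs\cb{a,b}} \prod_{st \in \Gamma} w(st) = \sum_{L \in \cc L\cb{a,b}} \prod_{st \in L} w(st) \prod_{s't' \in \cc C(L)}\ob{1 + w(s't')},
\end{equation*}
which is the desired formula.

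The fibre identity splits into two directions. The easier direction, that $L_\Gamma = L$ implies $L \subseteq \Gamma$ and $\Gamma \setminus L \subseteq \cc C(L)$, follows directly from the greedy construction of $L_\Gamma$: the lace edges are chosen from $\Gamma$, and for any extra edge $e \in \Gamma \setminus L$ the removal of $e$ does not disturb the extremal selections made by the algorithm when applied to $\Gamma$ (those selections, by hypothesis, lie in $L$ and do not involve $e$), so in particular $L_{L \cup \set{e}} = L$. The converse direction, that $L_{L \cup S} = L$ for every $S \subseteq \cc C(L)$, is the more delicate one. I would prove it by induction on $\abs{S}$: at each stage of the algorithm run on $L \cup S$, I show that the candidate maximum $t_{i+1}$ and candidate minimum $s_{i+1}$ contributed by edges of $S$ cannot beat the choice that $L$'s algorithm would have made, since compatibility of each edge individually precludes exactly this, and the effect of several compatible edges together is still dominated by the lace edges of $L$.

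The main obstacle will be the labelled-edge bookkeeping in the converse direction: the algorithm breaks ties at the $s_{i+1}$ step in favour of a spacelike label, so ``compatibility'' must be read as also forbidding a spacelike edge that would tie-break against a timelike lace edge, and this subtlety has to be propagated through the induction so that the tie-break outcomes on $L \cup S$ match those on $L$ edge by edge. Once this combinatorial case analysis is carried out, the rest of the proof is the routine factorisation sketched above, and the lemma follows.
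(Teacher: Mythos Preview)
Your proposal is correct and is precisely the classical Brydges--Spencer argument that the paper invokes by reference (the paper's own proof consists of a citation to \cite{BrydgesSpencer1985,Slade2006,Zeilberger1997} and the remark that the labelled case goes through identically). You have also correctly isolated the one genuinely new wrinkle in the labelled setting, namely the spacelike tie-break at the $s_{i+1}$ step, which is exactly what must be tracked to make the fibre identity go through verbatim.
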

\begin{proof}
  The proof is the same as the proof for unlabelled
  graphs, see~\cite{BrydgesSpencer1985}, \cite{Slade2006}, or
  \cite{Zeilberger1997}.
  % It is sufficient to verify that the lace prescription defines a
  % lace map $\cc L$ in the sense of~\cite{Zeilberger2001}. It is clear that
  % the labelled graph produced is a subgraph of the original
  % graph: $\cc L(G)\subset G$. If $\cc L(G) \subset H \subset G$ then
  % $\cc L(H) = \cc L(G)$: this holds at the first step of the
  % algorithm, and by induction at each step, as the longest possible
  % edge in $G$ is in $\cc L(G)$, and hence in $H$. Finally, if $\cc
  % L(G_{1}) = \cc L(G_{2})$, then in the union $G_{1}\cup G_{2}$ the
  % same initial edge is chosen. Again by induction, this holds for
  % each edge. 
\end{proof}

\begin{remark}
  \label{rem:LWW-Connectedness}
  \Cref{def:LWW-Lace-Connectedness} is \emph{not} the definition of
  lace connectedness typically used for self-avoiding walk, as the
  graph consisting of the single edge $\{a,a+1\}$ is not being
  considered connected. This change is entirely cosmetic for
  self-avoiding walk as graphs consisting of a single edge $\{a,a+1\}$
  do not contribute.
\end{remark}

\subsection{Laces and Hypergraphs}
\label{sec:LWW-Hyperlaces}

This section obtains an analogue of \Cref{lem:LWW-Lace-Prescription} for
hypergraphs.

\subsubsection{Recursion Relation for Hypergraphs}
\label{sec:LWW-Recursion}

\begin{definition}
  \label{def:LWW-Graph-Spans}
  For a hyperedge $(J,X)$ define $\edgespan{(J,X)} = \{\min J, \max
  J\}$. If $(J,X)$ is spacelike label
  $\edgespan{(J,X)}$ spacelike, and if $(J,X)$ is timelike label
  $\edgespan{(J,X)}$ timelike. If $G$ is a
  hypergraph the labelled graph $\Gamma_{G}$ with labelled edges
  $\{\edgespan{(J,X)} \mid (J,X)\in G\}$ will be called the
  \emph{graph of spans} of $G$.
\end{definition}

\begin{definition}
  A hypergraph $G$ on $\cb{a,b}$ is \emph{connected} if the graph of
  spans of $G$ is connected on $\cb{a,b}$. The set of connected
  hypergraphs on $\cb{a,b}$ is denoted $\IH^{c}\cb{a,b}$.
\end{definition}

The objects $\alpha$ and $\alpha_{0}$ in the next
definition have interpretations in terms of the loop measure, but
for now should be thought of as convenient shorthand.
\begin{definition}
  \label{def:LWW-Renormalized-Activity}
  Let $\cc X_{0} = \{X\in\cc X \mid 0\in\ell(X)\}$ and let $y\in\Omega$ be
  a vertex adjacent to $0$. Define
  \begin{equation}
    \label{eq:LWW-RA}
    \alpha_{0} = \alpha_{0}(\cc X) =  \prod_{X\in\cc X_{0}}(1+\alpha_{X}), \qquad
    \alpha = \alpha(\cc X) = \prod_{X\in \cc
      X_{0}}(1+\alpha_{X})^{\indicatorthat{y\notin \ell(X)}},
  \end{equation}
  That $\alpha$ is independent of the vertex $y\in\Omega$ chosen
  follows from the isometry invariance of the loop-weighted walk
  weight.
\end{definition}

By translation invariance $\alpha_{0}$ is also given by the product
over $X\in \cc X$ such that any single fixed vertex is contained in
$\ell(X)$, and hence
\begin{equation}
  \label{eq:LWW-Alpha0-Graphs}
  \alpha_{0} = \sum_{G\in \IH\cb{1,1}} w(G),
\end{equation}
where $w(G) = \prod_{(J,X)\in G}\hyp{J}{X}$. Using
\Cref{eq:LWW-Alpha0-Graphs} and the definition of connectedness for
hypergraphs implies that for $n\geq 1$
\begin{multline}
  \label{eq:LWW-Hypergraph-Recursion-A}
  \sum_{G\in
  \IH\cb{0,n}}w(G) =
  \alpha_{0}^{-1}\sum_{G_{1}\in\IH\cb{0,1}}
  \sum_{G_{2}\in\IH\cb{1,n}}w(G_{1}) w(G_{2}) \\+
  \alpha_{0}^{-1}\sum_{j\geq 2} 
  \sum_{G_{1}\in\IH^{c}\cb{0,j}} \sum_{G_{2}\in \IH\cb{j,n}} w(G_{1}) w(G_{2}).
\end{multline}
The factor of $\alpha_{0}^{-1}$ multiplying the first term arises
since the hypergraphs $G\in\IH\cb{1,1}$ are double counted due to
being present in both $\IH\cb{0,1}$ and $\IH\cb{1,n}$. The factor of
$\alpha_{0}^{-1}$ multiplying the second factor arises similarly, due
to double counting of the sum over $\IH\cb{j,j}$; translation
invariance implies this is the same as the sum over
$\IH\cb{1,1}$. The next lemma simplifies
\Cref{eq:LWW-Hypergraph-Recursion-A} by computing the sum over
$\IH\cb{0,1}$.

\begin{lemma}
  \label{lem:LWW-Hypergraph-Recursion}
  Fix $n\geq 1$. Then $\sum_{G\in \IH\cb{0,n}} \prod_{(J,X)\in
    G}\hyp{J}{X}$ is equal to
  \begin{equation}
    \label{eq:LWW-Hypergraph-Recursion}
    \alpha \sum_{G\in\IH\cb{1,n}}\prod_{(J,X)\in G}\hyp{J}{X}
    + \alpha_{0}^{-1}\sum_{j\geq 2}
    \sum_{G_{1}\in\IH^{c}\cb{0,j}}\sum_{G_{2}\in \IH\cb{j,n}}
    \prod_{(J,X)\in G_{1}} \hyp{J}{X} \prod_{(J^{\prime},X^{\prime})\in G_{2}}
    \hyp{J^{\prime}}{X^{\prime}}
  \end{equation}
\end{lemma}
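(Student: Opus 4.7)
The plan is to derive the claimed identity directly from \eqref{eq:LWW-Hypergraph-Recursion-A}, whose right-hand side differs from the claimed expression only in the first summand. So the whole lemma reduces to verifying the single algebraic identity
\[
\alpha_{0}^{-1}\sum_{G_{1}\in\IH\cb{0,1}}w(G_{1}) \;=\; \alpha.
\]
Once this is in hand, the first term of \eqref{eq:LWW-Hypergraph-Recursion-A} becomes $\alpha\sum_{G_{2}\in\IH\cb{1,n}}w(G_{2})$, while the second term is carried over unchanged.

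To evaluate the sum over $\IH\cb{0,1}$, I would first enumerate the allowable hyperedges on the interval $\cb{0,1}$: the only timelike candidate is $(\{0,1\},\emptyset)$, and for each $X\in\cc X$ the spacelike candidates are $(\{0\},X)$, $(\{1\},X)$, and $(\{0,1\},X)$. Since hypergraph sums factor multiplicatively as in \eqref{eq:LWW-Hypergraph-Product}, this would give
\[
\sum_{G\in\IH\cb{0,1}}w(G) \;=\; \bigl(1+\hyp{\{0,1\}}{\emptyset}\bigr)\prod_{X\in\cc X}\bigl(1+\hyp{\{0\}}{X}\bigr)\bigl(1+\hyp{\{1\}}{X}\bigr)\bigl(1+\hyp{\{0,1\}}{X}\bigr).
\]
The timelike factor equals $1$, because $\omega_{0}\sim\omega_{1}$ forces $\omega_{0}\neq\omega_{1}$ and hence $\hyp{\{0,1\}}{\emptyset}=-\indicatorthat{\omega_{0}=\omega_{1}}=0$. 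For each $X$, a case check on the two indicators $\indicatorthat{\omega_{0}\in\ell(X)}$ and $\indicatorthat{\omega_{1}\in\ell(X)}$, combined with the algebraic collapse $(1+\alpha)^{2}\bigl(1-\tfrac{\alpha}{1+\alpha}\bigr)=1+\alpha$ in the case when both indicators equal $1$, reduces the $X$-factor to $(1+\alpha_{X})^{\indicatorthat{\ell(X)\cap\{\omega_{0},\omega_{1}\}\neq\emptyset}}$.

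The final step is to split the exponent as $\indicatorthat{\omega_{0}\in\ell(X)}+\indicatorthat{\omega_{1}\in\ell(X),\ \omega_{0}\notin\ell(X)}$ and identify the two resulting products. The first, $\prod_{X\colon\omega_{0}\in\ell(X)}(1+\alpha_{X})$, equals $\alpha_{0}$ by the translation invariance that underlies \eqref{eq:LWW-Alpha0-Graphs}. The second, $\prod_{X\colon\omega_{1}\in\ell(X),\ \omega_{0}\notin\ell(X)}(1+\alpha_{X})$, equals $\alpha$ by applying the isometry mapping $(0,y)$ to $(\omega_{1},\omega_{0})$, which is legitimate because $\omega_{0}-\omega_{1}\in\Omega$ and $\alpha$ is independent of the particular neighbor $y$ of $0$ chosen in \eqref{eq:LWW-RA}. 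The product therefore evaluates to $\alpha_{0}\cdot\alpha$, and dividing by $\alpha_{0}$ yields $\alpha$, as required. The only mildly delicate points are the algebraic collapse to a single factor of $(1+\alpha_{X})$ and the careful bookkeeping of which endpoint plays the role of the origin when invoking isometry invariance; no new analytic estimates are needed.
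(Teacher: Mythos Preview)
Your proposal is correct and follows essentially the same route as the paper: both start from \eqref{eq:LWW-Hypergraph-Recursion-A} and reduce the lemma to evaluating $\sum_{G\in\IH\cb{0,1}}w(G)=\alpha_{0}\alpha$. The paper packages your case analysis by invoking \Cref{lem:LWW-Hypergraph-Representation} (applied to the two-vertex walk) to obtain the factor $(1+\alpha_X)^{\indicatorthat{\{\omega_0,\omega_1\}\cap\ell(X)\neq\emptyset}}$ in one stroke, and then divides by the representation \eqref{eq:LWW-Alpha0-Graphs} of $\alpha_{0}$; your explicit algebraic collapse and your careful justification of the isometry step (which the paper leaves implicit) are equivalent and arguably clearer.
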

\begin{proof}
  Let $\omega$ be a walk. \Cref{lem:LWW-Hypergraph-Representation}
  and~\eqref{eq:LWW-Timelike} imply that
  \begin{equation}
   % \label{eq:LWW-HR-1}
   % \sum_{G\in\IH\cb{1,1}}\prod_{(J,X)\in G}\hyp{J}{X}(\omega) &=
   % \alpha_{0}, \\
    \label{eq:LWW-HR-2}    
    \sum_{G\in\IH\cb{0,1}}\prod_{(J,X)\in G}\hyp{J}{X}(\omega) =
    \indicatorthat{\omega_{0}\neq \omega_{1}} \prod_{X\in\cc
      X}(1+\alpha_{X})^{\indicatorthat{\{\omega_{0},\omega_{1}\}\cap \ell(X)\neq \emptyset}}.
  \end{equation}
  The constraint that $\omega_{0}\neq \omega_{1}$ is irrelevant as
  $\omega_{j+1}\neq \omega_{j}$ for any walk. Using the
  representation of $\alpha_{0}$ in \Cref{eq:LWW-Alpha0-Graphs} gives
  \begin{equation}
    \label{eq:LWW-HR-3}
    \frac{ \sum_{G\in\IH\cb{0,1}} \prod_{(J,X)\in
        G}\hyp{J}{X}(\omega)} {\sum_{G\in \IH\cb{1,1}} 
      \prod_{(J,X)\in G}\hyp{J}{X} (\omega)} = \prod_{X\in \cc X}
    (1+\alpha_{X})^{\indicatorthat{\omega_{1}\in\ell(X)}\indicatorthat{\omega_{0}\notin
        \ell(X)}},
  \end{equation}
  and this last quantity is $\alpha$ by
  \Cref{eq:LWW-RA}. Using~\eqref{eq:LWW-Alpha0-Graphs}
  and~\eqref{eq:LWW-HR-3} to
  rewrite~\eqref{eq:LWW-Hypergraph-Recursion-A} gives the
  claim.
\end{proof}

\subsubsection{Laces for Hypergraphs and Weights on Lace Edges}
\label{sec:LWW-Hypergraph-Lace}

The weight $w(G) = \prod \hyp{J}{X}$ on hypergraphs can be pushed
forward to a weight $w_{\star}^{\omega}(st)$ on labelled graphs;
recall that labelled graphs were introduced following
\Cref{def:lace-graph}. Explicitly, the weight $w_{\star}^{\omega}(st)$
is defined by
\begin{align}
  \label{eq:LWW-Pushforward-Weight-1}
  w_{\star}^{\omega}(st,\mathrm{timelike}) &\equiv
  -\indicatorthat{\omega_{s}=\omega_{t}} \\
  \label{eq:LWW-Pushforward-Weight-2}
  w_{\star}^{\omega}(st,\mathrm{spacelike}) &\equiv
  (1-\indicatorthat{\omega_{s}=\omega_{t}})\!\!\!\sum_{\{(J_{i},X_{i})\}\colon
    \edgespan(J_{i},X_{i})=st} \prod_{i}\hyp{J_{i}}{X_{i}}(\omega).
\end{align}
The sum for a spacelike edge in~\eqref{eq:LWW-Pushforward-Weight-2} is
over all non-empty collections of hyperedges, each of whose span is
the labelled edge $(st,\mathrm{spacelike})$. The factor
$(1-\indicatorthat{\omega_{s}=\omega_{t}})$ accounts for the
possibility that a timelike hyperedge exists when the edge $st$ is
given the label spacelike. Note that this weight neglects hyperedges
$(J,X)$ with $\abs{J}=1$. For notational ease let $\hyp{j}{X} =
\hyp{\{j\}}{X}$.

\begin{lemma}
  \label{lem:LWW-Hypergraph-Lace-Lift}
  The following identity holds for $a<b$:
  \begin{equation}
    \label{eq:LWW-Hypergraph-Lace}
    \sum_{G\in \IH^{c}\cb{a,b}}\prod_{(J,X)\in G}\hyp{J}{X}  =
    \mathop{\prod_{a\leq j\leq b}}_{X\in \cc X}(1+\hyp{j}{X})
    \sum_{L\in \cc L\cb{a,b}} \prod_{st\in L}w_{\star}(st)\!\!\!\!\!\!\!
    \mathop{\prod_{(J^{\prime},X^{\prime})\colon}}_{\edgespan
      (J^{\prime},X^{\prime})\in \cc
      C(L)}(1+\hyp{J^{\prime}}{X^{\prime}}).
  \end{equation}
  The left-hand sum is over all connected hypergraphs on $\cb{a,b}$,
  while the right-hand sum is over labelled laces.
\end{lemma}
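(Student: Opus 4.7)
The plan is to first factor out singleton hyperedges, then identify the sum over connected non-singleton hypergraphs with a sum over labelled connected graphs, and finally apply the labelled lace prescription \Cref{lem:LWW-Lace-Prescription}.

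Step 1. Any $G\in\IH^{c}\cb{a,b}$ decomposes uniquely as $G_{1}\cup G_{2}$, where $G_{1}$ consists of the hyperedges with $\abs{J}=1$ and $G_{2}$ consists of those with $\abs{J}\geq 2$. Singleton hyperedges contribute no edge to the graph of spans, so $\Gamma_{G}=\Gamma_{G_{2}}$, and connectedness of $G$ depends only on $G_{2}$. Summing $\prod\hyp{J}{X}$ over the (unconstrained) choice of $G_{1}$ produces exactly the prefactor $\prod_{a\leq j\leq b,\,X\in\cc X}(1+\hyp{j}{X})$ on the right-hand side.

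Step 2. I organize the remaining sum over connected non-singleton $G_{2}$ by the labelled graph of spans $\Lambda=\Gamma_{G_{2}}$. For each labelled connected $\Lambda$, the hypergraphs $G_{2}$ with $\Gamma_{G_{2}}=\Lambda$ are parameterized by, for each $(st,\mathrm{spacelike})\in\Lambda$, a nonempty collection of spacelike hyperedges $(J,X)$ with $\min J=s$, $\max J=t$; the unique hyperedge $(\{s,t\},\emptyset)$ is forced into $G_{2}$ for every $(st,\mathrm{timelike})\in\Lambda$. Summing $\prod\hyp{J}{X}$ over these choices reduces the sum to one over labelled connected graphs in which each timelike edge contributes $T(st):=-\indicatorthat{\omega_{s}=\omega_{t}}$ and each spacelike edge contributes $S(st):=-1+\prod_{(J,X)\,:\,X\in\cc X,\,\min J=s,\,\max J=t}(1+\hyp{J}{X})$.

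Step 3. I apply \Cref{lem:LWW-Lace-Prescription} with the labelled-edge weight $u(st,\mathrm{timelike})=T(st)$ and $u(st,\mathrm{spacelike})=S(st)$. A direct computation gives $1+u(st,\ell)=\prod_{(J,X)\,:\,\edgespan(J,X)=(st,\ell)}(1+\hyp{J}{X})$ for either label $\ell$, so the compatible-edge product $\prod_{(st,\ell)\in\cc C(L)}(1+u(st,\ell))$ equals $\prod_{(J',X')\,:\,\edgespan(J',X')\in\cc C(L)}(1+\hyp{J'}{X'})$, matching the compatible factor in the lemma.

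Step 4. It remains to replace the lace product $\prod_{st\in L}u(st)$ by $\prod_{st\in L}w_{\star}(st)$. These agree on timelike lace edges and differ only by a factor $(1-\indicatorthat{\omega_{s}=\omega_{t}})$ on spacelike ones. The key observation is that the tie-breaking convention in the lace algorithm (which prefers the spacelike label) forces the timelike edge $(st,\mathrm{timelike})$ to be compatible with $L$ whenever $(st,\mathrm{spacelike})\in L$. Consequently the compatible product already contributes the factor $1+\hyp{st}{\emptyset}=1-\indicatorthat{\omega_{s}=\omega_{t}}$ in such cases, and by the idempotence $(1-\indicatorthat{\omega_{s}=\omega_{t}})^{2}=1-\indicatorthat{\omega_{s}=\omega_{t}}$ this factor can be duplicated into the spacelike lace weight without altering the overall product, recovering $w_{\star}$. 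The main technical obstacle is verifying that this tie-breaking rule provides exactly the compatibility needed to absorb these indicator factors without any over- or under-counting.
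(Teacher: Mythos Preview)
Your argument is correct and follows the paper's strategy: factor out singleton hyperedges, push the weight forward to labelled graphs, and apply \Cref{lem:LWW-Lace-Prescription}. The only real difference is the choice of pushforward weight. The paper applies \Cref{lem:LWW-Lace-Prescription} directly with $w_{\star}$, so no post-processing is needed; one then checks that $\sum_{\Gamma}\prod w_{\star}$ over connected labelled $\Gamma$ equals the connected hypergraph sum (this holds because for each $st$ the three nonempty labellings contribute $S+T+ST$ under either $w_{\star}$ or your $u$), and that $\prod_{\cc C(L)}(1+w_{\star})$ rewrites as the hyperedge product. You instead use the bare pushforward $u$ (your $S,T$), for which those two checks are immediate, and then spend Step~4 converting the lace factors from $u$ to $w_{\star}$ via the tie-breaking rule and the idempotence of $1-\indicatorthat{\omega_{s}=\omega_{t}}$. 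Both routes are valid and amount to the same bookkeeping; yours is slightly longer but makes explicit where the tie-breaking convention is actually used.
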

\begin{proof}
  Apply~\Cref{lem:LWW-Lace-Prescription} with the weight $w_{\star}$,
  and take the product of this equation with the first term on the
  right-hand side of~\eqref{eq:LWW-Hypergraph-Lace}:
  \begin{equation}
    \mathop{\prod_{a\leq j\leq b}}_{X\in \cc
      X}(1+\hyp{j}{X})\!\!\! \sum_{\Gamma\in \cgraphs\cb{a,b}}\prod_{st\in
      \Gamma}w_{\star}(st) = \mathop{\prod_{a\leq j\leq b}}_{X\in \cc X}(1+\hyp{j}{X})
    \!\!\!\sum_{L\in\cc L\cb{a,b}} \prod_{st\in L}
    w_{\star(st)}\!\!\!\!\!\!\! \prod_{s^{\prime}t^{\prime}\in\cc
      C(L)}(1+w_{\star}(s^{\prime}t^{\prime})).
  \end{equation}
  Expanding the product over connected labelled graphs with weight
  $w_{\star}$ gives the left-hand side
  of~\eqref{eq:LWW-Hypergraph-Lace} as hyperedges of the form
  $(\{j\},X)$ play no role in connectivity, and for each $st$ the
  weight $w_{\star}$ is a sum of the possible collections of
  hyperedges whose span is $st$. Similarly, $1+w_{\star}(ij)$ for
  $ij\in \cc C(L)$ can be written in the product form used above,
  giving the right-hand side of~\eqref{eq:LWW-Hypergraph-Lace}.
\end{proof}

The next definition and lemma simplifies the sum over laces
in~\eqref{eq:LWW-Hypergraph-Lace} by resumming the contributions to
the product over $st\in L$.

\begin{definition}
  \label{def:LWW-Walk-I2PF}
  For $0\leq s<t$ define $I^{\omega}_{\cc X}(s,t) = 1$ if
  $\omega_{s}=\omega_{t}$, and if $\omega_{s}\neq
  \omega_{t}$ define
  \begin{equation}
    \label{eq:LWW-Walk-I2PF}
    I^{\omega}_{\cc X}(s,t) =
      1-\prod_{X\in\cc X}\ob{1 -
      \frac{\alpha_{X}}{1+\alpha_{X}}\indicatorthat{\omega_{s}\in \ell(X)}
      \indicatorthat{\omega_{t}\in \ell(X)} \indicatorthat{\ell(X)\cap
        \range{\omega\ob{s,t}}=\emptyset}}.
  \end{equation}
\end{definition}

\begin{lemma}
  \label{lem:LWW-Span-Resummation}
    Let $st$ be an edge. Then
  \begin{equation}
    \label{eq:LWW-Span-Resummation-1}
    w_{\star}^{\omega}(st,\mathrm{spacelike}) +
    w_{\star}^{\omega}(st,\mathrm{timelike}) 
    = -I^{\omega}_{\cc X}(s,t)
  \end{equation}
\end{lemma}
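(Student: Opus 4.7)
The plan is to split on whether $\omega_s = \omega_t$. When $\omega_s = \omega_t$, the $(1 - \indicatorthat{\omega_s = \omega_t})$ prefactor annihilates $w_\star^\omega(st,\mathrm{spacelike})$, the timelike contribution is $-1$, and by definition $I_{\cc X}^\omega(s,t) = 1$, so both sides equal $-1$. Hence one can assume $\omega_s \neq \omega_t$, in which case the timelike term vanishes and it suffices to show $w_\star^\omega(st,\mathrm{spacelike}) = -I_{\cc X}^\omega(s,t)$.

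First I would factor the defining sum over non-empty families $\{(J_i,X_i)\}$ with common span $st$ over $X \in \cc X$. For each $X$ one independently selects a (possibly empty) set of $J$'s with $\min J = s$ and $\max J = t$, and the overall non-emptiness is enforced by subtracting the all-empty contribution:
\begin{equation*}
\sum_{\{(J_i,X_i)\}} \prod_i \hyp{J_i}{X_i}(\omega)
\;=\;
\prod_{X \in \cc X}\Bigl(\prod_{J\colon \min J = s,\,\max J = t}\bigl(1+\hyp{J}{X}(\omega)\bigr)\Bigr) - 1.
\end{equation*}

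The crux of the proof is evaluating the inner product over $J$ for each fixed $X$. Set $A = \cb{s,t}$, $B = A\setminus\{s\}$, $C = A\setminus\{t\}$, $D = A\setminus\{s,t\}$. The identity $\indicatorthat{s,t\in J} = \indicatorthat{J\subseteq A} - \indicatorthat{J\subseteq B} - \indicatorthat{J\subseteq C} + \indicatorthat{J\subseteq D}$ (for $J\subseteq A$) yields, with $P(A') := \prod_{J\subseteq A',\,J\neq\emptyset}(1+\hyp{J}{X})$,
\begin{equation*}
\prod_{J\colon\min J = s,\,\max J = t}(1+\hyp{J}{X}) \;=\; \frac{P(A)\, P(D)}{P(B)\, P(C)}.
\end{equation*}
The proof of \Cref{lem:LWW-Hypergraph-Representation} applies verbatim with $\cb{\omega}$ replaced by any subset of indices $A'$, giving $P(A') = (1+\alpha_X)^{\indicatorthat{\ell(X)\cap\{\omega_j\colon j\in A'\}\neq\emptyset}}$. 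A short case analysis on whether $\omega_s\in\ell(X)$, $\omega_t\in\ell(X)$, and whether $\ell(X)\cap \range{\omega\ob{s,t}}\neq\emptyset$ collapses the ratio (all four indicator exponents add to $0$ unless $\omega_s,\omega_t\in\ell(X)$ and $\ell(X)\cap\range{\omega\ob{s,t}}=\emptyset$, in which case one gets $(1+\alpha_X)^{-1}$) to
\begin{equation*}
\prod_{J\colon\min J = s,\,\max J = t}(1+\hyp{J}{X}) \;=\; 1 - \frac{\alpha_X}{1+\alpha_X}\indicatorthat{\omega_s\in\ell(X)}\indicatorthat{\omega_t\in\ell(X)}\indicatorthat{\ell(X)\cap\range{\omega\ob{s,t}}=\emptyset}.
\end{equation*}

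Taking the product over $X\in\cc X$ and subtracting $1$ yields exactly $-I_{\cc X}^\omega(s,t)$ by \Cref{def:LWW-Walk-I2PF}, completing the proof in the $\omega_s\neq\omega_t$ case. The main obstacle is the inclusion-exclusion rewriting together with the case analysis that collapses the ratio of $P$'s; the factorization over $X$ and the two boundary cases are straightforward bookkeeping.
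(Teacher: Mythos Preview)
Your proof is correct. The overall architecture matches the paper's: split on $\omega_s=\omega_t$, factor the sum over $\{(J_i,X_i)\}$ as a product over $X$, and reduce the inner product over $J$ using \Cref{lem:LWW-Hypergraph-Representation}. The difference is in how you handle the inner product. You use inclusion--exclusion on the four index sets $A=[s,t]$, $B=A\setminus\{s\}$, $C=A\setminus\{t\}$, $D=(s,t)$ to write the product as $P(A)P(D)/(P(B)P(C))$, apply \Cref{lem:LWW-Hypergraph-Representation} to each $P(\cdot)$ separately, and then do a case analysis on the three indicators to collapse the exponent. The paper instead parametrises each $J$ with span $st$ as $J'\cup\{s,t\}$ with $J'\subset(s,t)$, uses the factorisation $\hyp{J'\cup\{s,t\}}{X}=\indicatorthat{\omega_s\in\ell(X)}\indicatorthat{\omega_t\in\ell(X)}\hyp{J'}{X}$ (which holds because adding two indices preserves the parity of $|J|$), separates out $J'=\emptyset$, and applies \Cref{lem:LWW-Hypergraph-Representation} once on the open interval $(s,t)$. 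The paper's route is shorter and avoids the eight-case check; yours is more systematic and does not require noticing the factorisation identity for $\hyp{J}{X}$.
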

\begin{proof}
  The case $\omega_{s}=\omega_{t}$ corresponds to the timelike
  edge. Consider the spacelike term.  As any non-empty collection of
  spacelike hyperedges $\{(J_{i},X_{i})\}$ may be chosen in
  \Cref{eq:LWW-Pushforward-Weight-2} the equation can be rewritten as
  \begin{equation}
    w_{\star}^{\omega}(st,\mathrm{spacelike}) =
    (1-\indicatorthat{\omega_{s}=\omega_{t}})\cb{
      \mathop{\prod_{(J,X)\colon}}_{\edgespan(J,X)=st}(1+\hyp{J}{X}(\omega)) - 1}.
  \end{equation}
  A hyperedge with span $st$ and second element $X$ is equivalent to a
  possibly empty subset $J$ of $\ob{s,t}$. Using $\hyp{J\cup\{ab\}}{X} =
  \indicatorthat{\omega_{a}\in\ell(X)}
  \indicatorthat{\omega_{b}\in\ell(X)} \hyp{J}{X}$ gives
  \begin{equation}
    w_{\star}^{\omega}(st,\mathrm{spacelike}) =
    \indicatorthat{\omega_{s}\neq\omega_{t}}\ob{\prod_{X\in\cc
      X}\prod_{J\subset \ob{s,t}}\ob{1+\indicatorthat{\omega_{s}\in
      \ell(X)}\indicatorthat{\omega_{t}\in \ell(X)} \hyp{J}{X}(\omega)} - 1},
  \end{equation}
  where we recall that $\hyp{\emptyset}{X}(\omega) =
  -\alpha_{X}(1+\alpha_{X})^{-1}$.  Putting the condition that
  $\omega_{s}$ and $\omega_{t}$ are in $\ell(X)$ into the product,
  separating the case $J=\emptyset$, and then
  applying~\Cref{lem:LWW-Hypergraph-Representation} yields
  \begin{align}
    w_{\star}^{\omega}(st,\mathrm{spacelike}) &=
    \indicatorthat{\omega_{s}\neq\omega_{t}}\ob{\mathop{\prod_{X\in\cc
        X\colon}}_{\omega_{s},\omega_{t}\in\ell(X)}\cb{ (1-\frac{\alpha_{X}}{1+\alpha_{X}})
      \mathop{\prod_{J\subset \ob{s,t}}}_{\abs{J}\geq 1} (1+\hyp{J}{X}(\omega))} - 1} \\
%      &= \indicatorthat{\omega_{a}\neq\omega_{b}}\ob{\mathop{\prod_{X\in
%          \cc X}}_{a,b\in \ell(X)} (1-\frac{\alpha_{X}}{1+\alpha_{X}})
%      (1+\alpha_{X})^{\indicatorthat{\range{\omega\ob{a,b}}\cap
%          \ell(X)\neq\emptyset}} - 1} \\
      &=\indicatorthat{\omega_{s}\neq\omega_{t}}\ob{
      \mathop{\prod_{X\in\cc X\colon}}_{\omega_{s},\omega_{t}\in
        \ell(X)}(1+\alpha_{X})^{-\indicatorthat{
          \range{\omega\ob{s,t}}\cap\ell(X)=0}} - 1},
  \end{align}
  which is the second half of~\eqref{eq:LWW-Walk-I2PF}.
\end{proof}

\subsection{The Lace Expansion Equation}
\label{sec:LWW-X-Gas-Expansion}

This section shows how the recursion for the interaction expressed in
\Cref{lem:LWW-Hypergraph-Recursion,lem:LWW-Hypergraph-Lace-Lift}
translates into a recursion for the $c_{n}$. By summing the resulting
recursion over $n$ the desired lace expansion is obtained.

\subsubsection{Lace Expansion Equation}
\label{sec:LWW-Expansion-Equation}

For $m\geq 2$ define $\pi^{(N)}_{m}(x)$ to be
\begin{equation}
  \label{eq:LWW-Pi-Definition}
  %\pi_{m}^{(N)}(x) \equiv 
  z^{m} \alpha_{0}^{-1}
  \mathop{\sum_{\omega\colon 0\to x}}_{\abs{\omega}=m} 
  \sum_{L\in \cc L^{(N)}\cb{0,m}}
  \ob{\prod_{st\in L} I^{\omega}_{\cc X}(s,t)}
  \prod_{\edgespan (J,X)\in\cc C(L)}(1+\hyp{J}{X}(\omega)) \mathop{\prod_{a\leq j\leq
      b}}_{X^{\prime}\in \cc X}(1+\hyp{j}{X^{\prime}}(\omega)),
\end{equation}
where $\cc L^{(N)}\cb{0,m}$ is the set of laces with $N$ edges on the
interval $\cb{0,m}$. Let $\pi_{m}$ denote $\sum_{N\geq 1} (-1)^{N}
\pi_{m}^{(N)}$. Define $c_{m}=0$ for
$m<0$. \Cref{eq:LWW-Hypergraph-Interaction-1} combined with
\Cref{lem:LWW-Hypergraph-Recursion,lem:LWW-Hypergraph-Lace-Lift} imply
\begin{equation}
  \label{eq:LWW-Lace-1}
  z^{n}c_{n}(0,x) =
  \begin{cases}
    z\alpha\sum_{y\sim 0}z^{n-1}c_{n-1}(y,x) + \sum_{j\geq
      2}\sum_{y}\pi_{j}(y)z^{n-j}c_{n-j}(y,x) & n \geq 1 \\
    \alpha_{0}\delta_{0,x} & n=0.
  \end{cases}
\end{equation}
Let $G_{z}(x) = \sum_{n}z^{n}c_{n}(0,x)$. Summing~\eqref{eq:LWW-Lace-1} over
$n$, using the translation invariance of $G_{z}(x)$, and taking the
Fourier transform yields
\begin{equation}
  \label{eq:LWW-Lace-2}
  \hat G_{z}(k) = \alpha_{0} + \alpha z\abs{\Omega}\hat D(k) \hat
  G_{z}(k) + \hat \Pi_{z}(k) \hat G_{z}(k),
\end{equation}
where $\Pi_{z}(x) = \sum_{m\geq 2}\pi_{m}(x)$.

The next two sections give expressions for $\pi_{m}^{(N)}(x)$ in terms
of the quantities $\alpha_{X}$.

\subsubsection{Walk Representation of $\pi^{(N)}_{m}(x)$ for $N=1$}
\label{sec:LWW-X-Gas-pi-1}

If $N=1$ the lace consists of a single edge $0m$. If $x=0$ then
$\omega_{0}=\omega_{m}$, $I^{\omega}_{\cc X}(0,m)=1$, and
\begin{equation}
  \label{eq:LWW-X-Gas-pi-1-timelike}
  \pi^{(1)}_{m}(0) = z^{m}\alpha_{0}^{-1} \mathop{\sum_{\omega\colon 0
      \to 0}}_{\abs{\omega}=m} \indicatorthat{\omega\in\SAP}
  \prod_{X}(1+\alpha_{X})^{\indicatorthat{\range{\omega}\cap \ell(X) \neq\emptyset}}.
\end{equation}
If $x\neq 0$ the set of incompatible hyperedges are those that contain
both $0$ and $m$. Let $m_{1} = m-1$. \Cref{cor:LWW-Remainder} implies
that for $\omega\in \SAW$
\begin{equation}
  \label{eq:Edit-1}
  \prod_{(J,X)\in \cc C(0m)}(1+\hyp{J}{X}(\omega)) =
%  \indicatorthat{\omega\in\SAW} 
  \prod_{X\in \cc X} (1+\alpha_{X})^{ \indicatorthat{ \range{\omega}\cap
      \ell(X)\neq\emptyset}  +\indicator_{A}}
\end{equation}
where
\begin{equation}
  \label{eq:Edit-2}
  \indicator_{A} = \indicatorthat{\omega_{0}\in \ell(X)}
    \indicatorthat{\omega_{m}\in \ell(X)} \indicatorthat{
      \range{\omega\cb{1,m_{1}}} \cap \ell(X) = \emptyset},
\end{equation}
while if $\omega$ is not self-avoiding the right-hand side of
\Cref{eq:Edit-1} is zero. To see these two claims, use
\Cref{cor:LWW-Remainder} to compute the products over hyperedges
$(J,X)$ with (i) $J\subset \cb{1,m_{1}}$, (ii) $J\subset \cb{1,m}$
with $m\in J$, and (iii) $J\subset \cb{0,m_{1}}$ with $0\in J$. The
product over compatible hyperedges is the product of these terms. The
definition of $I^{\omega}_{\cc X}(0,m)$ when $\omega_{m}=x\neq 0$ then
gives a formula for $\pi^{(1)}_{m}(x)$:
\begin{align}
  \label{eq:LWW-X-Pi-1}
  \pi^{(1)}_{m}(x) = z^{m}\alpha_{0}^{-1}
  &\mathop{\sum_{\omega\colon 0 \to x}}_{\abs{\omega}=m}
  \indicatorthat{\omega\in \SAW}
  \prod_{X}(1+\alpha_{X})^{\indicatorthat{\range{\omega} \cap
    \ell(X)\neq \emptyset}} \\
  &\ob{ \prod_{X\in \cc X}(1+\alpha_{X})^{\indicatorthat{\omega_{0}\in \ell(X)}
  \indicatorthat{\omega_{m}\in \ell(X)}
  \indicatorthat{\range{\omega\cb{1,m_{1}}} 
  \cap \ell(X) = \emptyset}} - 1}.
\end{align}

\subsubsection{Walk Representation of $\pi^{(N)}_{m}(x)$ for $N\geq 2$}
\label{sec:LWW-X-Gas-pi-N}

For $N\geq 2$ the central observation is that the edges of a lace on
the discrete interval $\cb{a,b}$ divides the interval $\cb{a,b}$ into
$2N-1$ subintervals, see \Cref{fig:LWW-Lace}.
\begin{definition}
  \label{def:LWW-Valid}
  Let $m\in \N$. A vector $\vec m$ with components $m_{1}, \dots,
  m_{2N-1}$ is called \emph{valid} if (i) $m_{1}\geq 1$, $m_{2N-1}\geq
  1$, and $m_{2j}\geq 1$ for $1\leq j \leq N-1$, (ii) $m_{2j+1}\geq 0$
  for $1\leq j\leq N-1$, and (iii) $\sum m_{i} = m$.
\end{definition}
The lengths of the subintervals determined by a lace form valid vector
$\vec m$. The restrictions on which $m_{i}$ are strictly positive
arise from the definition of connectedness,
see~\cite[Section~3.3]{Slade2006} for more details. The subintervals
are given by
\begin{equation}
  \label{eq:LWW-UB-N.1}
  \bar I_{1} = \cb{0,m_{1}}, \quad \bar I_{2} = \cb{m_{1},
    m_{1}+m_{2}},\dots,
  \bar I_{2N-1} = \cb{m_{1} + \dots m_{2N-2}, m_{1} + \dots m_{2N-1}}.
\end{equation}
To each interval $\bar I_{k}$ associate a walk $\omega^{(k)}$, e.g.\
$\omega^{(2)} = (\omega_{m_{1}}, \omega_{m_{1}+1}, \dots,
\omega_{m_{1}+m_{2}})$. The walks $\omega^{(k)}$ interact with one
another through the compatible edges.

To the $k^{\mathrm{th}}$ interval associate (i) all hyperedges whose
span is contained in $\bar I_{k}$ and (ii) all compatible hyperedges
$(J,X)$ such that $\edgespan(J,X)$ is not contained in $\bar I_{k}$ with $\max
J\in\bar I_{k}$ \emph{and} $\max J \neq \max \bar I_{k}$.

For the subinterval $2N-1$ omit the last condition. That is, if a
hyperedge has $\max J=m$ associate this edge to $\bar
I_{2N-1}$. Subintervals $\bar I_{k}$ for $k<2N-1$ are missing
hyperedges of the form $(\max \bar I_{k},X)$. Including them, and
dividing by $\alpha_{0}$ to correct for this, shows the weight
associated to the interval $\bar I_{k}$ is
\begin{equation}
  \label{eq:LWW-UB-N.2}
  \alpha_{0}^{-1}\mathop{\prod_{(J,X)}}_{J\subset \bar I_{K}}(1+\hyp{J}{X})
  \prod_{\edgespan(J^{\prime},X^{\prime})\in \cc C_{k}}(1+\hyp{J^{\prime}}{X^{\prime}}),
\end{equation}
where the factor of $\alpha_{0}^{-1}$ for $k=2N-1$ comes from the
prefactor $\alpha_{0}^{-1}$ in the definition of $\pi^{(N)}_{m}$.

The last two factors can be evaluated together. A compatible
hyperedge must have its minimum index be at least the second index of
either $\omega^{(k-2)}$ or $\omega^{(k-3)}$. Suppose the first case;
the second is similar. \Cref{cor:LWW-Remainder} implies the product
in~\eqref{eq:LWW-UB-N.2} forces $\omega^{(k)}$ to be self-avoiding,
$\omega^{(k)}$ to avoid $\omega^{(k-1)}$ and
$\omega^{(k-2)}\cb{1\splice}$, and assigns
$\omega^{(k)}$ the weight
\begin{equation}
  \label{eq:LWW-UB-N.3}
  \alpha_{0}^{-1} \prod_{X\in \cc X}
  (1+\alpha_{X})^{\indicatorthat{ \range{\omega^{(k)}}\cap
      \ell(X) \neq \emptyset} \indicatorthat{
      \range{\omega^{(k-2)}\circ\omega^{(k-1)}\cb{1\splice}}\cap
      \ell(X) = \emptyset}},
\end{equation}

\tikzset{
  dot/.style={
    circle, inner sep=0pt, 
    minimum size=1.5mm, fill=black
 }
}

\begin{figure}[h]
  \centering
  \beginpgfgraphicnamed{fig3}
  \begin{tikzpicture}
    \node[dot] (v0) at (0,0) {};
    \node[dot] (v1) at (2,0) {};
    \node[dot] (v0p) at (2,2) {};
    \node[dot] (v2) at (4,2) {};
    \node[dot] (v1p) at (4,0) {};
    \node[dot] (v3) at (6,0) {};
    \node[dot] (v2p) at (6,2) {};
    \node[dot] (v4) at (8,2) {};
    \node[dot] (v3p) at (8,0) {};
    \node[dot] (v5) at (10,0) {};

    \node at (v1) [below] {$x_{1}$};
    \node at (v0p) [above] {${x_{0}^{\prime}}$};
    \node at (v2) [above] {$x_{2}$};
    \node at (v1p) [below] {$x_{1}^{\prime}$};
    \node at (v3) [below] {$x_{3}$};
    \node at (v2p) [above] {$x_{2}^{\prime}$};
    \node at (v4) [above] {$x_{4}$};
    \node at (v3p) [below] {$x_{3}^{\prime}$};

    \node at (v0) [below left] {$0$};
    \node at (v5) [below right] {$x$};

    \node(e1) at (1,0) {};
    \node (e2) at (2,1) {};
    \node (e3) at (3,2) {};
    \node (e4) at (4,1) {};
    \node (e5) at (5,0) {};
    \node (e6) at (6,1) {};
    \node (e7) at (7,2) {};
    \node (e8) at (8,1) {};
    \node (e9) at (9,0) {};

    \draw[black] (v0) -- (v1) -- (v0p) -- (v2) -- (v1p) -- (v3) --
    (v2p) -- (v4) -- (v3p) -- (v5);

    \draw[black, decorate, decoration={zigzag,segment length = 5,
      amplitude=.8}] (v0) to[out=30, in=240 ] (v0p); 
    \draw[black, decorate, decoration={zigzag,segment length = 5,
      amplitude=.8}] (v0) to[out=60, in=210] (v0p); 
    \draw[black, decorate, decoration={zigzag,segment length = 5,
      amplitude=.8}] (v1) to[out=15, in=165] (v1p); 
    \draw[black, decorate, decoration={zigzag,segment length = 5,
      amplitude=.8}] (v1) to[out=-15, in=195] (v1p); 
    \draw[black, decorate, decoration={zigzag,segment length = 5,
      amplitude=.8}] (v2) to[out=15, in=165] (v2p);
    \draw[black, decorate, decoration={zigzag,segment length = 5,
      amplitude=.8}] (v2) to[out=-15, in=195] (v2p);
    \draw[black, decorate, decoration={zigzag,segment length = 5,
      amplitude=.8}] (v3) to[out=15, in=165] (v3p);
    \draw[black, decorate, decoration={zigzag,segment length = 5,
      amplitude=.8}] (v3) to[out=-15, in=195] (v3p);
    \draw[black, decorate, decoration={zigzag,segment length = 5,
      amplitude=.8}] (v4) to[out=330,in=120] (v5);
   \draw[black, decorate, decoration={zigzag,segment length = 5,
      amplitude=.8}] (v4) to[out=300,in=150] (v5);

    \node at (e1) [below] {$m_{1}$};
    \node at (e2) [right] {$m_{2}$};
    \node at (e3) [above] {$m_{3}$};
    \node at (e4) [left] {$m_{4}$};
    \node at (e5) [below] {$m_{5}$};
    \node at (e6) [right] {$m_{6}$};
    \node at (e7) [above] {$m_{7}$};
    \node at (e8) [left] {$m_{8}$};
    \node at (e9) [below] {$m_{9}$};
  \end{tikzpicture}
  \endpgfgraphicnamed
  \caption[The diagrammatic representation of $\pi^{(5)}_{m}$]{The
    diagrammatic representation of $\pi^{(5)}_{m}(x)$ with $m = \sum
    m_{i}$. The vertices $x_{1}, \dots, x_{4}$ and
    $x_{0}^{\prime},\dots, x_{3}^{\prime}$ are summed over.}
  \label{fig:LWW-Diagrammatic-1}
\end{figure}
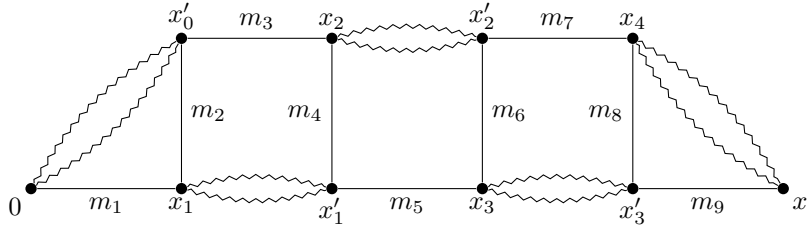

As an explicit formula for $\pi^{(N)}_{m}$ detailing the constraints
is unwieldy, let us explain the formula with a brief discussion of the
diagrammatic representation of $\pi^{(N)}_{m}$ in
\Cref{fig:LWW-Diagrammatic-1}. The solid lines represent a subdivision
of a walk $\omega$ into subwalks; these subwalks are subject to
self-avoidance constraints detailed below. Pairs of zigzag lines
represent $I^{\omega}_{\cc X}(t_{i},t_{i+1})$, where $t_{i}$ is the
time $x_{i}$ occurs in the walk $\omega = \omega^{(1)} \circ \dots
\circ \omega^{(2N-1)}$. Each walk $\omega^{(i)}$ has length $m_{i}$
and is self-avoiding. Further, each walk $\omega^{(i)}$ avoids some of
the previous walks $\omega^{(j)}$ for $j<i$, excluding the endpoint of
$\omega^{(i-1)}$. To be precise, $\omega^{(2)}$ avoids $\omega^{(1)}$,
$\omega^{(2k+1)}$ avoids $\omega^{(2k-1)}$ and $\omega^{(2k)}$, and
$\omega^{(2k+2)}$ avoids $\omega^{(2k-1)}$, $\omega^{(2k)}$, and
$\omega^{(2k+1)}$. The walk $\omega^{(j)}$ is weighted by those
closed walks in $\cc X$ that do not intersect the $\omega^{(j)}$ which
$\omega^{(i)}$ is forbidden to intersect; for example, in
\Cref{eq:LWW-UB-N.3} the walk $\omega^{(k)}$ is being weighted by all
closed walks that do not intersect $\omega^{(k-1)}$ or
$\omega^{(k-2)}\cb{1\splice}$.

\begin{remark}
  \label{rem:LWW-X-Gas-Repulsion}
  Since $\alpha_{X}\geq 0$ for each $X$, ignoring the constraint that some
  closed walks do not weight a subwalk gives an upper bound for the
  weight on the subwalks $\omega^{(i)}$. Ignoring the constraint of
  avoiding $\omega^{(j)}$ for some $j<i$ gives a further upper bound
  on $\pi^{(N)}_{m}(x)$.
\end{remark}

\section{Concrete Expressions for the Lace Expansion for
  $\lambda$-LWW}
\label{sec:LWW-LM-Formulation}

Quantities such as $\alpha_{0}(\cc X)$ and $I_{\cc X}^{\omega}$ will
be written as $\alpha_{0}(\lambda,z)$, $I_{\lambda,z}^{\omega}$ and
similarly in what follows. The arguments $\lambda$ and $z$ may be
omitted to lighten the notation. As emphasized
earlier, $\lambda\geq 0$ and $z\geq 0$ implies
$w_{\lambda,z}(\omega)\geq 0$, and hence $\alpha_{\omega}\geq 0$. In
particular, by \Cref{rem:LWW-X-Gas-Repulsion} we can obtain upper
bounds by ignoring constraints.

\begin{definition}
  \label{def:LWW-2-PT}
  The \emph{two point function} $G_{\lambda,z}(x,y)$ for $\lambda$-LWW
  is defined by
  \begin{equation}
    \label{eq:LWW-WLAW-2PT}
    G_{\lambda,z}(x,y) =  \sum_{\omega\colon x \to y} w_{\lambda,z}(\omega).
  \end{equation}
\end{definition}

By \Cref{thm:LWW-LM-Rep} and~\Cref{prop:LWW-2PT-Equivalence} the
two-point function $G_{\lambda,z}$ of $\lambda$-LWW is given by the
two-point function of self-avoiding walks weighted as in
\Cref{eq:LWW-Intro-X-Gas}. For future reference we state a
reformulation of~\eqref{eq:LWW-Lace-2} as a proposition.

\begin{proposition}
  \label{prop:LWW-Lace-Expansion-WLAW}
  \begin{equation}
    \label{eq:LWW-Lace-Expansion-WLAW}
    \hat G_{\lambda,z}(k) = \frac{\alpha_{0}(\lambda,z)}
    {1 - \alpha(\lambda,z)\abs{\Omega}\hat D(k) - \hat\Pi_{\lambda,z}(k)}.
  \end{equation}
\end{proposition}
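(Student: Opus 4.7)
The proposition is essentially a restatement of the lace expansion identity \eqref{eq:LWW-Lace-2}, solved for $\hat G_{\lambda,z}(k)$, so my plan is to trace through that derivation carefully and then isolate $\hat G_{\lambda,z}(k)$ algebraically.

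First I would identify $G_{\lambda,z}(0,x)$ with $\sum_{n \geq 0} z^{n} c_{n}(0,x)$. This combines the definition of $G_{\lambda,z}$ in \Cref{def:LWW-2-PT}, the equality $\sum_{\omega \colon 0 \to x} \bar w_{\lambda,z}(\omega) = \sum_{\omega \colon 0 \to x} w_{\lambda,z}(\omega)$ from \Cref{prop:LWW-2PT-Equivalence}, and the hypergraph representation \eqref{eq:LWW-Hypergraph-Interaction-1} of the self-avoiding walk count weighted as in \eqref{eq:LWW-Intro-X-Gas}. With this identification in hand, the recursion \eqref{eq:LWW-Lace-1} becomes a recursion for the coefficients of the formal power series $G_{\lambda,z}$.

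Next I would sum \eqref{eq:LWW-Lace-1} over $n \geq 0$. Translation invariance of the weights, guaranteed by \Cref{ass:LWW-Symmetry}, gives $c_{m}(y,x) = c_{m}(0, x - y)$, so each term on the right-hand side becomes a convolution of the form $(f \star G_{\lambda,z})(x)$. Concretely one obtains
\begin{equation*}
G_{\lambda,z}(x) = \alpha_{0}\, \delta_{0,x} + \alpha \sum_{y \sim 0} z\, G_{\lambda,z}(x - y) + \sum_{y \in \Z^{d}} \Pi_{\lambda,z}(y)\, G_{\lambda,z}(x - y),
\end{equation*}
where $\Pi_{\lambda,z}(y) = \sum_{m \geq 2} \pi_{m}(y)$. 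Applying the Fourier transform and using $\sum_{y \sim 0} e^{i k \cdot y} = \abs{\Omega} \hat D(k)$ converts the convolutions into products, yielding \eqref{eq:LWW-Lace-2}. Collecting the $\hat G_{\lambda,z}(k)$ terms on the left and dividing produces the stated formula.

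The only real issue is convergence, and this is where I would be most careful. For $z < (\abs{\Omega} \sqrt{\bar\lambda})^{-1}$, \Cref{prop:LWW-Trivial-G-Bound} ensures $\hat G_{\lambda,z}$ is well defined and the $\alpha_{X}$ are summable enough to make the $\pi_{m}$ absolutely convergent, legitimising the interchange of sums needed to pass from \eqref{eq:LWW-Lace-1} to the convolution identity above. Outside this regime the identity is to be read as an equality of formal power series in $z$, consistent with the formal-power-series framework already used in \Cref{sec:LWW-Hyperlaces}. Beyond this convergence check the proof is routine algebra, so I do not expect any significant obstacle.
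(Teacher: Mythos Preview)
Your proposal is correct and follows exactly the route the paper takes: the proposition is introduced in the text as ``a reformulation of~\eqref{eq:LWW-Lace-2}'', so the paper's proof is precisely the derivation \eqref{eq:LWW-Lace-1} $\to$ \eqref{eq:LWW-Lace-2} followed by the algebraic step of solving for $\hat G_{\lambda,z}(k)$, which is what you outline. Your added remarks on convergence and the formal-power-series interpretation are appropriate and consistent with how the paper treats these manipulations.

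One small observation: your derivation (and \eqref{eq:LWW-Lace-2}) correctly produce the factor $z\alpha(\lambda,z)\abs{\Omega}\hat D(k)$ in the denominator, whereas the displayed proposition omits the $z$. This is a typographical slip in the statement---the paper itself uses the version with the $z$ throughout (see the definition of $\hat F_{\lambda,z}(k)$ in \Cref{sec:LWW-Bootstrap})---so your argument in fact establishes the intended identity.
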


To analyze the recursion~\eqref{eq:LWW-Lace-Expansion-WLAW} it will be
convenient to rewrite the equation in terms of $w_{\lambda,z}$ and the
loop measure $\mu_{\lambda,z}$. The quantities $\alpha_{0}(\lambda,z)$ and
$\alpha(\lambda,z)$ can be expressed as, for $y\sim 0 \in \Z^{d}$,
\begin{equation}
  \label{eq:LWW-Specalization-4}
  \alpha_{0}(\lambda,z) = \exp \ob{\mu_{\lambda,z}(0)}, \qquad
%  \label{eq:LWW-Specialization-5}
  \alpha(\lambda,z) = \exp \ob{\mu_{\lambda,z}(0;y)}.
\end{equation}
Note that $\alpha_{0}\geq \alpha \geq 1$. Let $I^{\omega}_{\lambda,z}
= I^{\omega}_{\cc X}$. $I^{\omega}_{\lambda,z}(a,b)$ can be written in
a loop measure like way:
\begin{equation}
  \label{eq:LWW-Specialization-I2PF}
  I^{\omega}_{\lambda,z}(a,b) = \indicatorthat{\omega_{a} = \omega_{b}} +
  \indicatorthat{\omega_{a}\neq\omega_{b}} \ob{1 - e^{-
    \mu_{\lambda,z}( \omega_{a},\omega_{b}; \range{\omega\ob{a,b}})}},
\end{equation}
where
\begin{equation}
  \label{eq:LWW-Loop-Measure-Generalized}
  \mu_{\lambda,z}(A,B;C) = \sum_{x} \mathop{\sum_{\omega\colon x\to
      x}}_{\abs{\omega} \geq 1}
  \frac{w_{\lambda,z}(\omega)}{\abs{\omega}}
  \indicatorthat{\range{\omega}\cap A \neq\emptyset}
  \indicatorthat{\range{\omega} \cap B \neq \emptyset}
  \indicatorthat{\range{\omega} \cap C = \emptyset}.
\end{equation}
As with the loop measure, define $\mu_{\lambda,z}(A,B) =
\mu_{\lambda,z}(A,B;\emptyset)$. The effect of this more complicated
object is to require that both an element from $A$ \emph{and}
$B$ are in the range of the walk.

\section{Convergence of the Lace Expansion I. Preliminaries}
\label{sec:LWW-Convergence}

This section establishes the basic facts used to prove the convergence
of the lace expansion. The strategy is that of~\cite{Slade2006},
suitably adapted and modified for $\lambda$-LWW. An important role is
played by the function $H_{\lambda,z}$ in the next definition.

\begin{definition}
  \label{def:LWW-Reduced-2PT-Function}
  The \emph{reduced two point function} $H_{\lambda,z}(x,y)$ is
  defined by
  \begin{equation}
    \label{eq:LWW-WLAW-2PT-H}
    H_{\lambda,z}(x,y) = (1-\delta_{x,y})G_{\lambda,z}(x,y).
  \end{equation}
\end{definition}
A useful fact that will be used repeatedly is that
\begin{equation}
  \label{eq:LWW-G-H-Relation}
  G_{\lambda,z}(x,y) = \delta_{x,y}\alpha_{0}(\lambda,z) +
  H_{\lambda,z}(x,y).
\end{equation}
The two-point functions $G_{\lambda,z}$ and $H_{\lambda,z}$ inherit
the isometry invariance of the weight $w_{\lambda,z}$. By translation
invariance $G_{\lambda,z}(x,y) = G_{\lambda,z}(0,y-x)$; it will be
convenient to write $G_{\lambda,z}(x)$ for $G_{\lambda,z}(0,x)$.

\subsection{Random Walk Quantities and Bounds}
\label{sec:LWW-SRW-Versions}

\begin{definition}
  The \emph{random walk $2$-point function} $C_{z}(x)$ and its Fourier
  transform $\hat C_{z}(k)$ are given by
  \begin{equation}
    \label{eq:LWW-SRW-FT}
    C_{z}(x) = \sum_{\omega\colon x\to x}z^{\abs{\omega}}, \qquad 
    \hat C_{z}(k) = \frac{1}{1-z\abs{\Omega}\hat D(k)}.
  \end{equation}
\end{definition}

The following facts about the random walk two-point function will be
useful. For notational clarity, let $\beta$ be a quantity that is
$O(\abs{\Omega}^{-1})$. $\beta$ is to be thought of as being a small
parameter. 

\begin{lemma}[name = Lemma~5.5 of~\cite{MadrasSlade2013}]
  \label{lem:LWW-SL5.5}
  Assume $d>4$. Then for $0\leq z \leq \abs{\Omega}^{-1}$
  \begin{align}
    \label{eq:LWW-SL5.5.1}
    \sup_{x}D(x) &\leq \beta \\
    \label{eq:LWW-SL5.5.2}
    \norm{C_{z}}_{2}^{2} &\leq 1 + c\beta\\
    \label{eq:LWW-SL5.5.3}
    \norm{(1-\cos(k\cdot x))C_{z}(x)}_{\infty} &\leq
    5(1+c\beta)(1-\hat D(k))
  \end{align}
\end{lemma}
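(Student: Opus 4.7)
All three estimates are classical facts about the nearest-neighbour random walk Green function on $\Z^{d}$ in high dimensions and are recorded in Lemma~5.5 of~\cite{MadrasSlade2013}; I sketch the strategy. The first bound is immediate: $D(x)=\abs{\Omega}^{-1}\indicatorthat{x\sim 0}$, so $\sup_{x}D(x)=\abs{\Omega}^{-1}$, which is $\beta$ by the convention of this section.

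For the bubble bound, the plan is to apply Parseval and expand the geometric series $\hat C_{z}(k)=\sum_{N}(z\abs{\Omega})^{N}\hat D(k)^{N}$ to obtain
\[
\norm{C_{z}}_{2}^{2}=\int_{\FS^{d}}\hat C_{z}(k)^{2}\frac{dk}{\FSint}=\sum_{N\geq 0}(N+1)(z\abs{\Omega})^{N}D^{*N}(0).
\]
The $N=0$ term gives exactly $1$; odd $N$ vanish by bipartiteness of $\Z^{d}$; $D^{*2}(0)=\abs{\Omega}^{-1}$ contributes at most $3\beta$; and for even $N\geq 4$ the local central limit theorem yields $D^{*N}(0)\leq c N^{-d/2}$. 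The tail $\sum_{N\geq 4}(N+1)N^{-d/2}$ converges precisely for $d>4$ (which is where the dimension hypothesis enters), and a careful uniform-in-$d$ accounting of the prefactor collapses the tail into $c\beta$, giving $\norm{C_{z}}_{2}^{2}\leq 1+c\beta$.

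For the trigonometric bound, I would combine Fourier inversion with the resolvent structure of $\hat C_{z}$. Starting from
\[
(1-\cos k\cdot x)C_{z}(x)=\int_{\FS^{d}}\ob{\hat C_{z}(\ell)-\tfrac{1}{2}\hat C_{z}(\ell-k)-\tfrac{1}{2}\hat C_{z}(\ell+k)}e^{-i\ell\cdot x}\frac{d\ell}{\FSint}
\]
and $\hat C_{z}(\ell)-\hat C_{z}(\ell\pm k)=z\abs{\Omega}\hat C_{z}(\ell)\hat C_{z}(\ell\pm k)\ob{\hat D(\ell)-\hat D(\ell\pm k)}$, the bracketed expression can be rewritten as a sum of two terms each carrying a factor $\hat C_{z}(\ell)\hat C_{z}(\ell\pm k)$ against a trigonometric factor. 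The latter is controlled by the elementary identity
\[
\hat D(\ell)-\tfrac{1}{2}\hat D(\ell-k)-\tfrac{1}{2}\hat D(\ell+k)=\sum_{y}D(y)\cos(\ell\cdot y)(1-\cos k\cdot y),
\]
which has modulus at most $1-\hat D(k)$, together with a cross term in $\sin(\ell\cdot y)\sin(k\cdot y)$ handled by AM--GM. Cauchy--Schwarz in $\ell$ and the bound from the second part then produce the stated inequality; the constant $5$ arises from tallying the triangle-inequality losses in the three-term expansion.

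The main obstacle is precisely the $L^{\infty}$ nature of the third bound: a naive positivity argument $\sup_{x}(1-\cos k\cdot x)C_{z}(x)\leq\sum_{x}(1-\cos k\cdot x)C_{z}(x)=\hat C_{z}(0)-\hat C_{z}(k)$ diverges at $z=\abs{\Omega}^{-1},k=0$, so the resolvent manipulation above is essential to extract the compensating factor $1-\hat D(k)$ and obtain a bound uniform in $z\leq\abs{\Omega}^{-1}$.
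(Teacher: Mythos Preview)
The paper does not prove this lemma at all: it is stated with the attribution ``Lemma~5.5 of~\cite{MadrasSlade2013}'' and used as a black box. So there is nothing to compare your argument against within the paper itself; your sketch is essentially a reconstruction of the Madras--Slade proof, and the outline is correct.

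One remark on efficiency: for the bubble bound~\eqref{eq:LWW-SL5.5.2} your route through the LCLT and term-by-term control of $\sum (N+1)D^{*N}(0)$ works but is heavier than necessary, and the ``careful uniform-in-$d$ accounting'' you allude to is exactly the delicate part. The paper already records, as \Cref{prop:LWW-Small}, the integral bound $\int_{\FS^{d}}(1-\hat D(k))^{-r}\,\frac{d^{d}k}{\FSint}\leq 1+c\beta$ for $d>2r$; taking $r=2$ and observing $\hat C_{z}(k)\leq (1-\hat D(k))^{-1}$ for $0\leq z\leq\abs{\Omega}^{-1}$ gives~\eqref{eq:LWW-SL5.5.2} in one line. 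Your approach to~\eqref{eq:LWW-SL5.5.3} via the resolvent identity and the second-difference formula for $\hat D$ is the standard one and is fine as a plan; the observation that the naive $L^{1}$ bound fails and that one must extract the factor $1-\hat D(k)$ from the resolvent structure is exactly the point.
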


\begin{proposition}
  \label{prop:LWW-Small}
  Let $r\in \N$. There is a constant $K$ independent of $d$ such that
  for $d>2r$.
  \begin{equation}
    \label{eq:LWW-Small}
    \int_{\FS^{d}} \ob{\frac{1}{1-\hat D(k)}}^{r}\,
    \frac{d^{d}k}{\FSint} \leq 1+c\beta.
  \end{equation}
\end{proposition}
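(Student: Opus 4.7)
The plan is to expand $(1-\hat D(k))^{-r}$ as a power series in $\hat D(k)$, integrate termwise, and estimate the resulting random-walk series. Using $(1-x)^{-r}=\sum_{n\geq 0}\binom{n+r-1}{r-1}x^n$ (valid on $|x|<1$, which holds for $\hat D(k)$ almost everywhere on $\FS^d$) together with the Fourier-inversion identity $\int_{\FS^d}\hat D(k)^n\,d^dk/\FSint = D^{*n}(0)$, where $D^{*n}(0)$ is the $n$-step return probability at the origin of simple random walk, Tonelli yields
\[
\int_{\FS^d}\ob{\frac{1}{1-\hat D(k)}}^{r}\frac{d^dk}{\FSint}=\sum_{n\geq 0}\binom{n+r-1}{r-1}D^{*n}(0).
\]
The bipartite structure of $\Z^d$ forces $D^{*n}(0)=0$ for odd $n$, and $D^{*0}(0)=1$ supplies the ``$1$'' in the claimed bound. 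It therefore suffices to show that $\sum_{m\geq 1}\binom{2m+r-1}{r-1}D^{*2m}(0)\leq c\beta$.

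For $D^{*2m}(0)$ I would use two complementary estimates. Parseval gives $D^{*2m}(0)=\|D^{*m}\|_{2}^{2}\leq \|D^{*m}\|_\infty$, and iterating $\|f\ast g\|_\infty\leq \|f\|_\infty\|g\|_{1}$ with $\|D\|_{1}=1$ yields $\|D^{*m}\|_\infty\leq \|D\|_\infty=\beta$. Hence the uniform bound $D^{*2m}(0)\leq\beta$ for $m\geq 1$, which alone handles $m=1$. A sharper combinatorial bound of the form $D^{*2m}(0)\leq C_m\beta^{m}$ with $C_m$ independent of $d$ (proved by noting that each closed SRW walk of length $2m$ contributes $(2d)^{-2m}=\beta^{2m}$ and that the number of such walks is at most an explicit combinatorial factor such as $\binom{2m}{m}(2d)^{m}$) controls small-to-moderate $m$. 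For $m$ large, the local-CLT type decay $D^{*2m}(0)\leq C m^{-d/2}$ combined with $\binom{2m+r-1}{r-1}=O(m^{r-1})$ gives a tail $\sum_{m}m^{r-1-d/2}$ that converges precisely because $d>2r$.

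To finish, I would split the sum at an appropriate cutoff $M$. The sub-$M$ block is bounded by the convergent series $\sum_{m\geq 1}\binom{2m+r-1}{r-1}C_m\beta^{m}$, which is $O(\beta)$ uniformly in $d$ because each term carries at least one power of $\beta$. The super-$M$ block is controlled by the polynomial decay and yields an even smaller contribution under $d>2r$. The main technical obstacle is securing the small-$m$ bound $D^{*2m}(0)\leq C_{m}\beta^{m}$ with constants $C_m$ that neither depend on $d$ nor grow too fast in $m$; this is a careful but standard enumeration of closed SRW paths, analogous to arguments in~\cite{MadrasSlade2013}. The hypothesis $d>2r$ is sharp at the level of integrability, since near $k=0$ one has $1-\hat D(k)\sim |k|^{2}/(2d)$ and so $(1-\hat D(k))^{-r}$ is integrable precisely for $d>2r$.
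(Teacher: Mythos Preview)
Your expansion and termwise integration is exactly the standard route, and indeed the paper's own proof is nothing more than a pointer to~\cite[Lemma~A.3]{MadrasSlade2013}, whose argument is of this type. So at the level of strategy you are aligned with the paper.

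There is, however, a genuine slip in how you handle the tail. The local-CLT bound $D^{*2m}(0)\leq Cm^{-d/2}$ carries a constant $C$ that depends on $d$ --- for nearest-neighbour SRW it behaves like $(d/4\pi)^{d/2}$ --- so the split at a cutoff $M$ does not give a tail bound uniform in $d$, and hence does not produce the claimed $c\beta$ with $c$ independent of $d$. Fortunately the local-CLT piece is unnecessary: your combinatorial bound $D^{*2m}(0)\leq C_m\beta^{m}$ with $C_m$ independent of $d$ can be made to hold with $C_m\leq C^{m}$ for an absolute constant $C$ (this is the content of the Madras--Slade estimate you cite), and then
\[
\sum_{m\geq 1}\binom{2m+r-1}{r-1}D^{*2m}(0)\leq \sum_{m\geq 1}\binom{2m+r-1}{r-1}C^{m}\beta^{m}
\]
is a convergent power series in $\beta$ with no constant term, hence $O(\beta)$ uniformly in $d$ once $\beta$ is small. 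No cutoff and no local CLT are needed; the condition $d>2r$ enters only to guarantee integrability in the first place. If you drop the local-CLT paragraph and sharpen the combinatorial bound to $C_m\leq C^m$, the argument is complete and matches the reference.
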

\begin{proof}
  This follows by the argument used in the proof
  of~\cite[Lemma~A.3]{MadrasSlade2013}.
\end{proof}

\subsection{Convergence Strategy and Basic Bounds}
\label{sec:LWW-Convergence-Proof}

The proof of convergence is based on comparing the behaviour of simple
random walk and $\lambda$-LWW. Define $p(z)$
by
\begin{equation}
  \label{eq:LWW-Rescaling-Definition}
  \frac{\hat G_{\lambda,z}(0)}{\alpha_{0}(\lambda,z)} =
  \frac{1}{1-p(z)\abs{\Omega}} = \hat C_{p(z)}(0).
\end{equation}
Roughly speaking, the intuition is that $\lambda$-LWW should behave
like simple random walk. The definition of $p(z)$ serves to determine
the activity of the simple random walk that matches $\lambda$-LWW with
activity $z$. The following bootstrap lemma is what enables
conclusions to be drawn for $z<z_{c}(\lambda)$.
\begin{lemma}[name = {\cite[Lemma 5.9]{Slade2006}}]
  \label{lem:LWW-Bootstrap}
  Let $a<b$, let $f$ be a continuous function on the interval
  $\co{z_{1},z_{2}}$, and assume that $f(z_{1})\leq a$. Suppose for
  each $z\in \ob{z_{1},z_{2}}$ that $f(z)\leq b$ implies
  $f(z)\leq a$. Then $f(z)\leq a$ for all $z\in \co{z_{1},z_{2}}$.
\end{lemma}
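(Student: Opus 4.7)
The plan is to run a standard continuity/bootstrap argument, using the gap between $a$ and $b$ as the ``room'' that continuity needs in order to propagate the bound. Define the set
\[
S = \set{z \in \co{z_{1},z_{2}} \mid f(z) \leq a}.
\]
I will show $S$ is nonempty, closed in $\co{z_{1},z_{2}}$, and open in $\co{z_{1},z_{2}}$. Since $\co{z_{1},z_{2}}$ is connected, this forces $S = \co{z_{1},z_{2}}$, which is the desired conclusion.

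Nonemptiness is immediate from the hypothesis $f(z_{1}) \leq a$, so $z_{1} \in S$. Closedness follows from continuity: if $z_{n} \in S$ and $z_{n} \to z \in \co{z_{1},z_{2}}$, then $f(z) = \lim f(z_{n}) \leq a$.

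The key step is openness, and this is where the hypothesis $a < b$ is used. Suppose $z \in S$, so $f(z) \leq a < b$. By continuity of $f$, there is a neighborhood $U$ of $z$ in $\co{z_{1},z_{2}}$ on which $f < b$. For any $z^{\prime} \in U$ with $z^{\prime} > z_{1}$, the hypothesis applied at $z^{\prime}$ (using $f(z^{\prime}) \leq b$) yields $f(z^{\prime}) \leq a$, so $z^{\prime} \in S$. If $z = z_{1}$ then $U$ already contains a right-neighborhood of $z_{1}$ consisting of points in $S$; if $z > z_{1}$ then $U$ itself lies in $\ob{z_{1},z_{2}}$ after shrinking, and so $U \subset S$.

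There is no real obstacle here beyond a careful topological bookkeeping — the whole point of introducing the auxiliary threshold $b$ in the hypothesis is precisely to give continuity enough slack to conclude that the bound $f \leq a$ cannot fail for the first time at any interior point. This lemma will be applied later with $f$ built out of Fourier-space norms of $\hat G_{\lambda,z}$, where the dichotomy ``$f(z) \leq b$ implies $f(z) \leq a$'' will come from the lace-expansion identity~\eqref{eq:LWW-Lace-Expansion-WLAW} combined with the diagrammatic estimates of \Cref{sec:LWW-Diagrammatic-Bounds}.
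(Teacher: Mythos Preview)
Your argument is correct: the clopen-subset-of-a-connected-interval argument is exactly the standard proof of this bootstrap lemma. The paper does not supply its own proof but simply cites \cite[Lemma~5.9]{Slade2006}, and your write-up is essentially the proof one finds there.
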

To describe the function $f$ used in applying \Cref{lem:LWW-Bootstrap}
some definitions are needed.

\begin{definition}
  \label{def:LWW-Delta-k}
  Define $\Delta_{k}\hat A(\ell)$ by
  \begin{equation}
    -\frac{1}{2} \Delta_{k} \hat A(\ell) = \hat A(\ell) - \frac{1}{2}
    \ob{ \hat A(\ell +k) + \hat A(\ell -k)},
  \end{equation}
  and define
  \begin{multline}
    \nonumber
    U_{p(z)}(k,\ell) = 16\hat C_{p(z)}^{-1}(k) \bigg( \hat C_{p(z)}(\ell-k)
    \hat C_{p(z)}(\ell)  + \hat C_{p(z)}(\ell + k) \hat C_{p(z)}(\ell)\\+ \hat
    C_{p(z)}(\ell - k)\hat C_{p(z)}(\ell + k)\bigg).
  \end{multline}
\end{definition}
The quantity $U_{p(z)}$ is a convenient upper bound for $\frac{1}{2}
\abs{\Delta_{k}\hat C_{p(z)}(\ell)}$: this can be seen
by~\cite[Lemma~5.7]{Slade2006}. Define $f(z) = \max \{f_{1}(z),
f_{2}(z), f_{3}(z)\}$, where
\begin{equation}
  \label{eq:LWW-Comparison-Functions}
  f_{1}(z) = z\alpha(\lambda,z)\abs{\Omega}, \quad f_{2}(z) =
  \sup_{k\in\cb{-\pi,\pi}^{d}} \frac{ \abs{\hat G_{\lambda,z}(k)}
  }{\hat C_{p(z)}(k)}, \quad f_{3}(z) =
  \sup_{k,\ell\in\cb{-\pi,\pi}^{d}} \frac{ \Delta_{k}\hat
    G_{\lambda,z}(\ell)}{U_{p(z)}(k,\ell)}.
\end{equation}

The next lemma will be useful for estimating $G_{\lambda,z}$.
\begin{lemma}
  \label{lem:LWW-One-Step-SM}
  Assume $y\neq x$. The following inequality holds:
  \begin{equation}
    \label{eq:LWW-One-Step-SM}
    G_{\lambda,z}(x,y) \leq
    z\alpha(\lambda,z)\abs{\Omega}\sum_{u}D(u)G_{\lambda,z}(u,y).
  \end{equation}
\end{lemma}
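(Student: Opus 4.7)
The plan is to use the loop-measure representation of \Cref{thm:LWW-LM-Rep,prop:LWW-2PT-Equivalence} to rewrite $G_{\lambda,z}$ as a sum over self-avoiding walks, and then to decompose each self-avoiding walk by its first step. By \Cref{prop:LWW-2PT-Equivalence,thm:LWW-LM-Rep},
\begin{equation*}
  G_{\lambda,z}(x,y) = \sum_{\eta\in\SAW(x,y)} z^{\abs{\eta}}\exp\!\ob{\mu_{\lambda,z}(\range{\eta})}.
\end{equation*}
Since $y\neq x$, every $\eta\in \SAW(x,y)$ has at least one step and admits a unique decomposition $\eta=(x,u)\circ \eta'$, where $u\sim x$ and $\eta'\in \SAW(u,y)$ satisfies $x\notin \range{\eta'}$. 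In particular, $\abs{\eta}=1+\abs{\eta'}$ and the union $\range{\eta}=\{x\}\cup \range{\eta'}$ is disjoint.

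Next I would handle the loop-measure factor. Splitting closed walks according to whether their range meets $\range{\eta'}$ gives the identity
\begin{equation*}
  \mu_{\lambda,z}(\{x\}\cup\range{\eta'}) = \mu_{\lambda,z}(\range{\eta'}) + \mu_{\lambda,z}(x;\range{\eta'}).
\end{equation*}
Since $u\in \range{\eta'}$, monotonicity of $\mu_{\lambda,z}(\cdot\,;\cdot)$ in its second argument (\Cref{prop:LWW-LM-Properties}) yields $\mu_{\lambda,z}(x;\range{\eta'})\leq \mu_{\lambda,z}(x;u)$. By translation and isometry invariance (\Cref{ass:LWW-Symmetry} and the first part of \Cref{prop:LWW-LM-Properties}), $\mu_{\lambda,z}(x;u)$ depends only on the fact that $u$ is a neighbour of $x$, so by \Cref{eq:LWW-Specalization-4} it equals $\log\alpha(\lambda,z)$.

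Combining these observations,
\begin{equation*}
  z^{\abs{\eta}}\exp\!\ob{\mu_{\lambda,z}(\range{\eta})} \leq z\alpha(\lambda,z)\cdot z^{\abs{\eta'}}\exp\!\ob{\mu_{\lambda,z}(\range{\eta'})}.
\end{equation*}
Summing over $\eta'$ after dropping the constraint $x\notin \range{\eta'}$ (a further upper bound, valid because $\bar w_{\lambda,z}\geq 0$), and using that $u\sim x$ is equivalent to $D(u-x)=\abs{\Omega}^{-1}$, I obtain
\begin{equation*}
  G_{\lambda,z}(x,y) \leq z\alpha(\lambda,z)\sum_{u\sim x} G_{\lambda,z}(u,y) = z\alpha(\lambda,z)\abs{\Omega}\sum_{u}D(u-x)G_{\lambda,z}(u,y),
\end{equation*}
which, after relabelling the summation variable and using translation invariance of $G_{\lambda,z}$, is the desired inequality. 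The main (minor) subtlety is isolating $\mu_{\lambda,z}(x;\range{\eta'})$ cleanly from the full loop-measure factor; once that is separated, everything else is just monotonicity and symmetry.
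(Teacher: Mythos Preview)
Your proof is correct and follows essentially the same route as the paper: rewrite $G_{\lambda,z}$ via the loop-measure representation, split each self-avoiding walk at its first step, use the identity $\mu_{\lambda,z}(\{x\}\cup\range{\eta'})=\mu_{\lambda,z}(\range{\eta'})+\mu_{\lambda,z}(x;\range{\eta'})$, bound the latter by $\mu_{\lambda,z}(x;u)=\log\alpha$ via monotonicity, and relax the self-avoidance constraint. The paper's proof is the same argument written for $x=0$, with the splitting identity left implicit.
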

\begin{proof}
  This can be proven using the loop measure representation. For
  $\eta$ a walk beginning at $u\sim 0$, let $0\eta = (0,u)\circ\eta$.
  \begin{align}
    G_{\lambda,z}(0,y) &= \sum_{\eta\colon 0\to y}
    \indicatorthat{\eta\in \SAW}
    z^{\abs{\eta}} \exp \ob{ \mu_{\lambda,z}(\range{\eta})} \\
    &= \sum_{u\sim 0} \sum_{\eta\colon u\to y}
    \indicatorthat{0\eta\in \SAW}
    z \exp \ob{ \mu_{\lambda,z}(0; \range{\eta})} z^{\abs{\eta}} \exp
    \ob{ \mu_{\lambda,z}(\range{\eta})} \\
    &\leq z\alpha(\lambda,z)\abs{\Omega} \sum_{u}D(u) \sum_{\eta\colon
        u\to y} \indicatorthat{\eta\in \SAW} z^{\abs{\eta}} \exp
    \ob{ \mu_{\lambda,z}(\range{\eta})} \\
    &= z\alpha(\lambda,z)\abs{\Omega} \sum_{u}D(u)G_{\lambda,z}(u,y),
  \end{align}
  The inequality follows as (a) \Cref{prop:LWW-LM-Properties} implies
  $\mu_{\lambda,z}(0;\range{\eta})$ is bounded above by
  $\mu_{\lambda,z}(0;u) = \alpha_{0}$ and (b)
  $\indicatorthat{0\eta\in\SAW}$ is bounded above by
  $\indicatorthat{\eta\in\SAW}$.
\end{proof}

\begin{proposition}
  \label{prop:LWW-SL5.10}
  Assume $d>4$. Fix $z\in \ob{0,z_{c}}$ and assume $f(z)\leq K$. Then
  there is a constant $c_{K}$ independent of $z$ and $d$ such that
  \begin{align}
    \label{eq:LWW-SL5.10.1}
    \norm{(1-\cos(k\cdot x))H_{\lambda,z}}_{\infty} &\leq
    c_{K}(1+\beta) \hat C_{p(z)}(k)^{-1}, \\
    \label{eq:LWW-SL5.10.2}
    \norm{H_{\lambda,z}}_{2}^{2}  &\leq c_{K}\beta \\
    \label{eq:LWW-SL5.10.3}
    \norm{H_{\lambda,z}}_{\infty} & \leq c_{K}\beta.
  \end{align}
\end{proposition}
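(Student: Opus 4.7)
The three bounds follow the Slade-type bootstrap pattern, adapted to $\lambda$-LWW via two key structural facts. First, the decomposition $G_{\lambda,z}=\alpha_0\delta_{0,\cdot}+H_{\lambda,z}$ combined with the fact that $\Delta_k$ annihilates constants yields $\Delta_k \hat H_{\lambda,z}=\Delta_k\hat G_{\lambda,z}$. Second, \Cref{lem:LWW-One-Step-SM} gives the pointwise inequality
\[
H_{\lambda,z}(x)\leq z\alpha(\lambda,z)\abs{\Omega}\,(D*G_{\lambda,z})(x)\leq K(D*G_{\lambda,z})(x)
\]
for all $x$ (trivially at $x=0$, since $H_{\lambda,z}(0)=0$), which plays the role of the one-step inequality for SAW and drives the sup- and $L^2$-bounds.

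For \eqref{eq:LWW-SL5.10.1}, I would Fourier-invert: since the Fourier transform of $(1-\cos(k\cdot x))f(x)$ equals $-\tfrac12\Delta_k\hat f(\ell)$, we obtain
\[
\bigl|(1-\cos(k\cdot x))H_{\lambda,z}(x)\bigr|\leq \tfrac12\int_{\FS^d}\bigl|\Delta_k\hat H_{\lambda,z}(\ell)\bigr|\,\frac{d^d\ell}{\FSint}\leq \tfrac{K}{2}\int_{\FS^d}U_{p(z)}(k,\ell)\,\frac{d^d\ell}{\FSint},
\]
using $\Delta_k\hat H=\Delta_k\hat G$ and $f_3(z)\leq K$. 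By Cauchy--Schwarz and Parseval each of the three cross-products in $U_{p(z)}$ integrates to at most $\norm{C_{p(z)}}_2^2\leq 1+c\beta$ (\Cref{lem:LWW-SL5.5}), giving $\int U_{p(z)}(k,\ell)\,d^d\ell\leq c(1+\beta)\hat C_{p(z)}(k)^{-1}$.

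For \eqref{eq:LWW-SL5.10.2}, I would take the $L^2$ norm of the pointwise inequality, apply Plancherel, then invoke $f_2(z)\leq K$ to get $\abs{\hat G_{\lambda,z}(k)}\leq K\hat C_{p(z)}(k)$:
\[
\norm{H_{\lambda,z}}_2^2\leq K^2\norm{D*G_{\lambda,z}}_2^2\leq K^4\int_{\FS^d}\bigl(\hat D(k)\hat C_{p(z)}(k)\bigr)^2\,\frac{d^d k}{\FSint}.
\]
It remains to see that the last integral is $O(\beta)$ uniformly in $p(z)\in[0,\abs{\Omega}^{-1}]$. For this, note that $\hat D(k)^2/(1-p(z)\abs{\Omega}\hat D(k))^2$ is pointwise bounded by $\hat D(k)^2/(1-\hat D(k))^2$ on $\{\hat D\geq 0\}$ and by $\hat D(k)^2$ on $\{\hat D<0\}$; the latter integrates to $\norm{D}_2^2=O(\beta)$, while for the former one writes $\hat D^2/(1-\hat D)^2=1/(1-\hat D)^2-2/(1-\hat D)+1$ and applies \Cref{prop:LWW-Small} for $r=1,2$ (requiring $d>4$) to conclude it is $O(\beta)$ in high dimension.

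For \eqref{eq:LWW-SL5.10.3}, writing $(D*G_{\lambda,z})(x)=\alpha_0 D(x)+(D*H_{\lambda,z})(x)$ in the pointwise inequality gives
\[
\norm{H_{\lambda,z}}_\infty\leq K\alpha_0\norm{D}_\infty+K\norm{D*H_{\lambda,z}}_\infty\leq K\alpha_0\norm{D}_\infty+K\norm{D}_2\norm{H_{\lambda,z}}_2,
\]
by Cauchy--Schwarz; \Cref{lem:LWW-SL5.5} supplies $\norm{D}_\infty\leq\beta$ and $\norm{D}_2=O(\sqrt\beta)$, and \eqref{eq:LWW-SL5.10.2} gives $\norm{H_{\lambda,z}}_2=O(\sqrt\beta)$, so $\norm{H_{\lambda,z}}_\infty=O(\beta)$. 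The main obstacle is the uniform-in-$p(z)$ control in \eqref{eq:LWW-SL5.10.2}: the natural identity $p(z)\abs{\Omega}\hat D\hat C_{p(z)}=\hat C_{p(z)}-1$ has a spurious singularity as $p(z)\to 0$, so the small- and large-$p(z)$ regimes must be reconciled, which is the reason for routing the estimate through the sign-split bound on $\hat D^2/(1-\hat D)^2$ above.
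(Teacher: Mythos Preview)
Your argument is correct and matches the paper's proof for \eqref{eq:LWW-SL5.10.1} and \eqref{eq:LWW-SL5.10.2} essentially line by line; your sign-split on $\hat D$ in \eqref{eq:LWW-SL5.10.2} is a slightly more careful version of the paper's direct passage to $\hat D\hat C_{\abs{\Omega}^{-1}}=\hat C_{\abs{\Omega}^{-1}}-1$ (note that your appeal to \Cref{prop:LWW-Small} must be paired with the trivial lower bound $\int(1-\hat D)^{-1}\,d^dk/\FSint=C_{\abs{\Omega}^{-1}}(0)\geq 1$ to handle the negative middle term). For \eqref{eq:LWW-SL5.10.3} the paper instead iterates \Cref{lem:LWW-One-Step-SM} twice to get $H\leq K\alpha_0 D+K^2 D\ast D\ast G$ and bounds $\norm{D\ast D\ast G}_\infty$ by $\norm{\hat D^2\hat C_{p(z)}^2}_1$; your single-iteration-plus-Cauchy--Schwarz route is equally valid and arguably simpler, but you have left implicit the bound $\alpha_0=O(1)$, which the paper supplies via $f_2\leq K$ and $\alpha_0=\int\hat G_{\lambda,z}\leq K\int\hat C_{p(z)}\leq K\norm{\hat C_{\abs{\Omega}^{-1}}}_1<\infty$.
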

\begin{proof}
  The general fact that $\norm{g}_{\infty}\leq \norm{\hat g}_{1}$ and
  the identity
  \begin{equation}
    \sum_{x}\cos (k\cdot x) f(x) e^{i\ell\cdot x} = \frac{1}{2} \ob{\hat
    f(\ell+k) + \hat f(\ell-k)}
  \end{equation}
  imply that
  \begin{equation}
    \norm{(1-\cos(k\cdot x))H_{\lambda,z}(x)}_{\infty} =
    \norm{(1-\cos(k\cdot x))G_{\lambda,z}(x)}_{\infty}
    \leq \frac{1}{2} \norm{\Delta_{k}\hat G_{\lambda,z}(\ell)}_{1}.
  \end{equation}
  The definition of $U$, the fact that $f_{3}\leq K$, and the
  Cauchy-Schwarz inequality then imply
  \begin{equation}
    \norm{(1-\cos(k\cdot x))H_{\lambda,z}(x)}_{\infty} \leq 16K \hat
    C_{p(z)}(k)^{-1} 3 \norm{\hat C_{p(z)}}_{2}^{2},
  \end{equation}
  which yields~\eqref{eq:LWW-SL5.10.1} after
  using~\eqref{eq:LWW-SL5.5.2}.

  To estimate $\norm{H_{\lambda,z}}_{2}^{2}$ note that
  \Cref{lem:LWW-One-Step-SM} implies
  \begin{equation}
    H_{\lambda,z}(x) \leq z\alpha(\lambda,z)\abs{\Omega}
    D\ast G_{\lambda,z}(x)
  \end{equation}
  The factor $z\alpha\abs{\Omega}$ is estimated using
  $f_{1}(z)\leq K$. To estimate $D\ast G_{\lambda,z}$ use Parseval's
  identity, $f_{2}(z)\leq K$, and \Cref{lem:LWW-SL5.5}:
  \begin{equation}
    \norm{H_{\lambda,z}}_{2}^{2} \leq K^{2}
    \norm{D\ast G_{\lambda,z}}_{2}^{2} 
    % = \lambda^{2}K^{2}
    % \norm{\hat D \hat G_{\lambda,z}}_{2}^{2} \\
    % &\leq \lambda^{4}K^{4} \norm{\hat D \hat
    % C_{p(z)}}_{2}^{2} = K^{4} \norm{D\ast
    % C_{p(z)}}_{2}^{2} \\
    % &\leq \lambda^{4}K^{4}\norm{D\ast
    % C_{(2d)^{-1}}}_{2}^{2}
    \leq K^{4} \norm{\hat D \hat C_{\abs{\Omega}^{-1}}}_{2}^{2}  =
    K^{4}(\norm{\hat C_{\abs{\Omega}^{-1}}}_{2}^{2}-1)\leq
    c K^{4} \beta.
  \end{equation}

  For the last inequality use the fact that
  $\sup_{x}H_{\lambda,z}(x) = \sup_{x\neq 0}G_{\lambda,z}(x)$,
  \Cref{lem:LWW-One-Step-SM}, \Cref{eq:LWW-G-H-Relation} and then
  \Cref{lem:LWW-One-Step-SM} again. Using $f_{1}\leq K$ gives
  \begin{equation}
    H_{\lambda,z}(x) \leq K\alpha_{0}(\lambda,z)D(x) +
    K^{2}D\ast D\ast G_{\lambda,z}(x).
  \end{equation}
  A little manipulation shows that $\norm{D\ast
    D \ast G_{\lambda,z}}_{\infty}\leq \norm{\hat D^{2}\hat
    C^{2}_{p(z)}}_{1}$, so \Cref{lem:LWW-SL5.5} implies
  \begin{equation}
    \norm{D\ast D \ast G_{\lambda,z}}_{\infty} \leq c K\beta.
  \end{equation}
  \Cref{eq:LWW-SL5.5.1} implies $D(x)\leq \beta$ so it suffices to show
  $\alpha_{0}(\lambda,z)$ is bounded above. This follows from
  $f_{2}\leq K$:
  \begin{align}
    \alpha_{0} = \int_{\FS^{d}} \hat G_{\lambda,z}(k)\, \frac{d^{d}k}{\FSint}
    \leq K \int_{\FS^{d}} \hat C_{p(z)}(k)\, \frac{d^{d}k}{\FSint} \leq K
    \norm{\hat C_{\abs{\Omega}^{-1}}}_{1},
  \end{align}
  and this last integral is finite for $d\geq 3$, and decreases as the
  dimension $d$ increases.
\end{proof}

\section{Convergence of the Lace Expansion II. Diagrammatic 
  Bounds and Convergence}
\label{sec:LWW-Diagrammatic-Bounds}

To control the lace expansion it is necessary to show that
$\hat \Pi_{\lambda,z}$ is small. This is done by obtaining bounds on
norms of $\Pi^{(N)}_{\lambda,z}$ in terms of $H_{\lambda,z}$,
$G_{\lambda,z}$, and $I_{\lambda,z}$. These bounds are known as
\emph{diagrammatic bounds}. Coupled with \Cref{prop:LWW-SL5.10}
diagrammatic bounds are what make the hypothesis $f(z)\leq K$
powerful. 

Obtaining diagrammatic bounds requires bounding the weight of walks
constrained to have $\omega_{j}=x$ in terms of unconstrained
walks. This is best illustrated by an example. Consider obtaining a
bound for $\frac{d}{dz}G_{\lambda,z}(0,x)$. For self-avoiding walk
($\lambda=0$) this is straightforward: the Leibniz rule implies the
derivative is a sum over all self-avoiding walks from $0$ to $x$
together with a marked edge. Splitting the walk at the marked edge and
using the fact that self-avoiding walk is purely repulsive yields
\begin{equation}
  \label{eq:LWW-DB-Intro-1}
  \frac{d}{dz} G_{0,z}(0,x) \leq z^{-1}G_{0,z}\ast H_{0,z}(0,x).
\end{equation}

For $\lambda>0$ a similar argument is possible, but the weight on the
second half of the walk is not $w_{\lambda,z}$: memory of the first
half of the walk is needed to know when loops are
erased. \Cref{sec:LWW-Decompositions} derives identities for walks
that play the role of \Cref{eq:LWW-DB-Intro-1} for
$\lambda>0$. \Cref{sec:LWW-DB-Pi} uses these identities to derive the
diagrammatic bounds necessary to apply \Cref{lem:LWW-Bootstrap}.

\subsection{Decompositions for $\lambda$-LWW}
\label{sec:LWW-Decompositions}

The formulas presented in this section are the result of
tracking what happens when loop erasure is performed. The reader may
find it helpful to draw examples while reading the text.

\subsubsection{Decompositions from Loop Erasure}
\label{sec:LWW-Decomp-LE}

The loop erasure of a walk can be viewed as a last exit
decomposition: if $\omega\colon x\to y$ then the second vertex in the
loop erasure is the first vertex visited after the last visit to
$x$. Iterating this implies the next proposition.
\begin{proposition}
  \label{prop:LWW-LE-LE}
  Let $\omega$ be a walk. Define $\ell_{0}=0$, and $\ell_{k} = \sup
  \{j \mid \omega_{j}=\omega_{\ell_{k-1}} \} + 1$ for $k\in
  \N$. Suppose there are $n+1$ finite values of $\ell_{k}$ such that
  $\ell_{k}\leq \abs{\omega}$. Then
  \begin{equation}
    \label{eq:LWW-LE-LE}
    \LE(\omega) = (\omega_{\ell_{0}}, \omega_{\ell_{1}}, \dots, \omega_{\ell_{n}}).
  \end{equation}
\end{proposition}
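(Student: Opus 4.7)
The plan is to prove the proposition by induction on $|\omega|$. The base case $|\omega|=0$ is immediate, since both $\LE(\omega)$ and the last-exit sequence reduce to the single vertex $(\omega_0)$. For the inductive step, if $\omega$ is already self-avoiding then $\tau_\omega = \infty$ and $\LE(\omega) = \omega$, while a direct reading of the recursion defining $\ell_k$ gives $\ell_k = k$ for $0\le k\le |\omega|$, so the last-exit sequence also equals $\omega$.

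When $\omega$ is not self-avoiding, let $\tau = \tau_\omega$, $\tau^\star = \tau_\omega^\star$, and set $\omega' = \LE^{1}(\omega)$, which is strictly shorter than $\omega$ and satisfies $\LE(\omega) = \LE(\omega')$ by \Cref{def:LWW-Erasure}. The inductive hypothesis applied to $\omega'$ yields $\LE(\omega') = (\omega'_{\ell'_0}, \dots, \omega'_{\ell'_{n'}})$, where $\ell'_k$ is the last-exit sequence built from $\omega'$, so it suffices to prove that the sequences $(\omega_{\ell_k})$ and $(\omega'_{\ell'_k})$ agree. The positions of $\omega'$ are identified with those of $\omega$ via the order-preserving injection $\sigma(j) = j$ for $j\le \tau^\star$ and $\sigma(j) = j + (\tau - \tau^\star)$ for $j>\tau^\star$; with this identification $\omega'_j = \omega_{\sigma(j)}$.

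The heart of the proof is to show that $\sigma(\ell'_k) = \ell_k$ for every $k\ge 1$ for which $\ell'_k$ is defined, and that the two sequences terminate at the same index. Because $\tau$ is the \emph{first} repetition time, the vertices $\omega_1, \dots, \omega_{\tau-1}$ are all distinct from $\omega_0$, while $\omega_\tau = \omega_{\tau^\star}$; hence the only possible occurrence of $\omega_0$ in the erased segment $(\omega_{\tau^\star+1}, \dots, \omega_\tau)$ is at position $\tau$, and this actually occurs only in the case $\tau^\star = 0$. In both cases $\tau^\star=0$ and $\tau^\star>0$, one then checks directly from the definition that the last occurrence of $\omega_0$ in $\omega$ and the last occurrence of $\omega'_0 = \omega_0$ in $\omega'$ correspond under $\sigma$, giving $\sigma(\ell'_1) = \ell_1$. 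Past position $\tau$ the two walks agree up to the shift $\sigma$, so this identity propagates to all subsequent indices.

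The main obstacle is thus purely bookkeeping: carefully verifying the index translation in the two cases $\tau^\star=0$ and $\tau^\star>0$. The former is marginally more subtle because the erased loop is rooted at $\omega_0$ itself, so its final vertex is $\omega_\tau = \omega_0$; nonetheless, the last-exit definition of $\ell_1$ picks out the \emph{last} occurrence of $\omega_0$, which is either $0$ or lies strictly past $\tau$, and in either case corresponds cleanly to a position of $\omega'$ under $\sigma$.
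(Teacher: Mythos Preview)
Your argument is correct. The induction on $|\omega|$ works, and the index-translation via the map $\sigma$ is the right bookkeeping device; the case split on $\tau^\star=0$ versus $\tau^\star>0$ handles the one genuine subtlety, namely that the erased loop may be rooted at $\omega_0$ itself. One small imprecision: when you write that ``past position $\tau$ the two walks agree up to the shift $\sigma$, so this identity propagates,'' note that $\ell_k$ can also lie in $\{1,\dots,\tau^\star\}$ (this occurs precisely when the last exit from $\omega_{\ell_{k-1}}$ happens before the first erased loop). The propagation still goes through in that regime because $\omega$ and $\omega'$ agree on $[0,\tau^\star]$ as well, but your phrasing suggests you are only using agreement beyond $\tau$.

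The paper does not give a detailed proof; it offers a one-line heuristic (``the second vertex in the loop erasure is the first vertex visited after the last visit to $x$; iterating this implies the proposition'') and leaves the rest to the reader. Fleshed out, that heuristic is a \emph{different} induction: one shows that $\LE(\omega) = (\omega_0)\circ \LE(\omega[\ell_1\!:\,])$ by observing that the closed segment $\omega[0,\ell_1-1]$ is entirely consumed by chronological erasure before anything past $\ell_1$ is touched, and then inducts on the length of the loop-erased walk. Your induction on $|\omega|$ instead compares $\omega$ to $\LE^1(\omega)$ directly. The paper's route is conceptually slicker because each step peels off exactly one vertex of $\LE(\omega)$, avoiding the two-regime case analysis; your route is closer to the recursive definition of $\LE$ and makes the invariance of the last-exit sequence under a single $\LE^1$ explicit, which is a cleaner statement to verify in detail.
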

In \Cref{prop:LWW-LE-LE} the restriction to finite values at most
$\abs{\omega}$ is due to the fact that there will be an $\ell_{k} =
\abs{\omega}+1$, and then $\ell_{k+1} = -\infty$. The loop erasure of
a walk $\omega$ induces a decomposition of $\omega$. Let $\eta
= \LE(\omega) = (\omega_{\ell_{0}}, \dots,
\omega_{\ell_{k}})$. Define, for $0\leq r< s\leq k$,
\begin{equation}
  \label{eq:LWW-LE-Pre}
  \eta^{-1}\cb{r,s} = \omega\cb{\ell_{r},\ell_{s}-1},
\end{equation}
where, recalling \Cref{prop:LWW-LE-LE},
$\ell_{k+1}=\abs{\omega}+1$. See \Cref{fig:LWW-Splitting}.

\begin{remark}
  It would be more accurate to write $\LE(\omega)^{-1}\cb{r,s}$ as the
  definition requires knowledge of the walk $\omega$ whose loop
  erasure is $\eta$. As the walk $\omega$ will be clear from context
  this will not cause any confusion.
\end{remark}

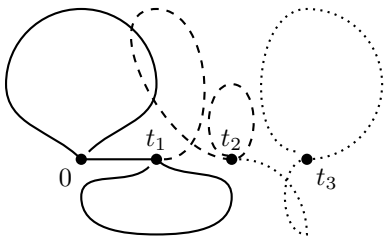
\begin{figure}[h]
  \centering
  \beginpgfgraphicnamed{fig4}
  \begin{tikzpicture}
    \draw[black,thick] (0.1,0.1) to[out=45, in=270] (1,1) to[out=90, in=0]
    (0,2) to[out=180, in=90] (-1,1) to[out=270, in=135] (0,0) to (1,0)
    to[out=-45, in=90] (2,-.5) to[out=270,in=0] (1,-1) to[out=180,in=270]
    (0,-.5) to[out=90,in=225] (.90,-.1);
    \draw[black,dashed,thick] (1.1,0) to[out=0,in=0] (1,2) to[out=180,in=180]
    (2,0) to[out=0,in=0] (2,1) to[out=180,in=180] (1.95,.05);
    \draw[black,thick, dotted] (2.1,0) to[out=0,in=90] (3,-1) to[out=180,in=180]
    (3,0) to[out=0,in=270] (4,1) to[out=90,in=0] (3,2)
    to[out=180,in=180] (2.9,0.1);
    \node[below left] at (0,0) {$0$};
    \node[above] at (1,0) {$t_{1}$};
    \node[above] at (2,0) {$t_{2}$};
    \node[below right] at (3,0) {$t_{3}$};
    \node[black,fill,inner sep=1.5pt,circle] at (0,0) {};
    \node[black,fill,inner sep=1.5pt,circle] at (1,0) {};
    \node[black,fill,inner sep=1.5pt,circle] at (2,0) {};
    \node[black,fill,inner sep=1.5pt,circle] at (3,0) {};
  \end{tikzpicture}
  \endpgfgraphicnamed
  \caption[Example of the splitting of a walk according to
  \Cref{eq:LWW-Segment-Split}]{An illustration of the definition of
    $\eta^{-1}\cb{r,s}$ and of the subdivision of a walk given by
    \Cref{eq:LWW-Segment-Split}. The initial walk
    $\eta^{-1}\cb{0,t_{1}}$, drawn with a solid line, is the preimage
    of the initial solid segment of the loop erasure of the walk. The
    dashed and dotted curves are $\eta^{-1}\cb{t_{1},t_{2}}$ and
    $\eta^{-1}\cb{t_{2},t_{3}}$ respectively. The small gaps in curves
    indicate the flow of time.}
  \label{fig:LWW-Splitting}
\end{figure}

The following extension of the notion of the concatenation of two
walks will be notationally convenient. If $\omega^{i}\colon x_{i}\to
y_{i}$ and $y_{1}\sim x_{2}$ write $\omega^{1}\diamond \omega^{2}$ for
the walk that consists of $\omega^{1}$ followed by a step from $y_{1}$
to $x_{2}$ followed by the walk $\omega^{2}$.

Fix a walk $\omega$ whose loop erasure is $k$ steps long. A sequence of
times $0=t_{0}<t_{1}<t_{2}<\dots <t_{n}=k$ induces a
decomposition of $\omega$ by using \Cref{eq:LWW-LE-Pre}:
\begin{equation}
  \label{eq:LWW-Segment-Split}
  \omega = \eta^{-1}\cb{t_{0},t_{1}} \diamond  \dots \diamond
  \eta^{-1}\cb{t_{n-1},t_{n}}.
\end{equation}
This decomposition has two notable features. First, the loop erasure
of the segments of the decomposition yield
$\eta\cb{t_{i},t_{i+1}-1}$. Second, each segment, barring perhaps the
first segment, never returns to its starting vertex. See
\Cref{fig:LWW-Splitting}.

The next definitions serve to formalize the idea that given the loop
erasure $\eta = \LE(\omega\cb{0,j})$ of a walk $\omega$ up to time
$j$, the remainder of $\omega$ has the effect of erasing some of
$\eta$, and then extending the remainder of $\eta$ to complete the
formation of $\LE(\omega)$.

\begin{definition}
  \label{def:LWW-hitting}
  Let $A\subset \Z^{d}$. The \emph{hitting time $\tau_{\omega}(A)$} of
  $A$ by $\omega$ is $\tau_{\omega}(A) = \inf \{ j\geq 0 \mid
  \omega_{j}\in A\}$.
\end{definition}

\begin{definition}
  \label{def:LWW-Shrinking}
  Let $\eta\colon x\to y$ be a self-avoiding walk, and let $\omega$ be
  a walk beginning at $y$. Let $\eta^{0} = \eta\co{0,\abs{\eta}}$. For
  $k\geq 1$ inductively define
  \begin{equation}
    \label{eq:LWW-Shrinking}
    s^{k}_{\eta}(\omega) = \tau_{\omega}(\eta^{k-1}), \qquad
    t^{k}_{\omega}(\eta) = \eta^{-1}(\omega_{s^{k}_{\eta}(\omega)}), \qquad
    \eta^{k} = \eta\co{0,t^{k}_{\omega}(\eta))}.
  \end{equation}
  The times $s^{k}_{\eta}(\omega)$ are the \emph{shrinking times of
    $\eta$ by $\omega$}.
\end{definition}
See~\Cref{fig:LWW-Shrinking} for an illustration of shrinking times.  The
walks $\eta^{k}$ in the definition are decreasing in length, and it
follows that the times $t^{k}_{\omega}(\eta)$ are decreasing in $k$.

\begin{figure}[h]
  \centering
  \beginpgfgraphicnamed{fig5}
  \begin{tikzpicture}
    \node (s1) at (2,0) {};
    \node (m12) at (3,1) {};
    \node (s2) at (4,0) {};
    \node (m23) at (5,1) {};
    \node (s23) at (5,0) {};
    \node (s3) at (6,0) {};
    \draw[black,thick,dashed] (0,0) to[out=3,in=177] (s1)
    to[out=-2,in=181] (s2)
    to[out=3,in=178] (s3);
    \draw[black, thick,->] (s3) to[out=60,in=0] (m23) to[out=180,in=90]
    (s2);
    \draw[black, thick,->] (s2) to[out=270,in=250] (s23);
    \draw[black,thick,->] (s23) to[out=70,in=270] (m23);
    \draw[black,thick,->] (m23) to[out=90,in=90] (m12) to[out=270, in=90]
    (s1);
    \node[below left] at (s2) {$\tau_{\omega}(\eta^{1})$};
    \node[below left] at (s1) {$\tau_{\omega}(\eta^{0})$};
  \end{tikzpicture}
  \endpgfgraphicnamed
  \caption[Splitting a walk at shrinking
  times]{An illustration of the shrinking times of the self-avoiding
    walk $\eta$ (dashed) by $\omega$ (solid). The gaps in $\omega$
    are to indicate the progress of time. Note that the second hitting
    time of $\eta$ is \emph{not} a shrinking time as it occurs on a
    portion of $\eta$ that is erased at the first hitting time.}
  \label{fig:LWW-Shrinking}
\end{figure}

\subsubsection{Expected Visits of $\lambda$-LWW}
\label{sec:LWW-Visits}

The next proposition gives a formula for the expected number of visits
of a closed $\lambda$-LWW to a given vertex $y$. We will first give an informal description of the formula. The number of visits by a walk $\omega$ to a vertex $y$
can be expressed as
\begin{equation}
  \label{eq:LWW-Bubble-Chain-Heuristic-1}
  \abs{ \{ j\geq 1 \mid \omega_{j}=y\}} = \sum_{j\geq 1}
  \indicatorthat{\omega_{j}=y}.
\end{equation}
Consider a walk with $\omega_{j}=y$. This naturally splits into two
pieces: the walk $\omega^{(a)}$ up to time $j$, and the walk
$\omega^{(b)}$ after time $j$. The splitting times introduced in
\Cref{sec:LWW-Decomp-LE} then splits each of $\omega^{(a)}$ and
$\omega^{(b)}$ into $k$ segments if there are $k$ splitting times. In
\Cref{prop:LWW-Bubble-Chain} the segments of $\omega^{(a)}$ are called
$\omega^{(i)}$ for $i=1, \dots, k$, and the segments of $\omega^{(b)}$
are called $\omega^{(k+i)}$ for $i=1, \dots, k$. The conditions
$A_{i}$ and $B_{i}$ are formalizations of the fact that these subwalks
arise from splitting times.

\begin{proposition}
  \label{prop:LWW-Bubble-Chain}
  Fix $x,y\in \Z^{d}$, $y\neq x$. Then
  \begin{align}
    \label{eq:LWW-Bubble-Chain}
    \sum_{\omega\colon x\to x} &\abs{\{j\geq 1 \mid \omega_{j}=y\}}
    w_{\lambda,z}(\omega)
    = \alpha_{0}\sum_{k\geq 1} \mathop{\sum_{x_{0}, \dots,
        x_{k}}}_{\mathrm{distinct}} \sum_{i=1}^{k}\indicatorthat{x_{0}=x} 
    \indicatorthat{x_{k}=y} \lambda^{k} \\
    &\!\!\!\!\!\!\!\!\mathop{\sum_{\omega^{(i)}\colon x_{i-1}\to
        x_{i}}}_{\omega^{(k+i)}\colon x_{k-i+1}\to x_{k-i}}
    \cb{\prod_{i=1}^{k} w_{\lambda,z}(\omega^{(i)})
    \indicatorthat{\omega^{(i)}\in A_{i}}}
    \cb{\prod_{i=1}^{k} w_{\lambda,z}(\omega^{(k+i)})
    \indicatorthat{\omega^{(k+i)}\in B_{i}}}
  \end{align}
  where $A_{i}$ and $B_{i}$ are defined as follows. A walk $\omega$ is
  in $A_{i}$ if $\omega\cb{1\splice}$ does not hit $\LE(\omega^{(j)})$
  for any $j<i$.  A walk $\omega$ is in $B_{i}$ if
  \begin{enumerate}
  \item $\omega$ does not hit $\omega^{k-j}$ for $j>i+1$,
  \item $\omega\cb{1\splice}$ hits $\omega^{k-i}$ at
    $\omega^{k-i}_{0}$,
  \item $\omega$ hits $\omega^{k-i-1}$ at $\omega^{k-i}_{0}$, and
    $\omega$ does not hit $\LE(\omega^{k-i-1})\setminus \{
    \omega^{k-i}_{0}\}$.
  \end{enumerate}
\end{proposition}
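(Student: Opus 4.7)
The strategy is to prove the identity by a careful combinatorial unfolding of the left-hand side. First I would rewrite the visit count as
\begin{equation*}
\abs{\set{j\geq 1 \mid \omega_j = y}} = \sum_{j=1}^{\abs{\omega}} \indicatorthat{\omega_j = y},
\end{equation*}
interchange the sums, and for each pair $(\omega, j)$ satisfying $\omega_j = y$ split $\omega$ at time $j$ into $\omega^{(a)} = \omega\cb{0,j}$ (a walk from $x$ to $y$) and $\omega^{(b)} = \omega\cb{j,\abs{\omega}}$ (a walk from $y$ back to $x$). The weight factors as $w_{\lambda,z}(\omega) = z^{\abs{\omega}} \lambda^{n_L(\omega)}$, with $\abs{\omega} = \abs{\omega^{(a)}} + \abs{\omega^{(b)}}$, and the loop count will decompose additively through the erasure process.

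Next I would apply \Cref{prop:LWW-LE-LE} to $\omega^{(a)}$. Writing $\eta = \LE(\omega^{(a)}) = (x_0, \dots, x_k)$ with $x_0 = x$ and $x_k = y$, the decomposition \eqref{eq:LWW-Segment-Split} splits $\omega^{(a)}$ into $k$ last-exit segments $\omega^{(1)}, \dots, \omega^{(k)}$, where $\omega^{(i)}$ runs from $x_{i-1}$ to $x_i$. Condition $A_i$ captures the defining property of this decomposition: $\omega^{(i)}\cb{1\splice}$ must avoid the loop-erased trajectories of all earlier segments. For $\omega^{(b)}$, I would track how it progressively undoes $\eta$. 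Since $\omega$ is closed its loop erasure is the trivial walk $(x)$, so $\eta$ must be reduced all the way back to $x_0$. The shrinking times of \Cref{def:LWW-Shrinking} partition $\omega^{(b)}$ into segments each of which peels off a single vertex from the current loop-erased path; after all $k$ segments only $x_0$ remains. This produces walks $\omega^{(k+1)}, \dots, \omega^{(2k)}$ with $\omega^{(k+i)}\colon x_{k-i+1}\to x_{k-i}$, and conditions $B_i$ encode exactly the requirement that the segment reaches the next lower level of $\eta$ at its start vertex $x_{k-i-1}$, avoids earlier parts of $\omega^{(a)}$, and does not erase further than $\LE(\omega^{(k-i-1)})\setminus\set{x_{k-i}}$, so that exactly one vertex is peeled off per segment.

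The weight identity then follows because $n_L(\omega)$ decomposes cleanly: each internal loop within some $\omega^{(i)}$ is accounted for by the factor $\lambda^{n_L(\omega^{(i)})}$ inside $w_{\lambda,z}(\omega^{(i)})$, while the $k$ shrinking events at the $\omega^{(a)}$--$\omega^{(b)}$ interface each form and erase exactly one additional loop, giving the overall factor $\lambda^k$. The prefactor $\alpha_0$ and the outer sum $\sum_{i=1}^k$ handle the remaining bookkeeping: $\alpha_0 = \exp(\mu_{\lambda,z}(0))$ absorbs the contribution of arbitrary closed subwalks that may be freely inserted at the origin $x_0 = x$ (these are not constrained by the $A_i$, $B_i$ conditions), while the sum over $i$ records which marked visit to $y$ within the assembled walk the decomposition corresponds to.

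The main obstacle will be verifying the correspondence is genuinely bijective with the correct multiplicity: every pair $(\omega, j)$ with $\omega_j = y$ must correspond to a tuple $(k, x_0, \dots, x_k, \omega^{(1)}, \dots, \omega^{(2k)})$ satisfying all $A_i$ and $B_i$, and conversely every such tuple must reconstruct a unique $\omega$ together with a distinguished visit. This requires unwinding the shrinking-time definitions carefully to verify that conditions $B_i$ really do force exactly one shrinking step per segment (no more, no less), and then matching the factors of $\lambda$ and $z$ produced on both sides.
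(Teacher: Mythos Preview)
Your overall strategy matches the paper's: rewrite the visit count as $\sum_{j}\indicatorthat{\omega_{j}=y}$, split $\omega$ at time $j$ into $\omega^{(a)}=\omega\cb{0,j}$ and $\omega^{(b)}=\omega\cb{j\splice}$, and then further decompose each half. However, there is a genuine conceptual error in how you identify $k$ and the vertices $x_{0},\dots,x_{k}$.

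You take $(x_{0},\dots,x_{k})$ to be the \emph{entire} loop erasure $\eta=\LE(\omega^{(a)})$, so that $k=\abs{\eta}$, and you then assert that the shrinking of $\eta$ by $\omega^{(b)}$ peels off exactly one vertex per segment. This is not what happens. In the paper's proof, $k$ is the number of \emph{shrinking times} of $\eta$ by $\omega^{(b)}$ in the sense of \Cref{def:LWW-Shrinking}, and the $x_{i}$ are the vertices $\eta_{t^{k-i}}$ at which the successive hits occur. A single shrinking step can jump back many vertices of $\eta$: if $\eta=(x,v_{1},v_{2},y)$ and $\omega^{(b)}$ goes from $y$ to $x$ without touching $v_{1}$ or $v_{2}$, there is only one shrinking time, so $k=1$, not $k=3$. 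Correspondingly, $\omega^{(a)}$ is split not at every last-exit time but only at the indices $t^{k},t^{k-1},\dots,t^{1}$ determined by the shrinking, via $\omega\cb{0,j}=\eta^{-1}\cb{t^{k},t^{k-1}}\diamond\cdots\diamond\eta^{-1}\cb{t^{1},\abs{\eta}}$. Your intuition for $\lambda^{k}$ (one extra loop closed per shrinking event) is exactly right, but it gives the number of shrinking times, not $\abs{\eta}$.

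Two smaller points. First, the symbol $\sum_{i=1}^{k}$ in the displayed formula is shorthand for the $2k$-fold sum over all the walks $\omega^{(1)},\dots,\omega^{(2k)}$; it does not index ``which marked visit to $y$'' as you suggest. The bijection is one pair $(\omega,j)$ to one tuple. Second, your reading of $\alpha_{0}$ is correct in spirit (it absorbs initial $x\to x$ excursions stripped from $\omega^{(1)}$), but the paper also performs a transfer of excursions occurring immediately after each shrinking time to the preceding constituent of $\omega^{(b)}$; this reshuffling is what makes the constraints $A_{i},B_{i}$ line up exactly, and you would need it to close the bijection.
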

\begin{proof}
  Rewrite $\abs{ \{ j\geq 1 \mid \omega_{j}=y\}}$ as $\sum_{j\geq 1}
  \indicatorthat{\omega_{j}=y}$. To prove the claim it suffices to
  show that walks with $\omega_{j}=y$ are in bijection with the
  summands such that $\abs{ \omega^{(1)} \circ \dots \circ
  \omega^{(k)}} = j$. 

  Suppose $\omega_{j}=y$, and let $\eta = \LE(\omega\cb{0,j})$. Let
  $t^{\ell},s^{\ell}$ be $t^{\ell}_{\omega}(\eta)$ and $s^{\ell}_{\eta}(\omega)$,
  respectively. Assume there are $k$ shrinking times for the walk
  $\omega$. Observing that $\omega$ closed implies $t^{k}=0$,
  $s^{k}=\abs{\omega}$ implies
  \begin{align}
    \label{eq:LWW-BC-1}
    \omega\cb{0,j} &= \eta^{-1}\cb{t^{k},t^{k-1}} \diamond \dots\diamond
    \eta^{-1}\cb{t^{2},t^{1}}
    \diamond \eta^{-1}\cb{t^{1},\abs{\eta}}\\
    \label{eq:LWW-BC-2}
    \omega\cb{j\splice} &= \omega\cb{j,s^{1}} \diamond \dots \diamond
    \omega \cb{s^{k-1},s^{k}}.
  \end{align}
  Call the subwalks on the right-hand sides of~\eqref{eq:LWW-BC-1}
  and~\eqref{eq:LWW-BC-2} the \emph{constituents} of $\omega\cb{0,j}$ and
  $\omega\cb{j\splice}$, respectively. Call a walk $\omega\colon x\to
  x$ an \emph{excursion} if the only occurrences of $x$ in $\omega$ are
  $\omega_{0}$ and $\omega_{\abs{\omega}}$.
  
  Separating any initial excursions from $x$ to $x$ from the first
  subwalk comprising $\omega\cb{0,j}$ gives the factor $\alpha_{0}$.
  To complete the claim, notice that any excursions immediately after
  a shrinking time that occur prior to the next hitting time of
  $\eta^{\ell}$ can be transferred to the previous subwalk comprising
  $\omega\cb{j\splice}$. In the case of the first constituent of
  $\omega\cb{j\splice}$ the excursions can be transferred to the last
  constituent of $\omega\cb{0,j}$.
\end{proof}

The next proposition handles the case of visits to the initial vertex
of a walk.
\begin{proposition}
  \label{prop:LWW-BC-Diag}
  \begin{equation}
    \label{eq:LWW-BC-Diag}
    \mathop{\sum_{\omega\colon x\to x}}_{\abs{\omega}\geq 1}
    \abs{\{j\geq 1 \mid \omega_{j}=x\}} 
    w_{\lambda,z}(\omega) = \alpha_{0}(\alpha_{0}-1).
  \end{equation}
\end{proposition}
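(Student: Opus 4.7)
The plan is to exploit that $w_{\lambda,z}$ factorises multiplicatively when a closed walk at $x$ is cut at any return time to $x$. Concretely, the key lemma I would establish first is: if $\omega\colon x\to x$ has $\abs{\omega}\geq 1$ and $1\leq j\leq\abs{\omega}$ satisfies $\omega_{j}=x$, and one sets $\omega^{(1)}=\omega\cb{0,j}$ and $\omega^{(2)}=\omega\cb{j,\abs{\omega}}$, then
$$w_{\lambda,z}(\omega)=w_{\lambda,z}(\omega^{(1)})\,w_{\lambda,z}(\omega^{(2)}).$$
Since $z^{\abs{\omega}}$ factorises trivially, what needs to be verified is that the loops erased from $\omega$ by iterating $\LE^{1}$ are exactly those erased from $\omega^{(1)}$ followed by those erased from $\omega^{(2)}$. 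This follows by induction on the number of repeats in $\omega^{(1)}$: the first repeat in $\omega$ occurs at some $\tau_{\omega}\leq j$ (because $\omega_{0}=\omega_{j}=x$), so $\tau_{\omega}=\tau_{\omega^{(1)}}$ and $\tau_{\omega}^{\star}=\tau_{\omega^{(1)}}^{\star}$, meaning $\LE^{1}$ applied to $\omega$ removes the same loop as $\LE^{1}$ applied to $\omega^{(1)}$; the resulting walk is the concatenation of the shorter closed walk $\LE^{1}(\omega^{(1)})$ with $\omega^{(2)}$, so induction closes the argument.

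Once this multiplicativity is available, the rest is bookkeeping. I rewrite $\abs{\{j\geq 1\mid \omega_{j}=x\}}=\sum_{j\geq 1}\indicatorthat{\omega_{j}=x}$ and interchange the sums. The data of a walk $\omega\colon x\to x$ with $\abs{\omega}\geq 1$ together with a distinguished return time $j$ is equivalent to the data of an ordered pair $(\omega^{(1)},\omega^{(2)})$ of closed walks at $x$ with $\abs{\omega^{(1)}}\geq 1$ and $\omega^{(2)}$ allowed to be trivial. Multiplicativity therefore gives
$$\text{LHS}=\Bigg(\mathop{\sum_{\omega^{(1)}\colon x\to x}}_{\abs{\omega^{(1)}}\geq 1} w_{\lambda,z}(\omega^{(1)})\Bigg)\Bigg(\sum_{\omega^{(2)}\colon x\to x}w_{\lambda,z}(\omega^{(2)})\Bigg).$$

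To finish I identify each factor. The second factor is $G_{\lambda,z}(x,x)$ by \Cref{def:LWW-2-PT}, which equals $\alpha_{0}(\lambda,z)$ by \Cref{eq:LWW-G-H-Relation} together with $H_{\lambda,z}(x,x)=0$. The first factor differs from the second only by the exclusion of the length-zero walk, whose weight is $1$, so it equals $\alpha_{0}-1$. The product is $\alpha_{0}(\alpha_{0}-1)$, which is the claim. The only nonroutine step is the multiplicativity lemma, and this is the main (mild) obstacle; once stated cleanly and dispatched by induction on loop erasures, everything else is a summation identity. Note that this is essentially the $y=x$ specialisation of the decomposition behind \Cref{prop:LWW-Bubble-Chain}, but since that proposition required $y\neq x$ (the shrinking-time machinery degenerates when $y=x$), a separate — and simpler — argument is warranted here.
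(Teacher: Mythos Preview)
Your proof is correct and follows essentially the same approach as the paper: rewrite the visit count as $\sum_{j\geq 1}\indicatorthat{\omega_j=x}$, split each walk at the marked return time $j$, and recognise the two resulting sums as $\alpha_0-1$ and $\alpha_0$. The paper's proof is terser and leaves the multiplicativity $w_{\lambda,z}(\omega)=w_{\lambda,z}(\omega^{(1)})w_{\lambda,z}(\omega^{(2)})$ implicit, whereas you spell it out and justify it by induction on loop erasures---a welcome clarification, but not a different idea.
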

\begin{proof}
  Write $\abs{\{j\geq 1\mid\omega_{j}=x\}}$ as $\sum_{j\geq 1}
  \indicatorthat{\omega_{j}=x}$. Inserting this into the left-hand
  side of~\eqref{eq:LWW-BC-Diag} and split each walk $\omega$ at time
  $j$. Summing the remainder after time $j$ gives a factor
  $\alpha_{0}$. Summing over $j$ gives $\alpha_{0}-1$ as $j\geq 1$
  implies the empty walk is excluded.
\end{proof}

To avoid explicitly writing the cumbersome
right-hand side of~\eqref{eq:LWW-Bubble-Chain} repeatedly it will be
convenient to introduce a short-hand definition:
\begin{definition}
  \label{def:LWW-True-BC}
  The \emph{bubble chain $\truebubblechain_{\lambda,z}(x,y)$ from $x$
    to $y$} is defined to be $\alpha_{0}(\alpha_{0}-1)$ if $x=y$ and
  the right-hand side of~\eqref{eq:LWW-Bubble-Chain} if $x\neq y$.
\end{definition}

The next decomposition formula is the analogue of
\Cref{prop:LWW-Bubble-Chain} for walks $\omega$ that are not
closed. Some notation will be needed: for $\eta$ a self-avoiding walk
ending at $x$ define $\truebubblechain_{\lambda,z}^{\eta}(x,y)$ to be
the bubble chain in $\Z^{d} \setminus \{\eta_{0}, \dots,
\eta_{\abs{\eta}-1}\}$. See \Cref{fig:LWW-Chain}.

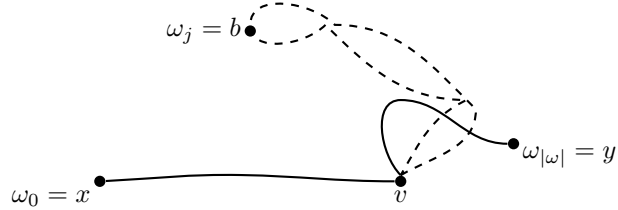
\begin{figure}[h]
  \centering
  \beginpgfgraphicnamed{fig6}
  \begin{tikzpicture}
    \node[black,fill,circle,inner sep=1.5pt,below] (s00) at (0,0) {};
    \node (s20) at (2,0) {};
    \node[black,fill,circle, inner sep = 1.5pt,below] (s40) at (4,0) {};
    \node (s60) at (6,0) {};
    \node (s51) at (5,1) {};
    \node (s32) at (3,2) {};
    \node[black,fill,circle, inner sep = 1.5pt,below] (s555) at (5.5,.5) {};
    \node[black,fill,circle,inner sep=1.5pt,below] (s22) at (2,2) {};
    \draw[black,thick] (s00) to[out=3,in=179] (s20) to[out=-1,in=180]
    (s40);
    \draw[black,thick,dashed] (4,0) to[out=30,in=270] (s51)
    to[out=135,in=0] 
    (s32) to[out=135,in=90] (s22) to[out=270,in=225] (3,2)
    to[out=-45,in=180] (s51) to[out=205,in=60] (4,0);
    \draw[black,thick] (4,0) to[out=135,in=180] (4,1) to[out=0,in=180]
    (s555);
    \node[below left] at (s00) {$\omega_{0}=x$};
    \node[below right] at (5.5,.5) {$\omega_{\abs{\omega}}=y$};
    \node[left] at (s22) {$\omega_{j}=b$};
    \node[below] at (s40) {$v$};
  \end{tikzpicture}
  \endpgfgraphicnamed
  \caption[A contribution to the sum in
  \Cref{eq:LWW-Bubble-Chain-Split}]{An illustration of a contribution
    to to the sum in \Cref{eq:LWW-Bubble-Chain-Split}. For clarity,
    only the loop erasure of each walk is shown. The dashed black path
    is the bubble chain portion of the walk. The vertex $v$ indicates
    the division between the path prior to the bubble chain and after
    the bubble chain.}
  \label{fig:LWW-Chain}
\end{figure}

\begin{proposition}
  \label{prop:LWW-Bubble-Chain-Split}
  Fix $x,y,b\in\Z^{d}$, $x\neq y$, $b\neq x$. Then
  \begin{align}
    \label{eq:LWW-Bubble-Chain-Split}
    \sum_{\omega\colon x\to y}&
    \indicatorthat{x\notin\omega\cb{1\splice}} \abs{\{j\geq 1 \mid
      \omega_{j}=b\}} w_{\lambda,z}(\omega) = \\&\sum_{a\in
      \Z^{d}} \mathop{\sum_{\omega^{(1)}\colon x\to a}}_{x\notin\omega\cb{1\splice}}
    \mathop{\sum_{\omega^{(2)}\colon a\to
        y}}_{\omega^{(2)}\cb{1\splice} \cap
      \LE(\omega^{(1)})=\emptyset}\!\!\!\!\!\!\! \ob{\delta_{a,b} +
    \truebubblechain_{\lambda,z}^{\LE(\omega^{(1)})}(a,b)}
    w_{\lambda,z}(\omega^{(1)}) w_{\lambda,z}(\omega^{(2)}).
  \end{align}
\end{proposition}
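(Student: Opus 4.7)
The plan is to set up a weight-preserving bijection between walks $\omega\colon x\to y$ with $x\notin\omega\cb{1\splice}$ carrying a marked visit $j\geq 1$ to $b$, and triples $(\omega^{(1)},\omega_{\mathrm{bc}},\omega^{(2)})$ carrying a marked position inside $\omega_{\mathrm{bc}}$. For each such $(\omega,j)$, invoke \Cref{prop:LWW-LE-LE} to obtain the last-visit indices $\ell_{0}<\ell_{1}<\cdots$ of $\omega$, let $r$ be the unique index with $\ell_{r}\leq j<\ell_{r+1}$, set $a:=\omega_{\ell_{r}}=\eta_{r}$ where $\eta=\LE(\omega)$, and split
\[
\omega^{(1)}:=\omega\cb{0,\ell_{r}},\qquad \omega_{\mathrm{bc}}:=\omega\cb{\ell_{r},\ell_{r+1}-1},\qquad \omega^{(2)}:=\omega\cb{\ell_{r+1}-1,\abs{\omega}}.
\]
Monotonicity of the $\ell_{k}$ yields immediately that $x\notin\omega^{(1)}\cb{1\splice}$, that $\LE(\omega^{(1)})=(\eta_{0},\ldots,\eta_{r})$, that $\omega_{\mathrm{bc}}\cb{1\splice}$ is disjoint from $\LE(\omega^{(1)})\setminus\{a\}=\{\eta_{0},\ldots,\eta_{r-1}\}$, and that $\omega^{(2)}\cb{1\splice}$ is disjoint from $\LE(\omega^{(1)})$ in its entirety. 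The inverse map is concatenation $\omega=\omega^{(1)}\circ\omega_{\mathrm{bc}}\circ\omega^{(2)}$ with $j=\ell_{r}+j'$; the listed constraints are exactly what guarantees that the last-visit indices of the concatenated walk recover $\ell_{r}$ and $\ell_{r+1}$, so the decomposition applied to the concatenate returns the original pieces.

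The crucial step is the weight factorization $w_{\lambda,z}(\omega)=w_{\lambda,z}(\omega^{(1)})\,w_{\lambda,z}(\omega_{\mathrm{bc}})\,w_{\lambda,z}(\omega^{(2)})$. The length contribution is additive by construction, so the content lies in showing that the multiset of loops erased from $\omega$ chronologically is the disjoint union of the loops erased from each piece standalone. After processing $\omega^{(1)}$, the dynamic loop-erased skeleton equals $\LE(\omega^{(1)})$. During the $\omega_{\mathrm{bc}}$ phase, the condition $\omega_{\mathrm{bc}}\cb{1\splice}\cap(\LE(\omega^{(1)})\setminus\{a\})=\emptyset$ ensures that the only interaction with the existing skeleton occurs at revisits of $a$: a sub-skeleton is built above $a$ and torn down on each return to $a$, and the loops so erased coincide exactly with those of $\omega_{\mathrm{bc}}$ treated as a standalone closed walk at $a$. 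When $\omega_{\mathrm{bc}}$ terminates (back at $a$), the skeleton has recovered to $\LE(\omega^{(1)})$. Since $\omega^{(2)}\cb{1\splice}$ avoids $\LE(\omega^{(1)})$, the final phase leaves that skeleton intact and erases precisely the loops of $\omega^{(2)}$ standalone.

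To finish, fix $(a,\omega^{(1)},\omega^{(2)})$ satisfying the stated constraints and resum over $(\omega_{\mathrm{bc}},j')$ with $(\omega_{\mathrm{bc}})_{j'}=b$. The trivial bubble chain $\omega_{\mathrm{bc}}=(a)$ contributes only when $a=b$ and $j'=0$, producing the $\delta_{a,b}$ factor. Non-trivial bubble chains with $j'\geq 1$ sum, via \Cref{prop:LWW-Bubble-Chain} when $a\neq b$ or \Cref{prop:LWW-BC-Diag} when $a=b$, applied in the punctured lattice $\Z^{d}\setminus(\LE(\omega^{(1)})\setminus\{a\})$, to $\truebubblechain_{\lambda,z}^{\LE(\omega^{(1)})}(a,b)$ as in \Cref{def:LWW-True-BC}. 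Combining these two contributions gives the claimed identity. The main obstacle is the loop part of the weight factorization: the chronological picture is transparent, but verifying it rigorously requires tracking how the loop-erased skeleton evolves across phases, and the hypotheses on $\omega_{\mathrm{bc}}$ and $\omega^{(2)}$ are exactly what is needed to prevent any boundary-spanning erased loop.
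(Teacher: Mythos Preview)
Your decomposition has a genuine gap: the claimed bijection between pairs $(\omega,j)$ and quadruples $(\omega^{(1)},\omega_{\mathrm{bc}},\omega^{(2)},j')$ is not a bijection, because your ``listed constraints'' do not characterise the image of the forward map. Take distinct vertices $x,v,c,y$ and set $\omega^{(1)}=(x,v,c,v)$, $\omega_{\mathrm{bc}}=(v)$, $\omega^{(2)}=(v,y)$, $j'=0$ (so $b=v$). All of your constraints hold: $x\notin\omega^{(1)}\cb{1\splice}$, $\LE(\omega^{(1)})=(x,v)$, and $\omega^{(2)}\cb{1\splice}=(y)$ avoids $\{x,v\}$. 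The concatenation is $\omega=(x,v,c,v,y)$ with $j=3$. But the last-exit indices of $\omega$ are $\ell_{0}=0$, $\ell_{1}=1$, $\ell_{2}=4$, so your forward map returns $\hat\omega^{(1)}=(x,v)$, $\hat\omega_{\mathrm{bc}}=(v,c,v)$, $\hat j'=2$ --- a \emph{different} quadruple. The issue is that your constraints allow $\omega^{(1)}$ to carry a terminal excursion at $a$ (here the loop $(v,c,v)$) that the forward map would instead place inside $\omega_{\mathrm{bc}}$; you are missing the requirement that $|\omega^{(1)}|$ actually be a last-exit index, equivalently that $\omega^{(1)}_{|\omega^{(1)}|-1}$ equal the penultimate vertex of $\LE(\omega^{(1)})$. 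Without this, concatenation is many-to-one and you over-count.

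There is a second, related slip in the resummation: when $a=b$ and $\omega_{\mathrm{bc}}$ is non-trivial, the marker $j'=0$ is legitimate (since $(\omega_{\mathrm{bc}})_{0}=a=b$) and does occur in the image of your forward map, yet it is covered neither by your $\delta_{a,b}$ term (which you assign only to trivial $\omega_{\mathrm{bc}}$) nor by the $\truebubblechain$ term (which, via \Cref{prop:LWW-Bubble-Chain} and \Cref{prop:LWW-BC-Diag}, counts only $j'\geq 1$). These two errors do not cancel in general. The paper sidesteps both problems by not collapsing the middle piece to a single closed walk: it works with $\eta'=\LE(\omega\cb{0,j})$ and the shrinking times of $\omega\cb{j\splice}$ on $\eta'$, which produces the bubble-chain \emph{structure} of \Cref{prop:LWW-Bubble-Chain} directly and fixes $\omega^{(1)}$ uniquely as the preimage of the surviving prefix of $\eta'$.
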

\begin{proof}
  This follows by writing $\abs{ \{j\geq 1 \mid \omega_{j}=b\}}$ as
  $\sum_{j\geq 1} \indicatorthat{\omega_{j}=b}$ and noting that this
  splits, by applying \Cref{eq:LWW-Segment-Split} with $\eta =
  \LE(\omega\cb{0,j})$, a walk $\omega$ into (i) an initial segment
  $\omega^{(1)}$ whose loop erasure is the subset of
  $\LE(\omega\cb{0,j})$ that is contained in $\LE(\omega)$, (ii) a
  bubble chain from the endpoint of $\omega^{(1)}$ to $b$ whose walks
  do not hit $\LE(\omega^{(1)})$; if the endpoint of $\omega^{(1)}$ is
  $b$ then it is also possible this walk is null, and (iii) a walk
  $\omega^{(2)}$ from the endpoint of $\omega^{(1)}$ to $y$ that does
  not, after the first vertex, hit $\LE(\omega^{(1)})$.
\end{proof}
The restriction in \Cref{prop:LWW-Bubble-Chain-Split} to walks
$\omega$ that do not return to their initial vertex is simply because
this is the type of sum that will occur most frequently in what follows.

\subsubsection{Two-Point Functions and Their Derivatives}
\label{sec:LWW-2PT-Deriv}

The quantity $I^{\omega}_{\lambda,z}(a,b)$ defined in
\Cref{eq:LWW-Specialization-I2PF} is inconvenient due to its
dependence on the details of $\omega$; the next definition introduces
a simple upper bound.
\begin{definition}
  \label{def:LWW-LRW-I2P}
  The \emph{interaction two-point function} $I_{\lambda,z}(x,y)$ is
  the function
  \begin{equation}
    \label{eq:LWW-LRW-I2P}
    I_{\lambda,z}(x,y) = \indicatorthat{x=y} + \indicatorthat{x\neq
      y}\ob{1 - e^{- \mu_{\lambda,z}(x,y;\emptyset)}}.
  \end{equation}
\end{definition}

\begin{lemma}
  \label{lem:LWW-Repulsion-I}
  Let $\omega$ be a walk of length $n$, and let $0\leq a<b\leq n$.
  \begin{equation}
    \label{eq:LWW-Repulsion-I}
    I_{\lambda,z}^{\omega}(a,b) \leq I_{\lambda,z}(\omega_{a},\omega_{b}). 
  \end{equation}
\end{lemma}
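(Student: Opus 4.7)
The plan is a direct case split on whether $\omega_a = \omega_b$, followed by a monotonicity observation for the generalized loop measure $\mu_{\lambda,z}(A,B;C)$.

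First, if $\omega_a = \omega_b$, then by \Cref{def:LWW-Walk-I2PF} one has $I^{\omega}_{\lambda,z}(a,b) = 1$, and by \Cref{def:LWW-LRW-I2P} one also has $I_{\lambda,z}(\omega_a,\omega_b) = 1$, so the inequality holds with equality. This case is immediate.

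The substantive case is $\omega_a \neq \omega_b$. Here I would rewrite $I^{\omega}_{\lambda,z}(a,b)$ in the exponentiated form analogous to \Cref{eq:LWW-Specialization-I2PF}, so that the claim reduces to
\begin{equation*}
\mu_{\lambda,z}\bigl(\omega_a,\omega_b;\range{\omega\ob{a,b}}\bigr) \leq \mu_{\lambda,z}(\omega_a,\omega_b;\emptyset).
\end{equation*}
The key point is that $\mu_{\lambda,z}(A,B;C)$, as defined in \Cref{eq:LWW-Loop-Measure-Generalized}, is weakly decreasing in the third (avoidance) argument $C$: enlarging $C$ only adds the constraint $\indicatorthat{\range{\omega}\cap C = \emptyset}$ to more walks, and since $w_{\lambda,z}(\omega)\geq 0$ for $\lambda,z\geq 0$, this can only decrease the sum term-by-term. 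This is the exact analogue for the two-set generalization $\mu_{\lambda,z}(\cdot,\cdot;\cdot)$ of the second statement of \Cref{prop:LWW-LM-Properties}, and the proof is identical. Applying this with $C = \range{\omega\ob{a,b}} \supseteq \emptyset$ yields the desired inequality on $\mu$; exponentiating and subtracting from $1$ (which reverses the inequality twice) gives $I^{\omega}_{\lambda,z}(a,b) \leq I_{\lambda,z}(\omega_a,\omega_b)$.

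There is no real obstacle here; the lemma is essentially a bookkeeping statement that recognizes $I^{\omega}_{\lambda,z}$ as the specialization of $I_{\lambda,z}$ with an extra avoidance constraint imposed by the interior of $\omega$, and exploits nonnegativity of $w_{\lambda,z}$ to discard that constraint. The only thing to be careful about is that the exponent in $I^{\omega}_{\lambda,z}$ depends on $\omega_a,\omega_b$ through the sets in $\mu$, and on $\range{\omega\ob{a,b}}$ as the avoidance set — both of which are exactly matched to the definition of $I_{\lambda,z}(\omega_a,\omega_b)$ after dropping the avoidance constraint.
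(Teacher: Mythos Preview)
Your proof is correct and follows exactly the same approach as the paper: a case split on $\omega_a=\omega_b$ (equality) versus $\omega_a\neq\omega_b$, with the latter reduced to the fact that the generalized loop measure $\mu_{\lambda,z}(A,B;C)$ is weakly decreasing in its avoidance argument $C$. The paper's proof is a two-sentence version of what you wrote.
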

\begin{proof}
  If $\omega_{a}=\omega_{b}$ then~\eqref{eq:LWW-Repulsion-I} is an
  equality. If $\omega_{a}\neq \omega_{b}$ the inequality follows
  because the loop measure is decreasing in its final
  argument.
\end{proof}

The important aspect of the next bound is that it is independent of
$\eta$.
\begin{proposition}
  \label{prop:LWW-I2P-DB}
  Let $\eta\colon x\to y$ be a self-avoiding walk. Then
  \begin{equation}
    \label{eq:LWW-I2P-DB}
    \frac{d}{dz} I^{\eta}_{\lambda,z}(x,y) \leq \indicatorthat{x\neq
      y}z^{-1} \sum_{a\in \Z^{d}}
    \mathop{\sum_{\omega\colon a\to a}}_{\abs{\omega}\geq 1}
    \indicatorthat{x\in\omega} \indicatorthat{y\in\omega}
    w_{\lambda,z}(\omega).
  \end{equation}
  Further, the right-hand side of~\eqref{eq:LWW-I2P-DB} is an upper bound
  for $\frac{d}{dz} I_{\lambda,z}(x,y)$ as well.
\end{proposition}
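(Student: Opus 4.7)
The plan is to differentiate the exponential expression defining $I^{\eta}_{\lambda,z}$, then use monotonicity of the loop measure in its third argument to eliminate the $\eta$-dependence.

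First, for $x=y$ one has $I^{\eta}_{\lambda,z}(x,y)=1$ identically in $z$, so the derivative vanishes and the bound holds trivially with the prefactor $\indicatorthat{x\neq y}$. So assume $x\neq y$. Recalling \Cref{def:LWW-Walk-I2PF}, $I^{\eta}_{\lambda,z}(x,y) = 1 - \exp\ob{-\mu_{\lambda,z}(x,y;\range{\eta}\setminus\{x,y\})}$, where the relevant avoidance set is the interior of the range of the self-avoiding walk $\eta$ (this uses that $\eta_0=x$, $\eta_{\abs{\eta}}=y$). Differentiating in $z$ and using $e^{-\mu}\leq 1$ (valid because $\mu_{\lambda,z}\geq 0$ for $\lambda,z\geq 0$) gives
\begin{equation}
\frac{d}{dz} I^{\eta}_{\lambda,z}(x,y) \leq \frac{d}{dz}\mu_{\lambda,z}(x,y;\range{\eta}\setminus\{x,y\}).
\end{equation}

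Next I would compute the derivative of the loop measure explicitly. Using the defining formula~\eqref{eq:LWW-Loop-Measure-Generalized} and the identity $\frac{d}{dz}w_{\lambda,z}(\omega) = \frac{\abs{\omega}}{z}w_{\lambda,z}(\omega)$, the factor $\abs{\omega}$ cancels the $\abs{\omega}^{-1}$ in the loop measure weight, yielding
\begin{equation}
\frac{d}{dz}\mu_{\lambda,z}(x,y;C) = z^{-1} \sum_{a\in\Z^d} \mathop{\sum_{\omega\colon a\to a}}_{\abs{\omega}\geq 1} w_{\lambda,z}(\omega)\indicatorthat{x\in\omega}\indicatorthat{y\in\omega}\indicatorthat{\range{\omega}\cap C=\emptyset}
\end{equation}
for any set $C$. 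Since every term in this sum is nonnegative, dropping the constraint $\range{\omega}\cap C=\emptyset$ yields an upper bound independent of $C$ (and hence independent of $\eta$), giving precisely the right-hand side of \eqref{eq:LWW-I2P-DB}.

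For the final statement about $\frac{d}{dz}I_{\lambda,z}(x,y)$, the argument is identical but simpler: from \Cref{def:LWW-LRW-I2P} we have $I_{\lambda,z}(x,y) = 1 - e^{-\mu_{\lambda,z}(x,y;\emptyset)}$ for $x\neq y$, so the same differentiation and $e^{-\mu}\leq 1$ estimate apply, and the explicit computation of $\frac{d}{dz}\mu_{\lambda,z}(x,y;\emptyset)$ above (with $C=\emptyset$, so no constraint-dropping step is needed) gives the same upper bound. No step here is really the ``hard part''; the only subtlety is verifying the interchange of $\frac{d}{dz}$ with the sums defining $\mu_{\lambda,z}$, which follows from term-by-term nonnegativity and the fact that for $z<z_c(\lambda)$ the relevant series converge by \Cref{prop:LWW-Trivial-G-Bound} (applied to a slightly larger activity to justify termwise differentiation).
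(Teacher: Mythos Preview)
Your proof is correct and follows exactly the approach of the paper's own proof, which reads in its entirety: ``Differentiate, and then use $e^{-x}\leq 1$ for $x\geq 0$.'' You have simply spelled out the details of that one-line argument, including the explicit cancellation of $\abs{\omega}$ against the $\abs{\omega}^{-1}$ in the loop measure and the constraint-dropping step, both of which the paper leaves implicit.
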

\begin{proof}
   Differentiate, and then use $e^{-x}\leq 1$ for $x\geq 0$.
\end{proof}

\begin{definition}
  \label{def:LWW-Reduced-2PT}
  The \emph{scaled two-point functions} $\bar G_{\lambda,z}(x,y)$ and
  $\bar H_{\lambda,z}(x,y)$ are defined by
  \begin{equation}
    \bar G_{\lambda,z}(x,y) = \alpha_{0}(\lambda,z)^{-1}
    G_{\lambda,z}(x,y), \qquad \bar H_{\lambda,z}(x,y) =
    \alpha_{0}(\lambda,z)^{-1} H_{\lambda,z}(x,y).
  \end{equation}
\end{definition}

Let $\bar B_{\lambda,z}(x) = \bar H_{\lambda,z}(x)^{2}$. An upper
bound on $\truebubblechain_{\lambda,z}$ is obtained by dropping
the constraints $A_{i}$ and $B_{i}$.
\begin{definition}
  \label{def:LWW-BC}
  Define $\bubblechain_{\lambda,z}(x,y)$ by
  \begin{equation}
    \label{eq:LWW-BC}
    \bubblechain_{\lambda,z}(x,y) = \alpha_{0}
    \begin{cases}
      \sum_{k\geq 1} \lambda^{k}\underbrace{\bar B_{\lambda,z}\ast \dots
        \ast \bar B_{\lambda,z}}_{\textrm{$k$ terms}} (x,y) &
      x\neq y \\
      \alpha_{0} - 1 & x=y
    \end{cases}
  \end{equation}
\end{definition}

\begin{proposition}
  \label{prop:LWW-BC-Bound}
  Let $\eta$ be any self-avoiding walk ending at $x$. Then
  \begin{equation}
    \label{eq:LWW-BC-Bound}
    \truebubblechain_{\lambda,z}^{\eta}(x,y) \leq
    \truebubblechain_{\lambda,z}(x,y) \leq 
    \bubblechain_{\lambda,z}(x,y).
  \end{equation}
\end{proposition}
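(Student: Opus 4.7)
The first inequality $\truebubblechain^{\eta}_{\lambda,z}(x,y) \leq \truebubblechain_{\lambda,z}(x,y)$ is pure monotonicity. By \Cref{def:LWW-True-BC}, $\truebubblechain^{\eta}_{\lambda,z}(x,y)$ is obtained by repeating the construction underlying $\truebubblechain_{\lambda,z}(x,y)$ with every walk in the relevant decomposition (the right-hand side of~\eqref{eq:LWW-Bubble-Chain} or the diagonal formula) restricted to $\Z^{d} \setminus \{\eta_{0},\ldots,\eta_{\abs{\eta}-1}\}$. Since $\lambda\geq 0$ and $z\geq 0$ force $w_{\lambda,z}(\omega) \geq 0$ for every walk, dropping this avoidance constraint can only enlarge the sum.

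For the second inequality, the diagonal case $x=y$ is an equality: both sides equal $\alpha_{0}(\alpha_{0}-1)$ by direct comparison of \Cref{def:LWW-True-BC} and \Cref{def:LWW-BC}. For $x\neq y$ my plan is to take the representation of $\truebubblechain_{\lambda,z}(x,y)$ from \Cref{prop:LWW-Bubble-Chain} and upper-bound its right-hand side in two stages: first drop the distinctness of $x_{0},\ldots,x_{k}$ (valid since $w_{\lambda,z}\geq 0$), then discard all of the constraints $A_{i}$ and $B_{i}$ except the fact that each segment $\omega^{(i)}$ (forward or return) is an excursion from its initial vertex, i.e.\ $\omega^{(i)}\cb{1\splice}$ does not contain its starting vertex. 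This excursion property is built into the shrinking-time decomposition used to prove \Cref{prop:LWW-Bubble-Chain}: the $x_{i}$'s are last-exit (respectively first-entry) vertices on the appropriate trip, and loops at the initial vertex of any given segment are already absorbed into other factors of the decomposition (in particular into the $\alpha_{0}$ prefactor for the very first segment).

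The calculation then reduces to one identity that I would record as a short auxiliary lemma: for $a\neq b$,
\[
  \mathop{\sum_{\omega\colon a\to b}}_{\omega\cb{1\splice}\not\ni a} w_{\lambda,z}(\omega) \ =\ \bar H_{\lambda,z}(a,b).
\]
This follows from \Cref{thm:LWW-LM-Rep} via the Wilson-type bijection decomposing a walk with loop erasure $\eta$ into $\eta$ together with collections of loops attached at each vertex of $\eta$: the excursion condition corresponds to attaching no loops at the initial vertex $a$, which removes precisely the factor $\exp(\mu_{\lambda,z}(\{a\})) = \alpha_{0}$ from the loop-measure prefactor of \Cref{thm:LWW-LM-Rep}. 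Applying this identity to each of the $2k$ segments, using isometry invariance to identify $\bar H_{\lambda,z}(x_{k-i+1},x_{k-i})$ with $\bar H_{\lambda,z}(x_{i-1},x_{i})$, and pairing each forward-return leg between $x_{i-1}$ and $x_{i}$ produces the factor $\bar H_{\lambda,z}(x_{i-1},x_{i})^{2} = \bar B_{\lambda,z}(x_{i-1},x_{i})$; summing over intermediate vertices yields $\bar B_{\lambda,z}^{\ast k}(x,y)$, and the outer sum over $k$ together with the $\alpha_{0}$ prefactor and $\lambda^{k}$ reproduces $\bubblechain_{\lambda,z}(x,y)$ from \Cref{def:LWW-BC}. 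The main obstacle is the bijective verification of the excursion identity, which rests on the fact that loops at the starting vertex decouple from loops at later vertices in the Wilson decomposition; once this is in hand, everything else is bookkeeping enabled by the non-negativity of weights.
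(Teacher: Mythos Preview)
Your argument is correct and follows the same two-step strategy as the paper: monotonicity for the first inequality, then relaxing the constraints $A_{i}$, $B_{i}$ and using the reversal symmetry of the two-point function to pair each forward leg with its return leg.  Where the paper's proof says only ``relaxing the conditions $A_{i}$ and $B_{i}$ increases the set of summands'' and then invokes $H_{\lambda,z}(x,y)=H_{\lambda,z}(y,x)$, you make explicit the point it elides: one must \emph{retain} the excursion property of each segment (built into the shrinking-time decomposition and the extraction of the prefactor $\alpha_{0}$) in order to land on $\bar H_{\lambda,z}$ rather than $H_{\lambda,z}$.  Your auxiliary identity
\[
  \mathop{\sum_{\omega\colon a\to b}}_{a\notin\omega\cb{1\splice}} w_{\lambda,z}(\omega)=\bar H_{\lambda,z}(a,b)
\]
is exactly the missing step; it follows from the factorisation $\mu_{\lambda,z}(\range{\eta})=\mu_{\lambda,z}(a)+\mu_{\lambda,z}(\range{\eta};a)$ in \Cref{thm:LWW-LM-Rep}, which gives $H_{\lambda,z}(a,b)=\alpha_{0}\cdot(\text{excursion sum})$.

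One small correction: the symmetry you need to reverse the return legs is not ``isometry invariance'' but the walk-reversal identity $\bar H_{\lambda,z}(u,v)=\bar H_{\lambda,z}(v,u)$, which the paper obtains from \Cref{thm:LWW-LM-Rep} (the loop-measure weight depends only on $\range{\eta}$, hence is invariant under reversal).  Isometry invariance alone would not let you swap the two specific endpoints $x_{k-i+1}$ and $x_{k-i}$.
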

\begin{proof}
  The first inequality follows as the set of summands is increasing
  from left to right and all summands are non-negative. For the second
  inequality note that relaxing the conditions $A_{i}$ and $B_{i}$
  increases the set of summands. Using $H_{\lambda,z}(x,y) =
  H_{\lambda,z}(y,x)$, which follows from \Cref{thm:LWW-LM-Rep}, to
  reverse the direction of the walks $\omega^{(k+i)}$ gives the
  upper bound $\bubblechain_{\lambda,z}(x,y)$.
\end{proof}

The next lemma shows that if a sum over walks satisfying some
constraints is upper bounded by relaxing the constraints, an upper
bound on the derivative is obtained by differentiating the upper
bound. This will be used frequently.
\begin{lemma}
  \label{lem:LWW-Derivative-UB}
  Suppose $A,B$ are two sets of walks, and $A\subset B$. Then
  \begin{equation}
    \frac{d}{dz}\sum_{\omega\in A} w_{\lambda,z}(\omega) \leq
    \frac{d}{dz} \sum_{\omega\in B} w_{\lambda,z}(\omega).
  \end{equation}
\end{lemma}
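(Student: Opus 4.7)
The plan is to reduce the statement to the elementary observation that $w_{\lambda,z}$ is, for fixed $\omega$, a monomial in $z$ with non-negative coefficients, so its derivative in $z$ is non-negative.

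Concretely, write $w_{\lambda,z}(\omega) = z^{\abs{\omega}}\lambda^{n_L(\omega)}$. Since $\lambda\geq 0$ by hypothesis and $z\geq 0$, we have
\begin{equation}
\frac{d}{dz} w_{\lambda,z}(\omega) = \abs{\omega}\, z^{\abs{\omega}-1}\lambda^{n_L(\omega)} \geq 0
\end{equation}
for every walk $\omega$ (with the convention that the $\abs{\omega}=0$ term is zero). Decomposing $B = A \sqcup (B\setminus A)$ and using additivity of the derivative gives
\begin{equation}
\frac{d}{dz}\sum_{\omega\in B} w_{\lambda,z}(\omega) - \frac{d}{dz}\sum_{\omega\in A} w_{\lambda,z}(\omega) = \frac{d}{dz}\sum_{\omega\in B\setminus A} w_{\lambda,z}(\omega),
\end{equation}
and I would claim the right-hand side is non-negative because it is a termwise non-negative sum.

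The only substantive point is justifying interchange of sum and derivative, which will be the main (very mild) obstacle. For this I would restrict attention to $z\in \co{0,z_c(\lambda)}$, where the sums are dominated termwise by the corresponding sum defining $\chi_\lambda(z)$ and hence converge. Since $w_{\lambda,z}(\omega)$ is a power series in $z$ with non-negative coefficients $\lambda^{n_L(\omega)}\indicatorthat{\abs{\omega}=n}$, the sum over any subset of walks is itself a power series with non-negative coefficients, convergent on $\co{0,z_c(\lambda)}$. Standard results for power series with non-negative coefficients then allow termwise differentiation on the open interval $\ob{0,z_c(\lambda)}$, after which the termwise inequality above yields the lemma.
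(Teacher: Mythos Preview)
Your proof is correct and follows essentially the same approach as the paper: the paper's one-sentence proof simply notes that each summand after differentiating is non-negative (since $w_{\lambda,z}(\omega)\propto z^{\abs{\omega}}$) and that the right-hand side has a larger set of summands. Your version is slightly more detailed in justifying the interchange of sum and derivative via power series with non-negative coefficients, which the paper leaves implicit.
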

\begin{proof}
  Each summand is non-negative as the weight of a walk $\omega$ is
  proportional to $z^{\abs{\omega}}$, and the set of summands on the
  right-hand side is larger.
\end{proof}

The formulas of \Cref{sec:LWW-Visits} yield diagrammatic bounds on
derivatives of two-point functions by applying the identity
\begin{equation}
  \label{eq:LWW-Walk-Leibniz}
  \abs{\omega} = \sum_{a\in \Z^{d}} \abs{ \{ j\geq 1 \mid \omega_{j}=a\}},
\end{equation}
where $j=0$ is not included because there $\abs{\omega}+1$ vertices
in a walk.

\begin{proposition}
  \label{prop:LWW-H-DB}
  For $x\in \Z^{d}$, $x\neq 0$,
  \begin{equation}
    \label{eq:LWW-H-DB}
    \frac{d}{dz} \bar G_{\lambda,z}(x) =
    \frac{d}{dz} \bar H_{\lambda,z}(x)
    \leq z^{-1} (1+\norm{\bubblechain_{\lambda,z}}_{1}) \bar G_{\lambda,z}\ast
    \bar H_{\lambda,z}(x).
  \end{equation}
\end{proposition}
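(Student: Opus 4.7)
The key step is to establish the walk representation
\[
\bar H_{\lambda,z}(x) = \tilde G_{\lambda,z}(x) := \sum_{\omega\colon 0\to x,\; 0\notin \omega\cb{1\splice}} w_{\lambda,z}(\omega), \qquad x\neq 0.
\]
To derive it, I decompose each walk $\omega\colon 0\to x$ at its last visit to $0$ as $\omega = \omega^{(a)}\circ\omega^{(b)}$ with $\omega^{(a)}\colon 0\to 0$ and $\omega^{(b)}\colon 0\to x$ satisfying $0\notin\omega^{(b)}\cb{1\splice}$. Since $\omega^{(b)}$ never returns to $0$, the first loop detected by loop erasure on $\omega$ lies inside $\omega^{(a)}$; iterating, every loop of $\omega^{(a)}$ is erased before loop erasure enters $\omega^{(b)}$, so $n_{L}(\omega) = n_{L}(\omega^{(a)}) + n_{L}(\omega^{(b)})$ and the $\lambda$-weights factor multiplicatively. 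Summing over $\omega^{(a)}$ produces a factor $\alpha_{0}(\lambda,z)$ that cancels in $\bar H = \alpha_{0}^{-1} H$. The equality $\frac{d}{dz}\bar G = \frac{d}{dz}\bar H$ for $x\neq 0$ is immediate from \eqref{eq:LWW-G-H-Relation}.

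Using $z\partial_{z} w_{\lambda,z}(\omega) = \abs{\omega}\,w_{\lambda,z}(\omega)$ together with $\abs{\omega} = \sum_{b}\#\{j\geq 1 : \omega_{j}=b\}$, term-by-term differentiation gives
\[
z\frac{d}{dz}\bar H(x) = \sum_{b\neq 0}\ \mathop{\sum_{\omega\colon 0\to x}}_{0\notin\omega\cb{1\splice}} \#\{j\geq 1: \omega_{j}=b\}\,w_{\lambda,z}(\omega),
\]
with the $b=0$ term vanishing by the non-return constraint. For each $b\neq 0$ I then apply \Cref{prop:LWW-Bubble-Chain-Split} to the inner sum, producing a decomposition indexed by a splitting vertex $a$, an initial walk $\omega^{(1)}\colon 0\to a$ with $0\notin\omega^{(1)}\cb{1\splice}$, a tail walk $\omega^{(2)}\colon a\to x$ with $\omega^{(2)}\cb{1\splice}\cap\LE(\omega^{(1)})=\emptyset$, and the weighting factor $\delta_{a,b} + \truebubblechain^{\LE(\omega^{(1)})}(a,b)$. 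A brief check rules out $a=0$: the constraint would force $\omega^{(1)}=(0)$, but inserting any non-trivial bubble chain at the origin then violates $0\notin\omega\cb{1\splice}$, so the $a=0$ case contributes nothing.

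Summing over $b\neq 0$ transforms the weighting factor into $1 + \sum_{b\neq 0}\truebubblechain^{\LE(\omega^{(1)})}(a,b) \leq 1 + \norm{\bubblechain_{\lambda,z}}_{1}$ by \Cref{prop:LWW-BC-Bound} and translation invariance of $\bubblechain$. Relaxing the constraint on $\omega^{(2)}$ from $\omega^{(2)}\cb{1\splice}\cap\LE(\omega^{(1)})=\emptyset$ to the weaker $a\notin\omega^{(2)}\cb{1\splice}$ gives $\sum_{\omega^{(2)}} w_{\lambda,z}(\omega^{(2)}) \leq \tilde G_{\lambda,z}(x-a) = \bar G(x-a)$ after translation and the first-step identity. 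The sum over $\omega^{(1)}$ equals $\bar H(a)$ for $a\neq 0$. Combining these ingredients and using $\bar H(0)=0$ to extend the sum over all $a$,
\[
z\frac{d}{dz}\bar H(x) \leq (1+\norm{\bubblechain}_{1}) \sum_{a} \bar H(a)\,\bar G(x-a) = (1+\norm{\bubblechain}_{1})\,\bar G\ast\bar H(x),
\]
which gives the claim upon dividing by $z$. The principal obstacle is the loop-counting factorization in the first step: the resolution is that any closed walk at $0$ has loop erasure equal to $(0)$, so since $\omega^{(b)}$ avoids $0$, chronological loop erasure completes its work on $\omega^{(a)}$ before touching $\omega^{(b)}$, yielding the additive decomposition of $n_{L}$.
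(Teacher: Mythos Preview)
Your proof is correct and follows essentially the same route as the paper's: represent $\bar H_{\lambda,z}(x)$ as a sum over walks that do not return to the origin, differentiate to produce the factor $\abs{\omega}$, apply the walk-Leibniz identity and \Cref{prop:LWW-Bubble-Chain-Split}, bound the bubble chain via \Cref{prop:LWW-BC-Bound}, and relax the avoidance constraint on $\omega^{(2)}$. You supply more detail than the paper at several points—the last-visit factorization giving $\bar H=\tilde G$, the explicit exclusion of $a=0$, and the relaxation of the $\omega^{(2)}$-constraint only to $a\notin\omega^{(2)}\cb{1\splice}$ (rather than dropping it entirely), which is exactly what is needed to land on $\bar G(x-a)$ instead of $\alpha_{0}\bar G(x-a)$.
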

\begin{proof}
  The first equality is straightforward as $\bar G_{\lambda,z}(x) =
  \delta_{0,x} + \bar H_{\lambda,z}(x)$ by \Cref{eq:LWW-G-H-Relation}.
  For the inequality observe that
  \begin{equation*}
    \frac{d}{dz} \bar H_{\lambda,z}(x) = z^{-1}
    \mathop{\sum_{\omega\colon 0\to x}}_{0\notin \omega\cb{1\splice}}  
    \abs{\omega} w_{\lambda,z}(\omega)
  \end{equation*}
  Applying~\eqref{eq:LWW-Walk-Leibniz}
  and~\Cref{prop:LWW-Bubble-Chain-Split} yields
  \begin{equation}
    z^{-1}\sum_{b\in \Z^{d}} \sum_{a\in \Z^{d}}
    \mathop{\sum_{\omega^{(1)}\colon 0\to
        a}}_{0\notin\omega\cb{1\splice}}
    \mathop{\sum_{\omega^{(2)}\colon a\to
        x}}_{\omega^{(2)}\cb{1\splice} \cap
      \LE(\omega^{(1)})=\emptyset} (\delta_{a,b} +
    \truebubblechain_{\lambda,z}^{\LE(\omega^{(1)})}(a,b))
    w_{\lambda,z}(\omega^{(1)}) w_{\lambda,z}(\omega^{(2)}).
  \end{equation}
  By \Cref{prop:LWW-BC-Bound} removing the restriction on the bubble
  chain gives an upper bound.  The sum over $b$ then gives the factor
  $1+ \norm{\bubblechain_{\lambda,z}}_{1}$. Dropping the constraint
  that $\omega^{(2)}$ does not intersect $\LE(\omega^{(1)})$ gives the
  claim.
\end{proof}

\begin{proposition}
  \label{prop:LWW-Alpha-DB}
  \begin{equation}
    \label{eq:LWW-Alpha-DB}
    \frac{d}{dz} \alpha_{0}(\lambda,z) = z^{-1}
    \norm{\bubblechain_{\lambda,z}}_{1} %\alpha_{0}(\alpha_{0}-1).
  \end{equation}
\end{proposition}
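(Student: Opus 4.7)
The approach is to identify $\alpha_{0}(\lambda,z)$ with the closed-walk two-point function $G_{\lambda,z}(0,0)$, differentiate the resulting sum in $z$ term by term, and recognize the outcome via the visit-counting identities of \Cref{prop:LWW-BC-Diag,prop:LWW-Bubble-Chain}.

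First I would establish that $\alpha_{0}(\lambda,z) = G_{\lambda,z}(0,0) = \sum_{\omega\colon 0\to 0} w_{\lambda,z}(\omega)$. By \Cref{prop:LWW-2PT-Equivalence} applied at $x=0$, $G_{\lambda,z}(0,0) = \sum_{\omega\colon 0\to 0} \bar w_{\lambda,z}(\omega)$. Only the trivial walk $(0)$ contributes: any nontrivial self-avoiding polygon $\eta$ satisfies $\eta_{0}=\eta_{|\eta|}$ with $|\eta|\geq 1$ and so fails the $\eta\in\SAW$ indicator in \Cref{def:LWW-LELWW}, forcing $\bar w_{\lambda,z}(\eta)=0$. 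For the trivial walk, \Cref{thm:LWW-LM-Rep} gives $\bar w_{\lambda,z}((0)) = z^{0}\exp(\mu_{\lambda,z}(\{0\})) = \alpha_{0}$, which completes the identification.

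Next I would differentiate. Since $w_{\lambda,z}(\omega)=z^{|\omega|}\lambda^{n_{L}(\omega)}$ depends on $z$ only through the monomial $z^{|\omega|}$, term-by-term differentiation yields
\begin{equation*}
z\,\frac{d\alpha_{0}}{dz} = \sum_{\omega\colon 0\to 0,\,|\omega|\geq 1} |\omega|\, w_{\lambda,z}(\omega).
\end{equation*}
Applying \Cref{eq:LWW-Walk-Leibniz} to decompose $|\omega| = \sum_{a\in\Z^{d}} |\{j\geq 1\colon \omega_{j}=a\}|$ and exchanging sums, the $a=0$ contribution is $\alpha_{0}(\alpha_{0}-1) = \bubblechain_{\lambda,z}(0,0)$ by \Cref{prop:LWW-BC-Diag}, while each $a\neq 0$ contribution is $\truebubblechain_{\lambda,z}(0,a)$ by \Cref{prop:LWW-Bubble-Chain} with $x=0$, $y=a$. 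Summing over $a$ and using translation invariance to interpret $\|\cdot\|_{1}$ as $\sum_{y}(\cdot)(0,y)$ produces the stated identity.

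The main obstacle lies entirely in this last identification. \Cref{prop:LWW-BC-Bound} gives only the pointwise inequality $\truebubblechain_{\lambda,z}(0,a)\leq\bubblechain_{\lambda,z}(0,a)$, so the natural derivation above literally produces $z\,\frac{d\alpha_{0}}{dz} = \bubblechain(0,0) + \sum_{a\neq 0}\truebubblechain(0,a)$. To land on the equality as stated one must read $\|\bubblechain_{\lambda,z}\|_{1}$ as the $L^{1}$ norm of the exact function returned by \Cref{prop:LWW-Bubble-Chain}, whose diagonal is $\alpha_{0}(\alpha_{0}-1)$ and whose off-diagonal is $\truebubblechain_{\lambda,z}$; everything prior to this identification is routine manipulation of the identities already in hand.
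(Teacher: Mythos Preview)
Your approach is essentially identical to the paper's: differentiate the closed-walk sum defining $\alpha_{0}$, apply \eqref{eq:LWW-Walk-Leibniz}, and identify the result via \Cref{prop:LWW-BC-Diag,prop:LWW-Bubble-Chain}. Your identification $\alpha_{0}=G_{\lambda,z}(0,0)$ is the content of \eqref{eq:LWW-UB-Alpha.1}, so there is no real difference in route.

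Your observation about the ``main obstacle'' is well taken. The computation genuinely yields $z\,\tfrac{d}{dz}\alpha_{0}=\norm{\truebubblechain_{\lambda,z}}_{1}$, and the paper's own proof explicitly invokes \Cref{prop:LWW-BC-Bound} as a third step to pass to $\norm{\bubblechain_{\lambda,z}}_{1}$; in other words the paper proves the inequality $\tfrac{d}{dz}\alpha_{0}\leq z^{-1}\norm{\bubblechain_{\lambda,z}}_{1}$, not the equality as literally stated. Every subsequent use of the proposition in the paper requires only the upper bound, so you should read the displayed equality as a (harmless) typo for $\leq$, or equivalently with $\truebubblechain$ in place of $\bubblechain$. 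Your derivation is complete once you append the one-line application of \Cref{prop:LWW-BC-Bound}.
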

\begin{proof}
  As a zero step walk does not survive being differentiated,
  \begin{equation}
    \frac{d}{dz} \alpha_{0}(\lambda,z) =
    z^{-1}\mathop{\sum_{\omega\colon 0 \to 0}}_{\abs{\omega} \geq 1}
    \abs{\omega} w_{\lambda,z}(\omega).
  \end{equation}
  The proposition follows by (i) applying~\eqref{eq:LWW-Walk-Leibniz}
  to rewrite $\abs{\omega}$, (ii) using
  \Cref{prop:LWW-Bubble-Chain,prop:LWW-BC-Diag} to recognize the
  resulting sum as the $1$-norm of the bubble chain, and (iii) using
  \Cref{prop:LWW-BC-Bound} to upper bound the norm of the bubble
  chain.
\end{proof}

\begin{proposition}
  \label{prop:LWW-BC-DB}
  \begin{align}
    \label{eq:LWW-BC-DB}
    \frac{d}{dz} \norm{\truebubblechain_{\lambda,z}}_{1} &\leq
    z^{-1}\norm{\bubblechain_{\lambda,z}}_{1}\ob{ 3\alpha_{0} - 1 -
      \alpha_{0}^{2} + \norm{\bubblechain_{\lambda,z}}_{1}} \\ &+
    2\alpha_{0}z^{-1}\lambda \norm{ \bar H_{\lambda,z}\cdot \ob{\bar
        G_{\lambda,z}\ast \bar H_{\lambda,z}}}_{1}\ob{1 +
      \norm{\bubblechain_{\lambda,z}}_{1}}^{3}.
%    &\leq
%    z^{-1} \alpha_{0} \ob{\alpha_{0}(\alpha_{0}-1) + (\alpha_{0}-1)^{2}} +\\
%    & 2 \lambda z^{-1} \alpha_{0}
%    (1+\norm{\bubblechain_{\lambda,z}}_{1}) \ob{ \norm{\bar
%        H_{\lambda,z} \cdot \ob{\bar G_{\lambda,z}\ast \bar
%          H_{\lambda,z}}}_{1} + \norm{ \ob{\bar H_{\lambda,z}
%          \cdot\ob{\bar G_{\lambda,z}\ast \bar H_{\lambda,z}}} \ast
%        \bubblechain_{\lambda,z}}_{1}}
  \end{align}
\end{proposition}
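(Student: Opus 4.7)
The plan is to reduce $\frac{d}{dz}\|\truebubblechain_{\lambda,z}\|_{1}$ to $\frac{d}{dz}\|\bubblechain_{\lambda,z}\|_{1}$ via the pointwise comparison of \Cref{prop:LWW-BC-Bound}, and then to compute the latter by differentiating the explicit formula of \Cref{def:LWW-BC}. The comparison $\truebubblechain\leq\bubblechain$ is obtained by relaxing constraints and using the reflection identity $H(x,y)=H(y,x)$, so the difference $\bubblechain-\truebubblechain$ has non-negative coefficients in the $z$-expansion, and \Cref{lem:LWW-Derivative-UB} (applied coefficient-wise) gives
\[
  \tfrac{d}{dz}\|\truebubblechain_{\lambda,z}\|_{1} \;\leq\; \tfrac{d}{dz}\|\bubblechain_{\lambda,z}\|_{1}.
\]

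Next I would write $\|\bubblechain\|_{1} = \alpha_{0}(\alpha_{0}-1) + \alpha_{0} T$ with $T := \sum_{y\neq 0}\sum_{k\geq 1}\lambda^{k}\bar B^{*k}(0,y)$, so that the product rule yields
\[
  \tfrac{d}{dz}\|\bubblechain\|_{1} \;=\; (2\alpha_{0}-1)\alpha_{0}' + \alpha_{0}' T + \alpha_{0} T'.
\]
The first two summands will produce the first line of the bound: \Cref{prop:LWW-Alpha-DB} gives $\alpha_{0}'\leq z^{-1}\|\bubblechain\|_{1}$, and the trivial inequality $T\leq \alpha_{0} T$ (valid because $\alpha_{0}\geq 1$) together with the identity $\alpha_{0} T = \|\bubblechain\|_{1} - \alpha_{0}^{2} + \alpha_{0}$ (read directly off the decomposition) assembles the two contributions into $z^{-1}\|\bubblechain\|_{1}(3\alpha_{0}-1-\alpha_{0}^{2}+\|\bubblechain\|_{1})$.

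The third summand $\alpha_{0} T'$ is the source of the second line. Termwise differentiation and the identity $\tfrac{d}{dz}\bar B^{*k} = k(\bar B')*\bar B^{*(k-1)}$ give $T'\leq \lambda\|\bar B'\|_{1}(1-\lambda\|\bar B\|_{1})^{-2}$. Since $\bar B = \bar H^{2}$, one has $\|\bar B'\|_{1}\leq 2\|\bar H \bar H'\|_{1}$, which by \Cref{prop:LWW-H-DB} is at most $2z^{-1}(1+\|\bubblechain\|_{1})\|\bar H\cdot(\bar G*\bar H)\|_{1}$. The remaining factor $(1-\lambda\|\bar B\|_{1})^{-2}$ I would bound by $(1+\|\bubblechain\|_{1})^{2}$ after extracting the $k=1$ piece of $\bubblechain$ (which yields $\alpha_{0}\lambda\|\bar B\|_{1}\leq \|\bubblechain\|_{1}$, using $\bar B(0)=0$) and standard geometric-series manipulations; combined with the extra $(1+\|\bubblechain\|_{1})$ from \Cref{prop:LWW-H-DB} this produces the claimed $(1+\|\bubblechain\|_{1})^{3}$, and multiplication by $\alpha_{0}$ gives the second line of \eqref{eq:LWW-BC-DB}.

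The main obstacle is the geometric-series step converting $(1-\lambda\|\bar B\|_{1})^{-1}$ into $1+\|\bubblechain\|_{1}$: the naive comparison $\sum_{k\geq 1}\lambda^{k}\|\bar B\|_{1}^{k}\leq \|\bubblechain\|_{1}$ is not immediate because the diagonal convolution powers $\bar B^{*k}(0,0)$ are not absorbed into $\bubblechain(0,0) = \alpha_{0}(\alpha_{0}-1)$. The vanishing $\bar B(0) = \bar H(0)^{2} = 0$ forces these diagonal terms to correspond to genuine multi-vertex returns, which can be controlled by a small factor of $\|\bar B\|_{\infty}$; making this bookkeeping precise so that the final constants match exactly is the delicate part of the argument, and the rest of the proof is routine algebraic manipulation.
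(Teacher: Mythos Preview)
Your approach is essentially identical to the paper's: reduce to $\bubblechain$ via \Cref{lem:LWW-Derivative-UB} and \Cref{prop:LWW-BC-Bound}, split off the diagonal term $\alpha_{0}(\alpha_{0}-1)$, apply the product rule, feed the $\alpha_{0}'$ contributions through \Cref{prop:LWW-Alpha-DB} to produce the first line, and handle the remaining convolution derivative with \Cref{prop:LWW-H-DB} and a geometric-series resummation to produce the second line. The paper carries this out in exactly the same order, with the same intermediate bounds.

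You are right to flag the geometric-series step as the one place where care is needed. The paper's displayed computation ends with an equality sign that should really be an inequality, and it passes silently from $\sum_{k\geq 1} k\lambda^{k}\|\bar B\|_{1}^{k-1}$ to $\lambda(1+\|\bubblechain\|_{1})^{2}$ without isolating the diagonal contributions $\bar B^{*k}(0)$ that you single out. In other words, the paper does not resolve this any more explicitly than you do; it simply asserts the final bound. In the regime where the proposition is actually used (\Cref{prop:LWW-BC-D-DB}), one has $\lambda\|\bar B\|_{1}\leq\|\bubblechain\|_{1}\leq c\beta$, and then $(1-\lambda\|\bar B\|_{1})^{-1}\leq 1+\|\bubblechain\|_{1}$ is immediate for $\beta$ small, so the gap is harmless in context even if the proposition as stated is slightly stronger than what either argument strictly establishes.
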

\begin{proof}
%  By \Cref{lem:LWW-Derivative-UB} it suffices to obtain bounds on the
%  derivative of $\norm{\bubblechain_{\lambda,z}}_{1}$. For the summand
%  with $x=y$ the an upper bound is $z^{-1}\alpha_{0}^{2}(\alpha_{0}-1) +
%  z^{-1}\alpha_{0}(\alpha_{0}-1)^{2}$ by \Cref{prop:LWW-Alpha-DB}.

  By \Cref{lem:LWW-Derivative-UB} it suffices to obtain bounds on the
  derivative of $\norm{\bubblechain_{\lambda,z}}_{1}$. For the summand
  with $x=y$ the an upper bound is
  $z^{-1}\norm{\bubblechain_{\lambda,z}}_{1}(\alpha_{0}-1) +
  z^{-1}\alpha_{0}\norm{\bubblechain_{\lambda,z}}_{1}$ by \Cref{prop:LWW-Alpha-DB}.

  For $x\neq y$ differentiating \Cref{eq:LWW-BC} and using
  \Cref{prop:LWW-Alpha-DB} gives an upper bound
  $z^{-1}\norm{\bubblechain_{\lambda,z}}_{1}\alpha_{0}^{-1}(\norm{\bubblechain_{\lambda,z}}_{1}
  - \alpha_{0}(\alpha_{0}-1))$ if the derivative is applied to
  $\alpha_{0}$. The factor of $\alpha_{0}^{-1}$ can be dropped to give
  an upper bound as $\alpha_{0}\geq 1$. When the derivative is not
  applied to $\alpha_{0}$ we have, using \Cref{prop:LWW-H-DB}, the upper bound
  \begin{align}
    \frac{d}{dz} \norm{\bubblechain_{\lambda,z}}_{1} &= \alpha_{0}\frac{d}{dz}
    \sum_{k\geq 1} \sum_{y} \lambda^{k} \underbrace{\bar
      H_{\lambda,z}^{2} \ast \dots
      \ast  \bar H_{\lambda,z}^{2}}_{\textrm{$k$ terms}}(y) \\
    &\leq 2\alpha_{0} \sum_{k\geq 1} \sum_{y} k\lambda^{k} \ob{\bar
      H_{\lambda,z}\frac{d}{dz} \bar H_{\lambda,z}}\ast
    \underbrace{\bar H_{\lambda,z}^{2} \ast \dots \ast \bar
      H_{\lambda,z}^{2}}_{\textrm{$k-1$ terms}}(y) \\
    \nonumber
    &= 2\alpha_{0}z^{-1}\lambda \norm{\bar H_{\lambda,z} \cdot \ob{\bar G_{\lambda,z}\ast
        \bar H_{\lambda,z}}}_{1} (1+\norm{\bubblechain_{\lambda,z}}_{1})^{3}.
  \end{align}
  Summing these upper bounds gives the result.
\end{proof}

\subsection{Diagrammatic Bounds}
\label{sec:LWW-DB-Pi}

The bounds derived in this section will be obtained under the
assumption that $f(z)\leq K$ for $z<z_{c}(\lambda)$. In particular the
results of \Cref{prop:LWW-SL5.10} hold. It will also be assumed that
the dimension $d$ is sufficiently large, i.e., $\beta$ is sufficiently
small.

\subsubsection{Initial Diagrammatic Bounds}
\label{sec:LWW-DB-Auxiliary}

\begin{proposition}
  \label{prop:LWW-UB-Alpha}
  If $z<z_{c}$ and $f(z)\leq K$ then $\alpha_{0}(\lambda,z)\leq 1+ c\beta$.
\end{proposition}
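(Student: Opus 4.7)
The starting point is the identity $\alpha_0(\lambda, z) = G_{\lambda, z}(0, 0)$, which follows from \Cref{eq:LWW-G-H-Relation} evaluated at $x = y$. The task therefore reduces to showing that the contribution of nontrivial closed walks at the origin is $O(\beta)$:
\[
\alpha_0 - 1 = \sum_{\omega\colon 0\to 0,\; \abs{\omega}\geq 2} w_{\lambda,z}(\omega) \leq c\beta.
\]

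The plan is to adapt the first-step decomposition from the proof of \Cref{lem:LWW-One-Step-SM}. Every nontrivial closed walk at $0$ factors as $(0, u) \diamond \omega'$ with $u \sim 0$ and $\omega' \colon u \to 0$. Switching to the LELWW representation via \Cref{thm:LWW-LM-Rep} and applying the loop-measure monotonicity from \Cref{prop:LWW-LM-Properties} yields the bound $\exp(\mu_{\lambda, z}(\{0\}; \range{\omega'})) \leq \exp(\mu_{\lambda, z}(\{0\}; \{u\})) = \alpha(\lambda, z)$. Dropping the remaining avoidance constraint on $\omega'$ (a valid upper bound since all weights are non-negative) then produces
\[
\alpha_0 - 1 \leq z\alpha(\lambda, z)\abs{\Omega} \sum_u D(u) G_{\lambda, z}(u, 0) = z\alpha(\lambda, z)\abs{\Omega}\, D \ast H_{\lambda, z}(0),
\]
where the second equality uses $D(0) = 0$ together with \Cref{eq:LWW-G-H-Relation}.

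To close the estimate I would invoke the bootstrap hypothesis $f_1(z) \leq K$ and Cauchy--Schwarz:
\[
\alpha_0 - 1 \leq K \norm{D}_2 \norm{H_{\lambda, z}}_2.
\]
A direct computation gives $\norm{D}_2^2 = \abs{\Omega}^{-1} = O(\beta)$ (alternatively one can appeal to \Cref{lem:LWW-SL5.5}), and $\norm{H_{\lambda, z}}_2^2 \leq c_K\beta$ is provided by \eqref{eq:LWW-SL5.10.2} of \Cref{prop:LWW-SL5.10}. Combining the two gives $\alpha_0 - 1 \leq c\beta$, as claimed.

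The only step requiring care is the extension of \Cref{lem:LWW-One-Step-SM} to the case $x = y = 0$ that is excluded from its statement: one must verify that the remaining walk $\omega' \colon u \to 0$ can be summed, after dropping the non-revisit constraints, to give $G_{\lambda, z}(u, 0)$. This is routine because $u \neq 0$ (so the argument that produces the factor $\alpha$ goes through verbatim) and because $w_{\lambda, z} \geq 0$ makes relaxations safe.
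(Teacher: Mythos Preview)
Your argument has a genuine gap at the decomposition step. The proof of \Cref{lem:LWW-One-Step-SM} works in the loop-erased (LELWW) representation: one writes $G_{\lambda,z}(0,y)$ as a sum over self-avoiding walks $\eta\colon 0\to y$ and splits off the first step of $\eta$. For $y\neq 0$ every such $\eta$ has length at least $1$, so this is possible. For $y=0$, however, the only self-avoiding walk from $0$ to $0$ is the trivial walk $(0)$; there is no first step to remove, and the factor $\exp(\mu_{\lambda,z}(\{0\};\range{\omega'}))$ you write down never actually arises. If instead you attempt the first-step split at the level of the raw walk $\omega=(0,u)\circ\omega'$, the weight does not factor: the loops erased from $(0,u)\circ\omega'$ are not the same as those erased from $\omega'$ (the presence of the initial $0$ changes which returns count as loop closures). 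In particular your displayed inequality $\alpha_{0}-1\le z\alpha\abs{\Omega}\,D\ast H_{\lambda,z}(0)$ is false for constant $\lambda>1$: for small $z$ the left side is $\sim \lambda\abs{\Omega}z^{2}$ while your proposed bound is $\sim \abs{\Omega}z^{2}$, smaller by a factor of $\lambda$.

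The paper instead performs a \emph{last}-step decomposition. Writing $\omega=\omega''\circ(u,0)$ with $\omega''\colon 0\to u$, loop erasure processes $\omega''$ first and the final step to $0$ erases exactly one additional self-avoiding polygon, namely $\LE(\omega'')\circ(u,0)$. Hence $w_{\lambda,z}(\omega)=z\,\lambda_{\LE(\omega'')\circ(u,0)}\,w_{\lambda,z}(\omega'')$, which for constant $\lambda$ gives the exact identity $\alpha_{0}-1=z\lambda\abs{\Omega}\,D\ast H_{\lambda,z}(0)$ (and $\le z\bar\lambda\abs{\Omega}\,D\ast H_{\lambda,z}(0)$ in general). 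One then bounds $D\ast H_{\lambda,z}(0)\le\norm{H_{\lambda,z}}_{\infty}\le c_{K}\beta$ and uses $z\abs{\Omega}\le f_{1}(z)\le K$. Your Cauchy--Schwarz finish would also work once the correct intermediate inequality is in hand, but the key missing ingredient is the last-step factorisation, which is where the factor $\lambda$ (not $\alpha$) appears.
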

\begin{proof}
  By definition and \Cref{thm:LWW-LM-Rep}
  \begin{equation}
    \label{eq:LWW-UB-Alpha.1}
    \alpha_{0}(\lambda,z) = \exp \ob{\mu_{\lambda,z}(0)} 
    = 1 + \mathop{\sum_{\omega\colon 0 \to 0}}_{\abs{\omega} \geq 1} 
    w_{\lambda,z}(\omega).
  \end{equation}
  The walks contributing to the sum have their last vertex a neighbour
  of $0$, so 
  \begin{equation}
    \label{eq:LWW-UB-Alpha.2}
    \mathop{\sum_{\omega\colon 0 \to 0}}_{\abs{\omega} \geq 1} w_{\lambda,z}(\omega)
    = z\lambda\abs{\Omega}D\ast H_{\lambda,z}(0),
  \end{equation}
  which is bounded by
  $z\lambda\abs{\Omega}\norm{H_{\lambda,z}}_{\infty}$. The claim
  follows from $z\abs{\Omega}\leq f_{1}(z) \leq K$ and~\eqref{eq:LWW-SL5.10.3}.
\end{proof}

\begin{proposition}
  \label{prop:LWW-BC-Geometric}
  If $z<z_{c}$ and $f(z)\leq K$ then $\norm{\bubblechain_{\lambda,z}}_{1}\leq c\beta$.
\end{proposition}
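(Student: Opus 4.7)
The plan is to integrate the explicit formula for $\bubblechain_{\lambda,z}$ directly, exploiting that it is a sum of convolution powers of $\bar B_{\lambda,z} = \bar H_{\lambda,z}^{2}$. Summing over $y$ (using translation invariance to fix the first coordinate at $0$) gives
\[
  \norm{\bubblechain_{\lambda,z}}_{1} \leq \alpha_{0}(\alpha_{0}-1) + \alpha_{0}\sum_{k\geq 1}\bar\lambda^{k}\norm{\bar B_{\lambda,z}}_{1}^{k},
\]
where $\bar\lambda = \sup_{\eta}\lambda_{\eta}<\infty$ by \Cref{ass:LWW-Bounded} (for the inhomogeneous loop weights, $\lambda^{k}$ is a product of $k$ loop activities and so is pointwise bounded by $\bar\lambda^{k}$), and I have used Young's inequality $\norm{f\ast g}_{1}\leq \norm{f}_{1}\norm{g}_{1}$ together with non-negativity of $\bar B_{\lambda,z}$ to drop the $x\neq y$ constraint in the off-diagonal term.

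The diagonal contribution is easily controlled: by \Cref{prop:LWW-UB-Alpha}, $\alpha_{0}\leq 1+c\beta$, so $\alpha_{0}(\alpha_{0}-1)\leq c\beta$. For the convolution term, I rewrite $\norm{\bar B_{\lambda,z}}_{1} = \norm{\bar H_{\lambda,z}}_{2}^{2} = \alpha_{0}^{-2}\norm{H_{\lambda,z}}_{2}^{2}$ and invoke \eqref{eq:LWW-SL5.10.2} together with $\alpha_{0}\geq 1$ to conclude $\norm{\bar B_{\lambda,z}}_{1}\leq c_{K}\beta$.

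For $\beta$ sufficiently small (which one gets by taking $d$ large, since $\beta=O(\abs{\Omega}^{-1})$), one has $\bar\lambda c_{K}\beta \leq \tfrac{1}{2}$, so the geometric series converges and satisfies
\[
  \sum_{k\geq 1}\bar\lambda^{k}\norm{\bar B_{\lambda,z}}_{1}^{k} \leq 2\bar\lambda c_{K}\beta.
\]
Putting the two pieces together and absorbing the constant factors (which depend only on $K$ and $\bar\lambda$, not on $d$ or $z$) yields $\norm{\bubblechain_{\lambda,z}}_{1}\leq c\beta$, as desired.

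There is no real obstacle in this proof; it is essentially a bookkeeping computation. The only subtle point is ensuring that $\beta$ is small enough for the geometric series to sum, but since the convergence argument is only required for $d\geq d_{0}(\lambda)$ anyway, this can be absorbed into the requirement on $d_{0}$. The substantive input is \Cref{prop:LWW-SL5.10}, which gives the small factor $\beta$ on $\norm{H_{\lambda,z}}_{2}^{2}$; everything else is algebra.
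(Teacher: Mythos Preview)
Your proof is correct and follows essentially the same approach as the paper: bound the $\ell^{1}$-norm by Young's inequality to reduce to a geometric series in $\lambda\norm{\bar H_{\lambda,z}}_{2}^{2}$, control the diagonal term via \Cref{prop:LWW-UB-Alpha}, and control the geometric term via \eqref{eq:LWW-SL5.10.2} together with $\alpha_{0}\geq 1$. Your explicit handling of the inhomogeneous loop activities via $\bar\lambda$ is slightly more careful than the paper's notation, but the argument is the same.
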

\begin{proof}
  Repeatedly using $\norm{f\ast g}_{1}\leq \norm{f}_{1}\norm{g}_{1}$ implies
  \begin{equation}
    \norm{\bubblechain_{\lambda,z}}_{1} \leq \alpha_{0}\ob{ (\alpha_{0}-1) +
      \sum_{k\geq 1} \lambda^{k} \norm{\bar H_{\lambda,z}}_{2}^{2k}},
  \end{equation}
  The interchange of summations is valid as each term is
  non-negative. By \Cref{prop:LWW-UB-Alpha} $\alpha_{0}\leq 1 + c\beta$
  so $\alpha_{0}-1 \leq c\beta$. Since $\alpha_{0}\geq 1$, $\norm{\bar
    H_{\lambda,z}}_{2}^{2}\leq \norm{H_{\lambda,z}}_{2}^{2}$, so
  \Cref{eq:LWW-SL5.10.2} implies that for $\beta$ sufficiently
  small 
  \begin{equation}
    \sum_{k\geq 1}\lambda^{k}\norm{\bar H_{\lambda,z}}_{2}^{2k} \leq
    c\beta. \qedhere
  \end{equation}
\end{proof}

\begin{proposition}
  \label{prop:LWW-UB-I}
  Let $I_{\lambda,z}(x) = I_{\lambda,z}(0,x)$. If $z<z_{c}$ and $f(z)\leq K$ then
  $\norm{I_{\lambda,z}}_{1} \leq 1 + c\beta$.
\end{proposition}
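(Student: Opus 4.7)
The plan is to establish $\sum_{x \neq 0} I_{\lambda,z}(0,x) \leq c\beta$, since $I_{\lambda,z}(0) = 1$ supplies the $1$ in the target bound. Since $I_{\lambda,z}(0,x) = 1 - e^{-\mu_{\lambda,z}(0,x;\emptyset)}$ for $x \neq 0$, the elementary inequality $1 - e^{-t} \leq t$ reduces this to bounding $\sum_{x\neq 0}\mu_{\lambda,z}(0,x;\emptyset)$ by $c\beta$.

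The key step will be the pointwise bound $\mu_{\lambda,z}(0,x;\emptyset) \leq \truebubblechain_{\lambda,z}(0,x)$ for $x \neq 0$. To prove this, I plan to exploit the cyclic invariance of $w_{\lambda,z}$ on closed walks, which follows from \Cref{ass:LWW-Symmetry}: cyclic rotation of a closed walk preserves its length, and the multiset of polygons erased is unchanged when viewed as unrooted polygons. Using cyclic invariance together with the bijection between triples $(y, \omega, j)$ with $\omega \colon y \to y$, $|\omega|\geq 1$, $\omega_j = 0$ and pairs $(\omega', s)$ with $\omega' \colon 0 \to 0$, $s \in \set{1,\dots,|\omega'|}$, and dominating $\indicatorthat{0\in\range{\omega}}$ by $T_0(\omega) = \abs{\set{j\geq 1:\omega_j = 0}}$, leads to
\[
\mu_{\lambda,z}(0,x;\emptyset) \leq \sum_{\omega \colon 0\to 0,\,|\omega|\geq 1} w_{\lambda,z}(\omega)\, \indicatorthat{x \in \range{\omega}}.
\]
Since $\omega_0 = \omega_{|\omega|} = 0 \neq x$, the indicator is at most $\abs{\set{j \geq 1 : \omega_j = x}}$, and \Cref{prop:LWW-Bubble-Chain} recognizes the resulting sum as $\truebubblechain_{\lambda,z}(0,x)$.

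From here the conclusion is straightforward: \Cref{prop:LWW-BC-Bound} gives $\truebubblechain_{\lambda,z}(0,x) \leq \bubblechain_{\lambda,z}(0,x)$, and summing over $x \neq 0$ followed by \Cref{prop:LWW-BC-Geometric} yields
\[
\sum_{x \neq 0}\mu_{\lambda,z}(0,x;\emptyset) \leq \sum_{x \neq 0}\bubblechain_{\lambda,z}(0,x) \leq \norm{\bubblechain_{\lambda,z}}_1 \leq c\beta.
\]

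The hardest part will be the justification of cyclic invariance of $w_{\lambda,z}$ on closed walks, since loop erasure depends on the starting vertex and different cyclic rotations of a closed walk will generally erase different \emph{rooted} polygons. The resolution comes from \Cref{ass:LWW-Symmetry}, which equates activities of polygons differing only in initial vertex or orientation, together with the combinatorial observation that the unordered multiset of erased polygons is preserved by cyclic rotation.
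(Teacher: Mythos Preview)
Your proof is correct in outline and arrives at the same endpoint as the paper (the bound $\norm{\bubblechain_{\lambda,z}}_{1}\leq c\beta$ via \Cref{prop:LWW-BC-Geometric}), but the route you take is genuinely different and more laborious. You aim for the \emph{pointwise} inequality $\mu_{\lambda,z}(0,x;\emptyset)\leq \truebubblechain_{\lambda,z}(0,x)$, and to get it you invoke cyclic invariance of $w_{\lambda,z}$ on closed walks. As you rightly flag, this is the crux: the combinatorial claim that the multiset of unrooted polygons removed by loop erasure is invariant under cyclic rotation of a closed walk is true, but it is not trivial and is not established anywhere in the paper. Proving it properly requires either the heap-of-pieces machinery (closed walks as pyramids) or a separate inductive argument; \Cref{ass:LWW-Symmetry} only tells you that \emph{if} the multisets agree as unrooted polygons then the weights agree, it does not supply the combinatorial identity itself.

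The paper sidesteps this entirely. It does not seek a pointwise bound: it sums over $x\neq 0$ \emph{first}, using $\sum_{x\neq 0}\indicatorthat{x\in\range{\omega}}\leq \abs{\range{\omega}}\leq \abs{\omega}$ to cancel the factor $\abs{\omega}^{-1}$ in the loop measure. This leaves $\sum_{y}\sum_{\omega\colon y\to y}\indicatorthat{0\in\range{\omega}}w_{\lambda,z}(\omega)$, and now ordinary \emph{translation} invariance (immediate from \Cref{ass:LWW-Symmetry}) converts this to a sum over $\omega\colon 0\to 0$ with a free vertex $y\in\range{\omega}$. From there the argument is identical to yours: bound the indicator by the visit count and apply \Cref{prop:LWW-Bubble-Chain}, \Cref{prop:LWW-BC-Diag}, and \Cref{prop:LWW-BC-Bound}. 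Your approach buys a sharper pointwise estimate, but since only the $\ell^{1}$ bound is needed here, the paper's ``sum first, translate second'' trick is the more economical choice and avoids importing an unproven lemma.
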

\begin{proof}
  The inequality $1-e^{-x}\leq x$ implies that
  $1+\norm{\indicatorthat{x\neq 0} \mu_{\lambda,z}(0,x)}_{1}$ is an
  upper bound for $\norm{I_{\lambda,z}}_{1}$. The factor of $1$ is
  from the term $\indicatorthat{x=0}$ in $I_{\lambda,z}$. Observe that
  $\norm{\indicatorthat{x\neq 0}\mu_{\lambda,z}(0,x)}_{1}$ is bounded
  by
  \begin{equation}
    \label{eq:LWW-UB-I-1}
    \sum_{x\neq 0} \sum_{y} \mathop{\sum_{\omega\colon y \to y}}_{\abs{\omega}\geq 1}
    \indicatorthat{0\in \omega} \indicatorthat{x\in \omega}
    \frac{ w_{\lambda,z}(\omega)}{\abs{\omega}}
    \leq \sum_{y} \mathop{\sum_{\omega\colon y\to y}}_{\abs{\omega}\geq
      1} \indicatorthat{0\in\omega} w_{\lambda,z}(\omega),
  \end{equation}
  as $\sum_{x\neq 0}\indicatorthat{x\in\omega} \leq
  \abs{\range{\omega}}\leq \abs{\omega}$. By translation invariance
  this is
  \begin{equation}
    \label{eq:LWW-UB-I-2}
    \sum_{y} \mathop{\sum_{\omega\colon 0 \to 0}}_{\abs{\omega}\geq 1}
    \indicatorthat{-y\in \omega} w_{\lambda,z}(\omega) = 
    \norm{\mathop{\sum_{\omega\colon 0 \to 0}}_{\abs{\omega}\geq 1}
    \indicatorthat{y\in \omega} w_{\lambda,z}(\omega)}_{1},
  \end{equation}
  where the norm is with respect to $y$. To establish the proposition
  (i) bound $\indicatorthat{y\in\omega}$ by $\abs{ \{j\geq 1 \mid
    \omega_{j}=y\}}$, (ii) apply \Cref{prop:LWW-BC-Diag} for the summands
  with $y=0$, (iii) apply \Cref{prop:LWW-Bubble-Chain} and
  \Cref{prop:LWW-BC-Bound} for the summands with $y\neq 0$, and (iv)
  observe that the sum of these two bounds is
  $\norm{\bubblechain_{\lambda,z}}_{1}$ and apply
  \Cref{prop:LWW-BC-Geometric}.
\end{proof}

\subsubsection{Bounds for $\pi^{(1)}$}
\label{sec:LWW-Pi-DB-1.1}

\begin{proposition}
  \label{prop:LWW-DB-1-Rep}
  \begin{equation}
    \label{eq:LWW-DB-1-Rep}
      \pi^{(1)}(x) = \sum_{m}\pi^{(1)}_{m} 
      \begin{cases}
        =   z\lambda\abs{\Omega} D\ast \bar H_{\lambda,z}(0) & x = 0 \\
        \leq \bar H_{\lambda,z}(x)I_{\lambda,z}(0,x)
        e^{\mu_{\lambda,z}(0,x)} & x\neq 0,
    \end{cases}
  \end{equation}
\end{proposition}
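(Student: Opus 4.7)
The plan is to treat the two cases of \Cref{eq:LWW-DB-1-Rep} separately, starting in each case from the explicit formulas for $\pi^{(1)}_m(x)$ derived in \Cref{sec:LWW-X-Gas-pi-1}. A common preparatory step is to recognize that each product of the form $\prod_X(1+\alpha_X)^{\indicatorthat{\ell(X)\cap A\neq\emptyset}}$ appearing in \Cref{eq:LWW-X-Gas-pi-1-timelike} and \Cref{eq:LWW-X-Pi-1} equals $\exp(\mu_{\lambda,z}(A))$, by $1+\alpha_X=\exp(w_{\lambda,z}(X)/\abs{X})$ together with the definition of the loop measure; the constrained variant analogously equals $\exp(\mu_{\lambda,z}(0,x;\range{\omega\ob{0,m}}))$.

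For $x\neq 0$, I would rewrite the bracketed term in \Cref{eq:LWW-X-Pi-1} as $\exp(\mu_{\lambda,z}(0,x;\range{\omega\ob{0,m}}))-1$ and then factor it as $I^{\omega}_{\lambda,z}(0,m)\cdot\exp(\mu_{\lambda,z}(0,x;\range{\omega\ob{0,m}}))$ using the elementary identity $e^{a}-1=(1-e^{-a})e^{a}$; the first factor then matches \Cref{def:LWW-Walk-I2PF}. Two monotonicity inputs produce $\omega$-independent upper bounds: \Cref{lem:LWW-Repulsion-I} replaces $I^{\omega}_{\lambda,z}(0,m)$ by $I_{\lambda,z}(0,x)$, and part~2 of \Cref{prop:LWW-LM-Properties} (decrease of $\mu_{\lambda,z}$ in its final argument) replaces the exponential by $e^{\mu_{\lambda,z}(0,x)}$. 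Pulling both factors out of the sum leaves $\alpha_{0}^{-1}\sum_{\omega\colon 0\to x,\,\omega\in\SAW} z^{\abs{\omega}}\exp(\mu_{\lambda,z}(\range{\omega}))$, which equals $\bar H_{\lambda,z}(x)$ by \Cref{thm:LWW-LM-Rep} and the normalization in \Cref{def:LWW-Reduced-2PT} (using $x\neq 0$). This yields the claimed inequality.

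For $x=0$ the sum is over self-avoiding polygons $\omega\colon 0\to 0$ of length $m\geq 2$. I would decompose each such polygon by its first step, writing $\omega=(0,u)\circ\eta'$ with $u\in\Omega$ and $\eta'$ a self-avoiding walk from $u$ to $0$ of length $m-1$; the pairing is a bijection, and crucially $\range{\omega}=\range{\eta'}$, so the loop-measure factor is preserved under the decomposition. Extracting the step weight $z$, summing over $m$ and applying \Cref{thm:LWW-LM-Rep} to the remaining $\SAW$ sum turns it into $\alpha_{0}^{-1}\sum_{u\in\Omega}G_{\lambda,z}(u,0)$. The isometry invariance from \Cref{prop:LWW-LM-Properties}, together with the definitions of $D$ and $\bar H_{\lambda,z}$, then rearranges this into the claimed multiple of $D\ast\bar H_{\lambda,z}(0)$, with the loop-activity factor $\lambda$ reflecting the loop formed when $\eta'$ is closed into a polygon by the final step $u\to 0$.

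The main bookkeeping subtlety lies in the $x=0$ identity, where the activity of the closing loop must be correctly tracked between the direct loop-weighted-walk weight and the loop-measure representation produced by the expansion; by contrast, the $x\neq 0$ case reduces to algebraic factoring plus two monotonicity bounds and should be the more mechanical of the two.
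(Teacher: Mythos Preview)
Your treatment of the case $x\neq 0$ is correct and is exactly the paper's argument: you use the identity $e^{a}-1=(1-e^{-a})e^{a}$ to factor out $I^{\omega}_{\lambda,z}(0,m)$, bound it by $I_{\lambda,z}(0,x)$ via \Cref{lem:LWW-Repulsion-I}, and drop the constraint in the loop measure by \Cref{prop:LWW-LM-Properties}. This is precisely the ``rearrangement $e^{\mu_{\lambda,z}(0,x)}-1 = e^{\mu_{\lambda,z}(0,x)} I_{\lambda,z}(0,x)$'' and ``forgetting the constraint'' that the paper invokes.

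For $x=0$ your first-step decomposition of the self-avoiding polygon is correct and clean; it yields
\[
\pi^{(1)}(0)=\alpha_{0}^{-1}\,z\sum_{u\sim 0}\sum_{\eta'\in\SAW(u,0)}z^{|\eta'|}\exp\bigl(\mu_{\lambda,z}(\range{\eta'})\bigr)
= z\abs{\Omega}\,D\ast\bar H_{\lambda,z}(0),
\]
as you say. The gap is in your final sentence: no factor of $\lambda$ arises here. The weight on the polygon $\omega$ in \Cref{eq:LWW-X-Gas-pi-1-timelike} is $z^{|\omega|}\exp(\mu_{\lambda,z}(\range{\omega}))$, which carries \emph{no} loop activity for the polygon itself; when you split off the first step you extract exactly one factor of $z$ and nothing more. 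Your remark that ``the loop-activity factor $\lambda$ reflects the loop formed when $\eta'$ is closed into a polygon'' would be appropriate for the decomposition of $\sum_{\omega:0\to 0}w_{\lambda,z}(\omega)$ (where the walk is weighted by $\lambda^{n_L(\omega)}$, and closing does add one loop), but that is the computation behind \Cref{eq:LWW-UB-Alpha.2}, not the one you are performing.

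In fact the extra $\lambda$ in the stated identity appears to be a typo in the paper: its own proof cites \Cref{eq:LWW-UB-Alpha.2} (which genuinely contains a $\lambda$, for the different quantity $\alpha_0-1$), and the downstream use in the proof of \Cref{prop:LWW-DB-1} bounds the $x=0$ contribution by $z\abs{\Omega}\norm{\bar H_{\lambda,z}}_\infty$ with no $\lambda$, consistent with what your computation actually gives. So your argument is right; just do not manufacture a $\lambda$ that is not there.
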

\begin{proof}
  For $x=0$ the claim follows from the identities in
  \Cref{prop:LWW-2PT-Equivalence,eq:LWW-Intro-X-Gas,eq:LWW-X-Gas-pi-1-timelike,eq:LWW-UB-Alpha.2}. For
  $x\neq 0$ use \Cref{eq:LWW-X-Pi-1}. Recall the loop measure
  representation of the second product, i.e., the expression for
  $I_{\cc X}^{\omega}$ given by \Cref{eq:LWW-Specialization-I2PF}. The
  desired bound follows by forgetting the constraint in the loop
  measure and the rearrangement
  $e^{\mu_{\lambda,z}(0,x)}-1 = e^{\mu_{\lambda,z}(0,x)}
  I_{\lambda,z}(0,x)$.
\end{proof}

\begin{proposition}
  \label{prop:LWW-DB-1}
  Suppose $f(z)\leq K$. The following bounds hold for $u=0,1$ and $k\in \FS^{d}$:
  \begin{equation}
    \label{eq:LWW-DB-1}
    \norm{\abs{x}^{2u}\pi^{(1)}}_{1} \leq
    c\beta\ob{\indicatorthat{u=0}+ \norm{\abs{x}^{2u}
        \bar H_{\lambda,z}}_{\infty}},
  \end{equation}
  and
  \begin{equation}
    \label{eq:LWW-DB-1-Trig}
    \norm{(1-\cos k\cdot x)\pi^{(1)}(x)}_{1}  \leq c\beta
    \norm{(1-\cos k \cdot x)\bar H_{\lambda,z}}_{\infty}.
  \end{equation}
\end{proposition}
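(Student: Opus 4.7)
The plan is to prove both bounds by splitting the sum over $x$ into the contribution from $x=0$ and the contribution from $x\neq 0$, applying the two cases of \Cref{prop:LWW-DB-1-Rep} separately, and then estimating each piece with the bounds already established in \Cref{sec:LWW-DB-Auxiliary}.

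First I would handle the $x=0$ contribution. For the $\abs{x}^{2u}$-weighted norm, this term vanishes identically when $u=1$, so it only appears when $u=0$. For the $(1-\cos k\cdot x)$-weighted norm, the factor vanishes at $x=0$ and so contributes nothing. Hence the only nontrivial $x=0$ contribution is the $u=0$ case of the first bound, where I would apply the equality $\pi^{(1)}(0) = z\lambda\abs{\Omega}D\ast \bar H_{\lambda,z}(0)$. Using $z\abs{\Omega}\leq \alpha(\lambda,z) z\abs{\Omega} = f_{1}(z)\leq K$ (since $\alpha\geq 1$), and the bound $D\ast \bar H_{\lambda,z}(0)\leq \norm{\bar H_{\lambda,z}}_{\infty}\leq c\beta$ from \Cref{prop:LWW-SL5.10} together with \Cref{prop:LWW-UB-Alpha}, I obtain $\pi^{(1)}(0)\leq c\beta\indicatorthat{u=0}$, which supplies the first term in the right-hand side of \eqref{eq:LWW-DB-1}.

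Next I would treat the $x\neq 0$ contribution uniformly for both bounds. Let $w(x)$ stand for $\abs{x}^{2u}$ or $1-\cos k\cdot x$. By \Cref{prop:LWW-DB-1-Rep},
\begin{equation}
\sum_{x\neq 0}w(x)\pi^{(1)}(x) \leq \sum_{x\neq 0} w(x)\bar H_{\lambda,z}(x) I_{\lambda,z}(0,x) e^{\mu_{\lambda,z}(0,x)}.
\end{equation}
By the monotonicity of the loop measure (\Cref{prop:LWW-LM-Properties}), $\mu_{\lambda,z}(0,x)\leq \mu_{\lambda,z}(0)$, so $e^{\mu_{\lambda,z}(0,x)}\leq \alpha_{0}(\lambda,z)\leq 1+c\beta$ by \Cref{prop:LWW-UB-Alpha}. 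Pulling this constant out and applying the Hölder-type estimate $\sum_{x\neq 0} w(x)\bar H_{\lambda,z}(x) I_{\lambda,z}(0,x) \leq \norm{w\,\bar H_{\lambda,z}}_{\infty}\, \norm{\indicatorthat{x\neq 0}I_{\lambda,z}}_{1}$ reduces the task to bounding $\norm{\indicatorthat{x\neq 0}I_{\lambda,z}}_{1}$.

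The final step is to observe that the proof of \Cref{prop:LWW-UB-I} already establishes $\norm{\indicatorthat{x\neq 0}I_{\lambda,z}}_{1}\leq c\beta$: one uses $1-e^{-t}\leq t$ to bound this by $\norm{\indicatorthat{x\neq 0}\mu_{\lambda,z}(0,x)}_{1}\leq \norm{\bubblechain_{\lambda,z}}_{1}\leq c\beta$ via \Cref{prop:LWW-BC-Bound,prop:LWW-BC-Geometric}. Combining this with the previous paragraph yields the off-diagonal contribution $c\beta\norm{w\,\bar H_{\lambda,z}}_{\infty}$, which together with the diagonal contribution completes both \eqref{eq:LWW-DB-1} and \eqref{eq:LWW-DB-1-Trig}. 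There is no serious obstacle here; the proof is essentially a bookkeeping exercise matching the off-diagonal smallness of $I_{\lambda,z}$ against the $L^{\infty}$ mass of $\bar H_{\lambda,z}$, and the only point requiring a little care is ensuring the diagonal term is consistently absorbed into $c\beta\indicatorthat{u=0}$ and vanishes for the trigonometric bound.
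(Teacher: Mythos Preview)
Your proposal is correct and follows essentially the same route as the paper: split into $x=0$ and $x\neq 0$ via \Cref{prop:LWW-DB-1-Rep}, bound $e^{\mu_{\lambda,z}(0,x)}\leq\alpha_{0}$, and then separate $\norm{w\,\bar H_{\lambda,z}}_{\infty}$ from $\norm{\indicatorthat{x\neq 0}I_{\lambda,z}}_{1}$. The only cosmetic difference is that the paper phrases the off-diagonal smallness as $\alpha_{0}(\norm{I_{\lambda,z}}_{1}-1)\leq c\beta$ directly from the statement of \Cref{prop:LWW-UB-I}, whereas you extract $\norm{\indicatorthat{x\neq 0}I_{\lambda,z}}_{1}\leq c\beta$ from its proof; these are the same bound.
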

\begin{proof}
  The triangle inequality, $\norm{f\ast g}_{1}\leq \norm{f}_{\infty}
  \norm{g}_{1}$ with $g=I_{\lambda,z}$, and $1-\cos 0 =0$ imply
  \begin{align}
    \label{eq:LWW-DB-1.1}
    \norm{\abs{x}^{2u}\pi^{(1)}}_{1} &\leq \indicatorthat{u=0} 
    z \abs{\Omega} \norm{\bar H_{\lambda,z}}_{\infty} 
    + \norm{I_{\lambda,z}(0,x)
      e^{\mu_{\lambda,z}(0,x)}\indicatorthat{x\neq 0}}_{1} 
    \norm{ \abs{x}^{2u}\bar H_{\lambda,z}}_{\infty} \\
    \label{eq:LWW-DB-1-Trig.1}
    \norm{(1-\cos k\cdot x)\pi^{(1)}(x)}_{1} & \leq
    \norm{I_{\lambda,z}(0,x) e^{\mu_{\lambda,z}(0,x)}\indicatorthat{x\neq 0}}_{1} 
    \norm{(1-\cos k \cdot x)\bar H_{\lambda,z}}_{\infty}.
  \end{align}
  Using $z\abs{\Omega} \leq
  f_{1}(z) \leq K$ by $f_{3}\leq K$ and
  $\sup_{x}e^{\mu_{\lambda,z}(0,x)} \leq \alpha_{0}(\lambda,z)$ implies
  \begin{align*}
    \norm{I_{\lambda,z}(0,x)
      e^{\mu_{\lambda,z}(0,x)}\indicatorthat{x\neq 0}}_{1} 
    &\leq \alpha_{0}\norm{I_{\lambda,z}(0,x)\indicatorthat{x\neq 0}}_{1} \\
    &\leq \alpha_{0}\ob{ \norm{I_{\lambda,z}(0,x)}_{1} - 1}.
  \end{align*}
  The conclusion now follows from \Cref{prop:LWW-UB-Alpha,prop:LWW-UB-I}.
\end{proof}

\subsubsection{Bounds for $\pi^{(N)}$, $N\geq 2$}
\label{sec:LWW-Pi-DB-N.1}

\begin{proposition}
  \label{prop:LWW-Pi-Repulsive-Bound}
  Let $m\geq 2$, $x\in \Z^{d}$, and $N\geq 2$. Let $x_{0}=0,
  x_{N-1}^{\prime}=x$. Then
  \begin{align}
    \label{eq:LWW-Pi-Repulsive-Bound}
    \abs{\pi_{m}^{(N)}(x)} \leq &\sum_{\vec m} \mathop{\sum_{x_{1},
        \dots, x_{N-1}}}_{x_{0}^{\prime}, \dots, x_{N-2}^{\prime}}
    \mathop{\sum_{\omega^{(1)}\colon 0 \to
        x_{1}}}_{\abs{\omega^{(1)}}=m_{1}}
    \mathop{\sum_{\omega^{(2)}\colon x_{1} \to
        x_{0}^{\prime}}}_{\abs{\omega^{(2)}}=m_{2}} \dots
    \mathop{\sum_{\omega^{(2N-2)}\colon x_{N-1} \to
        x_{N-2}^{\prime}}}_{\abs{\omega^{(2N-2)}}=m_{2N-2}}
    \mathop{\sum_{\omega^{(2N-1)}\colon x_{N-2}^{\prime} \to
        x}}_{\abs{\omega^{(2N-1)}}=m_{2N-1}} \\
    &\prod_{j=0}^{N-1}\abs{ I_{\lambda,z}(x_{j},x_{j}^{\prime})}
    \prod_{k=1}^{2N-1}
    \alpha_{0}^{-1}\exp\ob{\mu_{\lambda,z}(\range{\omega^{(k)}})}
  \end{align}
  where the summation is over valid vectors $\vec m$ (recall
  \Cref{def:LWW-Valid}) of subinterval lengths such that $\sum m_{i}=m$.
\end{proposition}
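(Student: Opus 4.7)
The plan is to start from the definition of $\pi_m^{(N)}(x)$ in \eqref{eq:LWW-Pi-Definition} and to simplify the three groups of factors appearing there separately: the $N$ lace-edge weights $I^{\omega}_{\cc X}(s_j,t_j)$, the compatible-hyperedge factors $\prod_{\edgespan(J,X)\in\cc C(L)}(1+\hyp{J}{X})$, and the singleton-hyperedge factors $\prod_{a\leq j\leq b, X'\in\cc X}(1+\hyp{j}{X'})$. First I would replace the sum over $\cc L^{(N)}\cb{0,m}$ by a sum over valid length vectors $\vec m$ (in the sense of \Cref{def:LWW-Valid}) together with the resulting choice of spacelike/timelike labellings of the $N$ edges. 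As in \Cref{eq:LWW-UB-N.1} this decomposes $\omega$ into subwalks $\omega^{(1)}\circ\dots\circ\omega^{(2N-1)}$ with $\abs{\omega^{(k)}}=m_k$, whose endpoints are the vertices $(x_j,x_j')$ appearing in \Cref{fig:LWW-Diagrammatic-1}, with $x_0=0$ and $x_{N-1}'=x$.

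For the $N$ lace-edge factors I would invoke \Cref{lem:LWW-Repulsion-I}, which gives $I^{\omega}_{\cc X}(s_j,t_j)\leq I_{\lambda,z}(\omega_{s_j},\omega_{t_j})=I_{\lambda,z}(x_j,x_j')$. Pushing the absolute value through the sum then yields the product $\prod_{j=0}^{N-1}\abs{I_{\lambda,z}(x_j,x_j')}$.

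For the remaining two families of hyperedge factors I would regroup them subinterval by subinterval exactly as in the derivation of \Cref{eq:LWW-UB-N.2}: to the $k$-th subinterval I associate all hyperedges whose index set $J$ lies in $\bar I_k$ together with the compatible hyperedges having $\max J\in\bar I_k$, and I insert one factor of $\alpha_0^{-1}$ per subinterval to compensate for adding back the missing hyperedges $(\{\max\bar I_k\},X)$ (the final $\alpha_0^{-1}$ coming from the prefactor in \Cref{eq:LWW-Pi-Definition} itself). Then I apply \Cref{rem:LWW-X-Gas-Repulsion}: since $\alpha_X\geq 0$, dropping the constraints that $\omega^{(k)}$ avoid the earlier subwalks only enlarges the weight, leaving per subinterval the quantity $\alpha_0^{-1}\prod_{X}(1+\alpha_X)^{\indicatorthat{\range{\omega^{(k)}}\cap\ell(X)\neq\emptyset}}$. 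Writing this product using the loop-measure representation of \Cref{thm:LWW-LM-Rep} gives $\alpha_0^{-1}\exp(\mu_{\lambda,z}(\range{\omega^{(k)}}))$, which is the factor in the stated bound; the self-avoidance of each $\omega^{(k)}$ is likewise dropped as part of the same relaxation.

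The main obstacle is not any single estimate but the bookkeeping of the third step: correctly partitioning the compatible hyperedges among the $2N-1$ subintervals and accounting for every $\alpha_0^{-1}$ factor so that each index set $J$ is assigned to exactly one subinterval. This is the same combinatorial manipulation that was carried out in \Cref{eq:LWW-UB-N.1}--\eqref{eq:LWW-UB-N.3}, so once that decomposition is verified in the present generality, the applications of \Cref{lem:LWW-Repulsion-I} and \Cref{thm:LWW-LM-Rep} are immediate and yield the stated inequality after summing over the vertices $x_1,\dots,x_{N-1}$ and $x_0',\dots,x_{N-2}'$.
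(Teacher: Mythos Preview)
Your proposal is correct and follows essentially the same route as the paper: invoke the subinterval decomposition of \Cref{sec:LWW-X-Gas-pi-N} (specifically \eqref{eq:LWW-UB-N.2}--\eqref{eq:LWW-UB-N.3}), replace $I^{\omega}_{\cc X}$ by $I_{\lambda,z}$ via \Cref{lem:LWW-Repulsion-I}, and drop the avoidance constraints using \Cref{rem:LWW-X-Gas-Repulsion}. One small remark: the identity $\prod_{X}(1+\alpha_X)^{\indicatorthat{\range{\omega^{(k)}}\cap\ell(X)\neq\emptyset}}=\exp(\mu_{\lambda,z}(\range{\omega^{(k)}}))$ is immediate from the definitions \eqref{eq:LWW-Specialization-3} and \eqref{eq:LWW-LM} rather than from \Cref{thm:LWW-LM-Rep}, but this does not affect the argument.
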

\begin{proof}
  This follows from \Cref{sec:LWW-X-Gas-pi-N}. By
  \Cref{lem:LWW-Repulsion-I} the factors of $I^{\omega}_{\lambda,z}$
  can be replaced by $I_{\lambda,z}$. In the language of an $\cc X$
  gas, as $\alpha_{\omega}\geq 0$ for any walk $\omega$, the $\cc X$
  gas for $\lambda$-LWW is repulsive in the sense of
  \Cref{rem:LWW-X-Gas-Repulsion}, and this proves the claim.
\end{proof}

Upper bounds on $\norm{\pi^{(N)}(x)}_{1}$ can be efficiently found by
formulating \Cref{prop:LWW-Pi-Repulsive-Bound} in terms of
multiplication and convolution operators. Let $\cc M_{g}$ and $\cc
C_{g}$ denote multiplication and convolution by $g$, respectively:
$\cc M_{g}f = gf$ and $\cc C_{g}f = g\ast f$.

\begin{lemma}
  \label{lem:LWW-MC-Formulation}
  Fix $N\geq 2$ and let $\bar H = \bar
  H_{\lambda,z}$, $\bar G = \bar G_{\lambda,z}$, and $I =
  I_{\lambda,z}$. Then
  \begin{equation}
    \label{eq:LWW-MC-Bare}
    \sum_{x}\abs{\pi^{(N)}(x)} \leq \norm{ \ob{\cc C_{\bar H\ast I} \cc
        M_{\bar H}}\ob{\cc C_{\bar G\ast I} \cc
        M_{\bar H}}^{N-2} \bar H\ast I}_{\infty}.
  \end{equation}
\end{lemma}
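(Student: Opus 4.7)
The plan is to sum out all the variables in the bound of \Cref{prop:LWW-Pi-Repulsive-Bound} in order along the diagram, thereby producing the operator expression of \eqref{eq:LWW-MC-Bare} evaluated at $0$, which is then dominated by its $\ell^\infty$ norm.

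First, sum over $m$ and all valid $\vec m$. Each factor $\alpha_0^{-1}\exp(\mu_{\lambda,z}(\range{\omega^{(k)}}))$ in the bound, combined with the $z^{\abs{\omega^{(k)}}}$ factor distributed from the prefactor $z^m=\prod_k z^{m_k}$ of $\pi^{(N)}_m$, equals $\bar w_{\lambda,z}(\omega^{(k)})/\alpha_0(\lambda,z)$. By \Cref{thm:LWW-LM-Rep} and \Cref{prop:LWW-2PT-Equivalence}, summing this weight over self-avoiding walks from $a$ to $b$ of positive length produces $\bar H_{\lambda,z}(b-a)$, and additionally allowing the empty walk produces $\bar G_{\lambda,z}(b-a)$. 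By \Cref{def:LWW-Valid}, the odd-indexed middle walks $\omega^{(3)},\dots,\omega^{(2N-3)}$ are precisely those permitted to have length zero, so they contribute the $N-2$ factors of $\bar G_{\lambda,z}$, while the remaining $N+1$ walks contribute $\bar H_{\lambda,z}$ factors.

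Next, relabel the diagram vertices as $0 = y_0, y_1 = x_1, y_2 = x_0', y_3 = x_2,\dots, y_{2N-1} = x$ and eliminate them in reverse order. At each step the sum pairs an adjacent two-point weight with its neighbouring $I$-factor via the identity $\sum_v f(v)g(v+a) = (f\ast g)(a)$, which holds because $\bar H_{\lambda,z}$, $\bar G_{\lambda,z}$, and $I_{\lambda,z}$ are even functions (by \Cref{ass:LWW-Symmetry}) and this property is preserved under pointwise multiplication and convolution. The $2N-1$ walks and $N$ $I$-factors thereby organise into $N$ operator blocks: the rightmost walk $\omega^{(2N-1)}$ together with the $I$-factor $I(x_{N-1},x)$ combines into the starting function $\bar H\ast I$; each of the $N-2$ middle pairs $(\omega^{(2k-1)},\omega^{(2k)})$ together with the $I$-factor $I(x_{k-1},x_{k-1}')$ yields one application of $\cc C_{\bar G\ast I}\cc M_{\bar H}$; and the outer pair $(\omega^{(1)},\omega^{(2)})$ together with $I(0,x_0')$ yields the outermost $\cc C_{\bar H\ast I}\cc M_{\bar H}$.

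The final sum over $y_1$ reads $\sum_{y_1}\bar H_{\lambda,z}(y_1)F(y_1) = (\bar H_{\lambda,z}\ast F)(0)$ by the evenness of $F$, which identifies the resulting scalar as $(\cc C_{\bar H\ast I}\cc M_{\bar H})(\cc C_{\bar G\ast I}\cc M_{\bar H})^{N-2}(\bar H\ast I)(0)$; bounding by the $\ell^\infty$ norm of this function gives the claim. The whole argument is naturally formalised by induction on $N$, the base case $N=2$ being a direct calculation involving three nested sums. The chief technical difficulty is the bookkeeping: verifying at each stage that the correct $I$-factor combines with the correct two-point weight so that the block decomposition above actually emerges, for which \Cref{fig:LWW-Diagrammatic-1} is a useful mnemonic.
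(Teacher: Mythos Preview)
Your proposal is correct and follows essentially the same approach as the paper: both sum the bound of \Cref{prop:LWW-Pi-Repulsive-Bound} over valid $\vec m$ to obtain \eqref{eq:LWW-Pi-2PT-Bound}, then successively sum out the vertex variables to recognize the operator expression. The only cosmetic difference is that the paper replaces $I_{\lambda,z}(x_{0},x_{0}^{\prime})$ by $I_{\lambda,z}(y,x_{0}^{\prime})$ so that the resulting function of $y$ directly yields the $\ell^{\infty}$ norm upon taking a supremum, whereas you evaluate at $0$ and then bound by the $\ell^{\infty}$ norm; these are equivalent.
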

\begin{proof}
  The definition of a valid vector of lengths implies that
  summing~\eqref{eq:LWW-Pi-Repulsive-Bound} over all valid vectors of
  lengths results in the sums over walks with indices $1$, $2j$, and
  $2N-1$ being replaced by $\bar H_{\lambda,z}$, and the remaining
  sums of walks are replaced by $\bar G_{\lambda,z}$. Consulting
  \Cref{fig:LWW-Diagrammatic-1}, this means that all horizontal solid
  lines except the leftmost and rightmost are weighted by $\bar
  G_{\lambda,z}$, while the rest are weighted by $\bar
  H_{\lambda,z}$. Formally,
  \begin{align}
    \label{eq:LWW-Pi-2PT-Bound}
    \abs{\pi^{(N)}(x)} \leq &\mathop{ \sum_{x_{1}, \dots, x_{N-1}}
    }_{x_{0}^{\prime}, \dots, x_{N-2}^{\prime}}
    \ob{\prod_{j=0}^{N-1} I_{\lambda,z}(x_{j},x_{j}^{\prime})}
    \bar H_{\lambda,z}(x_{0},x_{1})\\&
    \ob{\prod_{j=0}^{N-2} \bar H_{\lambda,z}(x_{j}^{\prime}, x_{j+1})}
    \ob{\prod_{j=0}^{N-3}\bar G_{\lambda,z}(x_{j}^{\prime},x_{j+2})}
    \bar H_{\lambda,z}(x_{N-2}^{\prime},x_{N-1}^{\prime}).
  \end{align}
  Replace the factor $I_{\lambda,z}(x_{0},x_{0}^{\prime})$ by
  $I_{\lambda, z}(y,x_{0}^{\prime})$ in~\eqref{eq:LWW-Pi-2PT-Bound} and
  call the resulting function $F(x,y)$. As $\sum_{x}\abs{F(x,0)} =
  \sum_{x}\abs{\pi^{(N)}(x)}$ the quantity
  $\sup_{y}\sum_{x}\abs{F(x,y)}$ is an upper bound for the left-hand
  side of~\eqref{eq:LWW-MC-Bare}. The associativity of convolution
  implies
  \begin{equation}
    \sum_{x}\abs{F(x,y)} = \ob{(\cc C_{\bar H} \cc C_{I}) \cc M_{\bar H} \ob{ \cc
      C_{\bar G} \cc C_{I} \cc M_{\bar H}}^{N-2} \bar H\ast I}(y).
  \end{equation}
  \Cref{eq:LWW-MC-Bare} follows as $\cc C_{\bar G}\cc C_{I} = \cc C_{\bar
    G\ast I}$ and $\cc C_{\bar H} \cc C_{I} = \cc C_{\bar H \ast I}$.
\end{proof}

The right-hand side of \Cref{lem:LWW-MC-Formulation} can be easily
estimated with the help of the next lemma.
\begin{lemma}[name = Lemma~4.6 of~\cite{Slade2006}]
  \label{lem:LWW-SL-Lp}
  Given non-negative even functions $f_{0}, f_{1}, \dots, f_{2M}$ on
  $\Z^{d}$, define $\cc C_{j}$ and $\cc M_{j}$ to be the operations of
  convolution with $f_{2j}$ and multiplication by $f_{2j-1}$ for $j=1,
  \dots, M$. Then for any $k\in \{0, \dots, 2M\}$,
  \begin{equation}
    \label{eq:LWW-SL-Lp}
    \norm{\cc C_{M}\cc M_{M} \dots \cc C_{1} \cc M_{1}f_{0}}_{\infty}
    \leq \norm{f_{k}}_{\infty} \prod \norm{ f_{j} \ast f_{j^{\prime}}}_{\infty},
  \end{equation}
where the product is over disjoint consecutive pairs $j,j^{\prime}$
taken from $\{0, \dots, 2M\} \setminus \{k\}$.
\end{lemma}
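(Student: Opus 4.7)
Writing $y_0 = 0$ and $y_{M+1} = x$, the composed operator expands to
\[
(\mathcal{C}_M \mathcal{M}_M \cdots \mathcal{C}_1 \mathcal{M}_1 f_0)(x) = \sum_{y_1,\dots,y_M \in \mathbb{Z}^d} \prod_{j=0}^{M} f_{2j}(y_{j+1}-y_j)\,\prod_{j=1}^{M} f_{2j-1}(y_j),
\]
which is a ``chain sum'' in which $f_{2j}$ are edge weights on $(y_j,y_{j+1})$ and $f_{2j-1}$ are vertex weights at $y_j$. The strategy is to bound the singled-out factor $f_k$ pointwise by $\|f_k\|_\infty$, detaching it from the product, and then to collapse the remaining alternating chain one consecutive pair at a time, using the elementary identity $\sum_y g(y)\,h(a-y) = (g \ast h)(a) \le \|g \ast h\|_\infty$. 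Both the identity and the bound are valid and tight because all $f_j \ge 0$; evenness of the $f_j$ lets us rewrite differences freely when needed.

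Concretely, two flavours of reduction step suffice. If an odd-index vertex weight $f_{2j-1}$ is removed -- either because $k = 2j-1$ at the outset, or because $y_j$ has been ``freed'' by an earlier step -- the sum over $y_j$ contracts the two adjacent edge weights into a single convolution
\[
\sum_{y_j} f_{2j-2}(y_j-y_{j-1})\,f_{2j}(y_{j+1}-y_j) \;=\; (f_{2j-2}\ast f_{2j})(y_{j+1}-y_{j-1}) \;\le\; \|f_{2j-2}\ast f_{2j}\|_\infty,
\]
yielding the pair factor $\|f_{2j-2}\ast f_{2j}\|_\infty$. If an even-index edge weight $f_{2j}$ is removed, the chain splits into two independent sub-chains with free endpoints at $y_j$ and $y_{j+1}$; each sub-chain is then processed from its free end inward by the same mechanism. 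In either case the chain length decreases by one and exactly one factor $\|f_i \ast f_{i'}\|_\infty$ is extracted. Iterating $M$ times reduces the chain to the trivial empty product and yields the claimed bound.

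The main difficulty is combinatorial rather than analytic: one has to verify that the sequence of pairs produced by this collapsing procedure is precisely the set of disjoint consecutive pairs of $\{0,\dots,2M\}\setminus\{k\}$ named in the statement. The subtle case is when $k$ is odd, where the first collapsing step produces the ``over-the-hole'' pair $\{k-1,k+1\}$, which is exactly the pair that is consecutive in the reduced index set once $k$ is removed. The analytic content of each step is only the pointwise bound $(u\ast v)(a)\le \|u\ast v\|_\infty$, so non-negativity of the $f_j$ is the sole structural input required. The argument also packages naturally as an induction on $M$, stripping off the outermost operator pair $\mathcal{C}_1\mathcal{M}_1$ (or the symmetric pair at the right end) and reducing to the inductive hypothesis, which is the route taken in Slade's monograph.
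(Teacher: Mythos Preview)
The paper does not supply its own proof of this lemma; it is quoted from Slade's monograph and used as a black box. Your argument is correct and is essentially the standard proof: expand the chain, pull out $\|f_k\|_\infty$, and collapse from the resulting free endpoints one convolution at a time, with evenness used only to reorient differences. Your combinatorial check that the pairs produced are exactly the disjoint consecutive pairs of $\{0,\dots,2M\}\setminus\{k\}$, including the over-the-hole pair $\{k-1,k+1\}$ when $k$ is odd, is the right bookkeeping and matches the inductive presentation in \cite{Slade2006}.
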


The strange formatting of the bounds in the next proposition are
strictly for typographic convenience; in applications we multiply
through by the denominators of the left-hand sides.
\begin{proposition}
  \label{prop:LWW-DB-N}
  Let $N\geq 2$. Then for $z<z_{c}$ and $u\in\{0,1\}$
  \begin{align}
    \label{eq:LWW-Pi-UB-N.1}
   \frac{\norm{\abs{x}^{2u}\pi^{(N)}(x)}_{1}}{(2N-1)^{u}}
    &\leq \norm{\abs{x}^{2u}\bar H_{\lambda,z}}_{\infty} 
    (c\beta)^{N-2 + \indicatorthat{N=2}}
    (1+c\beta)^{N+\indicatorthat{N\geq 3}} \\
    \label{eq:LWW-Pi-UB-N.2}
    \frac{\norm{(1-\cos(k\cdot x)) \pi^{(N)}(x)}_{1}}{(4N-1)(2N-1)}
    &\leq \norm{(1-\cos (k\cdot x))\bar
      H_{\lambda,z}(x)}_{\infty}
    (c\beta)^{N-2+\indicatorthat{N=2}}
    (1+c\beta)^{N+\indicatorthat{N\geq 3}}
  \end{align}
\end{proposition}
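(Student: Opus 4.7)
The plan is to combine the operator-theoretic $L^{1}$ bound of \Cref{lem:LWW-MC-Formulation} with Slade's composition lemma \Cref{lem:LWW-SL-Lp}. Unfolding the chain $(\cc C_{\bar H\ast I}\cc M_{\bar H})(\cc C_{\bar G\ast I}\cc M_{\bar H})^{N-2}\bar H\ast I$ in the notation of \Cref{lem:LWW-SL-Lp} with $M=N-1$ gives the sequence $f_{0}=\bar H\ast I$, $f_{1}=\bar H$, $f_{2}=\bar G\ast I$, $f_{3}=\bar H$, $\ldots$, $f_{2N-3}=\bar H$, $f_{2N-2}=\bar H\ast I$; altogether the diagram contains $N+1$ copies of $\bar H$, $N-2$ copies of $\bar G$, and $N$ copies of $I$.

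For \eqref{eq:LWW-Pi-UB-N.1} with $u=0$ I apply \Cref{lem:LWW-SL-Lp} with hero index $k=1$, producing $\norm{f_{1}}_{\infty}=\norm{\bar H_{\lambda,z}}_{\infty}$; the remaining $2N-2$ functions pair up consecutively into one pair $(\bar H\ast I,\bar G\ast I)$, then $N-3$ pairs $(\bar H,\bar G\ast I)$ (absent when $N\leq 3$), and a final pair $(\bar H,\bar H\ast I)$. Each pair norm $\norm{f\ast g}_{\infty}$ is controlled via $\norm{f\ast g}_{\infty}\leq \norm{f}_{2}\norm{g}_{2}$ together with Young's inequality $\norm{h\ast I}_{2}\leq \norm{h}_{2}\norm{I}_{1}$, producing a factor of order $(c\beta)^{1/2}$ from each $\bar H$ (via \Cref{prop:LWW-SL5.10}), a factor $1+c\beta$ from each $I$ (via \Cref{prop:LWW-UB-I}), and a factor $1+c\beta$ from each $\bar G$ using the decomposition $\bar G=\delta_{0}+\bar H$ together with $\bar H(0)=0$ to obtain $\norm{\bar G}_{2}^{2}=1+\norm{\bar H}_{2}^{2}\leq 1+c\beta$. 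Multiplying these contributions across the $N-1$ pairs and absorbing excess half-integer powers of $c\beta$ into integer powers (valid since $c\beta<1$) recovers the claimed exponents $N-2+\indicatorthat{N=2}$ and $N+\indicatorthat{N\geq 3}$; the indicator $\indicatorthat{N=2}$ accounts for the degenerate case $N=2$, where the $\cc C_{\bar G\ast I}\cc M_{\bar H}$ block is absent and the sole pair is $(\bar H\ast I,\bar H\ast I)$.

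For the weighted bounds I decompose the endpoint displacement as $x=\sum_{i=1}^{2N-1}y_{i}$, where $y_{i}$ is the displacement of the $i$-th subwalk in the representation of \Cref{prop:LWW-Pi-Repulsive-Bound}. Cauchy--Schwarz gives $\abs{x}^{2}\leq(2N-1)\sum_{i}\abs{y_{i}}^{2}$, and the sub-additivity $\abs{\sin(\tfrac{1}{2}k\cdot x)}\leq \sum_{i}\abs{\sin(\tfrac{1}{2}k\cdot y_{i})}$ combined with Cauchy--Schwarz yields $1-\cos(k\cdot x)\leq (2N-1)\sum_{i}(1-\cos(k\cdot y_{i}))$. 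For each $i$ I absorb the weight $\abs{y_{i}}^{2u}$ or $1-\cos(k\cdot y_{i})$ into the $i$-th subwalk factor; if that factor is a $\bar G$, the $\delta_{0}$ component of $\bar G=\delta_{0}+\bar H$ is annihilated by the weight vanishing at $y_{i}=0$, so in all cases the modified factor reduces to $\abs{y_{i}}^{2u}\bar H$ (or its trigonometric analogue). A fresh application of \Cref{lem:LWW-SL-Lp} with hero chosen as the weighted factor then replaces $\norm{\bar H_{\lambda,z}}_{\infty}$ by $\norm{\abs{y}^{2u}\bar H_{\lambda,z}}_{\infty}$ or $\norm{(1-\cos(k\cdot y))\bar H_{\lambda,z}}_{\infty}$, while the remaining pairs are bounded exactly as in the $u=0$ case. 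Summing over the $2N-1$ positions of the weighted subwalk and combining with the prefactor from the inequalities above yields the combinatorial constants $(2N-1)^{u}$ and $(4N-1)(2N-1)$ asserted in the statement.

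The main obstacle will be the combinatorial bookkeeping: tracking the exact powers of $c\beta$ and $1+c\beta$ across the $N-1$ pairs, handling the boundary cases $N=2$ (and to a lesser extent $N=3$) where the pair structure degenerates, and verifying that each pair norm can be evaluated using only $L^{\infty}$, $L^{2}$, and $L^{1}$ bounds on $\bar H$, $\bar G$, and $I$ --- never invoking quantities like $\norm{\bar H}_{1}$ or $\norm{I}_{\infty}$ which are not small near the critical point. For the trigonometric bound, additional care is required because the natural $(2N-1)^{2}$ combinatorial factor from the sub-additivity step plus the sum over subwalk positions must be reconciled with the stated $(4N-1)(2N-1)$; since $(4N-1)(2N-1)\geq(2N-1)^{2}$ the cushion is available, but cleanly accounting for the cross-term contributions between the $\bar H$ and $\bar G$ positions requires care.
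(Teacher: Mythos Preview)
Your overall architecture is the same as the paper's --- feed \Cref{lem:LWW-MC-Formulation} into \Cref{lem:LWW-SL-Lp}, distribute the weight $\abs{x}^{2}$ or $1-\cos(k\cdot x)$ over the $2N-1$ subwalks, and choose the weighted factor as the hero in \Cref{lem:LWW-SL-Lp}. The trigonometric step is fine (your $(2N-1)^{2}$ is indeed dominated by the stated $(4N-1)(2N-1)$; the paper obtains $(4N-1)$ from the inequality \eqref{eq:LWW-SL-Path-Inequality} rather than from sine sub-additivity).

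There is, however, a genuine gap in your power-counting. By bounding each pair via $\norm{f\ast g}_{\infty}\leq\norm{f}_{2}\norm{g}_{2}$ you only harvest $(c\beta)^{1/2}$ from each $\bar H$ appearing in a pair, and $\norm{\bar G}_{2}\leq(1+c\beta)^{1/2}$ contributes nothing small. With $N$ copies of $\bar H$ in the $N-1$ pairs this yields $(c\beta)^{N/2}$, not $(c\beta)^{N-2}$. For $N\geq 5$ one has $N/2<N-2$, so $(c\beta)^{N/2}>(c\beta)^{N-2}$ and no ``absorption'' can upgrade your bound to the stated one; the direction of the inequality is against you. (Your exponent would still suffice for the downstream summation in \Cref{lem:LWW-SL5.11}, but it does not prove the proposition as written.)

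The paper avoids this by never using Cauchy--Schwarz on the pairs. Instead it first strips off each $I$ via $\norm{f\ast I}_{\infty}\leq\norm{f}_{\infty}\norm{I}_{1}$, and then bounds each pair directly in sup norm using the decomposition $\bar G=\delta_{0}+\bar H$:
\[
\norm{\bar H\ast\bar G}_{\infty}
=\norm{\bar H+\bar H\ast\bar H}_{\infty}
\leq\norm{\bar H}_{\infty}+\norm{\bar H}_{2}^{2}\leq c\beta,
\]
which extracts a \emph{full} power of $\beta$ from every $(\bar H,\bar G)$ pair. With $N-2$ such pairs (and one $\norm{\bar G\ast\bar G}_{\infty}\leq 1+c\beta$ pair plus the hero $\norm{\bar H}_{\infty}$) this produces exactly $(c\beta)^{N-2}(1+c\beta)^{N+1}$ for $N\geq 3$. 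Replacing your $L^{2}$--$L^{2}$ estimate by this $L^{\infty}$ estimate on $\bar H\ast\bar G$ is the single missing ingredient.
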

\begin{proof}
  Suppose that both
  \begin{equation}
    \label{eq:LWW-DB-N.1}
    \frac{\norm{\abs{x}^{2u}\pi^{(N)}(x)}_{1}}{(2N-1)^{u}} \leq 
    \norm{\abs{x}^{2u}\bar H_{\lambda,z}}_{\infty} \norm{\bar
      G_{\lambda,z} \ast \bar G_{\lambda,z}}_{\infty} \norm{\bar
      G_{\lambda,z}\ast \bar H_{\lambda,z}}^{N-2}_{\infty}
    \norm{I_{\lambda,z}}^{N}_{1}
    \end{equation}
    and
    \begin{align}
      \nonumber
    \frac{\norm{(1-\cos(k\cdot x)) \pi^{(N)}(x)}_{1}}{(4N-1)(2N-1)} \leq& 
    \norm{(1-\cos (k\cdot x))\bar H_{\lambda,z}(x)}_{\infty}
    \norm{\bar G_{\lambda,z}\ast \bar G_{\lambda,z}}_{\infty} 
      \\     \label{eq:LWW-DB-N.2} &
    \times\norm{\bar G_{\lambda,z}\ast \bar H_{\lambda,z}}^{N-2}_{\infty}
    \norm{I_{\lambda,z}}^{N}_{1}.
  \end{align}
  Suppose further that if $N=2$ the same bounds hold with each term
  $\bar G_{\lambda,z}$ replaced by $\bar H_{\lambda,z}$. The claim then
  follows, as \Cref{eq:LWW-G-H-Relation}, the triangle inequality,
  Cauchy-Schwarz, and $\bar H_{\lambda,z}\leq H_{\lambda,z}$ imply
  \begin{equation}
    \label{eq:LWW-Conv-Bound}
    \norm{\bar H_{\lambda,z}\ast \bar G_{\lambda,z}}_{\infty} = \norm{
      \bar H_{\lambda,z} + \bar H_{\lambda,z}\ast
      \bar H_{\lambda,z}}_{\infty}  \leq \norm{\bar H_{\lambda,z}}_{\infty} +
    \norm{\bar H_{\lambda,z}}_{2}^{2} \leq c\beta,
  \end{equation}
  and \Cref{prop:LWW-UB-I} implies $\norm{I_{\lambda,z}}_{1}\leq 1 +
  c\beta$. The rest of the proof establishes
  \Cref{eq:LWW-DB-N.1,eq:LWW-DB-N.2}.

  First observe that the difference between $N=2$ and $N\geq 3$ is
  only that all two-point functions in \Cref{lem:LWW-MC-Formulation} are
  $\bar H_{\lambda,z}$ for $N=2$, while for $N\geq 3$ factors of $\bar
  G_{\lambda,z}$ arise.

  If $u=0$ \Cref{eq:LWW-DB-N.1} follows by applying
  \Cref{lem:LWW-SL-Lp} to the right-hand side of
  \Cref{lem:LWW-MC-Formulation}, putting the sup norm on the final
  $I_{\lambda,z}\ast \bar H_{\lambda,z}$, and using the inequality
  \begin{equation}
    \norm{\bar G_{\lambda,z}\ast \bar H_{\lambda,z}\ast I_{\lambda,z}}_{\infty}
    \leq
    \norm{\bar G_{\lambda,z}\ast \bar H_{\lambda,z}}_{\infty}
    \norm{I_{\lambda,z}}_{1}. 
  \end{equation}

  For $u=1$, note that $x = x_{1} + \dots x_{2N-1}$, where $x_{j}$ is
  the displacement along the $j^{\mathrm{th}}$ subwalk in a summand
  contributing to $\pi^{(N)}$. As $\abs{x}^{2} \leq \sum
  \abs{x_{i}}^{2}$ it follows that an upper bound is given by
  \begin{equation}
    \sum_{j=1}^{2N-1} \norm{ \ob{\cc C_{\bar H\ast I} \cc
        M_{\bar H}}\ob{\cc C_{\bar G\ast I} \cc
        M_{\bar H}}^{N-2} \bar H\ast I}_{\infty},
  \end{equation}
  where the $j^{\mathrm{th}}$ two-point function $\bar G$ or $\bar H$
  is replaced with $\abs{x}^{2}\bar H$. The claim follows by (i)
  applying \Cref{lem:LWW-SL-Lp} and putting the sup norm on the term
  involving the factor of $\abs{x}^{2}$ (ii) noting that the resulting
  norms are of the form $\norm{\bar H \ast \bar H \ast I}_{\infty}$,
  $\norm{I\ast \bar G \ast \bar H}_{\infty}$, $\norm{I\ast I \ast\bar
    G \ast \bar G}_{\infty}$, or $\norm{\bar H \ast I}_{\infty}$ and
  (iii) iterating $\norm{f\ast g}_{\infty}\leq
  \norm{f}_{\infty}\norm{g}_{1}$. The uniform upper bound follows by
  using $\norm{\bar H\ast \bar H}_{\infty}\leq \norm{\bar H \ast \bar
    G}_{\infty}$.

  To prove \Cref{eq:LWW-DB-N.2} let $t= \sum_{j=1}^{n}
  t_{j}$. Then (see~\cite[Section~4.2.3]{Slade2006})
  \begin{equation}
    \label{eq:LWW-SL-Path-Inequality}
    (1-\cos t) \leq (2n+1)\sum_{j=1}^{n}(1-\cos t_{j}).
  \end{equation}
  Letting $t_{j} = k\cdot x_{j}$ where $x_{j}$ is the displacement
  along the $j^{\mathrm{th}}$ subwalk the argument used to
  prove~\eqref{eq:LWW-DB-N.1} with $u=1$ can be applied to
  give~\eqref{eq:LWW-DB-N.2}. The prefactor $(4N-1)(2N-1)$ arises as
  for an $N$ edge lace there are $2N-1$ subwalks, so $n=2N-1$
  in \Cref{eq:LWW-SL-Path-Inequality}. 
\end{proof}

\subsection{Completion of the Bootstrap}
\label{sec:LWW-Bootstrap}

This section begins by using the diagrammatic bounds of
\Cref{sec:LWW-Pi-DB-1.1,sec:LWW-Pi-DB-N.1} to establish that $\Pi$ is small
under the hypothesis $f(z)\leq K$.
\begin{lemma}
  \label{lem:LWW-SL5.11}
  Fix $z\in \ob{0,z_{c}}$ and assume $d$ is sufficiently large. If
  $f(z) \leq K$, then there is a constant $\bar c_{K}$ independent of
  $z$ and $d$ such that
  \begin{align}
    \label{eq:LWW-Pi-Small}
    \sum_{x\in \Z^{d}}\abs{\Pi_{z}(x)} &\leq \bar c_{K}\beta \\
    \label{eq:LWW-Pi-Cos-Small}
    \sum_{x\in \Z^{d}}(1-\cos(k\cdot x))\abs{\Pi_{z}(x)} & \leq \bar
    c_{K}\beta \hat C_{p(z)}(k)^{-1}.
  \end{align}
\end{lemma}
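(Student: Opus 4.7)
The plan is to bound $|\Pi_z(x)|$ by the triangle inequality applied to its defining alternating series, namely $\Pi_z = \sum_{N \geq 1} (-1)^N \pi^{(N)}$, reducing matters to the $N$-by-$N$ diagrammatic estimates of \Cref{prop:LWW-DB-1,prop:LWW-DB-N}. The hypothesis $f(z) \leq K$ is then exactly what is needed to invoke \Cref{prop:LWW-SL5.10}, which controls the $\bar H_{\lambda,z}$ norms appearing on the right-hand sides of those diagrammatic bounds. Since $\alpha_{0}(\lambda,z) \geq 1$, all $\bar H_{\lambda,z}$-norm estimates follow immediately from the corresponding $H_{\lambda,z}$ estimates in \Cref{prop:LWW-SL5.10}.

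For \Cref{eq:LWW-Pi-Small}, I would start with the $N=1$ contribution, applying \Cref{eq:LWW-DB-1} with $u=0$ to obtain $\|\pi^{(1)}\|_{1} \leq c\beta(1 + \|\bar H_{\lambda,z}\|_{\infty}) \leq c\beta(1 + c_K\beta)$ using \Cref{eq:LWW-SL5.10.3}. For $N \geq 2$, \Cref{eq:LWW-Pi-UB-N.1} with $u=0$ gives $\|\pi^{(N)}\|_{1} \leq \|\bar H_{\lambda,z}\|_{\infty} (c\beta)^{N-2+\mathbf{1}_{N=2}} (1+c\beta)^{N + \mathbf{1}_{N \geq 3}}$. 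Using $\|\bar H_{\lambda,z}\|_\infty \leq c_K \beta$, this yields a bound of order $\beta^{N-1}$ for $N \geq 2$. Summing the resulting geometric series (valid provided $\beta$ is small enough, which is guaranteed by taking $d$ large) gives $\sum_{N \geq 1} \|\pi^{(N)}\|_1 \leq \bar c_K \beta$.

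For \Cref{eq:LWW-Pi-Cos-Small}, the strategy is identical but uses the $(1-\cos(k \cdot x))$ versions. From \Cref{eq:LWW-DB-1-Trig} and \Cref{eq:LWW-Pi-UB-N.2}, each $\|(1-\cos(k\cdot x))\pi^{(N)}\|_{1}$ is controlled by a constant (polynomial in $N$) times $\|(1-\cos(k \cdot x)) \bar H_{\lambda,z}\|_\infty$ times a power of $\beta$. By \Cref{eq:LWW-SL5.10.1} and $\bar H_{\lambda,z} \leq H_{\lambda,z}$, we have $\|(1-\cos(k\cdot x))\bar H_{\lambda,z}\|_\infty \leq c_K(1+\beta) \hat C_{p(z)}(k)^{-1}$. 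The polynomial prefactor $(4N-1)(2N-1)$ in \Cref{eq:LWW-Pi-UB-N.2} is swallowed by the geometric factor $(c\beta)^{N-1}$ for $\beta$ small. Summing over $N$ produces the advertised bound $\bar c_K \beta \hat C_{p(z)}(k)^{-1}$.

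The only real care is verifying convergence of the series in $N$, but this is automatic for $\beta$ small because the leading behaviour is geometric with ratio $c\beta$, and the polynomial prefactors in \Cref{eq:LWW-Pi-UB-N.2} do not disrupt summability. All estimates are uniform in $z \in (0, z_c)$ under the hypothesis $f(z) \leq K$, and the constant $\bar c_K$ depends only on $K$ through its appearance in \Cref{prop:LWW-SL5.10} and \Cref{prop:LWW-UB-Alpha,prop:LWW-UB-I}. There is no substantial obstacle here; the lemma is essentially an assembly step combining the already-proven diagrammatic estimates with the $\bar H$-norm control from the bootstrap hypothesis.
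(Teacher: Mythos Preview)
Your proposal is correct and follows essentially the same approach as the paper: the paper's proof simply states that the lemma follows by combining the diagrammatic bounds of \Cref{prop:LWW-DB-1,prop:LWW-DB-N} (for $u=0$) with the estimate \eqref{eq:LWW-SL5.10.1} on $\norm{(1-\cos(k\cdot x))H_{\lambda,z}}_{\infty}$, which is exactly the assembly you describe. Your write-up merely makes explicit the geometric summation in $N$ that the paper leaves implicit.
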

\begin{proof}
  This follows by combining the bounds of \Cref{prop:LWW-DB-1,prop:LWW-DB-N}
  for $u=0$ with the bound $\norm{(1-\cos k\cdot
    x)H_{\lambda,z}(x)}_{\infty} \leq c_{K}(1+\beta)\hat
  C_{p(z)}^{-1}(k)$ of \Cref{eq:LWW-SL5.10.1}.
\end{proof}

The remainder of this section is devoted to verifying the hypothesis
of \Cref{lem:LWW-Bootstrap} for $z_{1}=0$, $z_{2} = z_{c}(\lambda)$, $a=4$
and $b=1+O(\beta)$.

\begin{lemma}
  \label{lem:LWW-SL5.12}
  The function $f$ obeys $f(0)=1$.
\end{lemma}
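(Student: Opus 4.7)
The plan is to simply evaluate each of $f_1$, $f_2$, $f_3$ at $z=0$ and take the maximum. The key observation is that $w_{\lambda,z}(\omega) = z^{|\omega|}\lambda^{n_L(\omega)}$ vanishes at $z=0$ whenever $|\omega|\geq 1$, so all sums collapse to the contribution of the zero-step walk.

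First I would compute the two-point function and the basic scalars at $z=0$. Only the empty walk contributes to $G_{\lambda,0}(0,x)$, giving $G_{\lambda,0}(x) = \delta_{0,x}$ and hence $\hat G_{\lambda,0}(k) = 1$ for every $k\in\FS^{d}$. For the loop measure, every closed walk has $|\omega|\geq 1$, so $\mu_{\lambda,0}(0) = \mu_{\lambda,0}(0;y) = 0$, and the identities~\eqref{eq:LWW-Specalization-4} yield $\alpha_{0}(\lambda,0) = \alpha(\lambda,0) = 1$. Then the defining relation $\hat G_{\lambda,0}(0)/\alpha_{0}(\lambda,0) = 1/(1-p(0)\abs{\Omega})$ forces $p(0)=0$, and therefore $\hat C_{p(0)}(k) = 1$ for all $k$.

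With these values in hand I would evaluate the three comparison functions. For $f_{1}$, we have $f_{1}(0) = 0\cdot \alpha(\lambda,0)\abs{\Omega} = 0$. For $f_{2}$, the ratio is identically $1/1=1$, so $f_{2}(0)=1$. For $f_{3}$, the identity $\hat G_{\lambda,0}\equiv 1$ implies $\Delta_{k}\hat G_{\lambda,0}(\ell) = -2(1-\tfrac{1}{2}(1+1)) = 0$ for every $k,\ell$, while the denominator $U_{0}(k,\ell) = 16\cdot 1 \cdot(1+1+1) = 48$ is strictly positive; hence $f_{3}(0)=0$.

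Taking the maximum gives $f(0) = \max\{0,1,0\} = 1$, as claimed. There is no real obstacle here: the argument is a direct substitution, and the only point worth flagging is that $f_{3}$ is well-defined at $z=0$ because $U_{p(0)}(k,\ell)$ is bounded away from zero, so the quotient $0/U_{p(0)}(k,\ell)$ is genuinely zero rather than indeterminate.
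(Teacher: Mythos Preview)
Your proof is correct and follows exactly the same approach as the paper's: evaluate $f_{1}$, $f_{2}$, $f_{3}$ at $z=0$ using $\hat G_{\lambda,0}\equiv 1$, $\alpha_{0}(\lambda,0)=1$, $p(0)=0$, and $U_{0}=48$. You have simply written out in more detail what the paper states tersely.
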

\begin{proof}
  Clearly $f_{1}(0)=0$. The definition of $p(z)$ implies $p(0)=0$ as
  $\alpha_{0}(\lambda,0)=1$, so $f_{2}(0)=1$. Lastly, $f_{3}(0)=0$:
  $U_{0}=48$ while $\Delta_{k}\hat G_{\lambda,0} = 0$.
\end{proof}

\begin{lemma}
  \label{lem:LWW-SL5.14}
  The function $f$ is continuous on $\co{0,z_{c}}$.
\end{lemma}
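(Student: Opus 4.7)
The plan is to observe that $f = \max(f_1, f_2, f_3)$, so by continuity of $\max$ it suffices to show each $f_i$ is continuous on $\co{0, z_c(\lambda)}$. Each $f_i$ is either built from scalar power-series quantities or is a supremum over a compact set of a jointly continuous function of $(z, k)$ (or $(z,k,\ell)$); in each case the continuity will follow.

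First I would establish joint continuity on $\co{0, z_c} \times \FS^d$ of the Fourier-transformed two-point function $\hat G_{\lambda,z}(k) = \sum_{n,x} c_n(0,x) z^n e^{ik\cdot x}$. The absolute value of the generic term is at most $c_n(0,x) z^n$, and the sum of these over $n$ and $x$ is $\chi_{\lambda}(z)$, which is finite for $z < z_c$. Hence the series converges absolutely and uniformly on $\cb{0, z_c - \epsilon} \times \FS^d$ for any $\epsilon > 0$, exhibiting $\hat G_{\lambda,z}$ as a uniform limit of (jointly continuous) trigonometric polynomials, so it is jointly continuous. The same argument, applied to $\Delta_k \hat G_{\lambda,z}(\ell)$ as a finite linear combination of $\hat G$'s at shifted arguments, gives joint continuity of $(z,k,\ell) \mapsto \Delta_k \hat G_{\lambda,z}(\ell)$ on $\co{0, z_c} \times \FS^{2d}$.

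Next I would verify continuity of the auxiliary scalars. By~\eqref{eq:LWW-Specalization-4}, $\alpha_0(\lambda,z) = \exp(\mu_{\lambda,z}(0))$ and $\alpha(\lambda,z) = \exp(\mu_{\lambda,z}(0;y))$ are exponentials of power series in $z$ with non-negative coefficients. Because \Cref{thm:LWW-LM-Rep} and \Cref{prop:LWW-2PT-Equivalence} imply $\alpha_0 \leq G_{\lambda,z}(0,0) \leq \chi_\lambda(z) < \infty$ for $z < z_c$, and $\mu_{\lambda,z}(0;y) \leq \mu_{\lambda,z}(0) \leq \log \chi_{\lambda}(z)$ by \Cref{prop:LWW-LM-Properties}, both series converge absolutely on every $\cb{0, z_c - \epsilon}$. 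So $\alpha_0$, $\alpha$, and hence $f_1(z) = z|\Omega|\alpha(\lambda,z)$, are continuous. Continuity of $p(z) = |\Omega|^{-1}(1 - \alpha_0(\lambda,z)/\hat G_{\lambda,z}(0))$ follows; moreover $\alpha_0 \geq 1$ and $\chi_\lambda(z) < \infty$ ensure $p(z)|\Omega| < 1$ strictly for $z < z_c$, which in turn makes $\hat C_{p(z)}(k)$ and the three convolutional summands in $U_{p(z)}(k,\ell)$ jointly continuous and uniformly positive on any slab $\cb{0, z_c - \epsilon} \times \FS^d$ (resp.\ $\FS^{2d}$).

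The continuity of $f_2$ and $f_3$ then follows from the standard fact that if $g$ is jointly continuous on $\cb{a,b} \times K$ with $K$ compact, then $z \mapsto \sup_{w \in K} g(z,w)$ is continuous; apply this to $g(z,k) = |\hat G_{\lambda,z}(k)|/\hat C_{p(z)}(k)$ and $g(z,k,\ell) = \Delta_k \hat G_{\lambda,z}(\ell)/U_{p(z)}(k,\ell)$ on suitable slabs. The only point demanding care is strict positivity of the denominators; this reduces to $p(z)|\Omega| < 1$ for $z < z_c$, which is immediate from $1 - p(z)|\Omega| = \alpha_0(\lambda,z)/\hat G_{\lambda,z}(0) > 0$.
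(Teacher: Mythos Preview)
Your argument is correct, but it follows a different route from the paper. The paper argues equicontinuity of the families $\{z\mapsto |\hat G_{\lambda,z}(k)|/\hat C_{p(z)}(k)\}_{k}$ and $\{z\mapsto \Delta_{k}\hat G_{\lambda,z}(\ell)/U_{p(z)}(k,\ell)\}_{k,\ell}$ by bounding their $z$-derivatives uniformly in $k$ (resp.\ $k,\ell$): it computes $\frac{d}{dz}(\hat G_{\lambda,z}(k)/\hat C_{p(z)}(k))$ explicitly and controls each piece using $|\hat G|\le\chi_{\lambda}(r)$, $|\partial_{z}\hat G|\le|\chi_{\lambda}'(r)|$, $|\partial_{p}\hat C_{p}|\le|\Omega|\chi_{\lambda}(r)^{2}$, a bound on $dp/dz$, and $\tfrac12\le\hat C_{p(z)}(k)\le\chi_{\lambda}(r)$. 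You instead obtain joint continuity of $(z,k)\mapsto\hat G_{\lambda,z}(k)$ directly from absolute and uniform convergence of the defining double series on slabs $\cb{0,z_{c}-\epsilon}\times\FS^{d}$, and then quote the elementary fact that a supremum over a compact parameter set of a jointly continuous function is continuous. Your approach is shorter and avoids any differentiation; the paper's approach rehearses derivative estimates that are in any case needed later in Section~6. One small notational point: the coefficients you call $c_{n}(0,x)$ should be the $z$-independent $\lambda$-LWW counts $\sum_{\omega:0\to x,\,|\omega|=n}\lambda^{n_{L}(\omega)}$, not the $z$-dependent quantities of~\eqref{eq:LWW-Hypergraph-Interaction}; with that reading your Weierstrass $M$-test bound $\sum_{n,x}(\cdot)\,z^{n}=\chi_{\lambda}(z)$ is exactly right.
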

\begin{proof}
  It suffices to show $f_{1},f_{2},f_{3}$ are continuous on $\cb{0,r}$
  for any $r<z_{c}$. For $f_{1}$ this follows as $\alpha(\lambda,z) \leq
  \alpha_{0}(\lambda,z)\leq \chi_{\lambda}(z)$, i.e.,
  $\alpha(\lambda,z)$ has a convergent power series representation. 

  Recall (see~\cite[Lemma~5.13]{Slade2006}) that the supremum of an
  equicontinuous family of functions over a compact interval is a
  continuous function, provided this supremum is finite. It follows
  that it is enough to prove a bound uniform in $k$ on the derivative
  of $f_{2}$ (resp.\ $f_{3}$) with respect to $z$.  Since
  equicontinuity of a family $\{\abs{g_{\alpha}}\}$ is equivalent to
  equicontinuity of $\{g_{\alpha}\}$, the absolute value on $\hat
  G_{\lambda,z}$ (resp.\ $\Delta_{k}\hat G_{\lambda,z}$) can be
  ignored. For $f_{2}$ the derivative is
   \begin{equation}
    \frac{d}{dz} \frac{\hat G_{\lambda,z}(k)}{\hat C_{p(z)}(k)} =
    \frac{1}{\hat C_{p(z)}(k)^{2}} \cb{ \hat C_{p(z)}(k) \frac{ d\hat
        G_{\lambda,z}(k)}{dz} - \hat G_{\lambda,z}(k) \frac{ d\hat
        C_{p(z)}(k)}{dp}|_{p=p(z)} \frac{dp(z)}{dz}}.
  \end{equation}
  Now note: $\abs{\hat G_{\lambda,z}(k)} \leq \chi_{\lambda}(r)$,
  $\abs{\frac{d}{dz} \hat G_{\lambda,z}(k)} \leq
  \abs{\frac{d}{dz}\chi_{\lambda}(r)}$, $\abs{ \partial_{p}\hat
   C_{p}(k)} \leq \abs{\Omega} \chi_{\lambda}(r)^{2}$. Further,
  \begin{align}
     \abs{\frac{d p(z)}{dz}} &= \abs{\frac{d}{dz}\abs{\Omega}^{-1}\ob{1 -
       \frac{\alpha_{0}(\lambda,z)}{\chi_{\lambda}(z)}}} \\
     &\leq \abs{\Omega}^{-1}\alpha_{0}(\lambda,r)\frac{d}{dz}\chi_{\lambda,}(r)
     \chi^{-2}_{\lambda}(0) + \chi_{\lambda}^{-1}(1)\frac{d}{dz}\alpha_{0}(\lambda,r),
  \end{align}
  and $\frac{d}{dz}\alpha_{0}(\lambda,r)$ is bounded above by
  $\frac{d}{dz}\chi_{\lambda}(r)$ by \Cref{lem:LWW-Derivative-UB}. A
  uniform bound on the derivative then follows from
  \begin{equation}
    \frac{1}{2} \leq \hat C_{p(z)}(k) \leq \hat C_{p(z)}(0) =
    \frac{\chi_{\lambda}(z)}{\alpha_{0}(\lambda,z)}
    \leq \chi_{\lambda}(r),
  \end{equation}
  where the second last equality follows from the definition of
  $p(z)$, and the last inequality from $\alpha_{0}(\lambda,z) \geq 1$.

  For $f_{3}$ the calculation is essentially the same. Calculating the
  derivative shows that what is needed is upper bounds on $\abs{\hat
    G_{\lambda,z}(k)}$, $\abs{\frac{d}{dz} \hat G_{\lambda,z}(k)}$,
  $\abs{ \partial_{p} \hat C_{p}(k)}$, and $\abs{\frac{d}{dz} p(z)}$,
  along with upper and lower bounds on $\hat C_{p(z)}$. These bounds
  have already been obtained.
\end{proof}

The next lemma completes the bootstrap argument.
\begin{lemma}
  \label{lem:LWW-SL5.16}
  Suppose $d$ is sufficiently large. Fix $z\in\ob{0,z_{c}}$, and
  suppose that $f(z)\leq 4$. Then there is a constant $c$ independent
  of $z$ and $d$ such that $f(z)\leq 1 + c\beta$.
\end{lemma}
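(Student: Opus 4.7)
The plan is to bound each of $f_1$, $f_2$, and $f_3$ separately by $1 + c\beta$. Throughout I will use the lace expansion equation~\eqref{eq:LWW-Lace-2}, the smallness bounds on $\hat\Pi_{\lambda,z}$ from Lemma~\ref{lem:LWW-SL5.11}, and the bound $\alpha_0(\lambda,z) \leq 1 + c\beta$ from Proposition~\ref{prop:LWW-UB-Alpha}. Comparing the defining relation~\eqref{eq:LWW-Rescaling-Definition} of $p(z)$ with the lace expansion evaluated at $k = 0$ yields the key identity
\[
p(z)|\Omega| = z\alpha(\lambda,z)|\Omega| + \hat\Pi_{\lambda,z}(0),
\]
which will be used throughout.

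For $f_1$, the finiteness of $\hat G_{\lambda,z}(0) = \chi_\lambda(z)$ for $z < z_c(\lambda)$ forces the denominator $1 - z\alpha|\Omega| - \hat\Pi_{\lambda,z}(0)$ in the lace expansion to be strictly positive, so $z\alpha|\Omega| < 1 + |\hat\Pi_{\lambda,z}(0)| \leq 1 + \bar c_K\beta$. For $f_2$, substituting the identity above into the lace expansion gives
\[
\frac{\hat G_{\lambda,z}(k)}{\hat C_{p(z)}(k)} = \frac{\alpha_0(\lambda,z)}{1 + \hat C_{p(z)}(k)\bigl[\hat\Pi_{\lambda,z}(0)\hat D(k) - \hat\Pi_{\lambda,z}(k)\bigr]},
\]
reducing the task to bounding the bracketed correction by $c\beta$. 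Writing the bracket as $(\hat\Pi_{\lambda,z}(0) - \hat\Pi_{\lambda,z}(k)) - \hat\Pi_{\lambda,z}(0)(1 - \hat D(k))$, I would control the first piece via the symmetry of $\Pi_{\lambda,z}$ together with the trigonometric bound $\sum_x(1-\cos k\cdot x)|\Pi_{\lambda,z}(x)| \leq \bar c_K\beta\,\hat C_{p(z)}^{-1}(k)$ from Lemma~\ref{lem:LWW-SL5.11}, and the second using the elementary inequality $\hat C_{p(z)}(k)(1 - \hat D(k)) \leq 2$ (checked case-wise on the sign of $\hat D(k)$, since $p(z)|\Omega|<1$) combined with $|\hat\Pi_{\lambda,z}(0)| \leq \bar c_K\beta$.

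The main obstacle is the bound on $f_3$. I would apply the algebraic identity
\[
\Delta_k\frac{1}{F}(\ell) = -\frac{\Delta_k F(\ell)}{F(\ell)^2} + \sum_{\pm}\frac{[F(\ell\pm k)-F(\ell)]^2}{F(\ell\pm k)\, F(\ell)^2}
\]
with $F = \hat G_{\lambda,z}^{-1} = \alpha_0^{-1}[\hat C_{p(z)}^{-1} + \hat\Pi_{\lambda,z}(0)\hat D - \hat\Pi_{\lambda,z}]$. The second difference $\Delta_k F$ decomposes into a contribution proportional to $\Delta_k \hat D$ (absorbed into $U_{p(z)}(k,\ell)$ by the estimate $\tfrac12|\Delta_k\hat C_{p(z)}(\ell)| \leq U_{p(z)}(k,\ell)$ from~\cite[Lemma~5.7]{Slade2006}) plus a contribution $\Delta_k\hat\Pi_{\lambda,z}$ bounded by $2\bar c_K\beta\,\hat C_{p(z)}^{-1}(k)$ via Lemma~\ref{lem:LWW-SL5.11}. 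The quadratic correction terms are controlled by first-difference versions of the same trigonometric estimates applied to $\hat D$ and $\Pi_{\lambda,z}$. Finally, replacing each surviving $\hat G_{\lambda,z}$ factor by $(1 + c\beta)\hat C_{p(z)}$ via the already-established $f_2 \leq 1+c\beta$ bound collapses the entire expression into the three products of $\hat C_{p(z)}$-factors that comprise $U_{p(z)}(k,\ell)$. The hard part is verifying that the new terms involving $\hat\Pi_{\lambda,z}$ (absent in the self-avoiding walk case of~\cite{Slade2006}) do not destroy the $1 + c\beta$ bound; the bookkeeping is delicate but all required ingredients are in place.
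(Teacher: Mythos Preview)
Your treatments of $f_{1}$ and $f_{2}$ are correct and essentially the same as the paper's; in fact your $f_{2}$ argument is slightly cleaner, since writing
\[
\frac{\hat G_{\lambda,z}(k)}{\hat C_{p(z)}(k)}
= \frac{\alpha_{0}}{1+\hat C_{p(z)}(k)\bigl[\hat\Pi_{\lambda,z}(0)\hat D(k)-\hat\Pi_{\lambda,z}(k)\bigr]}
\]
and bounding the bracket by $c\beta\,\hat C_{p(z)}^{-1}(k)$ avoids the case split on $z$ that the paper uses to lower--bound $\hat F_{\lambda,z}(k)$.

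The $f_{3}$ argument, however, has a genuine gap. Your Taylor identity
\[
\Delta_{k}\hat G(\ell)= -\hat G(\ell)^{2}\,\Delta_{k}F(\ell)
+\sum_{\pm}\hat G(\ell\pm k)\,\hat G(\ell)^{2}\bigl[F(\ell\pm k)-F(\ell)\bigr]^{2}
\]
produces a leading term containing the factor $\hat G(\ell)^{2}$, whereas every product of $\hat C_{p(z)}$'s appearing in $U_{p(z)}(k,\ell)$ is of the form $\hat C(\ell)\hat C(\ell\pm k)$ or $\hat C(\ell+k)\hat C(\ell-k)$. Near $\ell=0$ one has $\hat G(\ell)^{2}\sim\chi_{\lambda}(z)^{2}$, and since $|\Delta_{k}\hat D(0)|=2(1-\hat D(k))$ is not small, the first term alone is of order $\chi_{\lambda}(z)^{2}\hat C_{p(z)}^{-1}(k)$, which can be arbitrarily larger than $U_{p(z)}(k,0)\asymp\chi_{\lambda}(z)$. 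The correct bound only emerges after a cancellation between your two terms, and your claim that the $\Delta_{k}\hat D$ piece is ``absorbed into $U_{p(z)}$ by $\tfrac{1}{2}|\Delta_{k}\hat C_{p(z)}|\le U_{p(z)}$'' does not provide this: the bound on $\Delta_{k}\hat C_{p(z)}$ already incorporates its own quadratic corrections, so you cannot separately invoke it for the linear part of your decomposition.

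The paper sidesteps this entirely by applying \cite[Lemma~5.7]{Slade2006} not to $\hat C_{p(z)}$ but directly to $\hat G_{\lambda,z}=(1-\hat g_{\lambda,z})^{-1}$ with $\hat g_{\lambda,z}=z\alpha|\Omega|\hat D+\hat\Pi_{\lambda,z}$. That lemma is precisely the refined version of your identity which outputs only products $\hat G(\ell)\hat G(\ell\pm k)$ and $\hat G(\ell-k)\hat G(\ell)\hat G(\ell+k)$, each multiplied by factors $|\hat g(0)|-|\hat g(\cdot)|$ that are bounded by $(2+O(\beta))\hat C_{p(z)}^{-1}$ via \Cref{lem:LWW-SL5.11} and the established $f_{1}$ bound. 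Replacing $\hat G$ by $(1+c\beta)\hat C_{p(z)}$ via $f_{2}\le 1+c\beta$ then matches the structure of $U_{p(z)}$ on the nose. To repair your argument you would effectively have to reprove Lemma~5.7; it is simpler to invoke it directly with $\hat g_{\lambda,z}$ in place of your $F$.
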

\begin{proof}
  We prove $f_{j}(z)\leq 1 + c\beta$ for $j=1,2,3$ in sequence.
  
  Since $\alpha_{0}(\lambda,z)$ and $\chi_{\lambda}(z)$ are both
  positive and finite it follows that
  \begin{equation}
    \label{eq:LWW-Positive}
    \frac{\alpha_{0}(\lambda,z)}{\chi_{\lambda}(z)} = 1 -
    z\alpha(\lambda,z) \abs{\Omega} - \hat \Pi_{\lambda,z}(0) > 0.
  \end{equation}
  \Cref{eq:LWW-Positive} and \Cref{lem:LWW-SL5.11} together imply
  \begin{equation}
    f_{1}(z) = z\alpha(\lambda,z)\abs{\Omega} \leq 1 + \hat \Pi_{\lambda,z}(0)
    \leq 1 + \bar c_{4}\beta.
  \end{equation}

  \Cref{prop:LWW-UB-Alpha} implies $\alpha_{0} \leq 1 + \bar c\beta$, so
  $f_{2}\leq 1+O(\beta)$ follows if
  \begin{equation}
    \label{eq:LWW-C5.53}
    \frac{\hat G_{\lambda,z}(k)}{\alpha_{0}(\lambda,z) \hat
      C_{p(z)}(k)} = 1 + \frac{1 - p(z)\abs{\Omega}\hat D(k) - \hat
      F_{\lambda,z}(k)}{\hat F_{\lambda,z}(k)}
  \end{equation}
  is $1 + O(\beta)$, where
  \begin{equation}
    \hat F_{\lambda,z}(k) \equiv \hat G_{\lambda,z}(k)^{-1} = 1 -
    z\alpha(\lambda,z)\abs{\Omega}\hat D(k) - 
    \hat\Pi_{\lambda,z}(k).
  \end{equation}
  By definition, $p(z)\abs{\Omega} = z\alpha(\lambda,z)\abs{\Omega} + \hat
  \Pi_{\lambda,z}(0)$. Hence the numerator of the right-hand side
  of~\eqref{eq:LWW-C5.53} is
  \begin{equation}
    \label{eq:LWW-C5.54}
    1 - p(z)\abs{\Omega}\hat D(k) - \hat F_{\lambda,z}(k) = \hat
    \Pi_{\lambda,z}(0) \ob{ 1- \hat D(k)} - \ob{
      \hat \Pi_{\lambda,z}(0) - \hat \Pi_{\lambda,z}(k)},
  \end{equation}
  which is bounded above by $4\bar c_{4}\beta$. An alternative upper
  bound of the right hand side of~\eqref{eq:LWW-C5.54} follows
  from~\Cref{eq:LWW-Pi-Small,eq:LWW-Pi-Cos-Small}:
  \begin{equation}
    \label{eq:LWW-SL5.55}
    \hat
    \Pi_{\lambda,z}(0) \ob{ 1- \hat D(k)} - \ob{
      \hat \Pi_{\lambda,z}(0) - \hat \Pi_{\lambda,z}(k)} \leq \bar
    c_{4}\beta \ob{1-\hat D(k)} + \bar c_{4}\beta \ob{1 - p(z) 
      \abs{\Omega} \hat D(k)}.
  \end{equation}
   Since
  \begin{equation}
    \label{eq:LWW-SL5.56}
    \ob{1-\hat D(k)}\hat C_{p(z)}(k) = 1 + \underbrace{\hat
      D(k)}_{\leq 1} \underbrace{\frac{
        p(z)\abs{\Omega} - 1}{1-p(z)\abs{\Omega}\hat D(k)}}_{\leq 1} \leq 2,
  \end{equation}
  the numerator of~\eqref{eq:LWW-C5.53} is bounded by 
  % as $0\leq \hat D(k)\leq 1$
  \begin{equation}
    \label{eq:LWW-SL5.57}
    3\bar c_{4}\beta \ob{1-p(z)\abs{\Omega}\hat D(k)} \leq 3\bar
    c_{4}\beta \ob{ \hat F_{\lambda,z}(0) + \ob{1-\hat D(k)}}.
  \end{equation}

  The denominator of~\eqref{eq:LWW-C5.53} is
  \begin{align}
    \hat F_{\lambda,z}(k) &= \hat F_{\lambda,z}(0) + \ob{ \hat
      F_{\lambda,z}(k) - \hat F_{\lambda,z}(0)} \\
    \label{eq:LWW-SL5.58}
    &=\hat F_{\lambda,z}(0) + z\alpha(\lambda,z)\abs{\Omega}\ob{1-\hat
      D(k)} + \ob{ \hat \Pi_{\lambda,z}(0) - \hat \Pi_{\lambda,z}(k)}.
  \end{align}
  Let $\bar \lambda = \sup_{\eta\in\SAP}\lambda_{\eta}$, and
  $\lambda^{\star} = \max(1,\bar \lambda)$. For $z\leq
  (2\abs{\Omega}\sqrt{\lambda^{\star}})^{-1}$
  \Cref{prop:LWW-Trivial-G-Bound} (if $\lambda^{\star}>1$) or
  neglecting loops (if $\lambda^{\star}\leq 1$) implies $\hat
  F_{\lambda,z}(0) \geq \hat C_{z\sqrt{\lambda^{\star}}}(0)^{-1} \geq
  \frac{1}{2}$. Then $1-\hat D(k)\geq 0$ and~\eqref{eq:LWW-Pi-Cos-Small}
  imply 
  % \footnote{The factor $2$ arises from the bound on $\hat
  %  C^{-1}_{p(z)}(k)$ in~\eqref{eq:LWW-Pi-Small}}
  \begin{equation}
    \label{eq:LWW-SL5.59}
    \hat F_{\lambda,z}(k) \geq \hat F_{\lambda,z}(0) - 2\bar
    c_{4}\beta \geq \frac{1}{2} - 2\bar c_{4}\beta.
  \end{equation}

  For $(2\abs{\Omega}\lambda^{\star})^{-1} \leq z<z_{c}(\lambda)$
  \Cref{eq:LWW-Pi-Cos-Small}, $\hat F_{z}(0)>0$, and
  $\alpha(\lambda,z)\geq 1$ imply
  \begin{equation}
    1 - p(z)\abs{\Omega} \hat D(k) = 1- (1-\hat F_{\lambda,z}(0))\hat
    D(k) \leq 1 - \hat D(k) + \hat F_{\lambda,z}(0)
  \end{equation}
  and hence
  \begin{align}
    \hat F_{\lambda,z}(k) &\geq \hat F_{\lambda,z}(0) +
    \frac{1}{2\sqrt{\lambda^{\star}}}
    \ob{1 - \hat D(k)} - \bar c_{4}\beta \ob{1-p(z)\abs{\Omega}\hat
      D(k)} \\
    &\geq \ob{\frac{1}{2\sqrt{\lambda^{\star}}} - \bar c_{4}\beta}
    \ob{\hat F_{\lambda,z}(0) + \ob{1- \hat D(k)}}.
  \end{align}
  For $z\leq (2\abs{\Omega}\lambda^{\star})^{-1}$ or
  $(2\abs{\Omega}\lambda^{\star})^{-1} \leq z <z_{c}$ these lower and
  upper bounds combine to imply the right-hand side
  of~\eqref{eq:LWW-C5.53} is $1+O(\beta)$, and hence $f_{2}(z) = 1+
  O(\beta)$.

  Lastly consider $f_{3}(z)$. As for $f_{2}$, it suffices to prove the
  claim for $f_{3}/\alpha_{0}$. Let $\hat g_{\lambda,z}(k) =
  z\alpha(\lambda,z) \abs{\Omega} \hat D(k) + \hat
  \Pi_{\lambda,z}(k)$, so
  \begin{equation}
    \label{eq:LWW-SL5.62}
    \frac{\hat G_{\lambda,z}(k)}{\alpha_{0}(\lambda,z)} =
    \frac{1}{1-\hat g_{\lambda,z}(k)}.
  \end{equation}
  The symmetry of $D(x)$ and $\Pi_{\lambda,z}(x)$ implies
  that $g_{\lambda,z}(x) = g_{\lambda,z}(-x)$, so applying Lemma~5.7
  of~\cite{Slade2006} (a general fact
    about even functions) gives
  \begin{align}
    \label{eq:LWW-SL5.63}
    \frac{1}{2}\abs{\Delta_{k}\hat G_{\lambda,z}(\ell)} &\leq
    \frac{1}{2} \ob{ \hat G_{\lambda,z}(\ell-k) + \hat
      G_{\lambda,z}(\ell + k)} \hat G_{\lambda,z}(\ell) \ob{ \abs{\hat
      g_{\lambda,z}(0)} - \abs{\hat g_{\lambda,z}(k)}} \\
    &+ 4\hat G_{\lambda,z}(\ell-k) \hat G_{\lambda,z}(\ell) \hat
    G_{\lambda,z}(\ell + k) \ob{ \abs{\hat
      g_{\lambda,z}(0)} - \abs{\hat g_{\lambda,z}(k)}} \ob{ \abs{\hat
      g_{\lambda,z}(0)} - \abs{\hat g_{\lambda,z}(\ell)}}.
  \end{align}
  Using $f_{2}(z) \leq 1+ O(\beta)$ bounds each factor of $\hat
  G_{\lambda,z}$ by $\ob{1+O(\beta)}\hat C_{p(z)}$. Further,
  \begin{align}
    \abs{\hat g_{\lambda,z}(0)} - \abs{\hat g_{\lambda,z}(k)} &\leq
    \sum_{x}\ob{1-\cos(k\cdot x)}
    \ob{ z\alpha(\lambda,z) \abs{\Omega} + \abs{\Pi_{z}(x)}} \\
    &\leq z\alpha(\lambda,z)\abs{\Omega} \ob{1-\hat D(k)} + \bar c_{4}
    \beta \hat C_{p(z)}(k)^{-1} \\
    &\leq \ob{2+O(\beta)} \hat C_{p(z)}(k)^{-1},
  \end{align}
  where the second inequality is by~\eqref{eq:LWW-Pi-Cos-Small} and the
  third is by $f_{1}(z) \leq 1+ O(\beta)$ and~\eqref{eq:LWW-SL5.56}.
  Combining the bounds and using the definition of $U_{p(z)}$ gives
  $f_{3}(z) \leq 1 + O(\beta)$.
\end{proof}

\begin{corollary}
  \label{cor:LWW-IRB}
  For $d$ sufficiently large, $\lambda$-LWW satisfies a \emph{$k$-space
  infrared bound}: there is a constant $K = 1+O(\beta)$ such that for $0\leq z\leq
  z_{c}(\lambda)$ 
  \begin{equation}
    \hat G_{\lambda,z}(k) \leq K \hat C_{p(z)}(k).
  \end{equation}
\end{corollary}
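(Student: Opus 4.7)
The plan is to apply the bootstrap lemma (\Cref{lem:LWW-Bootstrap}) to the function $f$ defined in \eqref{eq:LWW-Comparison-Functions}, with $z_1 = 0$, $z_2 = z_c(\lambda)$, $a = 1 + c\beta$, and $b = 4$. The three hypotheses of \Cref{lem:LWW-Bootstrap} have all been verified above: \Cref{lem:LWW-SL5.12} gives $f(0) = 1 \le a$ (for $d$ sufficiently large so that $c\beta \ge 0$), \Cref{lem:LWW-SL5.14} gives continuity of $f$ on $\co{0,z_c}$, and \Cref{lem:LWW-SL5.16} gives exactly the implication $f(z) \le 4 \Rightarrow f(z) \le 1 + c\beta$ for $z \in \ob{0,z_c}$, provided $d$ is large enough. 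Choosing $d$ large enough that $1 + c\beta < 4$ ensures $a < b$ as required.

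The conclusion of the bootstrap is $f(z) \le 1 + c\beta$ for all $z \in \co{0, z_c(\lambda)}$. In particular, $f_2(z) \le 1 + c\beta$, so by the definition of $f_2$ in \eqref{eq:LWW-Comparison-Functions},
\begin{equation}
\hat G_{\lambda,z}(k) \le (1 + c\beta)\,\hat C_{p(z)}(k) \qquad \text{for all } 0 \le z < z_c(\lambda),\ k \in \FS^{d}.
\end{equation}
Setting $K = 1 + c\beta = 1 + O(\beta)$ gives the bound on the open interval.

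It remains to extend the inequality to the endpoint $z = z_c(\lambda)$. For each fixed $k$, the function $z \mapsto \hat G_{\lambda,z}(k) = \sum_{x} e^{ik\cdot x} G_{\lambda,z}(x)$ is an increasing limit of partial sums $\sum_{\abs{\omega} \le n} w_{\lambda,z}(\omega) e^{ik\cdot \omega_n}$ in $z$ (at $k = 0$; at general $k$ use monotone convergence applied to $|G_{\lambda,z}(x)|$ and dominated convergence), so $\hat G_{\lambda,z_c}(k) = \lim_{z \nearrow z_c} \hat G_{\lambda,z}(k)$. Likewise $p(z)$ is continuous in $z$ by the definition \eqref{eq:LWW-Rescaling-Definition}, so $\hat C_{p(z)}(k) \to \hat C_{p(z_c)}(k)$. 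Passing to the limit in the bound yields $\hat G_{\lambda,z_c}(k) \le K\,\hat C_{p(z_c)}(k)$, completing the proof.

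There is no real obstacle here, since all of the work has been done: the diagrammatic bounds of \Cref{sec:LWW-DB-Pi} combined with the comparison machinery of \Cref{sec:LWW-Convergence-Proof} are precisely what makes the bootstrap step \Cref{lem:LWW-SL5.16} valid, and the corollary is just the statement $f_2(z) \le K$ restated in terms of the two-point functions themselves. The only mildly delicate point is the passage $z \nearrow z_c$, which relies on non-negativity of $w_{\lambda,z}$ and continuity of $p(z)$.
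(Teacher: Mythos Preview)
Your proof is correct and follows essentially the same route as the paper: invoke the bootstrap lemma with the inputs supplied by \Cref{lem:LWW-SL5.12}, \Cref{lem:LWW-SL5.14}, and \Cref{lem:LWW-SL5.16} to get $f_2(z)\le 1+c\beta$ on $\co{0,z_c}$, then pass to the limit $z\nearrow z_c$. The paper adds only the remark that the argument in \Cref{lem:LWW-SL5.16} actually bounds $\hat G_{\lambda,z}(k)/\hat C_{p(z)}(k)$ directly (without the absolute value appearing in the definition of $f_2$), but since $\hat G_{\lambda,z}(k)\le |\hat G_{\lambda,z}(k)|$ your version via $f_2$ is equally valid.
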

\begin{proof}
  The proof of \Cref{lem:LWW-SL5.16} showed that $f_{2}(z)\leq
  1+O(\beta)$ without absolute values on $\hat G_{\lambda,z}$,
  uniformly for $z<z_{c}$. Taking a limit gives the claim.
\end{proof}

The fact that the quantities $T_{\lambda,z}$ and $S_{\lambda,z}$
defined below are small will be important in what follows.
\begin{definition}
  \label{def:LWW-Bubble-Etc}
  The
  \emph{triangle diagram} $T_{\lambda,z}$ and \emph{square diagram}
  $S_{\lambda,z}$ are the quantities
  \begin{equation}
    \label{eq:LWW-Bubble-Etc}
    T_{\lambda,z} = \norm{\hat H_{\lambda,z}^{3}}_{1}, \qquad
    S_{\lambda,z} = \norm{\hat H_{\lambda,z}^{4}}_{1}.
  \end{equation}
\end{definition}

\begin{corollary}
  \label{cor:LWW-TS}
  For $d$ sufficiently large and $z\leq z_{c}$ the triangle and square
  diagrams are bounded above by $c\beta$.
\end{corollary}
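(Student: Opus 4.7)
The plan is to first establish $S_{\lambda,z} \leq c\beta$ by a Fourier-space argument, and then deduce $T_{\lambda,z} \leq c\beta$ by Cauchy--Schwarz. The key inputs are \Cref{prop:LWW-SL5.10} (which gives $\norm{H_{\lambda,z}}_{2}^{2} \leq c\beta$), \Cref{cor:LWW-IRB} (the infrared bound $\hat G_{\lambda,z}(k) \leq K\hat C_{p(z)}(k)$), \Cref{lem:LWW-One-Step-SM} (which, combined with the bootstrap estimate $f_{1}(z)\leq 1+O(\beta)$, yields the pointwise bound $H_{\lambda,z}\leq K\, D\ast G_{\lambda,z}$), and \Cref{prop:LWW-Small} ($\int \hat C_{\abs{\Omega}^{-1}}(k)^{r}\,d^{d}k/\FSint \leq 1+c\beta$ for $d>2r$).

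The centrepiece is the bound on $S$. Since $\hat H_{\lambda,z}$ is real (by the reflection symmetry of $H_{\lambda,z}$), Parseval gives
\begin{equation*}
S = \int \hat H_{\lambda,z}(k)^{4}\,\frac{d^{d}k}{\FSint} = (H_{\lambda,z}\ast H_{\lambda,z}\ast H_{\lambda,z}\ast H_{\lambda,z})(0) = \norm{H_{\lambda,z}\ast H_{\lambda,z}}_{2}^{2}.
\end{equation*}
Applying the one-step bound twice gives $H\ast H\leq K^{2}\,D\ast D\ast G\ast G$ pointwise. Transferring to Fourier, using the IRB $\hat G\leq K\hat C_{p(z)}$ together with the pointwise monotonicity $C_{p(z)}(x)\leq C_{\abs{\Omega}^{-1}}(x)$ (each walk contributes less weight at smaller activity), we arrive at $\norm{H\ast H}_{2}^{2} \leq K^{8}\norm{D\ast D\ast C\ast C}_{2}^{2}$ with $C:=C_{\abs{\Omega}^{-1}}$. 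Rearranging $(1-\hat D(k))\hat C(k)=1$ yields the key identity $D\ast C = C - \delta_{0}$, which iterates to $D\ast D\ast C\ast C = C\ast C - 2C + \delta_{0}$. Expanding the $L^{2}$ norm squared,
\begin{equation*}
\norm{D\ast D\ast C\ast C}_{2}^{2} = C^{\ast 4}(0) - 4\,C^{\ast 3}(0) + 6\,C^{\ast 2}(0) - 4\,C(0) + 1.
\end{equation*}
Each $C^{\ast k}(0)$ satisfies $1\leq C^{\ast k}(0) \leq 1+c_{k}\beta$: the lower bound follows from the empty-walk contribution $C(0)\geq 1$ and the inductive estimate $C^{\ast(k+1)}(0)\geq C(0)C^{\ast k}(0)$, while the upper bound is \Cref{prop:LWW-Small} provided $d>2k$, so $d>8$ suffices. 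The leading $1$'s cancel by the binomial identity $\sum_{j=0}^{4}\binom{4}{j}(-1)^{j}=0$, leaving only the $O(\beta)$ corrections; hence $S\leq c\beta$.

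For $T$, Cauchy--Schwarz in Fourier space gives
\begin{equation*}
T = \int \abs{\hat H_{\lambda,z}(k)}^{3}\,\frac{d^{d}k}{\FSint} \leq \left(\int \hat H_{\lambda,z}(k)^{2}\,\frac{d^{d}k}{\FSint}\right)^{\!1/2}\!\left(\int \hat H_{\lambda,z}(k)^{4}\,\frac{d^{d}k}{\FSint}\right)^{\!1/2} = \norm{H_{\lambda,z}}_{2}\cdot S^{1/2},
\end{equation*}
so combining $\norm{H_{\lambda,z}}_{2}^{2}\leq c\beta$ with the bound $S\leq c\beta$ just established yields $T\leq c\beta$. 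The main obstacle is the delicate cancellation in the formula for $\norm{D\ast D\ast C\ast C}_{2}^{2}$: no individual term $C^{\ast k}(0)$ is itself $O(\beta)$, yet the signed combination is, and extracting this requires simultaneously using the lower bound $C^{\ast k}(0)\geq 1$ on the negatively weighted terms and the upper bound from \Cref{prop:LWW-Small} on the positively weighted ones.
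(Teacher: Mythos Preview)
Your argument is correct, but it takes a longer route than the paper's. The paper works directly with $\alpha_0^{-1}\hat H = \alpha_0^{-1}\hat G - 1$ and the infrared bound $\alpha_0^{-1}\hat G \leq (1+O(\beta))\hat C_{p(z)}$, then appeals to \Cref{prop:LWW-Small}. The binomial cancellation you exploit for $C^{*k}(0)$ works equally well when applied directly to $\int(\alpha_0^{-1}\hat G-1)^r$, since the two-sided bootstrap estimate gives $\int(\alpha_0^{-1}\hat G)^j = 1+O(\beta)$ for each $j$; there is no need to first pass through the one-step bound or the identity $D\ast C = C-\delta_0$. Your detour does have the minor benefit of landing on the \emph{critical} $C_{|\Omega|^{-1}}$, for which $D\ast C=C-\delta_0$ holds exactly, whereas the paper's direct route implicitly uses both sides of the IRB. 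Your Cauchy--Schwarz reduction $T\leq \|H\|_2\,S^{1/2}$ for the odd power is clean and is not spelled out in the paper.

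One presentational point: the phrase ``transferring to Fourier, using the IRB \ldots\ together with the pointwise monotonicity $C_{p(z)}(x)\leq C_{|\Omega|^{-1}}(x)$'' mixes a $k$-space bound with an $x$-space bound and could be misread as asserting $\hat C_{p(z)}\leq \hat C_{|\Omega|^{-1}}$ pointwise in $k$, which is false when $\hat D(k)<0$. The chain you intend does work, but it needs an extra Parseval step: after bounding $\int\hat D^4\hat G^4\leq K^4\int\hat D^4\hat C_{p(z)}^4$, rewrite the right-hand side as $\|D\ast D\ast C_{p(z)}\ast C_{p(z)}\|_2^2$ and then invoke the $x$-space monotonicity of this nonnegative function in $p$. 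It would be worth making that explicit.
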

\begin{proof}
  For notational convenience write $\bar H_{\lambda,z} =
  \alpha_{0}^{-1} H_{\lambda,z}$, and similarly for $\bar
  G_{\lambda,z}$. By \Cref{eq:LWW-G-H-Relation} $\alpha_{0}^{-1}\hat
  H_{\lambda,z} = \alpha_{0}^{-1}\hat G_{\lambda,z}-1$. \Cref{cor:LWW-IRB}
  implies $\alpha_{0}^{-1}\hat G_{\lambda,z} \leq (1+O(\beta))\hat
  C_{p(z)}$ since $\alpha_{0}\leq 1 + O(\beta)$. The claim follows
  from \Cref{prop:LWW-Small}.
\end{proof}

\section{Proofs of the Main Results}
\label{sec:LWW-Further}

To go beyond the $k$-space infrared bound of \Cref{cor:LWW-IRB}
requires control of the derivatives of $G_{\lambda,z}$ and
$\Pi_{\lambda,z}$ with respect to $z$. This control is established in
\Cref{sec:LWW-Diagrammatic-Derivatives}. The remainder of the section
establishes \Cref{thm:LWW-Main} using arguments based
on~\cite[Chapter~6]{MadrasSlade2013}. Throughout let $z_{c} =
z_{c}(\lambda)$.

\subsection{Further Diagrammatic Bounds}
\label{sec:LWW-Diagrammatic-Derivatives}

Having verified that the bounds of \Cref{sec:LWW-DB-Pi} holds for
$z<z_{c}$, the monotone convergence theorem implies they continue to
hold at $z_{c}$.

\begin{proposition}
  \label{prop:LWW-BC-D-DB}
  For $d$ sufficiently large and $0<z\leq z_{c}$ 
  \begin{equation}
    \label{eq:LWW-BC-D-DB}
    \frac{d}{dz} \norm{\bubblechain_{\lambda,z}}_{1} \leq z_{c}^{-1}c\beta.
  \end{equation}
\end{proposition}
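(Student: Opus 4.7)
The plan is to combine \Cref{prop:LWW-BC-DB} with a convexity argument. A direct application of \Cref{prop:LWW-BC-DB} to $\norm{\bubblechain_{\lambda,z}}_{1}$ (its proof in fact establishes, via \Cref{lem:LWW-Derivative-UB}, the bound on $\frac{d}{dz}\norm{\bubblechain_{\lambda,z}}_{1}$ rather than just $\frac{d}{dz}\norm{\truebubblechain_{\lambda,z}}_{1}$) controls the derivative by terms carrying a $z^{-1}$ prefactor, which is \emph{weaker} than the asserted $z_{c}^{-1}c\beta$ whenever $z<z_{c}$. The cure is to exhibit $\norm{\bubblechain_{\lambda,z}}_{1}$ as a power series in $z$ with non-negative Taylor coefficients on $[0,z_{c}]$, so that $\frac{d}{dz}\norm{\bubblechain_{\lambda,z}}_{1}$ is non-decreasing and it suffices to bound the right-hand side at $z=z_{c}$.

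For the non-negative coefficient structure, \Cref{thm:LWW-LM-Rep,prop:LWW-2PT-Equivalence} together with \Cref{eq:LWW-Specalization-4} let me write, for $x\neq 0$,
\begin{equation*}
\bar H_{\lambda,z}(x)=\sum_{\eta\colon 0\to x,\,\eta\in\SAW} z^{\abs{\eta}}\exp\ob{\mu_{\lambda,z}(\range{\eta};\{0\})},
\end{equation*}
using $\mu_{\lambda,z}(\range{\eta})-\mu_{\lambda,z}(\{0\})=\mu_{\lambda,z}(\range{\eta};\{0\})$ (valid since $\{0\}\subset\range{\eta}$) to cancel the $\alpha_{0}^{-1}$ implicit in $\bar H$. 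Since each $\mu_{\lambda,z}(A;B)$ is a power series in $z$ with non-negative coefficients and vanishing constant term (all contributing walks have length $\geq 2$), the exponential preserves non-negativity, and therefore so do $\bar H_{\lambda,z}$, $\bar B_{\lambda,z}=\bar H_{\lambda,z}^{2}$ (pointwise), the convolutions $\bar B^{\ast k}$, and finally $\bubblechain_{\lambda,z}$ via \Cref{def:LWW-BC}. Convexity of $z\mapsto\norm{\bubblechain_{\lambda,z}}_{1}$ follows.

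It then remains to bound the right-hand side of \Cref{prop:LWW-BC-DB} at $z=z_{c}$, where by monotone convergence the estimates of \Cref{sec:LWW-DB-Pi} continue to hold. \Cref{prop:LWW-BC-Geometric,prop:LWW-UB-Alpha} give $\norm{\bubblechain_{\lambda,z_{c}}}_{1}\leq c\beta$ and $\alpha_{0}(\lambda,z_{c})\leq 1+c\beta$, whence $3\alpha_{0}-1-\alpha_{0}^{2}=1+O(\beta)$. For the remaining factor $\norm{\bar H_{\lambda,z}\cdot(\bar G_{\lambda,z}\ast \bar H_{\lambda,z})}_{1}$ I would use $\bar G_{\lambda,z}=\delta_{0}+\bar H_{\lambda,z}$ (\Cref{eq:LWW-G-H-Relation}) to split
\begin{equation*}
\norm{\bar H\cdot(\bar G\ast\bar H)}_{1}=\norm{\bar H^{2}}_{1}+\sum_{x}\bar H(x)(\bar H\ast\bar H)(x).
\end{equation*}
The first summand equals $\norm{\bar H}_{2}^{2}\leq c\beta$ by \Cref{eq:LWW-SL5.10.2}, and Parseval (with $\hat{\bar H}$ real by symmetry of $\bar H$) identifies the second as $\int_{\FS^{d}}\hat{\bar H}(k)^{3}\,d^{d}k/\FSint\leq\alpha_{0}^{-3}T_{\lambda,z_{c}}\leq c\beta$ by \Cref{cor:LWW-TS}. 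Assembling the pieces with the common $z_{c}^{-1}$ prefactor yields the bound $z_{c}^{-1}c\beta$ at $z=z_{c}$, and convexity propagates it to every $0<z\leq z_{c}$.

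The principal obstacle is precisely the observation that \Cref{prop:LWW-BC-DB} alone is inadequate—its $z^{-1}$ factor is strictly worse than the claimed $z_{c}^{-1}$ off $z_{c}$—which forces the use of the loop-measure representation of $\bar H_{\lambda,z}$ to extract the non-negative-coefficient structure and reduce the estimate to $z=z_{c}$; the diagrammatic estimates themselves are routine by this point in the paper.
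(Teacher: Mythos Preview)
Your proof is correct and follows the same approach as the paper: reduce to $z=z_{c}$ by exploiting the non-negative power-series coefficients of $\norm{\bubblechain_{\lambda,z}}_{1}$ (so that the derivative is monotone in $z$), then bound the right-hand side of \Cref{prop:LWW-BC-DB} at $z_{c}$ using \Cref{prop:LWW-UB-Alpha,prop:LWW-BC-Geometric,cor:LWW-TS}. Your loop-measure argument for the non-negativity of the coefficients of $\bar H_{\lambda,z}$ is more careful than the paper's one-line assertion (``polynomial with positive coefficients''), and correctly handles the potential issue that $\bar H=H/\alpha_{0}$ is a quotient of two series with non-negative coefficients.
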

\begin{proof}
  The left-hand side is a polynomial with positive coefficients, so it
  suffices to obtain an upper bound at $z=z_{c}$. By 
  \Cref{prop:LWW-BC-DB}, $\alpha_{0}(\lambda,z_{c})\leq 1+ c\beta$,
  $\norm{\bubblechain_{\lambda,z_{c}}}_{1}\leq c\beta$, and
  \Cref{cor:LWW-TS}, the claim follows.
\end{proof}

\begin{proposition}
  \label{prop:LWW-H-D-DB}
  Let $d$ be sufficiently large, $0<z\leq z_{c}$, and $v = 1,2$. Then
  \begin{equation}
    \label{eq:LWW-H-D-DB}
    \norm{\partial^{v}_{z} \bar G_{\lambda,z}}_{\infty} =
    \norm{\partial^{v}_{z} \bar H_{\lambda,z}}_{\infty} \leq
    c\beta z_{c}^{-v}
  \end{equation}
\end{proposition}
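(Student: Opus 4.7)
The plan is to obtain both bounds from the diagrammatic machinery of the previous section, with the $z^{-v}$ prefactor handled by a two-regime split in $z$. The equality $\partial_z^v \bar G_{\lambda,z} = \partial_z^v \bar H_{\lambda,z}$ is immediate because $\bar G_{\lambda,z}(x) - \bar H_{\lambda,z}(x) = \delta_{0,x}$ is $z$-independent, so only the inequality requires work.

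For $v=1$ I would simply invoke \Cref{prop:LWW-H-DB}, which gives the pointwise bound
\begin{equation*}
\partial_z \bar H_{\lambda,z}(x) \leq z^{-1}\ob{1+\norm{\bubblechain_{\lambda,z}}_{1}}\ob{\bar G_{\lambda,z}\ast \bar H_{\lambda,z}}(x).
\end{equation*}
The bubble-chain norm is $O(\beta)$ by \Cref{prop:LWW-BC-Geometric}. Writing $\bar G = \delta + \bar H$ and using $\norm{\bar H}_\infty, \norm{\bar H}_2^2 \leq c\beta$ from \Cref{prop:LWW-SL5.10} (together with $\alpha_0 \geq 1$), the convolution satisfies $\norm{\bar G\ast\bar H}_\infty \leq \norm{\bar H}_\infty + \norm{\bar H}_2^2 \leq c\beta$. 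To absorb the $z^{-1}$ uniformly on $\ob{0,z_c}$, I would split at $z_c/2$: for $z \in \cb{z_c/2, z_c}$ one has $z^{-1} \leq 2z_c^{-1}$, giving the bound at once; for $z \in \ob{0, z_c/2}$ I would invoke \Cref{lem:LWW-One-Step-SM} to write $\bar H \leq z\alpha(\lambda,z)\abs{\Omega}\, D \ast \bar G$, so that $\norm{\bar G \ast \bar H}_\infty \leq z\alpha\abs{\Omega}\cdot c\beta$ gains a factor of $z$ that cancels $z^{-1}$, and the resulting $O(\beta)$ bound is dominated by $c\beta z_c^{-1}$ because $z_c \leq O(\abs{\Omega}^{-1})$ from \Cref{prop:LWW-Trivial-G-Bound}.

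For $v=2$ I would redo the derivative--bubble-chain bookkeeping at second order by working at the walk level. Starting from
\begin{equation*}
\partial_z^2 H_{\lambda,z}(x) = z^{-2}\sum_{\omega\colon 0\to x} \abs{\omega}\ob{\abs{\omega}-1}w_{\lambda,z}(\omega),
\end{equation*}
the identity $\abs{\omega}(\abs{\omega}-1) = 2\sum_{1\leq j_1<j_2\leq\abs{\omega}}1$ marks two visits of each walk, and applying \Cref{prop:LWW-Bubble-Chain-Split} first at $j_1$ and then to the resulting middle sub-walk at $j_2$ decomposes the sum into three walk factors linked by two bubble-chain insertions. Dropping the non-intersection constraints, as justified by \Cref{rem:LWW-X-Gas-Repulsion}, yields a schematic bound
\begin{equation*}
\partial_z^2 \bar H_{\lambda,z}(x) \leq z^{-2}\ob{1+\norm{\bubblechain_{\lambda,z}}_{1}}^{2}\ob{\bar G_{\lambda,z}\ast \bar H_{\lambda,z}\ast \bar H_{\lambda,z}}(x),
\end{equation*}
modulo Leibniz corrections from $\bar H = \alpha_0^{-1}H$ that are controlled by \Cref{prop:LWW-Alpha-DB,prop:LWW-BC-D-DB}. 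The three-fold convolution is $O(\beta)$ in $\norm{\cdot}_\infty$ via $\norm{\bar G\ast \bar H\ast \bar H}_\infty \leq \norm{\bar H\ast \bar H}_\infty + \norm{\bar H\ast \bar H\ast \bar H}_\infty \leq \norm{\bar H}_2^2 + T_{\lambda,z} \leq c\beta$, invoking the triangle-diagram bound of \Cref{cor:LWW-TS}, and the $z^{-2}$ prefactor is handled by the same two-regime argument as before, using two applications of \Cref{lem:LWW-One-Step-SM} in the small-$z$ regime to supply two compensating factors of $z$.

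The main obstacle lies in the $v=2$ case: carefully carrying out the nested bubble-chain decomposition for a walk with two marked visits, and verifying that the Leibniz error terms from differentiating $\alpha_0^{-1}$ twice remain $O(\beta z_c^{-2})$. These are essentially bookkeeping concerns, since all the required ingredient bounds---on $\bar H$, on $\bar G$, on $\norm{\bubblechain_{\lambda,z}}_{1}$, and on its derivative---are already in hand from \Cref{sec:LWW-DB-Pi} and \Cref{prop:LWW-BC-D-DB}.
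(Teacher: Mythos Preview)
Your approach is essentially correct but unnecessarily laborious, and you miss the paper's key simplification. The observation you are overlooking is that $\bar H_{\lambda,z}(x)$ is itself a power series in $z$ with nonnegative coefficients: by separating the initial excursions at $0$, one has $\bar H_{\lambda,z}(x) = \sum_{\omega\colon 0\to x,\,0\notin\omega[1\splice]} w_{\lambda,z}(\omega)$, so each $\partial_z^v\bar H_{\lambda,z}(x)$ is monotone nondecreasing in $z$ on $[0,z_c]$. It therefore suffices to bound at $z=z_c$, where $z^{-1}=z_c^{-1}$ directly, and your two-regime split (with the auxiliary use of \Cref{lem:LWW-One-Step-SM} to manufacture a compensating factor of $z$) is entirely avoidable. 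This also means your ``Leibniz corrections from $\bar H=\alpha_0^{-1}H$'' are a red herring: there are none, because $\bar H$ is already a walk sum with nonnegative weights.

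For $v=2$ the paper again takes a shorter path: rather than marking two times and redoing the walk-level decomposition, it applies \Cref{lem:LWW-Derivative-UB} to the $v=1$ inequality. Since the bound $\partial_z\bar H \leq z^{-1}(1+\norm{\bubblechain}_1)\,\bar G\ast\bar H$ is obtained by enlarging a summation set of nonnegatively weighted walks, differentiating both sides preserves the inequality. One then just applies the Leibniz rule to the right-hand side and controls the three resulting terms using the $v=1$ case, \Cref{prop:LWW-BC-D-DB}, and the triangle bound $\norm{\bar H\ast\bar H\ast\bar G}_\infty\leq\norm{\bar H\ast\bar H}_\infty+\norm{\bar H\ast\bar H\ast\bar H}_\infty\leq c\beta$ from \Cref{cor:LWW-TS}. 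Your two-marked-times argument would ultimately yield the same convolution, but the paper's route is a one-line application of an already-stated lemma.
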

\begin{proof}
  As for \Cref{prop:LWW-BC-D-DB} it suffices to consider
  $z=z_{c}$. The equality of the first two terms follows from
  \Cref{eq:LWW-G-H-Relation}. \Cref{prop:LWW-H-DB} implies
  \begin{equation}
    \frac{d}{dz} \bar H_{\lambda,z} \leq
    z^{-1}(1+\norm{\bubblechain_{\lambda,z}}_{1}) \bar H_{\lambda,z}
    \ast \bar G_{\lambda,z}.
  \end{equation}
  The claim follows for $v=1$ as $\norm{\bar H_{\lambda,z} \ast \bar
    G_{\lambda,z}}_{\infty} \leq c\beta$ by \Cref{eq:LWW-Conv-Bound} and
  $\norm{\bubblechain_{\lambda,z}}_{1}\leq c\beta$ by
  \Cref{prop:LWW-BC-Geometric}.

  For $v=2$ apply \Cref{lem:LWW-Derivative-UB}. After computing the
  derivative and using the triangle inequality (i) argue as for $v=1$
  for the term from differentiating $z^{-1}$ (ii) use
  \Cref{prop:LWW-BC-D-DB} when differentiating
  $\norm{\bubblechain_{\lambda,z}}_{1}$ and (iii) when differentiating
  either of the two-point functions use \Cref{prop:LWW-H-DB} and $
  \norm{\bar H_{\lambda,z} \ast \bar H_{\lambda,z} \ast\bar
    G_{\lambda,z}}_{\infty} \leq \norm{\bar H_{\lambda,z} \ast \bar
    H_{\lambda,z}}_{\infty} + \norm{\bar H_{\lambda,z} \ast \bar
    H_{\lambda,z} \ast \bar H_{\lambda,z}}_{\infty}$, and
  \Cref{cor:LWW-TS} to see that this is bounded by $c\beta$. Each term is
  therefore bounded by $c\beta z_{c}^{-2}$.
\end{proof}

\begin{proposition}
  \label{prop:LWW-I-D-DB}
  Let $d$ be sufficiently large, $0<z\leq z_{c}$, and $v=1,2$. Then
  \begin{equation}
    \label{eq:LWW-I-D-DB}
    \norm{\partial^{v}_{z}I_{\lambda,z}}_{1} \leq c\beta z_{c}^{-v}
  \end{equation}
\end{proposition}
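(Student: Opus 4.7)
The plan is to follow the same pattern as the proofs of \Cref{prop:LWW-BC-D-DB,prop:LWW-H-D-DB}: observe that the quantities in question are polynomials in $z$ with non-negative coefficients (or are dominated by such), so that the monotonicity argument used in the proof of \Cref{prop:LWW-BC-D-DB} reduces matters to $z = z_{c}$, where the bounds of \Cref{prop:LWW-BC-Geometric,cor:LWW-TS,prop:LWW-UB-Alpha,prop:LWW-BC-D-DB} are all at our disposal.

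For $v=1$, I start from the pointwise bound of \Cref{prop:LWW-I2P-DB},
\[
  \partial_{z}I_{\lambda,z}(0,x) \leq \indicatorthat{x\neq 0}\,z^{-1}\sum_{a}\mathop{\sum_{\omega\colon a\to a}}_{\abs{\omega}\geq 1}\indicatorthat{0\in\omega}\indicatorthat{x\in\omega}w_{\lambda,z}(\omega),
\]
sum over $x\neq 0$, and use $\sum_{x\neq 0}\indicatorthat{x\in\omega}\leq\abs{\omega}$. After translating the closed walks to start at $0$ (the same trick used in the proof of \Cref{prop:LWW-UB-I}) and applying $\sum_{y'}\indicatorthat{y'\in\omega}\leq\abs{\omega}+1$, the right-hand side becomes at most $2z^{-1}\sum_{\omega\colon 0\to 0}\abs{\omega}^{2}w_{\lambda,z}(\omega)$. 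By \Cref{prop:LWW-Alpha-DB} this last sum equals $2z(\partial_{z}\norm{\bubblechain_{\lambda,z}}_{1}+z^{-1}\norm{\bubblechain_{\lambda,z}}_{1})$, so \Cref{prop:LWW-BC-Geometric,prop:LWW-BC-D-DB} together with the monotonicity of both $z^{-1}\norm{\bubblechain_{\lambda,z}}_{1}=\partial_{z}\alpha_{0}$ and $\partial_{z}\norm{\bubblechain_{\lambda,z}}_{1}$ in $z$ (each is a power series in $z$ with non-negative coefficients starting at order $\geq 1$) yield $\norm{\partial_{z}I_{\lambda,z}}_{1}\leq c\beta z_{c}^{-1}$.

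For $v=2$, the difficulty is that $I_{\lambda,z}(0,x)=1-e^{-\mu_{\lambda,z}(0,x)}$ is not itself a sum of walk weights with non-negative coefficients, so \Cref{lem:LWW-Derivative-UB} does not apply directly. Instead I compute $\partial_{z}^{2}I = e^{-\mu}\partial_{z}^{2}\mu - e^{-\mu}(\partial_{z}\mu)^{2}$ and use the triangle inequality together with $e^{-\mu}\leq 1$ to get the pointwise bound
\[
  \abs{\partial_{z}^{2}I_{\lambda,z}(0,x)} \leq \partial_{z}^{2}\mu_{\lambda,z}(0,x) + \bigl(\partial_{z}\mu_{\lambda,z}(0,x)\bigr)^{2}.
\]
The $\ell^{1}$ norm of the first term is handled by repeating the $v=1$ argument with one extra derivative: this introduces a factor $z^{-1}(\abs{\omega}-1)$, reducing the problem to controlling $z^{-2}\sum_{\omega\colon 0\to 0}\abs{\omega}^{3}w_{\lambda,z}(\omega)$, which is $2z^{-1}\partial_{z}\norm{\bubblechain_{\lambda,z}}_{1}+2\partial_{z}^{2}\norm{\bubblechain_{\lambda,z}}_{1}$, each bounded by $c\beta z_{c}^{-2}$ by monotonicity and one further differentiation of the proof of \Cref{prop:LWW-BC-D-DB}. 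For the second term, I use $\norm{f}_{2}^{2}\leq\norm{f}_{\infty}\norm{f}_{1}$: the $\ell^{1}$ factor is $c\beta z_{c}^{-1}$ by the $v=1$ bound, and the $\ell^{\infty}$ norm is controlled by a pointwise estimate $\partial_{z}\mu_{\lambda,z}(0,x)\leq 2\partial_{z}\alpha_{0}(\lambda,z)\leq c\beta z_{c}^{-1}$, obtained by translating the closed walks to start at $0$ and using $\sum_{c}\indicatorthat{c\in\omega}\indicatorthat{x+c\in\omega}\leq\abs{\omega}+1$.

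The main obstacle is the fact mentioned above that $I_{\lambda,z}$ is not directly amenable to \Cref{lem:LWW-Derivative-UB}; the workaround is to pass through the loop measure $\mu_{\lambda,z}$ via the elementary inequalities $1-e^{-\mu}\leq\mu$ and $\partial_{z}^{2}(1-e^{-\mu})\leq\partial_{z}^{2}\mu+(\partial_{z}\mu)^{2}$. Once this is done, every remaining quantity reduces to a $\sum\abs{\omega}^{k}w_{\lambda,z}(\omega)$ type sum over closed walks, which is exactly the content of the bubble chain identities of \Cref{sec:LWW-Visits} and their derivatives; at order $v=2$ we precisely need the new derivative bound of \Cref{prop:LWW-BC-D-DB}, which is why the present proposition appears only after it.
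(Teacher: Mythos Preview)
Your proof is correct and follows essentially the same route as the paper. For $v=1$ both arguments bound $\partial_{z}I\leq\partial_{z}\mu$, sum over $x$, and reduce (via the translation-invariance manoeuvre from \Cref{prop:LWW-UB-I}) to $\partial_{z}\norm{\truebubblechain_{\lambda,z}}_{1}$, which is then controlled by \Cref{prop:LWW-BC-D-DB}. Your displayed identity ``$2z(\partial_{z}\norm{\bubblechain}_{1}+z^{-1}\norm{\bubblechain}_{1})$'' is a minor algebraic slip (the correct expression is $2\partial_{z}\norm{\truebubblechain}_{1}$, since $\sum_{\omega}\abs{\omega}^{2}w(\omega)=(z\partial_{z})^{2}\alpha_{0}=z\partial_{z}\norm{\truebubblechain}_{1}$), but the conclusion is unaffected.

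For $v=2$ you are in fact \emph{more} careful than the paper. The paper's terse ``it is enough to bound the derivative of the bound of \Cref{prop:LWW-BC-DB}'' only directly controls $\norm{\partial_{z}^{2}\mu}_{1}$; since $I=1-e^{-\mu}$ does not have non-negative $z$-coefficients, \Cref{lem:LWW-Derivative-UB} does not immediately transfer this to $\norm{\partial_{z}^{2}I}_{1}$. Your explicit decomposition $\abs{\partial_{z}^{2}I}\leq\partial_{z}^{2}\mu+(\partial_{z}\mu)^{2}$ together with the $\ell^{\infty}\!\times\!\ell^{1}$ estimate for the cross term closes this gap cleanly. Both arguments ultimately reduce to $\partial_{z}^{2}\norm{\truebubblechain}_{1}\leq c\beta z_{c}^{-2}$, which is exactly the square-diagram computation the paper sketches.
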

\begin{proof}
  For $v=1$ note
  \begin{equation}
    \frac{d}{dz} I_{\lambda,z} = \frac{d}{dz} (1 -
    e^{-\mu_{\lambda,z}(0,x)}) \leq \frac{d}{dz} \mu_{\lambda,z}(0,x).
  \end{equation}
  This bound is increasing in $z$, so considering $z_{c}$ is
  enough. Translation invariance, as in the proof of \Cref{prop:LWW-UB-I},
  implies this is equal to the derivative in $z$ of
  $\norm{\bubblechain_{\lambda,z}}_{1}$. The claim
  follows for $v=1$ from \Cref{prop:LWW-BC-D-DB}.
  
  For $v=2$ it is enough to bound the derivative of the bound of
  \Cref{prop:LWW-BC-DB}. This is similar to the arguments already given;
  the only new terms that arise occur when differentiating $\norm{\bar
    H_{\lambda,z} \cdot \bar G_{\lambda,z} \ast \bar
    H_{\lambda,z}}_{1}$, which is $\bar H_{\lambda,z}\ast \bar
  G_{\lambda,z} \ast \bar H_{\lambda,z}(0)$. By \Cref{prop:LWW-H-DB} after
  taking a derivative the result is, up to a factor of $(1+O(\beta))$,
  a square diagram $\bar H_{\lambda,z}\ast \bar G_{\lambda,z} \ast
  \bar H_{\lambda,z} \ast \bar G_{\lambda,z}(0)$. Repeatedly using
  \Cref{eq:LWW-G-H-Relation} and \Cref{cor:LWW-TS} shows this is at most
  $c\beta$.
\end{proof}

\begin{proposition}
  \label{prop:LWW-Derivative-Small}
  For $d$ sufficiently large, $0<z<z_{c}$, and $v=1,2$
  \begin{equation}
    \label{eq:LWW-Derivative-Small}
    \norm{\partial^{v}_{z}\Pi_{\lambda,z}}_{1} \leq c\beta z_{c}^{-v}
  \end{equation}
\end{proposition}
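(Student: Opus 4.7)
The plan is to differentiate $\Pi_{\lambda,z} = \sum_{N \geq 1}(-1)^N \pi^{(N)}_{\lambda,z}$ term by term and control each $\partial_z^v \pi^{(N)}$ by mimicking the diagrammatic analysis of \Cref{sec:LWW-DB-Pi}. Absolute convergence of the series uniformly in $z \leq z_c$ is furnished by \Cref{prop:LWW-DB-1} and \Cref{prop:LWW-DB-N}, which give geometric decay in $N$, so term-by-term differentiation is legitimate provided I can show $\|\partial_z^v \pi^{(N)}\|_1 \leq c N^v z_c^{-v} (c\beta)^{N - 2 + \indicatorthat{N = 2}}$ with a constant independent of $N$.

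To bound each $\partial_z^v \pi^{(N)}$, I would return to the walk-level expression \eqref{eq:LWW-Pi-Definition} and differentiate using Leibniz. The $z$-dependence enters through the prefactor $z^m \alpha_0^{-1}$, through the interaction factors $I^{\omega}_{\lambda,z}(s,t)$ on the lace edges, and through the compatibility factors $(1 + \hyp{J}{X}(\omega))$ associated to $\cc C(L)$. Differentiating $z^m$ brings down $|\omega|$, which via the walk-length identity \eqref{eq:LWW-Walk-Leibniz}, \Cref{prop:LWW-Bubble-Chain-Split}, and \Cref{prop:LWW-BC-Bound} corresponds to inserting a bubble chain into the diagram, contributing an extra factor $z^{-1}(1 + \|\bubblechain_{\lambda,z}\|_1) \leq z_c^{-1}(1 + c\beta)$. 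Derivatives of $\alpha_0^{-1}$ are controlled by \Cref{prop:LWW-Alpha-DB}, derivatives of the $I$-factors by \Cref{prop:LWW-I-D-DB}, and derivatives of the two-point functions $\bar H$ and $\bar G$ (which result from summing walks in the bound of \Cref{lem:LWW-MC-Formulation}) by \Cref{prop:LWW-H-D-DB}. Each such replacement converts an $O(\beta)$ factor into one of size $O(\beta z_c^{-1})$, so Leibniz applied to the roughly $N$-factor diagram produces $O(N^v)$ terms, each dominated by the original $N$-th diagrammatic bound times $z_c^{-v}$.

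Summing these bounds over $N \geq 1$ gives $\|\partial_z^v \Pi_{\lambda,z}\|_1 \leq c z_c^{-v} \sum_{N} N^v (c\beta)^{N - 2 + \indicatorthat{N = 2}}$, which is $O(\beta z_c^{-v})$ for $\beta$ sufficiently small because the geometric decay dominates the polynomial $N^v$. The main obstacle will be bookkeeping: carefully enumerating all of the terms produced by Leibniz (particularly for $v = 2$, where one must distinguish differentiating two different factors from differentiating the same factor twice) and verifying that each one is bounded as claimed. A subsidiary difficulty is that differentiating a convolution like $\bar G \ast \bar H$ twice produces cross-terms such as $(\partial_z \bar G) \ast (\partial_z \bar H)$, whose $L^\infty$ norms must be controlled by iterating \Cref{prop:LWW-H-D-DB} together with the triangle and square diagram bounds of \Cref{cor:LWW-TS} via inequalities of the form $\|f \ast g\|_\infty \leq \|f\|_2 \|g\|_2$ after using \Cref{eq:LWW-G-H-Relation} to trade $\bar G$ for $\bar H$.
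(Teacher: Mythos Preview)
Your proposal is correct and follows essentially the same route as the paper. The only streamlining you might adopt from the paper is that, rather than returning to the walk-level expression \eqref{eq:LWW-Pi-Definition} and differentiating there, the paper invokes \Cref{lem:LWW-Derivative-UB} once to justify differentiating the diagrammatic upper bound of \Cref{lem:LWW-MC-Formulation} directly (since that bound was obtained by enlarging a set of walks with non-negative weights); Leibniz is then applied to the product of $3N-1$ factors $\bar H$, $\bar G$, $I$, and each differentiated factor is handled exactly as you describe via \Cref{prop:LWW-H-D-DB} and \Cref{prop:LWW-I-D-DB}, with the $v=2$ cross-term controlled by bounding $\norm{\bar H\ast\bar G\ast\bar G\ast I}_\infty$ using \Cref{eq:LWW-G-H-Relation} and \Cref{cor:LWW-TS}.
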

\begin{proof}
  The Leibniz rule and \Cref{lem:LWW-Derivative-UB} imply that the result
  of differentiating $\Pi$ is a sum of terms of the form of the bounds
  of \Cref{prop:LWW-I2P-DB}, but where each term has one of the factors of
  $\bar G_{\lambda,z}$, $\bar H_{\lambda,z}$ or $I_{\lambda,z}$
  differentiated. Given this, the argument is as in the proofs of
  \Cref{prop:LWW-DB-1} and \Cref{prop:LWW-DB-N}. Let us describe the proof for
  $N=2$. For $N=1$ the proof is similar as
  $e^{\mu_{\lambda,z}(0,x)}\leq \alpha_{0}$.

  Consider $v=1$. There are $3N-1$ terms arising when differentiating
  $\pi^{(N)}_{\lambda,z}$. If $\bar G_{\lambda,z}$ or $\bar
  H_{\lambda,z}$ is differentiated apply~\Cref{prop:LWW-H-DB} and place
  the sup norm on this term when applying \Cref{lem:LWW-SL-Lp}, and
  then use \Cref{prop:LWW-H-D-DB} to bound this norm. If $I_{\lambda,z}$
  is differentiated use \Cref{lem:LWW-SL-Lp} placing the $\sup$ norm on
  a term $H_{\lambda,z}$ and use \Cref{prop:LWW-I-D-DB} to bound the one
  norm of the derivative of $I_{\lambda,z}$. This yields the claim as
  the factor of $3N-1$ is irrelevant for the convergence of the series.
  
  If $v=2$ there are $(3N-1)^{2}$ terms. If both derivatives fall on a
  single factor proceed as in the previous paragraph and use
  \Cref{prop:LWW-H-D-DB} or \Cref{prop:LWW-I-D-DB}. If the derivatives fall on
  distinct factors, one factor being $I_{\lambda,z}$, proceed as
  before. For the remaining case, where two distinct factors of $\bar
  H_{\lambda,z}$ (or $\bar G_{\lambda,z}$) are differentiated, place a
  sup norm on one term. The new term to bound when applying
  \Cref{lem:LWW-SL-Lp} is of the form $\norm{\bar H_{\lambda,z} \ast
    \bar G_{\lambda,z} \ast \bar G_{\lambda,z} \ast
    I_{\lambda,z}}_{\infty}$. It suffices to bound $\norm{\bar H \ast
    \bar G \ast \bar G}_{\infty}$, and this is bounded above by
  $\norm{\bar H\ast \bar G}_{\infty} + \norm{\bar H\ast \bar
    H}_{\infty} + \norm{\bar H \ast \bar H \ast \bar H}_{\infty}$, all
  of which are bounded by $c\beta$ by
  \Cref{cor:LWW-TS}.
\end{proof}

\begin{corollary}
  \label{cor:LWW-Pi-D-NZ}
  Let $d$ be sufficiently large and $0<z\leq z_{c}$. Then $-\frac{d}{dz}
  \hat F_{z}(0)\geq c>0$.
\end{corollary}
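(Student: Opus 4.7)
The plan is to differentiate the defining identity
\begin{equation*}
\hat F_{\lambda,z}(0) = 1 - z\alpha(\lambda,z)\abs{\Omega} - \hat \Pi_{\lambda,z}(0)
\end{equation*}
and argue that the positive term coming from $z\alpha(\lambda,z)\abs{\Omega}$, which is of order $\abs{\Omega}$, dominates all other contributions once $d$ is sufficiently large. Explicitly,
\begin{equation*}
-\frac{d}{dz}\hat F_{\lambda,z}(0) = \alpha(\lambda,z)\abs{\Omega} + z\abs{\Omega}\frac{d\alpha(\lambda,z)}{dz} + \frac{d}{dz}\hat \Pi_{\lambda,z}(0).
\end{equation*}

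The first two terms are non-negative and give the dominant contribution. Since $\alpha(\lambda,z) = \exp(\mu_{\lambda,z}(0;y))$ and $\mu_{\lambda,z}(0;y)$ is a power series in $z$ with non-negative coefficients (by the definitions of $\mu_{\lambda,z}$ and $w_{\lambda,z}$), both $\alpha(\lambda,z) \geq 1$ and $\frac{d\alpha}{dz} \geq 0$. Hence $\alpha(\lambda,z)\abs{\Omega} + z\abs{\Omega}\frac{d\alpha}{dz} \geq \abs{\Omega}$.

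The remaining task is to show that $\abs{\frac{d}{dz}\hat \Pi_{\lambda,z}(0)}$ stays bounded as $\abs{\Omega}$ grows. By \Cref{prop:LWW-Derivative-Small},
\begin{equation*}
\abs{\frac{d}{dz}\hat \Pi_{\lambda,z}(0)} \leq \norm{\partial_z \Pi_{\lambda,z}}_1 \leq c\beta z_c^{-1}
\end{equation*}
for $0 < z < z_c$, and the monotone-convergence argument opening \Cref{sec:LWW-Diagrammatic-Derivatives} extends this to $z = z_c$. \Cref{prop:LWW-Trivial-G-Bound} yields $z_c \geq (\abs{\Omega}\sqrt{\lambda^{\star}})^{-1}$ with $\lambda^{\star} = \max(1,\bar\lambda)$, so $z_c^{-1} \leq \abs{\Omega}\sqrt{\lambda^{\star}}$. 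Substituting and using $\beta\abs{\Omega} = O(1)$ gives $\abs{\frac{d}{dz}\hat \Pi_{\lambda,z}(0)} \leq c\sqrt{\lambda^{\star}}$, a bound that does not grow with $d$.

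Combining the three bounds yields
\begin{equation*}
-\frac{d}{dz}\hat F_{\lambda,z}(0) \geq \abs{\Omega} - c\sqrt{\lambda^{\star}},
\end{equation*}
which is at least $\abs{\Omega}/2 \geq 1$ once $d$ is large enough (depending on $\lambda$) that $\abs{\Omega} \geq 2c\sqrt{\lambda^{\star}}$. The only genuinely nontrivial input is the $z_c^{-1}$-scale control of $\partial_z \hat\Pi_{\lambda,z}$, which is delivered by \Cref{prop:LWW-Derivative-Small}; everything else is a matter of assembling elementary monotonicity facts and the crude a priori estimate on $z_c$.
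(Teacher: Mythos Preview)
Your proof is correct and follows essentially the same approach as the paper: differentiate the defining identity for $\hat F_{\lambda,z}(0)$, use $\alpha\geq 1$ to get a dominant term of order $\abs{\Omega}$, and invoke \Cref{prop:LWW-Derivative-Small} together with the lower bound on $z_{c}$ from \Cref{prop:LWW-Trivial-G-Bound} to show the $\hat\Pi$ derivative contributes only a bounded error. Your treatment of the middle term $z\abs{\Omega}\frac{d\alpha}{dz}$ is in fact slightly cleaner than the paper's, since you observe it is non-negative rather than bounding its magnitude separately.
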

\begin{proof}
  The derivative is
  \begin{equation}
    \label{eq:LWW-Derivative-F}
    -\frac{d}{dz} \hat F_{\lambda,z}(0) = \abs{\Omega}\alpha(\lambda,z)
    + z\abs{\Omega}\frac{d}{dz}\alpha(\lambda,z) + \frac{d}{dz}\hat
    \Pi_{\lambda,z}(0).
  \end{equation}
  By \Cref{prop:LWW-Derivative-Small} $\abs{\frac{d}{dz}\hat
    \Pi_{\lambda,z}(k)}$ is bounded above by a constant since $z_{c}$
  is bounded below by a term of order $\beta$ by
  \Cref{prop:LWW-Trivial-G-Bound}. An argument as for \Cref{prop:LWW-Alpha-DB}
  shows the magnitude of the second term is bounded by a constant. As
  $\alpha(\lambda,z)\geq 1$ the first term dominates for
  $d$ sufficiently large.
\end{proof}

\subsubsection{Derivatives of Moments}
\label{sec:LWW-Derivative-Moments}

The next proposition (for $\lambda=0$) is~\cite[Exercise~5.17]{Slade2006}.
\begin{lemma}
  \label{lem:LWW-Pi-2M}
  For $d$ sufficiently large and $0\leq z<z_{c}$
  \begin{equation}
    \label{eq:LWW-Pi-2M}
    \norm{\abs{x}^{2}\Pi_{\lambda,z}(x)}_{1} \leq c\beta
  \end{equation}
\end{lemma}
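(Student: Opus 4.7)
The plan is to combine the diagrammatic bounds of \Cref{prop:LWW-DB-1} and \Cref{prop:LWW-DB-N} with a direction-sensitive limit in the infrared estimate \Cref{eq:LWW-SL5.10.1} to control $\norm{\abs{x}^{2}\bar H_{\lambda,z}}_{\infty}$. First I would sum the $u=1$ bounds of \Cref{prop:LWW-DB-1} and \Cref{prop:LWW-DB-N} over $N\ge 1$; the factor $(c\beta)^{N-2+\indicatorthat{N=2}}$ makes the series converge geometrically for $\beta$ small, yielding
\begin{equation*}
\norm{\abs{x}^{2}\Pi_{\lambda,z}}_{1}\le c\beta\,\norm{\abs{x}^{2}\bar H_{\lambda,z}}_{\infty}.
\end{equation*}
Since $\alpha_{0}\ge 1$ and $H_{\lambda,z}(x)$ is non-decreasing in $z$ for each $x$ (each summand carries the factor $z^{\abs{\omega}}\ge 0$), it then suffices to prove $\norm{\abs{x}^{2}H_{\lambda,z_c}}_{\infty}\le C$ at criticality, the bound \Cref{eq:LWW-SL5.10.1} itself being extended to $z=z_c$ by monotone convergence.

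At $z=z_c$ one has $p(z_c)\abs{\Omega}=1$, so $\hat C_{p(z_c)}(k)^{-1}=1-\hat D(k)$ and \Cref{eq:LWW-SL5.10.1} specialises to
\begin{equation*}
\sup_{x}(1-\cos(k\cdot x))H_{\lambda,z_c}(x)\le c_{K}(1+\beta)(1-\hat D(k)),\qquad k\in\FS^{d}.
\end{equation*}
Taking $k=k_{1}e_{j}$ with $k_{1}\in(0,\pi]$ gives $1-\hat D(k_{1}e_{j})=d^{-1}(1-\cos k_{1})$; dividing by $1-\cos k_{1}$ and letting $k_{1}\to 0$ with $x$ fixed, so that $(1-\cos(k_{1}x_{j}))/(1-\cos k_{1})\to x_{j}^{2}$, produces $\sup_{x}x_{j}^{2}H_{\lambda,z_c}(x)\le c_{K}(1+\beta)d^{-1}$ for each $j$. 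Summing over $j=1,\dots,d$ and using $\abs{x}^{2}=\sum_{j}x_{j}^{2}$ together with subadditivity of the supremum gives $\norm{\abs{x}^{2}H_{\lambda,z_c}}_{\infty}\le c_{K}(1+\beta)$, a bound independent of $d$. Combined with the first display this proves $\norm{\abs{x}^{2}\Pi_{\lambda,z}}_{1}\le c\beta$.

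The hard part is justifying the limit $k_{1}\to 0$. At subcritical $z<z_c$ the ratio $\hat C_{p(z)}^{-1}(k_{1}e_{j})/(1-\cos k_{1})$ diverges like $(1-p(z)\abs{\Omega})k_{1}^{-2}$, so the pointwise limit argument fails there; what rescues the plan is the exact cancellation $\hat C_{p(z_c)}^{-1}(k_{1}e_{j})=d^{-1}(1-\cos k_{1})$ available only at $z=z_c$, after which monotonicity in $z$ transfers the bound to all $z<z_c$.
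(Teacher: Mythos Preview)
Your argument is correct and is essentially the paper's proof spelled out in detail; in particular you correctly identify that the coordinate-direction limit only succeeds at $z=z_c$, where $\hat C_{p(z_c)}^{-1}(k)=1-\hat D(k)$, and then transfers to $z<z_c$ by monotonicity. The paper's one-line proof applies the same limit directly to $\Pi$ via \eqref{eq:LWW-Pi-Cos-Small} (which already packages the trig diagrammatic bounds with \eqref{eq:LWW-SL5.10.1}) rather than first bounding $\norm{\abs{x}^{2}H}_{\infty}$ from \eqref{eq:LWW-SL5.10.1} and then invoking the $u=1$ bounds of \Cref{prop:LWW-DB-1,prop:LWW-DB-N}, but this is merely a reordering of the same two ingredients.
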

\begin{proof}
  This follows from $\hat C_{p(z)}^{-1} \leq 1-\hat D(k)$
  and~\eqref{eq:LWW-Pi-Cos-Small}.
\end{proof}

\begin{proposition}
  \label{prop:LWW-Moment-Bounds}
  For $0\leq z\leq z_{c}$ the following bounds hold:
  \begin{align}
    \label{eq:LWW-Moment-Bounds-1}
    \norm{\abs{x}^{2}H_{\lambda,z}(x)}_{\infty} &\leq c\beta, \\
    \label{eq:LWW-Moment-Bounds-2}
    \norm{\abs{x}^{2}H_{\lambda,z}(x)}_{2} &\leq c.
  \end{align}
\end{proposition}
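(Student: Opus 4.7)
The proof of both bounds proceeds by Fourier analysis, using the bootstrap estimates of \Cref{sec:LWW-Bootstrap} and the infrared bound \Cref{cor:LWW-IRB}, paralleling the standard self-avoiding walk arguments of~\cite{MadrasSlade2013}.

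For the $L^\infty$ bound, the case $x=0$ is trivial since $H_{\lambda,z}(0)=0$. For $x\neq 0$, I would apply \Cref{prop:LWW-SL5.10} with the specific choice $k := \pi x/|x|^2$. Then $|k_j| = \pi|x_j|/|x|^2 \leq \pi/|x| \leq \pi$ so $k\in[-\pi,\pi]^d$; the choice yields $k\cdot x = \pi$, hence $1-\cos(k\cdot x) = 2$, and $1-\hat D(k) = d^{-1}\sum_j(1-\cos k_j) \leq |k|^2/(2d) = \pi^2/(2d|x|^2) = c\beta/|x|^2$. Since each summand $w_{\lambda,z}(\omega)$ comprising $H_{\lambda,z}(x)$ is monotone increasing in $z$, it suffices to establish the bound at $z=z_c$; there $p(z_c)|\Omega|=1$ (because $\hat G_{\lambda,z_c}(0)=\chi_\lambda(z_c)=\infty$ by \Cref{thm:LWW-Main-Intro}(1)), so $\hat C_{p(z_c)}(k)^{-1} = 1-\hat D(k)$. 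The extension of \Cref{prop:LWW-SL5.10} to $z=z_c$ by monotone convergence then yields $2H_{\lambda,z_c}(x) \leq c\beta/|x|^2$, hence $|x|^2 H(x) \leq c\beta$.

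For the $L^2$ bound, by Parseval (after a truncation argument)
\begin{equation*}
  \norm{|x|^2 H_{\lambda,z}}_2^2 = (2\pi)^{-d}\int \Bigl|\sum_{j=1}^d \partial_{k_j}^2 \hat G_{\lambda,z}(k)\Bigr|^2 \, dk,
\end{equation*}
noting that the additive constant $\alpha_0$ in $\hat H = \hat G - \alpha_0$ is annihilated by the Laplacian. From $\hat G = \alpha_0/\hat F$ with $\hat F = 1 - z\alpha(\lambda,z)|\Omega|\hat D - \hat \Pi_{\lambda,z}$, differentiation gives
\begin{equation*}
  \sum_j \partial_{k_j}^2 \hat G = -\alpha_0\,\frac{\sum_j \partial_{k_j}^2 \hat F}{\hat F^{2}} + 2\alpha_0\,\frac{\sum_j (\partial_{k_j}\hat F)^2}{\hat F^{3}}.
\end{equation*}
The derivatives of $\hat F$ are $O(1)$ uniformly in $k$: the nearest-neighbour identities $\sum_x|x|^r D(x) \leq 1$ for $r\in\{1,2\}$ bound the $\hat D$ contributions, \Cref{lem:LWW-Pi-2M} gives $|\sum_j \partial_{k_j}^2 \hat \Pi| \leq \norm{|x|^2\Pi}_1 \leq c\beta$, and Cauchy--Schwarz yields $\sum_j(\partial_{k_j}\hat\Pi)^2 \leq (\sum|x||\Pi|)^2 \leq \norm{\Pi}_1\,\norm{|x|^2\Pi}_1 \leq c\beta^2$. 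Hence $|\sum_j \partial_{k_j}^2 \hat G|^2 \leq c(\hat F^{-4} + \hat F^{-6})$. The infrared bound \Cref{cor:LWW-IRB} gives $\hat F^{-1} \leq K\hat C_{p(z)}$, and \Cref{prop:LWW-Small} controls $\int \hat C^r\,dk \leq 1+c\beta$ for $r\leq 6$ provided $d > 12$, yielding $\norm{|x|^2 H_{\lambda,z}}_2 \leq c$.

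The substantive idea in the $L^\infty$ proof is the wavevector choice $k=\pi x/|x|^2$, optimally trading off $(1-\cos(k\cdot x))$ against $\hat C^{-1}(k)$ so as to extract the $|x|^{-2}$ decay of $H$ (consistent with Gaussian behaviour) directly from \Cref{prop:LWW-SL5.10}. The technical subtlety is justifying Parseval when $|x|^2 H$ is not manifestly in $\ell^2$; this is handled by truncation, applying Parseval to $\min(|x|^2,N^2)H_{\lambda,z}(x)$ and invoking Fatou/monotone convergence as $N\to\infty$, since the Fourier-side estimate is uniform in $N$.
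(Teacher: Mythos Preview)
Your proof is correct. The $L^{2}$ argument is essentially the paper's: both use Parseval to reduce to $\norm{\nabla_{k}^{2}\hat G_{\lambda,z}}_{2}$, differentiate $\hat G = \alpha_{0}/\hat F$, bound the numerators by $O(1)$ using $\norm{\abs{x}^{2}\Pi}_{1}\leq c\beta$, and invoke the infrared bound together with \Cref{prop:LWW-Small} in sufficiently high dimensions. The paper works component-by-component and records the finer pointwise bounds $\abs{\hat D_{\mu}(k)},\abs{\hat\Pi_{\mu}(k)}\leq c\abs{k_{\mu}}$, but for the $L^{2}$ estimate it ultimately appeals to the same integrability, so there is no substantive difference.

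Your $L^{\infty}$ argument, however, is genuinely different from the paper's and rather more direct. The paper proves \eqref{eq:LWW-Moment-Bounds-1} by the Fourier identity $\abs{x_{\mu}}^{2}H_{\lambda,z}(x)=-\int \partial_{k_{\mu}}^{2}\hat H_{\lambda,z}(k)\,e^{-ik\cdot x}\,d^{d}k/(2\pi)^{d}$, expands $\partial_{k_{\mu}}^{2}\hat G$ into five explicit terms, and bounds each $L^{1}$-norm using the random-walk integrals of~\cite[Appendix~A]{MadrasSlade2013} together with the Taylor estimate $\abs{\hat\Pi_{\mu}(k)}\leq c\beta\abs{k_{\mu}}$. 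You instead bypass all Fourier differentiation: choosing $k=\pi x/\abs{x}^{2}$ in the already-established bound \eqref{eq:LWW-SL5.10.1} and passing to $z=z_{c}$ (where $p(z_{c})\abs{\Omega}=1$) immediately yields $2H_{\lambda,z_{c}}(x)\leq c(1-\hat D(k))\leq c\abs{k}^{2}/(2d)=c\beta/\abs{x}^{2}$, and monotonicity in $z$ extends this to all $z\leq z_{c}$. This route is shorter, uses only the bootstrap output \Cref{prop:LWW-SL5.10} as a black box, and does not need the Madras--Slade integral estimates; the paper's approach has the advantage of producing the $L^{\infty}$ and $L^{2}$ bounds from a single computation of $\partial_{k_{\mu}}^{2}\hat G$.
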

\begin{proof}
  The proof relies on the identity
  \begin{equation}
    \label{eq:LWW-MB-1}
    \abs{x_{\mu}}^{2}H_{\lambda,z}(x) = -
    \int_{\FS^{d}} \partial^{2}_{k_{\mu}} \hat H_{\lambda,z}(k)
    e^{-ik\cdot x}\, \frac{d^{d}k}{\FSint},
  \end{equation}
  where $\mu$ is a unit basis vector of $\Z^{d}$. Omitting the
  subscripts $\lambda$ and $z$ and letting a subscript $\mu$ denote
  partial differentiation with respect to $k_{\mu}$ the derivative can
  be calculated:
  \begin{equation}
    \label{eq:LWW-MB-2}
    \hat G_{\mu,\mu}(k) = z\alpha\abs{\Omega} \frac{\hat
      D_{\mu,\mu}(k)}{\hat F^{2}(k)} + 2(z\alpha\abs{\Omega})^{2}
    \frac{ \hat D_{\mu}^{2}(k)}{\hat F^{3}(k)} + \frac{ \hat
      \Pi_{\mu,\mu}(k)}{\hat F^{2}(k)} + 4z\alpha\abs{\Omega}
    \frac{\hat D_{\mu}(k)\hat \Pi_{\mu}(k)}{\hat F^{3}(k)} + 2
    \frac{\hat \Pi^{2}_{\mu}(k)}{\hat F^{3}(k)}.
  \end{equation}
  To obtain an estimate of $\norm{\abs{x}^{2}H_{\lambda,z}}_{\infty}$
  take the absolute value of~\eqref{eq:LWW-MB-1} inside of the integral
  and estimate the resulting one norms. Using $z\alpha\abs{\Omega}
  \leq 1+ O(\beta)$ an upper bound for the first
  two terms is
  \begin{equation}
    (1+O(\beta))\ob{ \norm{ \frac{\hat D_{\mu,\mu}(k)} {(1-\hat
          D(k))^{2}}}_{1} + 2\norm{ \frac{ \hat D_{\mu}^{2}(k)}
        {(1-\hat D(k))^{3}}}_{1}} \leq c\beta,
  \end{equation}
  where the second inequality follows by estimating the
  integrals, see~\cite[Appendix~A]{MadrasSlade2013}.

  For the remaining terms, $\norm{\abs{x}^{2}\Pi_{\lambda,z}}_{1}\leq
  c\beta$ implies $\norm{\hat \Pi_{\mu,\mu}}_{\infty}\leq
  c\beta$. Since $\hat \Pi_{\mu}(k)=0$ when $k_{\mu}=0$ Taylor's
  theorem and the above bound on $\norm{\hat \Pi_{\mu,\mu}}_{\infty}$
  imply $\norm{\hat \Pi_{\mu}}_{\infty}\leq c\beta
  \abs{k_{\mu}}$. Lastly, $\abs{\hat D_{\mu}(k)}_{\infty}\leq
  c\abs{k_{\mu}}$. These bounds combined with the $k$-space infrared
  bound \Cref{cor:LWW-IRB} imply each of the remaining
  terms are bounded by $c\beta$. This proves
  \Cref{eq:LWW-Moment-Bounds-1}. 

  For $\norm{\abs{x}^{2}H_{\lambda,z}}_{2}$ use Parseval's identity:
  $\norm{ \widehat{ \abs{x}^{2}H_{\lambda,z}} }_{2} =
  \norm{ \partial^{2}_{k}\hat H_{\lambda,z}}_{2}$. The previously
  described bounds for the numerators along with \Cref{cor:LWW-IRB} and
  \Cref{prop:LWW-Small} imply that $\hat G_{\mu,\mu}(k)$ is square
  integrable in sufficiently high dimensions. This implies
  \Cref{eq:LWW-Moment-Bounds-2}.
\end{proof}

\begin{proposition}
  \label{prop:LWW-Moment-Derivative-Small}
  For $d$ sufficiently large and $0<z\leq z_{c}$
  \begin{equation}
    \label{eq:LWW-Moment-Derivative-Small}
    \norm{\partial^{v}_{z} \abs{x}^{2} \Pi_{\lambda,z}}_{1} \leq
    c\beta z_{c}^{-v}
  \end{equation}
\end{proposition}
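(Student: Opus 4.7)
The plan is to combine the moment-distribution scheme of \Cref{lem:LWW-Pi-2M} with the Leibniz-style derivative bookkeeping of \Cref{prop:LWW-Derivative-Small}. Starting from the diagrammatic upper bound of \Cref{prop:LWW-Pi-Repulsive-Bound} for each $\pi^{(N)}_{m}(x)$ and summing over valid $\vec m$, one obtains the $\mathcal{M}$-$\mathcal{C}$ product representation of \Cref{lem:LWW-MC-Formulation}, in which each factor of $I_{\lambda,z}$, $\bar H_{\lambda,z}$, or $\bar G_{\lambda,z}$ carries the displacement $x_i$ of the $i$-th subwalk. Writing $x = x_1 + \dots + x_{2N-1}$ and using $|x|^2 \leq (2N-1)\sum_i |x_i|^2$, one may pull $|x_i|^2$ onto a single two-point function, exactly as for \Cref{lem:LWW-Pi-2M}.

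Next I would apply $\partial^v_z$ through this bound using \Cref{lem:LWW-Derivative-UB} and the Leibniz rule, producing $O(N^v)$ terms in each of which at most $v$ of the factors $I_{\lambda,z}$, $\bar H_{\lambda,z}$, $\bar G_{\lambda,z}$ are differentiated. For each such term I apply \Cref{lem:LWW-SL-Lp}, placing the sup norm on the factor carrying the moment $|x_i|^2$; this contributes $\||x|^2 \bar H_{\lambda,z}\|_\infty \leq c\beta$ by \Cref{prop:LWW-Moment-Bounds}. The remaining factors pair into $L^\infty$ norms of convolutions controlled by \Cref{cor:LWW-TS} and \Cref{prop:LWW-UB-I}, while each differentiated factor contributes a factor $c\beta z_c^{-1}$ via \Cref{prop:LWW-H-D-DB} (for $\bar H$, $\bar G$) or \Cref{prop:LWW-I-D-DB} (for $I$, in $\ell^1$). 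Summing over $N \geq 1$ gives a geometric series in $\beta$ and yields the asserted bound $c\beta z_c^{-v}$.

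The main technical obstacle is the case where the moment factor $|x_i|^2$ and at least one derivative $\partial_z$ land on the same two-point function, which requires an auxiliary bound of the form $\|\partial^v_z\,|x|^2 \bar H_{\lambda,z}\|_\infty \leq c\beta z_c^{-v}$. I would establish this by extending the Fourier-space computation of \Cref{prop:LWW-Moment-Bounds}: write
\begin{equation*}
  \partial^v_z |x_\mu|^2 H_{\lambda,z}(x) = -\int_{\FS^d} \partial^v_z \partial^2_{k_\mu} \hat H_{\lambda,z}(k)\, e^{-ik\cdot x}\,\frac{d^d k}{\FSint},
\end{equation*}
expand $\partial^2_{k_\mu}\hat G_{\lambda,z}$ as in \eqref{eq:LWW-MB-2}, and apply Leibniz in $z$. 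The resulting terms are handled using the $k$-space infrared bound \Cref{cor:LWW-IRB}, the bounds $\|\hat\Pi_{\mu,\mu}\|_\infty \leq c\beta$ and $|\hat\Pi_\mu(k)| \leq c\beta|k_\mu|$ already exploited in \Cref{prop:LWW-Moment-Bounds}, together with \Cref{prop:LWW-Derivative-Small,prop:LWW-Pi-2M} to absorb the additional $z$-derivatives; the integrals remain finite in high dimensions by \Cref{prop:LWW-Small}. This Fourier estimate is the one genuinely new ingredient; once in hand, combining it with the $\mathcal{M}$-$\mathcal{C}$ scheme described above delivers the claim.
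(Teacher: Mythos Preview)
Your overall structure matches the paper's: distribute $|x|^2$ across the subwalk displacements, apply Leibniz for $\partial_z^v$ via \Cref{lem:LWW-Derivative-UB}, and for each resulting term place the sup norm on the factor carrying the moment when invoking \Cref{lem:LWW-SL-Lp}. The divergence, and the gap, is in how you handle the collision case where the moment and a $z$-derivative land on the same two-point function.

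Your proposed Fourier route to $\|\partial_z^v |x|^2 \bar H_{\lambda,z}\|_\infty$ is circular. Differentiating \eqref{eq:LWW-MB-2} in $z$ forces you to control $\partial_z\hat\Pi_{\mu,\mu}$ and $\partial_z\hat\Pi_\mu$. But $|\partial_z\hat\Pi_{\mu,\mu}(k)|\leq \||x|^2\partial_z\Pi_{\lambda,z}\|_1$, and the Taylor argument you cite from \Cref{prop:LWW-Moment-Bounds} gives $|\partial_z\hat\Pi_\mu(k)|\leq |k_\mu|\,\|\partial_z\hat\Pi_{\mu,\mu}\|_\infty\leq |k_\mu|\,\||x|^2\partial_z\Pi_{\lambda,z}\|_1$. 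Neither \Cref{prop:LWW-Derivative-Small} nor \Cref{lem:LWW-Pi-2M} supplies this quantity --- it is exactly the left-hand side of \eqref{eq:LWW-Moment-Derivative-Small} that you are trying to bound. One could in principle close this as a bootstrap in $\beta$, but that is a further argument you have not given.

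The paper avoids Fourier entirely for the collision case and stays in $x$-space. When a derivative hits the factor carrying $|x|^2$, it uses \Cref{prop:LWW-H-DB} to replace $\partial_z\bar H$ by (a constant times) $\bar H\ast\bar G$, and $\partial_z^2\bar H$ by $\bar H\ast\bar G\ast\bar G$. The factor $|x|^2$ is then redistributed over these \emph{new} convolution factors, and the resulting sup norms such as $\|(|x|^2\bar H)\ast\bar G\|_\infty\leq \||x|^2\bar H\|_\infty + \||x|^2\bar H\|_2\|\bar H\|_2$ are controlled by Young's inequality together with both conclusions of \Cref{prop:LWW-Moment-Bounds} --- in particular the $L^2$ moment bound \eqref{eq:LWW-Moment-Bounds-2}, which your proposal does not invoke. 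This route uses only previously established estimates and has no circularity.
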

\begin{proof}
  Distribute the factor $\abs{x}^{2}$ along the factors of $\bar
  H_{\lambda,z}$ and $\bar G_{\lambda,z}$ as in the proof of
  \Cref{prop:LWW-DB-N}. The proof is now essentially the same as for
  \Cref{prop:LWW-Derivative-Small}. For each term place the sup norm on
  the factor with the term $\abs{x}^{2}$.  

  If a factor $\abs{x}^{2}G_{\lambda,z}$ has been
  differentiated once or twice the resulting term whose norm must be
  estimated has the form of either $\bar
  H_{\lambda,z}\ast \bar G_{\lambda,z}$ or $\bar H_{\lambda,z} \ast
  \bar G_{\lambda,z} \ast \bar G_{\lambda,z}$. In either
  case the factor $\abs{x}^{2}$ can again be split along the factors
  in the convolution. In the first case use \Cref{eq:LWW-G-H-Relation},
  the triangle inequality, and Young's inequality to obtain
  \begin{equation}
    \norm{(\abs{x}^{2}\bar H_{\lambda,z}) \ast \bar
      G_{\lambda,z}}_{\infty} \leq \norm{\abs{x}^{2}
      \bar H_{\lambda,z}}_{\infty} + \norm{\abs{x}^{2}\bar
      H_{\lambda,z}}_{2}\norm{\bar H_{\lambda,z}}_{2},
  \end{equation}
  and then use~\Cref{prop:LWW-Moment-Bounds} to see that this is
  bounded by $c\beta$. For the second case arguing similarly gives
  \begin{align}
    \nonumber
    \norm{(\abs{x}^{2}\bar H_{\lambda,z}) \ast \bar G_{\lambda,z} \ast
      \bar G_{\lambda,z}}_{\infty} \leq\, & 
    \norm{(\abs{x}^{2} \bar H_{\lambda,z}) \ast \bar
      G_{\lambda,z}}_{\infty} + \norm{(\abs{x}^{2} \bar H_{\lambda,z})
      \ast \bar H_{\lambda,z}}_{\infty} \\ &+ \norm{\abs{x}^{2} \bar
      H_{\lambda,z}}_{2}\norm{\bar H_{\lambda,z} \ast \bar 
      H_{\lambda,z}}_{2}.
  \end{align}
  The first case analysis implies the first two terms are bounded
  above by $c\beta$. Parseval's identity combined with \Cref{cor:LWW-TS}
  implies the last term is bounded by $c\beta$.  The rest of the
  analysis of these terms is in the proof of
  \Cref{prop:LWW-Derivative-Small}.

  The cases in which all derivatives fall on factors without the
  term $\abs{x}^{2}$ can be handled in the same manner as in the proof
  of \Cref{prop:LWW-Derivative-Small} by using Young's inequality, the
  triangle inequality, and \Cref{cor:LWW-TS}.
\end{proof}

\subsection{Linear Divergence of $\chi_{\lambda}(z)$ as $z\nearrow z_{c}$}
\label{sec:LWW-Susceptibility-MF}

Before proving the linear divergence of the susceptibility it will be
helpful to verify that it is only infinite at the critical point
$z=z_{c}$ itself.

\begin{lemma}
  \label{lem:LWW-Susceptibility-Infinite}
  For $d$ sufficiently large and $\abs{z}\leq z_{c}$ the inverse
  susceptibility $\hat F_{\lambda,z}(0)$ satisfies
  \begin{equation}
    \label{eq:LWW-Susceptibility-Infinite}
    \abs{\hat F_{\lambda,z}(0)} \geq \frac{\abs{\Omega}}{2} \abs{z_{c}-z}.
  \end{equation}
\end{lemma}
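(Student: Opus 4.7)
The plan is to combine two facts. First, $\hat F_{\lambda,z_c}(0) = 0$: since $\chi_\lambda(z) = \alpha_0(\lambda,z)/\hat F_{\lambda,z}(0)$ is a power series in $z$ with non-negative coefficients whose radius of convergence is $z_c$, and $\alpha_0(\lambda,z_c) \leq 1 + O(\beta)$ is finite (extending \Cref{prop:LWW-UB-Alpha} to $z_c$ by monotone convergence), one must have $\chi_\lambda(z_c) = \infty$ and hence $\hat F_{\lambda,z_c}(0) = 0$. Second, I would improve \Cref{cor:LWW-Pi-D-NZ} quantitatively to $-\tfrac{d}{dz}\hat F_{\lambda,z}(0) \geq \tfrac{|\Omega|}{2}$ for $z \in (0,z_c]$ and $d$ sufficiently large. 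From
\[
  -\tfrac{d}{dz}\hat F_{\lambda,z}(0) = |\Omega|\alpha(\lambda,z) + z|\Omega|\tfrac{d\alpha}{dz} + \tfrac{d\hat\Pi_{\lambda,z}(0)}{dz},
\]
the first term is at least $|\Omega|$ since $\alpha \geq 1$; the second is bounded by $|\Omega|\alpha \cdot z\,\tfrac{d}{dz}\mu_{\lambda,z}(0;y) \leq |\Omega|\alpha \cdot \|\bubblechain_{\lambda,z}\|_1 = O(|\Omega|\beta) = O(1)$ by combining \Cref{prop:LWW-Alpha-DB} with \Cref{prop:LWW-BC-Geometric}; the third is $O(\beta z_c^{-1}) = O(1)$ by \Cref{prop:LWW-Derivative-Small} and the lower bound $z_c \gtrsim \beta$ from \Cref{prop:LWW-Trivial-G-Bound}. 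For $|\Omega|$ (equivalently $d$) sufficiently large, the leading $|\Omega|\alpha$ term dominates the two $O(1)$ corrections by a factor of at least two.

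For real $z \in [0,z_c]$ I would then integrate:
\[
  \hat F_{\lambda,z}(0) = -\int_z^{z_c} \hat F_{\lambda,w}'(0)\,dw \geq \tfrac{|\Omega|}{2}(z_c - z) \geq 0,
\]
which gives the claim since $|\hat F_{\lambda,z}(0)| = \hat F_{\lambda,z}(0)$ on this interval. If the statement is intended for complex $z$ in the closed disk $|z|\leq z_c$, one integrates along the line segment $w(t) = z_c + t(z - z_c)$, so that $|\hat F_{\lambda,z}(0)| = |z_c - z|\,\bigl|\int_0^1 \hat F_{\lambda,w(t)}'(0)\,dt\bigr|$. The additional input is a majorant observation: since $\mu_{\lambda,w}(0;y)$ has non-negative Taylor coefficients in $w$, so does $\alpha(\lambda,w) = \exp\mu_{\lambda,w}(0;y)$, whence $|\alpha(w) - 1| \leq \alpha(|w|) - 1 \leq O(\beta)$ and $\mathrm{Re}\,\alpha(w) \geq 1 - O(\beta)$ throughout $|w| \leq z_c$; analogous majorant arguments applied to the absolute-value diagrammatic bounds on $\hat\Pi$ (\Cref{prop:LWW-DB-1} and \Cref{prop:LWW-DB-N}) and their derivatives (\Cref{prop:LWW-Derivative-Small}) control $|w\alpha'(w)|$ and $|\hat\Pi_{\lambda,w}'(0)|$ by $O(1)$ on the disk. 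Hence the real part of the line integral still dominates at order $|\Omega|$, yielding $|\hat F_{\lambda,z}(0)|/|z_c - z| \geq |\Omega|(1-O(\beta)) - O(1) \geq |\Omega|/2$.

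The main obstacle is not any of the integration steps but the quantitative strengthening of \Cref{cor:LWW-Pi-D-NZ}: one must carefully verify that the two correction terms $z\partial_z\alpha$ and $\partial_z\hat\Pi$ contribute at $O(1)$ rather than growing with $|\Omega|$. This reduces to already-established bubble-chain estimates once one notices that both $|\Omega|\beta$ and $\beta z_c^{-1}$ are $O(1)$ in high dimensions, so that no further diagrammatic work beyond what has already been done is required.
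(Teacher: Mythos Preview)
Your approach is essentially the same as the paper's: both use $\hat F_{\lambda,z_c}(0)=0$, integrate $-\frac{d}{dz}\hat F_{\lambda,z}(0)$ along the segment from $z_c$ to $z$, and observe that the integrand is $\abs{\Omega}\alpha(\lambda,z)$ plus corrections that are $O(1)$ (equivalently, after factoring out $\abs{\Omega}$, corrections that are $O(\beta)$). You are somewhat more explicit than the paper about the complex case, spelling out the majorant argument for $\alpha$, $\alpha'$, and $\hat\Pi'$ on the disc $\abs{w}\leq z_c$; the paper simply writes the line integral for general $z$ and refers back to the real-variable estimates in \Cref{cor:LWW-Pi-D-NZ}, leaving the majorant step implicit.
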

\begin{proof}
  As $\hat F_{\lambda,z_{c}}(0)=0$ the fundamental theorem of calculus
  implies
  \begin{equation}
    \abs{F_{\lambda,z}(0)} = \abs{\int_{z_{c}}^{z}-\frac{d}{dz}\hat F_{z}(0)\,dz}.
  \end{equation}
  Using $\hat F_{\lambda,z_{c}}(0)=0$, \Cref{eq:LWW-Derivative-F}, and
  integrating from $z_{c}$ to $z$ along the straight line $z_{t} = (1-t)z_{c} + tz$ implies
  \begin{equation}
    \abs{\hat F_{\lambda,z}(0)} = \abs{\Omega}\abs{z-z_{c}} \abs{
      \int_{0}^{1} \alpha(\lambda,z_{t}) + z_{t}\frac{d}{dz}\alpha(\lambda,z_{t})
      + \abs{\Omega}^{-1}\frac{d}{dz}\hat \Pi_{\lambda,z_{t}}(0)\,
      dt}.
  \end{equation}
  The last two terms are bounded by $c\beta$, see the proof of
  \Cref{cor:LWW-Pi-D-NZ}.  The claim follows by taking the dimension
  sufficiently large as $\int \alpha = 1 + O(\beta)$.
\end{proof}

Define constants $A=A(\lambda)$ and $D = D(\lambda)$ by
\begin{align}
  \label{eq:LWW-Constant-A}
  A(\lambda) &=  z_{c}^{-1}\ob{\alpha(\lambda,z_{c})\abs{\Omega} +
    z_{c}\abs{\Omega} \frac{d}{dz}\alpha(\lambda,z_{c}) + \frac{d}{dz}\hat
    \Pi_{\lambda,z_{c}}(0)}^{-1}, \\
  \label{eq:LWW-Constant-D}
  D(\lambda) &= A(\lambda) \ob{ - z_{c}\abs{\Omega}\alpha(\lambda,z_{c})
    \nabla^{2}_{k} \hat D(0) - \nabla^{2}_{k}\hat \Pi_{\lambda,z_{c}}(0)}.
\end{align}

\begin{theorem}
  \label{thm:LWW-Susceptibility-MF}
  For $d$ large enough, the susceptibility of $\lambda$-LWW diverges
  linearly as $z\nearrow z_{c}$:
  \begin{equation}
    \label{eq:LWW-Susceptibility-MF}
    \chi_{\lambda}(z) \sim \frac{Az_{c}}{z_{c}-z}.
  \end{equation}
  The constant $A$ in~\eqref{eq:LWW-Susceptibility-MF} is as in
  \Cref{eq:LWW-Constant-A}.
\end{theorem}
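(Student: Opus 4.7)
The plan is to derive the simple-pole asymptotic by Taylor expanding the inverse susceptibility around $z_{c}$. By \Cref{prop:LWW-Lace-Expansion-WLAW} at $k=0$ (using $\hat D(0)=1$),
\[
\chi_{\lambda}(z) \;=\; \hat G_{\lambda,z}(0) \;=\; \frac{\alpha_{0}(\lambda,z)}{\hat F_{\lambda,z}(0)},
\qquad
\hat F_{\lambda,z}(0) \;=\; 1 - z\abs{\Omega}\alpha(\lambda,z) - \hat \Pi_{\lambda,z}(0).
\]
\Cref{lem:LWW-Susceptibility-Infinite} gives $\hat F_{\lambda,z_{c}}(0)=0$, and \Cref{cor:LWW-Pi-D-NZ} shows that $-\tfrac{d}{dz}\hat F_{\lambda,z}(0)$ is bounded below by a positive constant uniformly in $z\in[0,z_{c}]$; by the definition \eqref{eq:LWW-Constant-A} its value at $z_{c}$ is $1/(A(\lambda)z_{c})$. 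Thus the divergence of $\chi_{\lambda}(z)$ is driven by the first-order zero of $\hat F_{\lambda,z}(0)$ at $z_{c}$, and the theorem reduces to showing that this zero is truly first order.

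I would write the Taylor expansion with integral remainder
\[
\hat F_{\lambda,z}(0)
\;=\; (z-z_{c})\,\frac{d\hat F_{\lambda,w}(0)}{dw}\bigg|_{w=z_{c}} + R(z),
\qquad
R(z) = \int_{z_{c}}^{z}(z-w)\frac{d^{2}\hat F_{\lambda,w}(0)}{dw^{2}}\,dw,
\]
and prove $\abs{R(z)}\leq C(z_{c}-z)^{2}$, which reduces to the uniform bound $\sup_{0\le z\le z_{c}}\abs{\tfrac{d^{2}\hat F_{\lambda,z}(0)}{dz^{2}}}<\infty$. The contribution from $\hat \Pi_{\lambda,z}(0)$ is controlled by \Cref{prop:LWW-Derivative-Small} with $v=2$. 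The remaining contribution is
\[
\frac{d^{2}}{dz^{2}}\ob{z\alpha(\lambda,z)} \;=\; 2\frac{d\alpha}{dz} + z\frac{d^{2}\alpha}{dz^{2}}.
\]
Writing $\alpha(\lambda,z)=e^{\mu_{\lambda,z}(0;y)}$ and applying the Leibniz rule, each derivative of $\alpha$ reduces to a polynomial in derivatives of $\mu_{\lambda,z}(0;y)$. The derivatives of $\mu_{\lambda,z}(0;y)$ have bubble-chain-type representations analogous to those in the proofs of \Cref{prop:LWW-Alpha-DB,prop:LWW-BC-D-DB,prop:LWW-UB-I}, and by the monotonicity in \Cref{prop:LWW-LM-Properties} the extra constraint ``range avoids $y$'' only strengthens the bounds; combined with \Cref{prop:LWW-BC-Geometric} and \Cref{cor:LWW-TS}, this produces the required second-derivative estimate for $d$ sufficiently large.

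Combining these estimates, $\hat F_{\lambda,z}(0) = -(z_{c}-z)/(A(\lambda)z_{c})\cdot\ob{1+O(z_{c}-z)}$ as $z\nearrow z_{c}$. The numerator $\alpha_{0}(\lambda,z)$ is continuous on $[0,z_{c}]$ (its derivative is bounded via \Cref{prop:LWW-Alpha-DB} and \Cref{prop:LWW-BC-Geometric}), so $\alpha_{0}(\lambda,z)=\alpha_{0}(\lambda,z_{c})+O(z_{c}-z)$. Dividing yields $\chi_{\lambda}(z)\sim Az_{c}/(z_{c}-z)$, completing the proof once the explicit form of $A$ in \eqref{eq:LWW-Constant-A} is matched against the residue of the pole. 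The main obstacle is the second-derivative bound on $\alpha''(z)$: it requires extending the bubble-chain diagrammatics one order further than \Cref{prop:LWW-BC-D-DB,prop:LWW-Derivative-Small}, so that $\mu''_{\lambda,z}(0;y)$ is controlled by convolution products of the form $\bar H_{\lambda,z}\cdot(\bar G_{\lambda,z}\ast\bar H_{\lambda,z})$ which are small by the triangle-diagram bounds of \Cref{cor:LWW-TS} in high dimension.
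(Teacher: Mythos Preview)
Your approach is correct and follows the same basic strategy as the paper: write $\chi_{\lambda}(z)=\alpha_{0}(\lambda,z)/\hat F_{\lambda,z}(0)$, expand the inverse susceptibility $\hat F_{\lambda,z}(0)$ around its zero at $z_{c}$, and control the error using the derivative bounds of Section~\ref{sec:LWW-Diagrammatic-Derivatives}. The difference is one of order. The paper does not invoke a second-order Taylor expansion; it simply writes
\[
\hat F_{\lambda,z}(0)-\hat F_{\lambda,z_{c}}(0)
=(z_{c}-z)\Bigl[\alpha(\lambda,z_{c})\abs{\Omega}
+z\abs{\Omega}\,\frac{\alpha(\lambda,z_{c})-\alpha(\lambda,z)}{z_{c}-z}
+\frac{\hat\Pi_{\lambda,z_{c}}(0)-\hat\Pi_{\lambda,z}(0)}{z_{c}-z}\Bigr]
\]
and observes that the two difference quotients converge to $\frac{d}{dz}\alpha(\lambda,z_{c})$ and $\frac{d}{dz}\hat\Pi_{\lambda,z_{c}}(0)$ as $z\nearrow z_{c}$, which only needs the \emph{first}-derivative bounds already supplied by \Cref{prop:LWW-Alpha-DB} and \Cref{prop:LWW-Derivative-Small} with $v=1$. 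Consequently the ``main obstacle'' you identify---extending the bubble-chain diagrammatics to control $\alpha''$---never arises in the paper's argument. Your route does buy something, namely the quantitative error $\chi_{\lambda}(z)=\tfrac{Az_{c}}{z_{c}-z}\bigl(1+O(z_{c}-z)\bigr)$ rather than merely $\sim$, so the extra second-derivative work is not wasted if a rate is wanted; but for the theorem as stated it is unnecessary. One minor correction: \Cref{lem:LWW-Susceptibility-Infinite} \emph{uses} $\hat F_{\lambda,z_{c}}(0)=0$ as an input rather than proving it; that vanishing is the statement that $\chi_{\lambda}(z)\nearrow\infty$ as $z\nearrow z_{c}$.
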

\begin{proof}
  Recall $\hat F_{\lambda,z}(0) = \hat G_{\lambda,z}(0)^{-1}$ is zero
  at $z_{c}$ since $\chi_{\lambda}(z)\nearrow \infty$ as $z\nearrow
  z_{c}$.
  \begin{align}
    \chi_{\lambda}(z) &= \frac{1}{\hat F_{\lambda,z}(0) - \hat
      F_{\lambda,z_{c}}(0)} \\
    &= \frac{1}{z_{c}-z} \cb{ \alpha(\lambda,z_{c})\abs{\Omega} +
      z\abs{\Omega} \frac{\alpha(\lambda,z_{c}) -
        \alpha(\lambda,z)}{z_{c}-z} + \frac{\hat
        \Pi_{\lambda,z_{c}}(0) - \hat
        \Pi_{\lambda,z}(0)}{z_{c}-z}}^{-1}.
  \end{align}
  The claim follows from \Cref{prop:LWW-Moment-Derivative-Small} and
  \Cref{prop:LWW-Alpha-DB} combined with $\alpha_{0}\leq 1 + c\beta$
  for $z\leq z_{c}$, which implies differentiability of $\alpha_{0}$ at $z_{c}$.
\end{proof}

\subsection{Growth Rate and Diffusive Scaling}
\label{sec:LWW-Growth}

To establish the growth rate of $\lambda$-LWW, as well as the
diffusive scaling, a Tauberian type theorem is needed. The statement
and proof of the next lemma in~\cite{MadrasSlade2013} involve
fractional derivatives of order $1+\epsilon$ for $0<\epsilon<1$, but the
arguments apply without modification for two ordinary derivatives.

\begin{lemma}[name = Lemma~6.3.4 of~\cite{MadrasSlade2013}]
  \label{lem:LWW-Tauberian}
  Let
  \begin{equation}
    \label{eq:LWW-Taub-1}
    f(z) = \frac{1}{\phi(z)} = \sum_{n=0}^{\infty}b_{n}z^{n},
  \end{equation}
  where $\phi(z) = \sum_{n=0}^{\infty}a_{n}z^{n}$. Suppose that
  \begin{equation}
    \label{eq:LWW-Taub-2}
    \sum_{n=0}^{\infty}n^{2}\abs{a_{n}}R^{n}<\infty,
  \end{equation}
  so in particular, $\phi(z)$, $\phi^{\prime}(z)$, and
  $\phi^{\prime\prime}(z)$ are finite when $\abs{z}=R$. Assume in
  addition that $\phi^{\prime}(R)\neq 0$. Suppose that $\phi(R)=0$ and
  $\phi(z)\neq 0$ for $\abs{z}\leq R$, $z\neq R$. Then
  \begin{equation}
    \label{eq:LWW-Taub-3}
    f(z) = \frac{1}{-\phi^{\prime}(R)}\frac{1}{R-z} + O(1)
  \end{equation}
  uniformly in $\abs{z}\leq R$, and
  \begin{equation}
    \label{eq:LWW-Taub-4}
    b_{n} = R^{-n-1}\cb{\frac{1}{-\phi^{\prime}(R)} + O(n^{-\alpha})}
    \quad \textrm{as $n\to\infty$},
  \end{equation}
  for every $\alpha<1$.
\end{lemma}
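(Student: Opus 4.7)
The plan follows classical singularity analysis: isolate the principal part of $f$ at $z=R$ and estimate the Taylor coefficients of the smoother remainder via Cauchy's integral formula. Define
\[
g(z) := f(z) + \frac{1}{\phi'(R)(R-z)} = \frac{-E(z)}{\phi'(R)(z-R)\phi(z)},
\]
where $E(z) := \phi(z) - \phi'(R)(z-R)$. The hypothesis $\sum n^{2}\abs{a_{n}}R^{n}<\infty$ implies that $\phi, \phi', \phi''$ all have absolutely convergent Taylor series on the closed disk $\set{\abs{z}\leq R}$, so $\phi\in C^{2}$ there. Taylor's theorem then gives $E(z) = (z-R)^{2}(\phi''(R)/2 + o(1))$ uniformly as $z\to R$, while $(z-R)\phi(z)\sim \phi'(R)(z-R)^{2}$; hence $g$ is bounded near $R$ with finite limit $-\phi''(R)/(2\phi'(R)^{2})$. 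Away from $R$, the hypothesis $\phi(z)\neq 0$ on the compact set $\set{\abs{z}\leq R,\abs{z-R}\geq\varepsilon}$ makes $g$ bounded there by continuity. Combined, $\sup_{\abs{z}\leq R}\abs{g(z)}<\infty$, which establishes \Cref{eq:LWW-Taub-3}.

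For \Cref{eq:LWW-Taub-4}, the principal part $(R-z)^{-1}/(-\phi'(R))$ contributes exactly $R^{-n-1}/(-\phi'(R))$ to the $n$-th Taylor coefficient, so the task reduces to showing $[z^{n}]g = O(R^{-n-1}n^{-\alpha})$ for every $\alpha<1$. A second Taylor expansion of $g'(z) = -\phi'(z)/\phi(z)^{2}+1/(\phi'(R)(R-z)^{2})$ exhibits a cancellation of the leading $(z-R)^{-2}$ singularities, leaving $\abs{g'(z)}\leq C\omega(\abs{z-R})/\abs{z-R} + O(1)$ near $R$, where $\omega$ is the modulus of continuity of $\phi''$. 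I then extract $[z^{n}]g$ via Cauchy's formula on a contour just inside the disk, integrate by parts in the angular variable, and split the resulting integral into a small-angle detour (on which $g$ itself is bounded) and a large-angle main arc (on which the bound on $g'$ applies), optimizing the cutoff angle in $n$ to produce the polynomial decay rate.

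The main obstacle is the coefficient estimate: the hypothesis supplies only continuity of $\phi''$ on the closed disk, not any Hölder regularity, so the cutoff contour argument must be executed carefully to convert the uniform continuity of $\phi''$ into an explicit polynomial decay in $n$, balancing the loss from integration by parts against the size of the detour. The rest is routine bookkeeping.
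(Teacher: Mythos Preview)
Your proposal is correct, and in fact the ``main obstacle'' you flag is not an obstacle at all. From mere boundedness of $\phi''$ on the closed disk (no modulus of continuity needed) the two-term Taylor expansion of $\phi$ at $R$ already gives
\[
g'(z) \;=\; -\frac{\phi'(z)}{\phi(z)^{2}} + \frac{1}{\phi'(R)(R-z)^{2}} \;=\; O\!\bigl(\abs{z-R}^{-1}\bigr)
\]
uniformly on the closed disk punctured at $R$: the leading $\abs{z-R}^{-2}$ singularities cancel and the next order is $O(\abs{z-R}^{-1})$, while away from $R$ everything is bounded by compactness and the hypothesis $\phi\neq 0$. With this, your contour split at angle $\delta$ gives a small-arc contribution $O(\delta)$ (using $\abs{g}\leq M$) and, after one integration by parts, a large-arc contribution bounded by $n^{-1}\int_{\delta}^{\pi}\theta^{-1}\,d\theta = O(n^{-1}\log(1/\delta))$ plus $O(n^{-1})$ boundary terms. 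Taking $\delta = n^{-1}$ yields $[z^{n}]g = O\bigl(R^{-n}(\log n)/n\bigr)$, which is $O(R^{-n-1}n^{-\alpha})$ for every $\alpha<1$. So the argument closes without ever invoking~$\omega$.

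The paper does not supply its own proof of this lemma; it simply invokes \cite[Lemma~6.3.4]{MadrasSlade2013}, remarking that the Madras--Slade argument, which is phrased in terms of fractional derivatives of order $1+\epsilon$, goes through unchanged when two ordinary derivatives are available. Your route via Cauchy's formula and a split contour is the classical singularity-analysis alternative (in the style of Flajolet--Odlyzko) and is arguably more transparent here than the fractional-derivative machinery, since it avoids introducing an auxiliary Tauberian lemma and works directly with the analytic information about $g$ and $g'$ on the boundary circle.
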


Recall that $c_{n}^{\lambda}$ is the total mass of $n$-step
$\lambda$-LWW, i.e.,
\begin{equation*}
  c_{n}^{\lambda} = \sum_{x} \mathop{\sum_{\omega\colon 0
    \to x}}_{\abs{\omega}=n} \lambda^{n_{L}(\omega)}.
\end{equation*}
\begin{theorem}
  \label{thm:LWW-Growth-Rate}
  For $d$ sufficiently large and any $\delta<1$
  \begin{equation*}
    c_{n}^{\lambda} = A(\lambda)z_{c}(\lambda)^{-n}(1+O(n^{-\delta})).
  \end{equation*}
\end{theorem}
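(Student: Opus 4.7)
The plan is to apply the Tauberian-type \Cref{lem:LWW-Tauberian} with the choice $\phi(z)=\hat F_{\lambda,z}(0)$, so that $f(z)=1/\phi(z)=\hat G_{\lambda,z}(0)=\chi_\lambda(z)=\sum_n c_n^\lambda z^n$ and the target coefficients are exactly $b_n=c_n^\lambda$. Expanding via the lace expansion \Cref{prop:LWW-Lace-Expansion-WLAW} (after dividing by $\alpha_0$) gives
\begin{equation}
\phi(z)=1-z\alpha(\lambda,z)|\Omega|-\hat\Pi_{\lambda,z}(0),
\end{equation}
and with $R=z_c$ the conclusion of \Cref{lem:LWW-Tauberian} will read $c_n^\lambda=z_c^{-n-1}\bigl[(-\phi'(z_c))^{-1}+O(n^{-\delta})\bigr]$. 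Reading off \Cref{eq:LWW-Constant-A}, $(-\phi'(z_c))^{-1}=Az_c$, so this immediately yields $c_n^\lambda=A z_c^{-n}(1+O(n^{-\delta}))$ for every $\delta<1$.

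What must be verified are the four hypotheses of the Tauberian lemma on the closed disk $|z|\le z_c$. First, $\phi(z_c)=0$ is equivalent to $\chi_\lambda(z_c)=+\infty$, which follows from \Cref{thm:LWW-Susceptibility-MF}. Second, non-vanishing of $\phi$ off the critical point on the disk is exactly \Cref{lem:LWW-Susceptibility-Infinite}, which gives $|\phi(z)|\ge(|\Omega|/2)|z_c-z|$. Third, $\phi'(z_c)\neq 0$ is \Cref{cor:LWW-Pi-D-NZ}. Fourth, the summability $\sum_n n^2|a_n|z_c^n<\infty$ requires showing that $\phi$, $\phi'$, and $\phi''$ are well defined and finite on the closed disk. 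Since $\alpha(\lambda,z)=\exp(\mu_{\lambda,z}(0;y))$ expands as a power series in $z$ with non-negative coefficients (the weight $w_{\lambda,z}(\omega)\ge0$ is monomial in $z$), each quantity $\sum_x\pi_m^{(N)}(x)$ is likewise a non-negative power series in $z$, so the diagrammatic bounds of \Cref{prop:LWW-Derivative-Small} on $\|\partial_z^v\Pi_{\lambda,z}\|_1$ and the analogous bounds on $\partial_z^v\alpha(\lambda,z)$ coming from \Cref{prop:LWW-Alpha-DB,prop:LWW-BC-D-DB}, valid at the real point $z=z_c$, control the absolute second derivative of $\phi$ on the whole closed disk.

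I would lay the argument out in the following order. First, record that coefficient-wise non-negativity of the series defining $\alpha(\lambda,z)$ and each $\sum_x\pi_m^{(N)}(x)$ promotes the real-axis derivative bounds of \Cref{prop:LWW-Alpha-DB,prop:LWW-BC-D-DB,prop:LWW-Derivative-Small} into absolute convergence of the coefficient sums $\sum_n n^v|a_n|z_c^n$ for $v\in\{0,1,2\}$, so $\phi$ is twice continuously differentiable on $|z|\le z_c$ with the expected values at $z_c$. Second, combine \Cref{lem:LWW-Susceptibility-Infinite} with \Cref{cor:LWW-Pi-D-NZ} to check the zero-set and non-vanishing of $\phi'(z_c)$. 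Third, invoke \Cref{lem:LWW-Tauberian} to obtain the stated asymptotic expansion of $b_n$. Fourth, substitute the explicit form of $A(\lambda)$ from \Cref{eq:LWW-Constant-A} to identify the prefactor and conclude.

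The main obstacle is the fourth hypothesis: extending the derivative estimates, which the diagrammatic machinery delivers in an essentially real-variable form along $(0,z_c]$, to uniform absolute convergence on the closed disk $|z|=z_c$. The resolution rests on the simple but crucial observation that every coefficient in $z$ appearing in the lace expansion is non-negative (because $w_{\lambda,z}$ is monomial and $\alpha_X\ge0$), so that bounds established at real $z=z_c$ automatically majorize $|\phi^{(v)}(z)|$ for $|z|\le z_c$. Without this positivity one would not be able to cross from the real axis to the complex disk, and the Tauberian argument would fail; everything else is bookkeeping.
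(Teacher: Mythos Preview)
Your approach is exactly the paper's: apply \Cref{lem:LWW-Tauberian} with $\phi=\hat F_{\lambda,z}(0)$ and verify its hypotheses via \Cref{prop:LWW-Derivative-Small}, \Cref{cor:LWW-Pi-D-NZ}, and \Cref{lem:LWW-Susceptibility-Infinite}; you are also right that the passage from real-$z$ derivative bounds to the absolute-coefficient hypothesis $\sum n^{2}|a_{n}|z_{c}^{n}<\infty$ rests on termwise non-negativity, a point the paper leaves implicit. One small slip: with $\phi(z)=1-z\alpha(\lambda,z)|\Omega|-\hat\Pi_{\lambda,z}(0)$ one has $1/\phi=\chi_{\lambda}/\alpha_{0}$, not $\chi_{\lambda}$, so the Tauberian output gives the coefficients of $\chi_{\lambda}/\alpha_{0}$; since $\alpha_{0}$ has non-negative coefficients with $\sum n^{2}(\alpha_{0})_{n}z_{c}^{n}<\infty$, convolving recovers $c_{n}^{\lambda}$ with the same error term (and an extra factor $\alpha_{0}(\lambda,z_{c})$ in the constant), but you should either note this step or instead take $\phi=\alpha_{0}^{-1}\hat F_{\lambda,z}(0)=\chi_{\lambda}^{-1}$.
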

\begin{proof}
  Apply \Cref{lem:LWW-Tauberian} to $\hat F_{\lambda,z}(0)$. The
  verification of the hypotheses of the theorem are the conclusions of
  \Cref{prop:LWW-Derivative-Small}, \Cref{cor:LWW-Pi-D-NZ}, and
  \Cref{lem:LWW-Susceptibility-Infinite}.
\end{proof}

The proof of the next theorem is essentially the proof for
self-avoiding walk in~\cite{MadrasSlade2013} verbatim; it is
reproduced here for the sake of completeness. The next lemma, which
will be used several times, is stated here for the convenience of the
reader.
\begin{lemma}[name = Lemma~6.3.2 of~\cite{MadrasSlade2013}]
  \label{lem:LWW-SL6.3.2}
  Let $f(z) = \sum_{n=0}^{\infty} a_{n}z^{n}$. Let $R>0$, and suppose
  $f^{\prime}(R) = \sum_{n=0}^{\infty}n \abs{a_{n}}R^{n-1}<\infty$, so
  in particular $f(z)$ converges for $\abs{z}\leq R$. Then for $\abs{z}\leq R$
  \begin{equation}
    \abs{f(z)- f(R)} \leq f^{\prime}(R) \abs{R-z}.
  \end{equation}
  If $f^{\prime\prime}(z)(R)<\infty$, then for $\abs{z}\leq R$
  \begin{equation}
    \abs{f(z) - f(R) - f^{\prime}(R)(z-R)} \leq \frac{1}{2}
    f^{\prime\prime}(R) \abs{R-z}^{2}.
  \end{equation}
\end{lemma}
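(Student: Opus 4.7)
The plan is to prove both inequalities termwise, reducing everything to elementary bounds on $|z^n - R^n|$ and $|z^n - R^n - nR^{n-1}(z-R)|$ valid whenever $|z|\leq R$, and then summing against $|a_n|$.

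For the first bound, I would write
\begin{equation}
f(z) - f(R) = \sum_{n=0}^{\infty} a_n\,(z^n - R^n)
\end{equation}
and use the factorization $z^n - R^n = (z-R)\sum_{k=0}^{n-1} z^k R^{n-1-k}$. When $|z|\leq R$ each term in the sum is bounded by $R^{n-1}$, so $|z^n - R^n| \leq n R^{n-1}|z-R|$. Substituting and applying the triangle inequality gives
\begin{equation}
|f(z) - f(R)| \leq |z-R|\sum_{n=0}^{\infty} n|a_n| R^{n-1} = f'(R)\,|R-z|,
\end{equation}
where $f'(R)$ here is interpreted in the sense of the hypothesis, as the sum of absolute values. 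The interchange of summation with the termwise bound is justified by the assumption that $\sum n|a_n|R^{n-1}<\infty$.

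For the second bound, the same strategy applies but with one extra iteration. The key identity is
\begin{equation}
z^n - R^n - nR^{n-1}(z-R) = (z-R)\sum_{j=0}^{n-2} R^j \bigl(z^{n-1-j} - R^{n-1-j}\bigr),
\end{equation}
obtained by telescoping the first factorization against the constant $R^{n-1}$. Applying the first-order bound to each inner difference $z^{n-1-j} - R^{n-1-j}$ yields, for $|z|\leq R$,
\begin{equation}
|z^n - R^n - nR^{n-1}(z-R)| \leq |z-R|^2 \sum_{j=0}^{n-2} (n-1-j)R^{n-2} = \tfrac{1}{2}n(n-1)R^{n-2}|z-R|^2.
\end{equation}
Multiplying by $|a_n|$ and summing over $n$ gives the claim, with the summability of $\sum n(n-1)|a_n|R^{n-2}$ following from the hypothesis $f''(R)<\infty$.

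I do not expect any genuine obstacle here: the entire proof is two telescoping identities combined with the triangle inequality. The only thing to be careful about is the abuse of notation whereby $f'(R)$ and $f''(R)$ in the statement refer to the sums of the moduli of the coefficients rather than to the complex derivatives, but these agree for real non-negative $a_n$ (which is the only case needed in the subsequent applications to $\hat F_{\lambda,z}(0)$ and its derivatives), and in any case the proof above yields the stated inequalities verbatim.
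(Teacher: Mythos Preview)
Your proof is correct. Note, however, that the paper does not actually supply a proof of this lemma: it is stated ``for the convenience of the reader'' and attributed to Lemma~6.3.2 of Madras--Slade, so there is no in-paper argument to compare against. Your termwise telescoping approach is the standard one, and matches what one finds in Madras--Slade.
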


\begin{theorem}
  \label{thm:LWW-Diffusive}
  For $d$ sufficiently large $\lambda$-LWW is diffusive:
  \begin{equation}
    \label{eq:LWW-Diffusive}
    \ab{ \abs{\omega(n)}^{2}}^{\lambda}_{n} = Dn(1+O(n^{-\delta}))
  \end{equation}
  as $n\to\infty$ for any $\delta<1$. The constant $D$ is that
  of~\eqref{eq:LWW-Constant-D}.
\end{theorem}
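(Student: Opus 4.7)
The plan is to compute the moment generating function
\[
M_2(z) \;:=\; \sum_{x\in\Z^d} \abs{x}^2 G_{\lambda,z}(x) \;=\; \sum_{n\geq 0} c_n^{\lambda}\, \ab{\abs{\omega_n}^2}^{\lambda}_n\, z^n
\]
and extract its coefficient asymptotics. By Fourier inversion together with the $\Z^d$-symmetry of the weight (which forces $\nabla_k\hat F_{\lambda,z}(0)=0$) and the lace expansion identity $\hat G_{\lambda,z}=1/\hat F_{\lambda,z}$ from \Cref{prop:LWW-Lace-Expansion-WLAW},
\[
M_2(z) \;=\; -\nabla_k^2 \hat G_{\lambda,z}(0) \;=\; \nabla_k^2 \hat F_{\lambda,z}(0)\,\chi_{\lambda}(z)^2,
\]
so $M_2$ inherits a double pole at $z_c$ from $\chi_\lambda^2$, with a regular prefactor evaluating at $z_c$ to $\nabla_k^2\hat F_{\lambda,z_c}(0) = D(\lambda)/A(\lambda)$ by the definitions \eqref{eq:LWW-Constant-A}--\eqref{eq:LWW-Constant-D}.

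To decompose the singular part, I would invoke \Cref{lem:LWW-Tauberian} applied to $\phi(z)=\hat F_{\lambda,z}(0)$, whose hypotheses have already been verified in the proof of \Cref{thm:LWW-Susceptibility-MF} via \Cref{prop:LWW-Derivative-Small}, \Cref{cor:LWW-Pi-D-NZ}, and \Cref{lem:LWW-Susceptibility-Infinite}. The lemma yields the stronger statement $\chi_\lambda(z) = A(\lambda)z_c/(z_c-z) + R(z)$ with $R$ bounded uniformly on $\abs{z}\leq z_c$. In parallel, \Cref{prop:LWW-Moment-Bounds}, \Cref{prop:LWW-Moment-Derivative-Small}, and \Cref{prop:LWW-Alpha-DB} show that the prefactor $\nabla_k^2\hat F_{\lambda,z}(0) = -z\alpha(\lambda,z)\abs{\Omega}\nabla_k^2\hat D(0) - \nabla_k^2\hat\Pi_{\lambda,z}(0)$ is continuously differentiable on $\abs{z}\leq z_c$, hence equals $\nabla_k^2\hat F_{\lambda,z_c}(0) + O(z_c-z)$. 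Combining,
\[
M_2(z) \;=\; \frac{A(\lambda)\,D(\lambda)\,z_c^2}{(z_c-z)^2} \;+\; \frac{s(z)}{z_c-z} \;+\; T(z),
\]
with $s$ and $T$ bounded on $\abs{z}\leq z_c$.

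To read off the Taylor coefficients, observe $[z^n](z_c-z)^{-2} = (n+1)z_c^{-n-2}$, and apply the contour-shift argument underlying \Cref{lem:LWW-Tauberian}, extended to the double-zero case by strengthening \eqref{eq:LWW-Taub-2} to an $n^3$-summability condition (which \Cref{prop:LWW-Moment-Derivative-Small} supplies). This yields
\[
c_n^\lambda\,\ab{\abs{\omega_n}^2}^\lambda_n \;=\; A(\lambda)\, D(\lambda)\, (n+1)\, z_c^{-n}\, \bigl(1+O(n^{-\delta})\bigr)
\]
for any $\delta<1$. Dividing by $c_n^\lambda = A(\lambda)z_c^{-n}(1+O(n^{-\delta}))$ from \Cref{thm:LWW-Growth-Rate} and absorbing the $O(n^{-1})$ coming from $(n+1) = n(1+n^{-1})$ into the error term yields \eqref{eq:LWW-Diffusive}.

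The main obstacle is the Tauberian step for the double pole: \Cref{lem:LWW-Tauberian} as stated handles simple poles, so one must either extend its proof directly to the double-zero setting or, equivalently, apply it to $\chi_\lambda$ and then perform a careful Cauchy-product analysis of $\chi_\lambda(z)^2$ to propagate the $O(n^{-\delta})$ error rate. Either approach hinges on two $z$-derivatives of $\Pi_{\lambda,z}$ and of its second moment being controlled on $\abs{z}\leq z_c$; these are precisely the diagrammatic bounds of \Cref{prop:LWW-Derivative-Small}, \Cref{prop:LWW-Moment-Bounds}, and \Cref{prop:LWW-Moment-Derivative-Small}. The non-vanishing $\nabla_k^2\hat F_{\lambda,z_c}(0)\neq 0$ needed to identify the leading constant follows from $-z_c\alpha(\lambda,z_c)\abs{\Omega}\nabla_k^2\hat D(0)$ being of order one while $\nabla_k^2\hat\Pi_{\lambda,z_c}(0) = O(\beta)$ by \Cref{lem:LWW-Pi-2M}.
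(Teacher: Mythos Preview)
Your approach is essentially the paper's: both compute $-\nabla_k^2\hat c_n^\lambda(0)$ as the $z^n$-coefficient of $\nabla_k^2\hat F_{\lambda,z}(0)/\hat F_{\lambda,z}(0)^2$, isolate the double pole at $z_c$, and control the remainder using the same diagrammatic inputs (\Cref{prop:LWW-Derivative-Small}, \Cref{prop:LWW-Moment-Derivative-Small}, \Cref{lem:LWW-Susceptibility-Infinite}, \Cref{cor:LWW-Pi-D-NZ}). The paper packages the remainder step differently: rather than factoring through $\chi_\lambda^2$ and invoking a double-pole Tauberian extension or a Cauchy-product argument, it works directly with Cauchy's integral formula, writes the integrand as the explicit leading term $\nabla_k^2\hat F_{\lambda,z_c}(0)\big/\big[\tfrac{d}{dz}\hat F_{\lambda,z_c}(0)\big]^2(z_c-z)^{-2}$ plus an error $E(z)=T_1(z)+T_2(z)$, and verifies $|E(z)|\leq C|z_c-z|^{-1}$ on $|z|\leq z_c$ by elementary manipulations using \Cref{lem:LWW-SL6.3.2}. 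This bound is precisely what feeds the coefficient extraction giving the $O(n^{-\delta})$ error, and it requires only the two $z$-derivatives already established---your mention of an $n^3$-summability condition is stronger than what is actually needed (and stronger than what \Cref{prop:LWW-Moment-Derivative-Small} provides), but your alternative Cauchy-product route through \Cref{thm:LWW-Growth-Rate} avoids this and works with the available bounds.
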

\begin{proof}
  Let $\nabla^{2}_{k}$ denote the $k$-space Laplacian. Then
  \begin{equation}
    \label{eq:LWW-Diffusive-1}
    \ab{ \abs{\omega(n)}^{2}}_{\lambda,n} = -\frac{\nabla^{2}_{k}\hat
      c_{n}^{\lambda}(0)}{ c_{n}^{\lambda}}. 
  \end{equation}
  Since $\hat c_{n}^{\lambda}(k)$ is the coefficient of $z^{n}$ in $\hat
  G_{\lambda,z}(k)$ Cauchy's formula implies
  \begin{equation}
    \label{eq:LWW-Diffusive-2}
    -\nabla^{2}_{k}\hat c_{n}^{\lambda}(0) = \frac{1}{2\pi i} \oint \frac{
      \nabla^{2}_{k} \hat F_{\lambda,z}(0)}{ \hat
      F_{\lambda,z}(0)^{2}} \frac{dz}{z^{n+1}},
  \end{equation}
  where the integral is around a small origin centred circle. Define
  $E(z)$ by
  \begin{equation}
    \label{eq:LWW-Diffusive-3}
    \frac{\nabla^{2}_{k} \hat F_{\lambda,z}(0)}{ \hat
      F_{\lambda,z}(0)^{2}} = \frac{ \nabla^{2}_{k}\hat F_{z_{c}}(0)}
    { \cb{ \frac{d}{dz}\hat F_{z_{c}}(0)}^{2}(z_{c}-z)^{2}} + E(z).
  \end{equation}
  Making this substitution into \Cref{eq:LWW-Diffusive-2} and calculating
  the first integral implies
  \begin{equation}
    \label{eq:LWW-Diffusive-4}
    -\nabla^{2}_{k}\hat c_{n}^{\lambda}(0) = \frac{ \nabla^{2}_{k}\hat F_{z_{c}}(0)}
    { \cb{ \frac{d}{dz}\hat F_{z_{c}}(0)}^{2}}(n+1)z_{c}^{-n-2} +
    \frac{1}{2\pi i} \oint E(z) \frac{dz}{z^{n+1}}.
  \end{equation}

  Assuming the integral of $E(z)$ is $O(n^{\delta}z_{c}^{-n})$ for every
  $\delta>0$ implies the theorem by inserting the behaviour of
  $c_{n}^{\lambda}$ given by \Cref{thm:LWW-Growth-Rate}.

  To verify the assumption it suffices by
  \Cref{lem:LWW-SL6.3.2}
  to prove $\abs{E(z)} \leq \mathrm{const.}  \abs{z_{c}-z}^{-1}$ for
  all $\abs{z}\leq z_{c}$. Split $E(z)$ as $E(z) = T_{1}(z) +
  T_{2}(z)$ with
  \begin{align}
    \label{eq:LWW-Diffusive-6}
    T_{1}(z) &= \cb{\frac{d}{dz}\hat F_{\lambda,z_{c}}(0)}^{-2} \frac{
      \nabla^{2}_{k} \hat F_{\lambda,z}(0) - \nabla^{2}_{k}\hat
      F_{\lambda,z_{c}}(0)}{(z_{c}-z)^{2}} \\
    \label{eq:LWW-Diffusive-7}
    T_{2}(z) &= \frac{ - \nabla^{2}_{k} \hat F_{\lambda,z}(0) \cb{
        \hat F_{\lambda,z}(0)^{2} - \cb{\frac{d}{dz}\hat
          F_{\lambda,z_{c}}(0)}^{2}(z_{c}-z)^{2}}
    }{\cb{\frac{d}{dz}\hat F_{\lambda,z_{c}}(0)}^{2} \hat
      F_{\lambda,z}(0)^{2}(z_{c}-z)^{2}}.
  \end{align}

  The numerator of $T_{1}(z)$ is differentiable in $z$ by
  \Cref{prop:LWW-Moment-Derivative-Small}, so (i) of
  \Cref{lem:LWW-SL6.3.2}
  implies the numerator is bounded above by a constant times
  $\abs{z_{c}-z}$. It follows that $\abs{T_{1}}\leq
  O(\abs{z_{c}-z}^{-1})$.

  For $T_{2}$ note that $\hat F_{\lambda,z}(0)^{2} \geq
  \textrm{const.}\abs{z_{c}-z}^{2}$ by
  \Cref{lem:LWW-Susceptibility-Infinite} so
  \begin{align}
    \label{eq:LWW-Diffusive-8}
    \abs{T_{2}(z)} \leq \mathrm{const.}\abs{z_{c}-z}^{-4} \cb{ \hat
      F_{\lambda,z}(0) + \frac{d}{dz}\hat
      F_{\lambda,z_{c}}(0)(z_{c}-z)} \cb{ \hat F_{\lambda,z}(0)
      - \frac{d}{dz} \hat F_{\lambda,z_{c}}(0)(z_{c}-z)},
  \end{align}
  as $\nabla^{2}_{k}\hat F_{\lambda,z}(0)$ is bounded by a constant by
  \Cref{prop:LWW-Moment-Bounds}. By (ii) of \Cref{lem:LWW-SL6.3.2},
  \Cref{prop:LWW-Moment-Derivative-Small}, and $\hat
  F_{\lambda,z_{c}}(0) = 0$, the middle term is
  $O(\abs{z_{c}-z}^{2})$. Using (i) of \Cref{lem:LWW-SL6.3.2} for
  $\hat F_{\lambda,z}(0)$ in the last term shows the last term is
  $O(\abs{z_{c}-z})$. Thus $\abs{T_{2}(z)}\leq O(\abs{z_{c}-z}^{-1})$,
  which proves the claim.
\end{proof}

\appendix

\section{Loop Measure Representation of $\lambda$-LWW}
\label{sec:LWW-Viennot}

The purpose of this appendix is to provide a proof of
\Cref{thm:LWW-LM-Rep}. 

A fundamental property of $\lambda$-LWW is that it admits a loop
measure representation. The representation follows from a theorem of
Viennot~\cite{Viennot1986} and is proved via the theory of heaps of
pieces in \Cref{app:LWW-LM-Rep}.

\begin{remark}
  \label{rem:LWW-Use-of-Heaps}
  The methods of~\cite[Chapter~9]{LawlerLimic2010} are sufficient to
  derive formulas that would suffice for the lace expansion analysis
  of $\lambda$-LWW. These methods have the benefit of brevity, but
  they do not reveal the connection with the loop $O(N)$ model. For
  this reason we have chosen to present a more scenic route here.
\end{remark}

The rest of this section will take place in the context of an
arbitrary graph $G$, as specializing to $\Z^{d}$ does not provide any
simplification. The theory of heaps of pieces will be freely used;
see~\cite{Viennot1986} or~\cite{Krattenthaler2006} for an introduction.

\subsection{Viennot's Theorem}
\label{app:LWW-Viennot}

\begin{definition}
  \label{def:LWW-Cycle}
  A \emph{trivial cycle} is a single edge of $G$. An \emph{oriented
    cycle} is either (i) an oriented cyclic subgraph of $G$ or (ii) a
  trivial cycle in $G$.
\end{definition}

An oriented cycle corresponds to an equivalence class of self-avoiding
polygons, where a self-avoiding polygon $\omega = (\omega_{0}, \dots,
\omega_{k}=\omega_{0})$ is equivalent to any cyclic permutation
$\tilde\omega = (\omega_{r}, \omega_{r+1}, \dots, \omega_{k},
\omega_{1}, \dots, \omega_{r})$. For example, a trivial cycle
$\{x,y\}$ corresponds to the self-avoiding polygons $(x,y,x)$ and
$(y,x,y)$, while an oriented 3-cycle corresponds to walks of the form
$(x,y,z,x)$ and cyclic permutations thereof for $x,y,z$ distinct.

\begin{definition}
  \label{def:LWW-Heap-Cycles}
  A \emph{heap of (oriented) cycles} is a heap of pieces whose labels
  are oriented cycles. Two oriented cycles
  $C_{1}$, $C_{2}$ are concurrent if $V(C_{1})\cap V(C_{2}) \neq
  \emptyset$, i.e., if the cycles share a vertex.
\end{definition}

\begin{definition}
  \label{def:LWW-IH-Legal-Pair}
  A pair $(\eta, H)$ where $\eta$ is a self-avoiding walk from $a$
  to $b$ and $H$ is a heap of cycles whose maximal elements' labels
  each contain a vertex in $\eta$ is called a \emph{legal $(a,b)$
    pair}. Let $\WH(a,b)$ denote the set of legal $(a,b)$ pairs, and
  $\WH$ denote the set of all legal pairs.
\end{definition}

\Cref{thm:LWW-LM-Rep}, the loop measure representation of
$\lambda$-LWW, is a byproduct of the proof of the following theorem
of Viennot.
\begin{theorem}[name={\cite[Proposition~6.3]{Viennot1986}}]
  \label{thm:LWW-Bijection}
  There is a bijection $\phi_{ab}$ from the set $\WH(a,b)$ of legal
  $(a,b)$ pairs to the set of walks $\Walks(a,b)$ from $a$ to
  $b$. Further,
  \begin{enumerate}
  \item The multi-set of edges in a legal $(a,b)$ pair $(\eta,H)$ is
    the same as the multi-set of edges in the walk
    $\phi_{ab}((\eta,H))$.
  \item The multi-set of oriented cycles $\{ \ell(x) \mid x\in H\}$
    for a heap $(H,\ell,\preceq)$ is the same as the multi-set of oriented
    cycles that are erased by applying loop erasure to $\phi_{ab}(
    (\eta,H))$.
  \end{enumerate}
\end{theorem}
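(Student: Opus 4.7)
The plan is to construct $\phi_{ab}$ and its inverse $\psi_{ab}\colon\Walks(a,b)\to\WH(a,b)$ explicitly, check that they are mutually inverse by induction on the number of pieces in the heap, and then read off properties (1) and (2) from the construction.

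The forward map $\psi_{ab}$ encodes loop erasure. Given $\omega\colon a\to b$, set $\eta=\LE(\omega)$, which is self-avoiding from $a$ to $b$ by \Cref{def:LWW-Erasure}, and let $\ell_{1},\dots,\ell_{k}$ be the oriented cycles removed by successive applications of $\LE^{1}$, in chronological order. Form $H_{\omega}$ by dropping these cycles one by one into an initially empty heap. Each $\ell_{i}$ shares a vertex with the current loop-erased prefix by construction of $\LE^{1}$, and tracking this vertex through subsequent erasures shows that every maximal piece of $H_{\omega}$ contains a vertex of the final $\eta$, so $\psi_{ab}(\omega)=(\eta,H_{\omega})$ is legal.

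The backward map $\phi_{ab}$ is defined recursively on $\abs{H}$. If $H=\emptyset$, output $\eta$. Otherwise let $v$ be the earliest vertex of $\eta$ that lies in some piece of $H$; by legality at least one maximal piece contains $v$, and among these the heap partial order selects a unique topmost piece $C$. Write $\omega_{C}$ for the cyclic traversal of $C$ starting and ending at $v$ (determined by the orientation of $C$), and split $\eta=\eta'\circ\eta''$ at $v$. A short verification shows that $(\eta'',H\setminus\{C\})$ is again a legal pair: any piece that becomes newly maximal in $H\setminus\{C\}$ sat directly below $C$, hence shares a vertex with $C$, and unwinding this chain back through the heap shows it ultimately shares a vertex with $\eta$ at or past $v$. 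Set
\[
  \phi_{ab}((\eta,H)) \;=\; \eta'\circ\omega_{C}\circ\phi_{ab}\bigl((\eta'',H\setminus\{C\})\bigr).
\]

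The main obstacle is proving $\psi_{ab}\circ\phi_{ab}=\mathrm{id}$ and $\phi_{ab}\circ\psi_{ab}=\mathrm{id}$ by induction on $\abs{H}$ (equivalently, on $\abs{\omega}-\abs{\LE(\omega)}$). The inductive step hinges on the claim that the very first loop removed by $\LE^{1}$ from $\phi_{ab}((\eta,H))$ is precisely $C$: the walk $\eta'\circ\omega_{C}$ is self-avoiding up to its last vertex (the second occurrence of $v$), so $\tau$ and $\tau^{\star}$ fall exactly at the endpoints of $\omega_{C}$. The identification of this first-erased loop with the topmost piece at $v$ is the combinatorial heart of Viennot's theorem and amounts to showing that the chronological order of loop erasure is compatible with the heap partial order on concurrent pieces. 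Once this is established, property (1) is immediate because $\omega_{C}$ contributes exactly the edges of $C$ and the union over recursion steps accounts for all edges of $\phi_{ab}((\eta,H))$, while property (2) follows because the loops peeled off top-down by the recursion are, in the same order, the loops removed by iterated application of $\LE^{1}$.
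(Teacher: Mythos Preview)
Your overall plan --- define $\psi_{ab}$ via iterated $\LE^{1}$, define $\phi_{ab}$ by reinserting cycles, and prove they are mutual inverses by induction on $\abs{H}$ --- matches the paper's, and your $\psi_{ab}$ is exactly the paper's loop erasure algorithm (\Cref{lem:LWW-Erasure-Heap} formalises your claim that its output is legal). The gap is in your recursion for $\phi_{ab}$: the pair $(\eta'',H\setminus\{C\})$ on which you recurse need not lie in $\WH$. A piece that becomes newly maximal after removing $C$ sat directly below $C$ and therefore shares a vertex with $C$, but that shared vertex is typically not in $\eta$ at all, and no amount of ``unwinding this chain back through the heap'' produces one. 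Concretely, take $\eta=(a,b)$, a cycle $C_{1}$ through $\{a,c\}$, a cycle $C_{2}$ through $\{c,d\}$ with $c,d\notin\{a,b\}$, and form $H$ with $C_{2}\prec C_{1}$. The pair $(\eta,H)$ is legal since the unique maximal piece $C_{1}$ contains $a\in\eta$; your algorithm selects $v=a$, $C=C_{1}$, and recurses on $((a,b),\{C_{2}\})$. But $C_{2}$ is now maximal and disjoint from $\{a,b\}$, so this pair is not legal and the recursion is undefined. (There is a smaller issue as well: the earliest vertex of $\eta$ lying in \emph{some} piece need not lie in any \emph{maximal} piece.)

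The paper's loop addition algorithm avoids this by inserting cycles in the opposite order. At each step one inserts into the current walk $\omega^{i-1}$ the maximal piece $C$ whose first intersection with $\omega^{i-1}$ occurs \emph{latest} (the walk order of \Cref{def:LWW-Walk-Order}), then removes that piece from the heap. Because the walk \emph{grows}, the invariant ``every maximal piece of $H^{i}$ meets $\range{\omega^{i}}$'' is preserved: pieces newly exposed below $C$ share a vertex with $C$, hence with $\omega^{i}$, while the remaining maximal pieces already met $\omega^{i-1}$ and thus $\omega^{i}$. In this order the \emph{last} cycle inserted is the one meeting the final walk earliest, and \Cref{lem:LWW-Insert-Remove} shows it is exactly the cycle $\LE^{1}$ removes first. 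Your observation that $\eta'\circ\omega_{C}$ is self-avoiding up to its final vertex is correct and is essentially the seed of that lemma; what your construction lacks is an insertion order under which the recursion stays inside $\WH$.
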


\Cref{thm:LWW-Bijection} is not proven in~\cite{Viennot1986}. For the sake
of completeness and the convenience of the reader a proof is given in
\Cref{app:LWW-Viennot-Proof}. The remainder of this section consists of a
heuristic description of the proof; see also
\Cref{fig:LWW-IH-Bijection} which depicts the proof strategy.

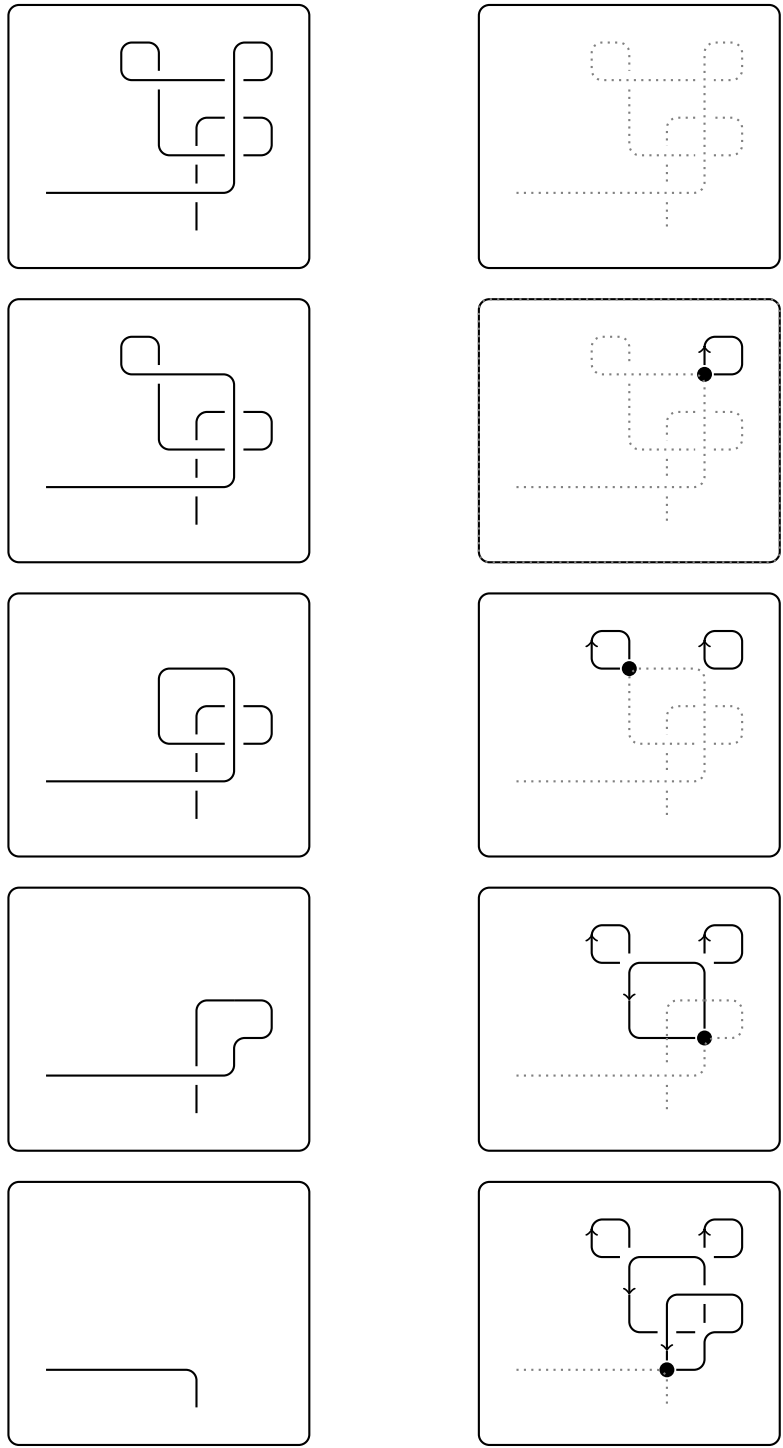
\begin{figure}[]
  \centering
  \begin{subfigure}{1.0\textwidth}
    \centering
  \beginpgfgraphicnamed{fig7a}
    \begin{tikzpicture}[scale=.5]
      \foreach \x in {0,...,6} \foreach \y in {0,...,5} \node (\x\y)
      at (\x,\y) {};

     \draw[black,thick, rounded corners] (-1,-1) rectangle (7,6);
     \draw[black, thick, rounded corners] (0,1) to (5,1) to (5,5) to (6,5) to
     (6,4) to (54);
     \draw[black, thick, rounded corners] (54) to (2,4) to (2,5) to (3,5) to (34);
     \draw[black, thick, rounded corners] (34) to (3,2) to (52);
     \draw[black, thick, rounded corners] (52) to (6,2) to (6,3) to
     (53);
     \draw[black, thick, rounded corners] (53) to (4,3) to (42);
     \draw[black, thick, rounded corners] (42) to (41);
     \draw[black,thick,rounded corners] (41) to (4,0);
   \end{tikzpicture}
   \endpgfgraphicnamed
   \qquad\qquad\qquad
  \beginpgfgraphicnamed{fig7b}
   \begin{tikzpicture}[scale=.5]
      \foreach \x in {0,...,6} \foreach \y in {0,...,5} \node (\x\y)
      at (\x,\y) {};
     \draw[black,thick,rounded corners] (-1,-1) rectangle (7,6);

     \draw[gray, thick, dotted, rounded corners] (0,1) to (5,1) to (5,5) to (6,5) to
     (6,4) to (54);
     \draw[gray, thick, dotted, rounded corners] (54) to (2,4) to
     (2,5) to (3,5) to (34); 
     \draw[gray, thick, dotted, rounded corners] (34) to (3,2) to (52);
     \draw[gray, thick, dotted, rounded corners] (52) to (6,2) to (6,3) to
     (53);
     \draw[gray, thick, dotted, rounded corners] (53) to (4,3) to (42);
     \draw[gray, thick, dotted, rounded corners] (42) to (41);
     \draw[gray, thick, dotted, rounded corners] (41) to (4,0);
   \end{tikzpicture}
   \endpgfgraphicnamed
 \end{subfigure}%

  \vspace{10pt}

  \begin{subfigure}{1.0\textwidth}
    \centering
  \beginpgfgraphicnamed{fig7c}
    \begin{tikzpicture}[scale=.5]
      \foreach \x in {0,...,6} \foreach \y in {0,...,5} \node (\x\y)
      at (\x,\y) {};

     \draw[black,thick, rounded corners] (-1,-1) rectangle (7,6);
     \draw[black,thick, rounded corners] (0,1) to (5,1) to (5,4)
     to (2,4) to (2,5) to (3,5) to (34);
     \draw[black, thick, rounded corners] (34) to (3,2) to (52);
     \draw[black, thick, rounded corners] (52) to (6,2) to (6,3) to
     (53);
     \draw[black, thick, rounded corners] (53) to (4,3) to (42);
     \draw[black, thick, rounded corners] (42) to (41);
     \draw[black,thick,rounded corners] (41) to (4,0);     
   \end{tikzpicture}
   \endpgfgraphicnamed
   \qquad\qquad\qquad
     \beginpgfgraphicnamed{fig7d}
   \begin{tikzpicture}[scale=.5]
     \foreach \x in {0,...,6} \foreach \y in {0,...,5} \node (\x\y) at
     (\x,\y) {}; 
     
     \draw[black,thick,rounded corners] (-1,-1) rectangle
     (7,6);
     \draw[black, thick, rounded corners,->]  (5,4.75) to (5,5) to (6,5) to
     (6,4) to (54) to (5,4.75);
     \node[black,inner sep=2pt,fill,circle] at (54) {};

     \draw[gray,thick,dotted, rounded corners] (-1,-1) rectangle (7,6);
     \draw[gray,thick,dotted, rounded corners] (0,1) to (5,1) to (5,4)
     to (2,4) to (2,5) to (3,5) to (34);
     \draw[gray,thick,dotted, rounded corners] (34) to (3,2) to (52);
     \draw[gray,thick,dotted, rounded corners] (52) to (6,2) to (6,3) to
     (53);
     \draw[gray,thick,dotted, rounded corners] (53) to (4,3) to (42);
     \draw[gray,thick,dotted, rounded corners] (42) to (41);
     \draw[gray,thick,dotted, rounded corners] (41) to (4,0); 
   \end{tikzpicture}
   \endpgfgraphicnamed
  \end{subfigure}%

  \vspace{10pt}

  \begin{subfigure}{1.0\textwidth}
    \centering
      \beginpgfgraphicnamed{fig7e}
    \begin{tikzpicture}[scale=.5]
      \foreach \x in {0,...,6} \foreach \y in {0,...,5} \node (\x\y)
      at (\x,\y) {};

     \draw[black,thick, rounded corners] (-1,-1) rectangle (7,6);
     \draw[black,thick, rounded corners] (0,1) to (5,1) to (5,4) to
     (3,4) to (3,2) to (52);
     \draw[black, thick, rounded corners] (52) to (6,2) to (6,3) to
     (53);
     \draw[black, thick, rounded corners] (53) to (4,3) to (42);
     \draw[black, thick, rounded corners] (42) to (41);
     \draw[black,thick,rounded corners] (41) to (4,0);
   \end{tikzpicture}
   \endpgfgraphicnamed
   \qquad\qquad\qquad
     \beginpgfgraphicnamed{fig7f}
   \begin{tikzpicture}[scale=.5]
     \foreach \x in {0,...,6} \foreach \y in {0,...,5} \node (\x\y) at
     (\x,\y) {}; 
     
     \draw[black,thick,rounded corners] (-1,-1) rectangle
     (7,6);
     \draw[black, thick, rounded corners,->]  (5,4.75) to (5,5) to (6,5) to
     (6,4) to (5,4) to (5,4.75);
     \draw[black,thick,rounded corners,->] (2,4.75) to (2,5) to
     (3,5) to (34) to (2,4) to (2,4.75);
     \node[black,inner sep=2pt,fill,circle] at (34) {};

     \draw[gray,thick,dotted,rounded corners] (0,1) to (5,1) to (5,4) to
     (3,4) to (3,2) to (52);
     \draw[gray,thick,dotted,rounded corners] (52) to (6,2) to (6,3) to
     (53);
     \draw[gray,thick,dotted,rounded corners] (53) to (4,3) to (42);
     \draw[gray,thick,dotted,rounded corners] (42) to (41);
     \draw[gray,thick,dotted,rounded corners] (41) to (4,0);
   \end{tikzpicture}
   \endpgfgraphicnamed
  \end{subfigure}%

  \vspace{10pt}

  \begin{subfigure}{1.0\textwidth}
    \centering
      \beginpgfgraphicnamed{fig7g}
    \begin{tikzpicture}[scale=.5]
      \foreach \x in {0,...,6} \foreach \y in {0,...,5} \node (\x\y)
      at (\x,\y) {};

     \draw[black,thick, rounded corners] (-1,-1) rectangle (7,6);
     \draw[black,thick, rounded corners] (0,1) to (5,1) to (5,2) to (6,2) to (6,3) to
     (5,3);
     \draw[black, thick, rounded corners] (5,3) to (4,3) to (4,2);
     \draw[black, thick, rounded corners] (4,2) to (41);
     \draw[black,thick,rounded corners] (41) to (4,0);
   \end{tikzpicture}
   \endpgfgraphicnamed
   \qquad\qquad\qquad
  \beginpgfgraphicnamed{fig7h}
   \begin{tikzpicture}[scale=.5]
     \foreach \x in {0,...,6} \foreach \y in {0,...,5} \node (\x\y) at
     (\x,\y) {}; 
     
     \draw[black,thick,rounded corners] (-1,-1) rectangle
     (7,6);
     \draw[black, thick, rounded corners,->]  (5,4.75) to (5,5) to (6,5) to
     (6,4) to (54) to (5,4.75);
     \draw[black,thick,rounded corners,->] (2,4.75) to (2,5) to
     (3,5) to (34) to (2,4) to (2,4.75);
     \draw[black, thick, rounded corners,->] (3,3) to
     (3,2) to (52) to (5,4) to (3,4) to (3,3);
     \node[black,inner sep=2pt,fill,circle] at (52) {};

    \draw[gray,thick,dotted, rounded corners] (0,1) to (5,1) to (5,2)
     to (6,2) to (6,3) to (5,3); 
     \draw[gray,thick,dotted, rounded corners] (5,3) to (4,3) to (4,2);
     \draw[gray,thick,dotted, rounded corners] (4,2) to (41);
     \draw[gray,thick,dotted, rounded corners] (41) to (4,0);
   \end{tikzpicture}
   \endpgfgraphicnamed
  \end{subfigure}%

  \vspace{10pt}

  \begin{subfigure}{1.0\textwidth}
    \centering
  \beginpgfgraphicnamed{fig7i}
    \begin{tikzpicture}[scale=.5]
      \foreach \x in {0,...,6} \foreach \y in {0,...,5} \node (\x\y)
      at (\x,\y) {};

     \draw[black,thick, rounded corners] (-1,-1) rectangle (7,6);
     \draw[black,thick, rounded corners] (0,1) to (4,1) to (4,0);
   \end{tikzpicture}
   \endpgfgraphicnamed
   \qquad\qquad\qquad
  \beginpgfgraphicnamed{fig7j}
   \begin{tikzpicture}[scale=.5]
     \foreach \x in {0,...,6} \foreach \y in {0,...,5} \node (\x\y) at
     (\x,\y) {}; 
     
     \draw[black,thick,rounded corners] (-1,-1) rectangle
     (7,6);
     \draw[black, thick, rounded corners,->]  (5,4.75) to (5,5) to (6,5) to
     (6,4) to (54) to (5,4.75);
     \draw[black,thick,rounded corners,->] (2,4.75) to (2,5) to
     (3,5) to (34) to (2,4) to (2,4.75);
     \draw[black, thick, rounded corners,->] (3,3) to
     (3,2) to (42) to (52) to (53) to (5,4) to (3,4) to (3,3);
     \draw[black,thick,rounded corners,->] (4,1.5) to (41) to (5,1) to (5,2) to
     (6,2) to (6,3) to (4,3) to (4,1.5);
     \node[black,inner sep=2pt,fill,circle] at (41) {};
     
     \draw[gray,thick,dotted,rounded corners] (0,1) to (4,1) to (4,0);
   \end{tikzpicture}
   \endpgfgraphicnamed
  \end{subfigure}%
  \caption{The figure illustrates the bijection between valid pairs
    $(\eta,H)$ and walks $\omega$ whose loop erasure is $\eta$. The
    left-hand side shows the results of successive applications of
    $\LE^{1}$, culminating in a self-avoiding walk. The right hand
    side shows the heaps of oriented cycles generated, with the walk
    displayed in dotted gray. Each heap has been given a distinguished
    vertex. The vertex indicates the oriented cycle that is maximal in
    the walk order as well as the location at which this oriented
    cycle is glued in to the corresponding walk when performing loop
    addition.}
  \label{fig:LWW-IH-Bijection}
\end{figure}
Let $\omega$ be a walk from $a$ to $b$. Trace $\omega$ until the first
time a vertex is visited twice. This identifies a first closed subwalk
$C_{1} = (\omega_{\tau^{\star}_{\omega}}, \dots,
\omega_{\tau_{\omega}})$. Remove $C_{1}$ by performing a single loop
erasure, and form a heap of pieces consisting of a single piece
labelled $C_{1}$. The first time a vertex is visited twice by the walk
$\LE^{1}(\omega)$ identifies a second closed subwalk, call this
$C_{2}$. Remove $C_{2}$ and form a new heap of pieces by adding a
second piece labelled $C_{2}$ to the heap consisting of
$C_{1}$. Continuing in this manner removes all of the closed subwalks
from $\omega$, resulting in a self-avoiding walk $\eta$ from $a$ to
$b$. Each maximal piece in the heap is labelled by a cycle that shares
a vertex with $\eta$. In other words, this procedure converts each walk from
$a$ to $b$ into a legal pair $(\eta,H)$.

Conversely, consider a legal pair $(\eta,H)$. To invert the procedure
what is required is a way to reduce the heap to the empty heap one
piece at a time, while inserting the labels of the removed pieces into
the (initially) self-avoiding walk $\eta$. This is relatively
straightforward: the maximal pieces of the heap $H$ have labels that
share a vertex with $\eta$, and hence the maximal pieces can be
ordered by using the linear order on vertices in $\eta$. Take the
maximal piece in this order, remove it from the heap to get a heap
$H^{\prime}$, and glue the corresponding label into $\eta$ to get a
walk $\eta^{\prime}$. The maximal elements of $H^{\prime}$ have labels
that share a vertex with $\eta^{\prime}$, and hence this procedure can
be iterated.

These operations are in fact inverses of one another. The next section
makes the preceding discussion precise.

\subsection{Proof of Viennot's Theorem}
\label{app:LWW-Viennot-Proof}

The theorem requires two algorithms, one which inserts oriented cycles
into a given walk, and one which removes oriented cycles from a
walk. Removing oriented cycles is achieved by loop erasure. The other
algorithm is introduced now.

\begin{definition}
  \label{def:LWW-Loop-Insertion}
  Let $\omega$ be a walk of length $n$, and let $C$ be an oriented
  cycle of length $k$. Assume that $C$ and $\omega$ have a vertex in
  common, and let $i$ be the minimal index such that $\omega_{i}$ is a
  vertex in $C$. Let $(c_{0}, \dots, c_{k})$ be the
  unique representative of $C$ such that $c_{0}=\omega_{i}$. The \emph{loop
    insertion $\omega \loopadd C$ of $C$ into $\omega$} is the walk
  $(\omega_{0}, \dots, \omega_{i-1}, c_{0}, \dots, c_{k}, \omega_{i+1},
  \dots, \omega_{n})$.
\end{definition}

In words, to insert a loop $C$ into a walk $\omega$ we find the first
vertex $\omega_{i}$ in $\omega$ that is contained in $C$. $C$ is then
rooted at $\omega_{i}$, $\omega$ is traversed until just before
reaching $\omega_{i}$, $C$ is traversed, and then the remainder of
$\omega$ is traversed.

\begin{lemma}
  \label{lem:LWW-Remove-Insert}
  Let $\omega$ be a walk, and let $C$ be the oriented
  cycle removed to create $\LE^{1}(\omega)$. Then
  $\LE(\omega)\loopadd C = \omega$.
\end{lemma}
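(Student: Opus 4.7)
The plan is to verify the identity by unpacking the definitions of $\LE^{1}$ and $\loopadd$ and matching them term by term. (I read the statement as $\LE^{1}(\omega)\loopadd C = \omega$, which is the natural single-step inverse; the version with the iterated $\LE$ would only hold if $\omega$ had a single loop.) Set $\tau = \tau_{\omega}$, $\tau^{\star} = \tau^{\star}_{\omega}$, and $n=\abs{\omega}$. By \Cref{def:LWW-Erasure} the erased cycle is $C = (\omega_{\tau^{\star}},\omega_{\tau^{\star}+1},\dots,\omega_{\tau})$ with $\omega_{\tau^{\star}} = \omega_{\tau}$, so its vertex set is $V(C) = \{\omega_{\tau^{\star}},\dots,\omega_{\tau-1}\}$, and
\[
\LE^{1}(\omega) = (\omega_{0}, \dots, \omega_{\tau^{\star}\wedge n}, \omega_{\tau+1}, \dots, \omega_{n}).
\]

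The main step is to show that the loop insertion of $C$ into $\LE^{1}(\omega)$ takes place at position $\tau^{\star}$. By \Cref{def:LWW-Loop-Insertion} the insertion point is the smallest index $i$ in $\LE^{1}(\omega)$ whose vertex lies in $V(C)$. The minimality in the definition $\tau_{\omega}=\min\{i \mid \exists\, j<i,\ \omega_{i}=\omega_{j}\}$ guarantees that $\omega_{0},\dots,\omega_{\tau-1}$ are pairwise distinct. In particular, among the first $\tau^{\star}+1$ entries $\omega_{0},\dots,\omega_{\tau^{\star}}$ of $\LE^{1}(\omega)$, only $\omega_{\tau^{\star}}$ is a vertex of $C$, so $i=\tau^{\star}$. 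Since $\omega_{\tau^{\star}}$ is already the initial vertex of the chosen cyclic representative $(\omega_{\tau^{\star}},\dots,\omega_{\tau})$ of $C$, no cyclic rotation of $C$ is needed.

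Substituting into the formula of \Cref{def:LWW-Loop-Insertion} then yields
\[
\LE^{1}(\omega)\loopadd C = (\omega_{0},\dots,\omega_{\tau^{\star}-1},\omega_{\tau^{\star}},\omega_{\tau^{\star}+1},\dots,\omega_{\tau},\omega_{\tau+1},\dots,\omega_{n}) = \omega,
\]
as desired. The only degenerate case to check separately is when $\omega$ is itself a self-avoiding polygon; then $\tau^{\star}=0$, $\tau=n$, $\LE^{1}(\omega)=(\omega_{0})$, $C=\omega$, and inserting $C$ at the unique vertex of the trivial walk returns $\omega$. There is no real obstacle here — the argument is pure bookkeeping — but the one step worth stating carefully is the distinctness of $\omega_{0},\dots,\omega_{\tau^{\star}-1}$ from $V(C)$, since this is what pins down the insertion index to exactly $\tau^{\star}$ and hence makes the two operations inverse to one another.
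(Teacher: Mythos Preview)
Your proof is correct and follows essentially the same route as the paper's: both identify the insertion index as $\tau^{\star}$ by observing that $\omega_{\tau^{\star}}$ is the first vertex of $\LE^{1}(\omega)$ lying in $C$, though you spell out the key distinctness of $\omega_{0},\dots,\omega_{\tau-1}$ more explicitly. You are also right that the statement should read $\LE^{1}(\omega)\loopadd C = \omega$ rather than $\LE(\omega)\loopadd C = \omega$; the paper's own proof works with $\LE^{1}(\omega)$ throughout, confirming the typo.
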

\begin{proof}
  The definition of $\tau^{\star}_{\omega}$ and the definition of loop
  erasure implies that the first vertex in common between
  $\LE^{1}(\omega)$ and $C$ is $\omega_{\tau^{\star}_{\omega}}$, and hence
  the closed self-avoiding walk representing $C$ that is inserted by
  loop insertion is $(\omega_{\tau^{\star}_{\omega}},
  \dots, \omega_{\tau_{\omega}})$.
\end{proof}

Given a collection of oriented cycles that intersect a walk it is
necessary to determine the order in which the cycles should be
inserted. The next definition gives the correct order for inverting loop erasure.

\begin{definition}
  \label{def:LWW-Walk-Order}
  Let $\omega$ be a walk, and $C_{1},\dots, C_{k}$ a collection of
  oriented cycles that each share a vertex with $\omega$. Let $t_{j}=\min
  \{i \mid \omega_{i}\in C_{j}\}$.  The \emph{walk order} on the
  oriented cycles is given by setting $C_{m}\geq C_{n}$ if $t_{m}\geq t_{n}$.
\end{definition}

The following algorithm, called the \emph{loop addition algorithm},
constructs a walk beginning at the vertex $a$ and ending at the vertex
$b$ from a legal $(a,b)$ pair $(\eta,H)$.

\begin{enumerate}
\item Set $\omega^{0}=\eta$.
\item Suppose $H^{i-1}\neq\emptyset$. Set $\omega^{i} =
    \omega^{i-1}\loopadd C$, where $C$ is maximal in the walk order
    among the labels of the maximal pieces of $H^{i-1}$. Let $y$ be
    the piece whose label is $C$, and set $H^{i}=H^{i-1}\setminus \{y\}$.
\item If $H^{i-1}=\emptyset$, output $\omega=\omega^{i-1}$. Otherwise
  go to 2.
\end{enumerate}

The algorithm is well-defined as the labels of the maximal pieces in a
heap must be vertex disjoint, so the walk order is a strict total
order on the maximal pieces of the heap. Note that at each step of the
algorithm the walk $\omega^{i}$ begins at the vertex $a$ and ends at
$b$, so $\omega$ is a walk from $a$ to $b$ as claimed.

\begin{lemma}
  \label{lem:LWW-Insert-Remove}
  Suppose $(\eta,H)\in\WH$. Suppose the output of the loop addition
  algorithm is $\omega$. If $C$ is the last oriented cycle inserted,
  then the oriented cycle removed by loop erasure applied to $\omega$
  is $C$.
\end{lemma}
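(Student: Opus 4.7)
The plan is to establish, by induction on the iteration index $j$ of the loop addition algorithm, the stronger invariant that $\LE^1(\omega^j) = \omega^{j-1}$; specializing to $j = n$ (the total number of insertions) yields the claim. Writing $s_j$ for the insertion position of $C_j$ in $\omega^{j-1}$ and $(c_0, c_1, \dots, c_{|C_j|})$ for the cyclic representative of $C_j$ produced by loop insertion, the invariant is equivalent to the assertion that $\omega^j\cb{0, s_j + |C_j| - 1}$ is self-avoiding while $\omega^j_{s_j} = \omega^j_{s_j + |C_j|} = c_0$. Once this is established, $\tau^\star_{\omega^j} = s_j$ and $\tau_{\omega^j} = s_j + |C_j|$, so \Cref{def:LWW-Erasure} directly gives $\LE^1(\omega^j) = \omega^{j-1}$, and the oriented cycle removed is $C_j$.

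The base case $j = 1$ is immediate: $\omega^0 = \eta$ is self-avoiding, $\eta\cb{0, s_1 - 1}$ avoids $C_1$ by minimality of $s_1$, and $C_1$'s cyclic representative is itself a self-avoiding polygon. For the inductive step, the main task is the key estimate $s_j \leq s_{j-1} + |C_{j-1}| - 1$: granted this, $\omega^{j-1}\cb{0, s_j - 1}$ is a subsegment of $\omega^{j-1}\cb{0, s_{j-1} + |C_{j-1}| - 1}$, which is self-avoiding by the inductive hypothesis, and combining with the disjointness of this subsegment from $C_j$ (by definition of $s_j$) and with the self-avoidance of $C_j$'s cyclic representative yields the required self-avoidance of $\omega^j\cb{0, s_j + |C_j| - 1}$.

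I would prove the estimate by splitting on whether $C_j$ and $C_{j-1}$ share a vertex. In the concurrent case, $C_{j-1}$ must have been above $C_j$ in the heap $H^{j-2}$ (since $C_{j-1}$ was removed first), and the shared vertex is either $c_0$ (which already appears at position $s_{j-1}$ in $\omega^{j-1}$, giving $s_j \leq s_{j-1}$) or an interior vertex of the inserted $C_{j-1}$ segment (giving $s_{j-1} < s_j \leq s_{j-1} + |C_{j-1}| - 1$). In the non-concurrent case $C_j$ must already have been maximal in $H^{j-2}$: otherwise some piece above $C_j$ would need to be removed before $C_j$ could be picked at step $j$, but only $C_{j-1}$ was removed and it is not concurrent with $C_j$. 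Since the algorithm chose $C_{j-1}$ over $C_j$ at step $j-1$ as max in walk order, the first occurrence of $C_j$ in $\omega^{j-2}$ lies strictly before $s_{j-1}$ (ties in walk order are impossible because disjoint cycles occupy disjoint positions in the walk), and this position is unaffected by inserting $C_{j-1}$, giving $s_j < s_{j-1}$.

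The main obstacle will be handling the boundary in the concurrent case: one must rule out $s_j = s_{j-1} + |C_{j-1}|$, since that equality would make the closing vertex of the inserted $C_{j-1}$ segment, rather than $C_j$ itself, produce the first repetition of $\omega^j$ and would collapse the inductive argument. Exploiting the cyclic identity $c_{|C_{j-1}|} = c_0$, whenever the shared vertex would land at the end of the inserted segment it in fact also lies at the start at position $s_{j-1}$, so the minimality in the definition of $s_j$ forces $s_j \leq s_{j-1}$ and excludes the boundary.
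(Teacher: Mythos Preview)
Your proof is correct and follows essentially the paper's approach: induct on the number of insertions and split on whether the last-inserted cycle $C_j$ was already maximal in $H^{j-2}$ (equivalently, whether $C_j$ is concurrent with $C_{j-1}$), in each case arguing that $C_j$ closes before the previously first-closing cycle. Your explicit tracking of insertion positions $s_j$ and the stated invariant $\LE^1(\omega^j)=\omega^{j-1}$ make the argument more transparent than the paper's; one harmless slip is that in the concurrent--interior subcase the lower bound $s_{j-1}<s_j$ need not hold (a vertex of $C_j$ not in $C_{j-1}$ could appear earlier in $\omega^{j-1}$), but only the upper bound $s_j\leq s_{j-1}+|C_{j-1}|-1$ is used.
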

\begin{proof}
  The proof is by induction on the size of $H$. Suppose $C$ was the
  $(k+1)^{\mathrm{st}}$ oriented cycle added.
  \begin{enumerate}
  \item If $C$ was the label of a maximal element in $H^{k-1}$ then
    $C$ is vertex disjoint from the $k^{\mathrm{th}}$ added oriented
    cycle $C^{\prime}$.  The definition of the walk order implies that
    the first vertex $C$ shares with $\omega^{k-1}$ occurs prior to
    the first vertex in $C^{\prime}$ because $C$ is disjoint from
    $C^{\prime}$. It follows that $C$ is the oriented cycle erased by
    loop erasure, as $C$ closes prior to $C^{\prime}$, which was
    previously (by induction) the first oriented cycle to close.
  \item If $C$ was not the label of a maximal piece in $H^{k-1}$ then
    $C$ is the label of a piece that was below the $k^{\mathrm{th}}$
    inserted piece. Suppose the $k^{\mathrm{th}}$ piece had label
    $C^{\prime}$. As $C$ intersects $C^{\prime}$, $C$ is inserted into
    the subwalk $C^{\prime}$ of $\omega^{k-1}$. By induction
    $C^{\prime}$ was the first oriented cycle to close in
    $\omega^{k-1}$, so $C$ is the first oriented cycle to close in
    $\omega^{k}$.\qedhere
  \end{enumerate}
\end{proof}

To construct a legal pair $(\eta,H)$ from a walk is fairly
straightforward. By applying loop erasure oriented cycles are removed,
and they naturally form a heap by using the heap composition
operation. More precisely, we have the \emph{(total) loop erasure
  algorithm}:
\begin{enumerate}
\item Set $\omega^{0}=\eta$, and $H^{0}=\emptyset$, where
  $\emptyset$ is the empty heap of oriented cycles.
\item If $\omega^{i-1}$ is not a self-avoiding walk, set
  $\omega^{i}=\LE^{1}(\omega^{i-1})$, and if $C$ is the closed
  self-avoiding walk removed from
  $\omega^{i-1}$, let $H^{i}=H\circ \{\bar C\}$ where $\bar C$ is the
  oriented cycle corresponding to $C$.
\item If $\omega^{i-1}$ is a self-avoiding walk, output
  $(\omega^{i-1},H^{i-1})$. Otherwise go to 2.
\end{enumerate}

Single loop erasure removes a subwalk of length at least 2 from any
non-simple walk at each step, so iteratively applying $\LE^{1}$
stabilizes on a self-avoiding walk in a finite number of
iterations. It follows that the total loop erasure is well defined.

\begin{lemma}
  \label{lem:LWW-Erasure-Heap}
  The output of the loop erasure algorithm applied to a walk $\omega =
  (\omega_{0}, \dots, \omega_{n})$ is a pair
  $(\eta,H)\in\WH(\omega_{0},\omega_{n})$.
\end{lemma}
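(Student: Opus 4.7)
The plan is to verify each of the conditions required for $(\eta, H) \in \WH(\omega_0, \omega_n)$ in turn. First, I would establish termination: a single loop erasure strictly decreases the length of a non-self-avoiding walk (by $\tau_\omega - \tau_\omega^\star \geq 2$ steps, where the gap is $\geq 2$ since even if $\tau_\omega^\star = \tau_\omega - 1$ is impossible for a cycle of length at least $2$; more precisely, a trivial cycle has length $2$ and non-trivial ones have length $\geq 3$). Hence the loop is well-defined and terminates after finitely many iterations, and the stopping criterion ensures $\omega^{i-1}$ is self-avoiding. I would also note that $\LE^1$ preserves endpoints, since the definition of $\LE^1$ in \Cref{def:LWW-Erasure} always keeps $\omega_0$ and $\omega_n$, so $\eta \in \SAW(\omega_0, \omega_n)$.

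Next, $H$ is assembled by successive heap compositions $H^{i-1} \circ \{\bar C_i\}$ with singleton heaps of oriented cycles, which produces a heap of oriented cycles by construction. The content that actually needs argument is the condition on maximal elements of $H$: each maximal piece's label must share a vertex with the final walk $\eta$. For this, I would single out, for each oriented cycle $\bar C_i$ added at step $i$, the vertex $v_i = \omega^{i-1}_{\tau^\star}$ at which $\bar C_i$ was rooted when it was erased. By the explicit formula in \Cref{eq:LWW-Single-Erase}, $v_i$ is retained in $\omega^i$, and $v_i \in \bar C_i$.

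The key combinatorial step is the following vertex-tracking observation: if a vertex $v$ belongs to $V(\omega^{j-1})$ but not to $V(\omega^j)$, then $v$ must occur in $\omega^{j-1}$ only at positions in $\{\tau^\star+1,\dots,\tau-1\}$ (where $\tau, \tau^\star$ refer to $\omega^{j-1}$), since positions $\leq \tau^\star$ and $> \tau$ survive in $\omega^j$ and $\omega^{j-1}_\tau = \omega^{j-1}_{\tau^\star}$ is kept. All such positions lie within the cycle $\bar C_j$, and their vertex values are vertices of $\bar C_j$. Hence any vertex lost in passing from $\omega^{j-1}$ to $\omega^j$ is a vertex of $\bar C_j$. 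Applying this tracking argument: if the root $v_i$ is not in $\eta$, there is a smallest $j > i$ at which $v_i$ is lost, forcing $v_i$ to be a vertex of $\bar C_j$. Then $\bar C_i$ and $\bar C_j$ are concurrent, so $\bar C_j$ lies above $\bar C_i$ in the heap $H^j$, and hence in $H$, so $\bar C_i$ is not maximal. Contrapositively, if $\bar C_i$ is maximal in $H$, then $v_i \in \eta$, and the label $\bar C_i$ shares the vertex $v_i$ with $\eta$, as required.

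The only genuine obstacle is the vertex-tracking claim; once that is pinned down, the rest is a matter of unwinding definitions and noting that concurrency propagates a piece to a non-maximal position in the heap. The argument I sketch is direct rather than going through chains of concurrent pieces, since in fact a single direct concurrency between $\bar C_i$ and some later $\bar C_j$ already suffices to witness non-maximality of $\bar C_i$, which is exactly what the vertex-tracking delivers.
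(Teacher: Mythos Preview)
Your argument is correct and follows essentially the same idea as the paper's proof: the paper asserts the invariant that at each step the maximal pieces of $H^i$ share a vertex with $\omega^i$, together with endpoint preservation, but does not spell out why the invariant is maintained. Your vertex-tracking argument (if the root $v_i$ is ever lost it must lie in a later cycle $\bar C_j$, forcing concurrency and hence non-maximality of $\bar C_i$) is precisely the missing justification, so your proposal is a more detailed version of the same proof rather than a genuinely different approach.
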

\begin{proof}
  At each step of the algorithm the maximal pieces of the heap $H^{i}$
  share a vertex with the remaining walk $\omega^{i}$, and the
  algorithm only terminates once the remaining walk is
  self-avoiding. Removing a cycle cannot change the initial vertex
  of a walk, so $\eta_{0}=\omega_{0}$. If the final vertex of $\omega$
  is removed it must be that visiting the final vertex completes a
  cycle, and hence $\eta$ ends at $\omega_{n}$.
\end{proof}

\begin{proof}[Proof of~\Cref{thm:LWW-Bijection}]
  We claim that loop erasure and loop addition are inverses of one
  another, and prove the claim by induction. Suppose the claim holds
  between walks whose loop erasure removes $k$ oriented cycles and
  pairs $(\eta,H)\in\WH(a,b)$ whose heap $H$ has $k$ pieces.

  On the one hand, inserting the final oriented cycle $C$ in the loop
  addition algorithm yields a walk, and $C$ is the first oriented
  cycle removed by loop erasure by~\Cref{lem:LWW-Insert-Remove}. By
  induction it follows that loop erasure applied to the loop addition
  of a pair $(\eta,H)\in\WH(a,b)$ returns $(\eta,H)$.

  On the other hand when a single oriented cycle $C$ is removed from
  $\omega$ the cycle $C$ is minimal in the walk order and the removed
  oriented cycle is the label of a maximal piece. So the
  reconstruction of the heap formed by loop erasure proceeds as if the
  piece with label $C$ was not present, and hence (by induction)
  recreates $\LE^{1}(\omega)$ correctly. \Cref{lem:LWW-Remove-Insert}
  then implies that $\omega$ is the output of applying loop erasure
  and then loop addition.
\end{proof}

\subsection{Proof of~\Cref{thm:LWW-LM-Rep}}
\label{app:LWW-LM-Rep}

The proof of \Cref{thm:LWW-LM-Rep} follows from two calculations. The
first is a straightforward consequence of the fact that the bijection
between walks and legal pairs is given by loop erasure. Let $\cc T$
denote the set of trivial heaps of oriented cycles, and $\cc H$ the
set of heaps of oriented cycles. Let $\OC(\eta)$ denote the set of
oriented cycles that \emph{do not} share a vertex with the set $\eta$,
and let $\cc H_{\eta}$ denote the set of heaps $H$ such that
$(\eta,H)$ is a legal pair. The definition of $\lambda$-LWW,
\Cref{thm:LWW-Bijection}, and the heap
theorem~\cite[Proposition~5.3]{Viennot1986} imply 
\begin{align}
  \label{eq:LWW-LM-Triv-1}
  \bar w_{\lambda,z}(\eta) &= \sum_{\omega\colon \LE(\omega)=\eta}
  w_{\lambda,z}(\omega) \\
    \label{eq:LWW-LM-Triv-2}
  &= z^{\abs{\eta}} \sum_{H\in\cc H_{\eta}} w_{\lambda,z}(H) \\
  \label{eq:LWW-LM-Triv-3}
  &= z^{\abs{\eta}} \frac{ \sum_{T\in\cc T(\OC(\eta))}
    (-1)^{\abs{T}}w_{\lambda,z}(T)} { \sum_{T\in \cc
      T}(-1)^{\abs{T}}w_{\lambda,z}(T)},
\end{align}
where $w_{\lambda,z}(H) = \prod_{x\in H}w_{\lambda,z}(\ell(x))$ for a
heap $(H,\ell,\preceq)$. In particular note that this definition
assigns a weight $z^{2}\lambda$ to a trivial cycle.

The second calculation is an expression for sums over trivial heaps of
oriented cycles. \Cref{thm:LWW-LM-Rep} follows by applying
\Cref{prop:LWW-SO} to the numerator and denominator
of~\eqref{eq:LWW-LM-Triv-3} and cancelling common factors. This
calculation is a calculation involving formal power series; to see
that it holds as a relation between power series, note that for $z$
sufficiently small the final expressions are bounded by random walk
quantities, which converge.
\begin{proposition}
  \label{prop:LWW-SO}
  \begin{equation}
    \label{eq:LWW-SO}
    \sum_{T\in\cc T(\OC(A))} (-1)^{\abs{T}}w_{\lambda,z}(T) = \exp \ob{ -
      \sum_{x\in \Z^{d}} \mathop{\sum_{\omega\colon x\to
          x}}_{\abs{\omega}\geq 1}
      \indicatorthat{\range{\omega}\cap A = \emptyset}
      \frac{w_{\lambda,z}(\omega)}{\abs{\omega}}}
  \end{equation}
\end{proposition}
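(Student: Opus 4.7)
My strategy combines the heap-theoretic inversion formula with the bijection of \Cref{thm:LWW-Bijection} and a pyramid decomposition.

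\emph{Step 1 (heap inversion).} I would first apply the heap theorem of~\cite{Viennot1986} to the set $\OC(A)$ of oriented cycles avoiding $A$, equipped with the concurrence ``shares a vertex,'' to obtain
\begin{equation*}
\sum_{T \in \cc T(\OC(A))} (-1)^{\abs{T}} w_{\lambda,z}(T) \cdot \sum_{H \in \cc H(\OC(A))} w_{\lambda,z}(H) = 1.
\end{equation*}
This reduces \Cref{prop:LWW-SO} to the identity $\sum_{H} w_{\lambda,z}(H) = \exp(L_A)$, where $L_A$ denotes the double sum appearing inside the exponential on the right-hand side of~\eqref{eq:LWW-SO} (taken with the opposite sign).

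\emph{Step 2 (closed walks as pyramids).} Call a non-empty heap with a unique maximal piece a \emph{pyramid}. Two distinct maximal pieces of a heap of oriented cycles are necessarily vertex-disjoint, so the requirement in \Cref{thm:LWW-Bijection} that every maximal piece of $H$ contain the single vertex of $\eta = (x)$ forces $H$ to be a pyramid whose maximal cycle contains $x$. Viennot's bijection therefore restricts to a weight-preserving bijection between closed walks $\omega\colon x\to x$ with $\abs{\omega}\geq 1$ and pyramids $P$ with labels in $\OC(A)$ whose unique maximal cycle contains $x$. Under this bijection $\abs{\omega}$ corresponds to the total step count $\sum_{p\in P}\abs{\ell(p)}$, and the constraint $\mathrm{range}(\omega)\cap A = \emptyset$ corresponds precisely to the labels of $P$ lying in $\OC(A)$.

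\emph{Step 3 (logarithm formula).} Summing over $x\in\Z^{d}$ and observing that, for a fixed pyramid $P$ with maximal piece $p^{\star}$, there are exactly $\abs{\ell(p^{\star})}$ choices of root vertex $x$, I rewrite
\begin{equation*}
L_A = \sum_{P\text{ pyramid}} \frac{\abs{\ell(p^{\star}(P))}}{\sum_{p\in P}\abs{\ell(p)}}\, w_{\lambda,z}(P).
\end{equation*}
The proof is then completed by recognizing the right-hand side as $\log \sum_{H} w_{\lambda,z}(H)$, via Viennot's expression of the logarithm of a heap generating function as a sum over pyramids with exactly this multiplicity; this in turn follows from the unique factorization of any heap as an ordered product of maximal pyramids. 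For $\lambda = 1$ the identity reduces to the classical trace--log relation $-\log\det(I-zM) = \sum_{n\geq 1} z^{n}\operatorname{tr}(M^{n})/n$.

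\emph{Main obstacle.} The delicate part is Step 3: verifying the pyramid--logarithm formula with the correct multiplicity factor, which is an orbit--stabilizer bookkeeping matching the $1/\abs{\omega}$ in the loop measure with $\abs{\ell(p^{\star})}/\sum_{p}\abs{\ell(p)}$ in the pyramid sum. This relies essentially on cyclic-shift invariance of $w_{\lambda,z}$ on closed walks, which follows from \Cref{ass:LWW-Symmetry}: cyclic rotation permutes the loops erased by loop-erasure only up to choices of initial vertex and orientation, and these are identified under the symmetry assumption on $\lambda$.
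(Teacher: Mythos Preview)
Your Steps 1 and 2 are correct and cleanly stated. The reduction to $\sum_{H}w_{\lambda,z}(H)=\exp(L_{A})$ via heap inversion is fine, and the identification of closed walks from $x$ with pyramids whose maximal cycle contains $x$ is exactly \Cref{thm:LWW-Bijection} applied to the zero-step walk $\eta=(x)$. Note also that what you actually use in Step~3 is not cyclic-shift invariance of $w_{\lambda,z}$ on closed walks, but the weaker fact that every root $x$ in the maximal cycle of a fixed pyramid $P$ yields a walk with the same weight; this is immediate from part~(2) of \Cref{thm:LWW-Bijection} together with \Cref{ass:LWW-Symmetry}, since the multiset of erased cycles and the total length depend only on $P$.

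The gap is your justification of Step~3. I do not know a result in~\cite{Viennot1986} of the form ``$\log\sum_{H}w(H)=\sum_{P}\tfrac{|\ell(p^{\star})|}{\sum_{p}|\ell(p)|}w(P)$'', and the phrase ``unique factorization of any heap as an ordered product of maximal pyramids'' is not correct as stated (two non-concurrent pieces form a heap that factors as a product of two one-piece pyramids in either order). What does work is a \emph{pointed-heap} argument: show that $(H,p)\mapsto(H_{\leq p},\,H\setminus H_{\leq p})$ is a weight-preserving bijection between heaps with a marked piece and pairs (pyramid, heap), which yields $\sum_{H}(\sum_{p\in H}|\ell(p)|)w_{s}(H)=\big(\sum_{P}|\ell(p^{\star})|w_{s}(P)\big)\cdot Z(s)$ and hence, after dividing by $Z(s)$ and integrating in $s$, exactly your pyramid--logarithm identity. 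But once you introduce the auxiliary parameter $s$ and differentiate, you have essentially reproduced the paper's argument.

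The paper's proof bypasses your Step~1 and the pyramid--log identity entirely: it writes the left-hand side as $\exp\int_{0}^{1}\frac{d}{ds}\log\sum_{T}(-1)^{|T|}w_{\lambda,sz}(T)\,ds$, interprets $\frac{d}{ds}$ via the Leibniz rule as marking a vertex in one cycle of $T$, and then applies the heap theorem plus \Cref{thm:LWW-Bijection} to the marked cycle directly, obtaining $-s^{-1}\sum_{x}\sum_{\omega}w_{\lambda,sz}(\omega)$; integrating $s^{|\omega|-1}$ produces the $|\omega|^{-1}$. So the paper's route is the same differentiation idea you would need for Step~3, but applied to the trivial-heap sum rather than to $Z$, which makes the intermediate pyramid formula unnecessary. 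Your approach is conceptually pleasant in that it isolates the pyramid--log identity as a standalone heap-theoretic fact, but it does not shorten the argument.
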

\begin{proof}[Proof of \Cref{thm:LWW-LM-Rep}]
  Let $\bar z = sz$. Then
  $w_{\lambda,z}(\omega)=w_{\lambda,\bar z}(\omega)$ when
  $s=1$. Using this observe that
  \begin{equation}
    \sum_{T\in\cc T(\OC(A))} (-1)^{\abs{T}}w_{\lambda,z}(T) = \exp
    \int_{0}^{1}\frac{d}{ds} \log \sum_{T\in\cc T(\OC(A))}
    (-1)^{\abs{T}}w_{\lambda,\bar z}(T).
  \end{equation}
  In calculating the derivative the Leibniz rule for differentiating
  $s^{k}$ can be interpreted as selecting one of the $k$ vertices
  contained in the cycles of a trivial heap. The selected vertex
  distinguishes a self-avoiding polygon. \Cref{thm:LWW-Bijection} can
  be applied to transform this into a walk weighted by
  $w_{\lambda,z}$. The factor of $-1$ in the exponent arises from the
  application of \Cref{thm:LWW-Bijection}, as the distinguished cycle
  carried a factor of $-1$. Lastly, the term $\abs{\omega}^{-1}$
  arises from the integration of $s^{\abs{\omega}-1}$ from $0$ to
  $1$; the missing factor of $s$ is due to the differentiation which
  distinguished a vertex.
\end{proof}

\subsection{Relation to Correlations of the $O(N)$ Cycle Gas}
\label{sec:LWW-Cycle-Correlation}

Note that \Cref{eq:LWW-LM-Triv-1} and \Cref{eq:LWW-LM-Triv-3} imply
that $G_{\lambda,z}(0,x)$ is given by a ratio of partition
functions. The denominator is a sum over oriented mutually disjoint
cyclic subgraphs, where the weight of a subgraph $H$ is
$z^{\abs{E(H)}}(-\lambda)^{\# H}$, where $\# H$ denotes the number of
cyclic subgraphs contained in $H$. The numerator is a sum over
self-avoiding walks from $0$ to $x$ along with disjoint cyclic
subgraphs; the weight is the same as for the denominator except for
the fact that the walk does not receive a factor of $\lambda$.

For each cycle of length at least $3$ summing over the possible
orientations of the cycles results in a model of unoriented cycle,
where each unoriented cycle has weight $-2\lambda$, except for trivial
cycles, which have weight $-\lambda$. \emph{Defining} the two-point
correlation in the $O(N)$ cycle gas to be the ratio described in the
previous paragraph gives the relation between the $O(N)$
cycle gas and $\lambda$-LWW. Note that if cycles of length two are
assigned loop activity $0$ this yields a precise correspondence
between $\lambda$-LWW and the $O(-2\lambda)$ cycle gas.

\section*{Acknowledgements}
\label{sec:acknowledgements}

I would like to thank my PhD advisor, David Brydges, for many
interesting and inspiring discussions related to this work, which
formed a portion of my PhD thesis at the University of British
Columbia. This paper was revised while I was at ICERM for the semester
program \emph{Phase Transitions and Emergent Properties}, and I would
like to thank ICERM for their hospitality and support. Finally, I
would like to thank Gordon Slade, Mark Holmes, and the referees
for their helpful comments, critiques and references, which have
greatly improved this article.

\bibliographystyle{amsplain}
\bibliography{refs1}

\providecommand{\bysame}{\leavevmode\hbox to3em{\hrulefill}\thinspace}
\providecommand{\MR}{\relax\ifhmode\unskip\space\fi MR }
% \MRhref is called by the amsart/book/proc definition of \MR.
\providecommand{\MRhref}[2]{%
  \href{http://www.ams.org/mathscinet-getitem?mr=#1}{#2}
}
\providecommand{\href}[2]{#2}
\begin{thebibliography}{10}

\bibitem{Aizenman1982}
Michael Aizenman, \emph{Geometric analysis of $\varphi^4$ fields and {Ising}
  models. {I}, {I}{I}}, Comm. Math. Phys. \textbf{86} (1982), no.~1, 1--48.

\bibitem{BrydgesFrohlichSpencer1982}
David Brydges, J\"urg Fr\"ohlich, and Thomas Spencer, \emph{The random walk
  representation of classical spin systems and correlation inequalities}, Comm.
  Math. Phys. \textbf{83} (1982), no.~1, 123--150.

\bibitem{BrydgesSpencer1985}
David Brydges and Thomas Spencer, \emph{Self-avoiding walk in $5$ or more
  dimensions}, Communications in Mathematical Physics \textbf{97} (1985),
  no.~1-2, 125--148.

\bibitem{BrydgesImbrie2003}
David~C. Brydges and John~Z. Imbrie, \emph{Branched polymers and dimensional
  reduction}, Ann. of Math. \textbf{158} (2003), no.~3, 1019--1039.

\bibitem{ChayesPryadkoShtengel2000}
L.~Chayes, Leonid~P. Pryadko, and Kirill Shtengel, \emph{Intersecting loop
  models on $\mathbb{Z}^d$: rigorous results}, Nuclear Physics B \textbf{570}
  (2000), no.~3, 590--614.

\bibitem{DomanyMukamelNienhuisSchwimmer1981}
Eytan Domany, D.~Mukamel, B.~Nienhuis, and A.~Schwimmer, \emph{Duality
  relations and equivalences for models with $\mathrm{O}(n)$ and cubic
  symmetry}, Nuclear Physics B \textbf{190} (1981), no.~2, 279--287.

\bibitem{FernandezFrohlichSokal1992}
Roberto Fern\'andez, J\"urg Fr\"ohlich, and Alan~D. Sokal, \emph{Random walks,
  critical phenomena, and triviality in quantum field theory}, Texts and
  Monographs in Physics, Springer-Verlag, Berlin, 1992.

\bibitem{HaraSlade1992}
Takashi Hara and Gordon Slade, \emph{Self-avoiding walk in five or more
  dimensions {I}. {The} critical behaviour}, Communications in Mathematical
  Physics \textbf{147} (1992), no.~1, 101--136.

\bibitem{Krattenthaler2006}
C.~Krattenthaler, \emph{{The Theory of Heaps and the Cartier–-Foata Monoid}},
  Commutation and Rearrangements (P.~Cartier and D.~Foata, eds.), 2006,
  pp.~63--73.

\bibitem{LawlerLimic2010}
Gregory~F. Lawler and Vlada Limic, \emph{Random walk: a modern introduction},
  Cambridge Studies in Advanced Mathematics, vol. 123, Cambridge University
  Press, Cambridge, 2010.

\bibitem{MadrasSlade2013}
Neal Madras and Gordon Slade, \emph{The self-avoiding walk}, Modern
  Birkh\"auser Classics, Birkh\"auser/Springer, New York, 2013, reprint of the
  1993 original.

\bibitem{Nienhuis2010}
Bernard Nienhuis, \emph{Exact methods in low-dimensional statistical physics
  and quantum computing}, ch.~Loop models, pp.~159--195, Oxford Univ. Press,
  Oxford, 2010.

\bibitem{HelfrichRys1982}
F.~Rys and W.~Helfrich, \emph{Statistical mechanics of non-intersecting line
  systems}, Journal of Physics A: Mathematical and General \textbf{15} (1982),
  no.~2, 599--603.

\bibitem{Slade2006}
G.~Slade, \emph{The lace expansion and its applications}, Lecture Notes in
  Mathematics, vol. 1879, Springer-Verlag, Berlin, 2006, Lectures from the 34th
  Summer School on Probability Theory held in Saint-Flour, July 6–-24, 2004.

\bibitem{Symanzik1969}
K.~Symanzik, \emph{Euclidean quantum field theory}, Local Quantum Field Theory
  (R.~Jost, ed.), Academic Press, New York, Oct. 1969.

\bibitem{Ueltschi2002}
Daniel Ueltschi, \emph{A self-avoiding walk with attractive interactions},
  Probab. Theory Related Fields \textbf{124} (2002), no.~2, 189--203.

\bibitem{vdHofstadHolmes2012}
Remco van~der Hofstad and Mark Holmes, \emph{An expansion for self-interacting
  random walks}, Braz. J. Probab. Stat. \textbf{26} (2012), no.~1, 1--55.

\bibitem{Viennot1986}
G\'erard~Xavier Viennot, \emph{Heaps of pieces. {I}. {B}asic definitions and
  combinatorial lemmas}, Combinatoire \'enum\'erative, Lecture Notes in
  Mathematics, vol. 1234, Springer Berlin Heidelberg, 1986, pp.~321--350.

\bibitem{Zeilberger1997}
Doron Zeilberger, \emph{The abstract lace expansion}, Advances in Applied
  Mathematics \textbf{19} (1997), no.~3, 355--359.

\end{thebibliography}

\end{document}